\DeclareMathOperator{\Aut}{Aut}
\DeclareMathOperator{\GL}{GL}
\DeclareMathOperator{\irk}{irk}
\DeclareMathOperator{\im}{Im}
\DeclareMathOperator{\rk}{rk}
\DeclareMathOperator{\Mat}{Mat}
\DeclareMathOperator{\Tor}{Tor}
\DeclareMathOperator{\Ext}{Ext}
\DeclareMathOperator{\Gr}{Gr}
\DeclareMathOperator{\Hom}{Hom}
\DeclareMathOperator{\cd}{cd}
\DeclareMathOperator{\len}{len}
\newcommand{\D}{\mathcal D}
\newcommand{\CTor}{\mathfrak{Tor}}
\newcommand{\F}{\mathbb{F}}
\newcommand{\Z}{\mathbb{Z}}
\newcommand{\bbn}{\mathbb{N}}
\newcommand{\bbq}{\mathbb{Q}}
\newcommand{\bbd}{\mathbb{D}}
\newcommand{\Falgebra}[1]{\F_p[\![#1]\!]}
\newcommand{\fraks}{\mathfrak{s}}
\newcommand{\frakL}{\mathfrak{L}}
\newcommand{\frakh}{\mathfrak{h}}
\newcommand{\frakr}{\mathfrak{r}}
\newcommand{\cotimes}{\widehat{\otimes}}
\let\emph\relax 
\DeclareTextFontCommand{\emph}{\bfseries\em}
\newtheorem{thm}{Theorem}[section]
\newtheorem{lem}[thm]{Lemma}
\newtheorem{claim}[thm]{Claim}
\newtheorem{cor}[thm]{Corollary}
\newtheorem{prop}[thm]{Proposition}
\theoremstyle{definition}
\newtheorem{rem}[thm]{Remark}
\newtheorem*{rem*}{Remark}
\newtheorem{defn}[thm]{Definition}
\newtheorem{exmp}[thm]{Example}
\newtheorem{Question}{Question}
\newtheorem{Conjecture}[Question]{Conjecture}
\newcounter{henriquecomments}
\newcounter{andreicomments}
\date{\today}
\begin{document}

\title{Sylvester domains and pro-$p$ groups}
\author{Andrei Jaikin-Zapirain}
 \address{Departamento de Matem\'aticas, Universidad Aut\'onoma de Madrid \and  Instituto de Ciencias Matem\'aticas, CSIC-UAM-UC3M-UCM}
\email{andrei.jaikin@uam.es}

\author{Henrique Souza}
\address{Departamento de Matem\'aticas, Universidad Aut\'onoma de Madrid}
\email{henrique.mendesdasilva@uam.es}

\begin{abstract} Let $G$ be  a finitely generated torsion-free pro-$p$ group containing an open free-by-\(\Z_p\) pro-$p$ subgroup. We show that the completed group algebra $\Falgebra{G}$ is a Sylvester domain. Moreover the inner rank $\irk_{\Falgebra{G}}(A)$ of a matrix $A$ over  $\Falgebra{G}$ can be calculated by approximation by ranks corresponding to finite quotients of $G$. As a consequence, we obtain a particular case of the mod \(p\) Lück approximation for abstract finitely generated subgroups of free-by-\(\Z_p\) pro-\(p\) groups.
 \end{abstract}

\maketitle

\section{Introduction}

A Sylvester rank function on a ring $R$ is a function taking non-negative real values on matrices over $R$ and satisfying a series of conditions  (see Paragraph \ref{subsec-intro-division-rings}) resembling the conditions of the rank function of matrices over a field. The value of a Sylvester rank function $\rk$ on a matrix $A$ over $R$ is bounded from above by its inner rank $\rk(A)\le \irk_R(A)$. If $\irk_R$ is itself a Sylvester rank function, then the ring $R$ is called a \emph{Sylvester domain}. This notion appeared first implicitly in the works of P. Cohn and was explicitly defined by W. Dicks and E. Sontag \cite{DS78}. One remarkable property of a Sylvester domain $R$ is that it has a universal division ring of fractions $\D_R$ into which it embeds and such that for  every matrix $A$ over $R$, its rank over $\D_R$ is equal to its inner  rank $\irk_R(A)$. In particular, $R$ has no zero divisors, see \cite[Sec. 0]{DS78}. 

In the case $R=K[\Gamma]$ is a group algebra of a free group $\Gamma$ over a field $K$, it was proven by P. Cohn \cite[Theorem 7.11.8, Proposition 5.5.1]{cohnFreeIdealRings2006} that \(K[\Gamma]\) is a Sylvester domain. In \cite{HL22}, F. Henneke and D. L\'opez-\'Alvarez considered the case where $\Gamma$ is a free-by-$\Z$ group and showed that $K[\Gamma]$ is a Sylvester domain if and only if every left  finitely generated  projective $K[\Gamma]$-module is free. 

We would like to notice that, for an abstract group $\Gamma$, if $K$ is a field of characteristic 0 and $K[\Gamma]$ is a Sylvester domain, then $\irk_{K[\Gamma]}$ coincides with the von Neumann Sylvester rank function $\rk_\Gamma$ (for the definition of $\rk_\Gamma$, see \cite[Section 2]{campbell_l2-betti_2019}).

In this paper we want to understand for which pro-$p$ groups $G$ the completed group algebra $\Falgebra{G}$ is a Sylvester domain. Often, the results in combinatorial group theory and combinatorial pro-$p$ group theory run in parallel; although their proofs are quite different. For example,  the theorem of J. Stallings (\cite{stallingsTorsionFreeGroupsInfinitely1968}) saying that a torsion-free and virtually free group is itself free was inspired by the previous and analogous result on free pro-$p$ groups due to J.-P. Serre in \cite{Se65}. Later in
\cite{lubotzkyCombinatorialGroupTheory1982}, A. Lubotzky described how the classical theorems of M. Hall, L. Greenberg and A. Howson about finitely generated subgroups of free groups  all have analogues for free pro-\(p\) groups.  More recently, a new proof of the Hanna Neumann Conjecture (first proven independently by J. Friedman and I. Mineyev in 2011) was given by the first author in \cite{jaikin-zapirainApproximationSubgroupsFinite2017} using homological methods that apply both to the abstract and pro-\(p\) versions of the conjecture. This result was later extended to non-solvable pro-\(p\) Demushkin groups and their discrete counterparts, the hyperbolic surface groups, in \cite{JS19} and \cite{antolinHannaNeumannConjecture2022} respectively. Still in the theme of Demushkin groups, in \cite{shustermanVirtualRetractionHowson2020} M. Shusterman and P. Zalesskii extended the pro-\(p\) version of Howson's theorem to non-solvable Demushkin groups, for which they also proved the virtual retractions property that was stablished for surface groups by P. Scott in \cite{scottSubgroupsSurfaceGroups1978}.

In the case of a pro-$p$ group $G$ there exists a Sylvester function on $\Falgebra{G}$, which is  an analogue of the von Neumann Sylvester rank function and we also denote it  by $\rk_G$ (see Example \ref{exmp-rk-G} for the definition). Its definition resembles the L\"uck approximation \cite{Lu94, campbell_l2-betti_2019}.  

If  $F$ is a finitely generated free pro-$p$ group, then $\Falgebra{F}$ is isomorphic to a ring of non-commutative formal power series over $\F_p$, and so from
  \cite{Co74} we know that $\Falgebra{F}$ is a Sylvester domain.
 In \cite{jaikin-zapirainExplicitConstructionUniversal2020}, the first author proved that in this case $\irk_{\Falgebra{F}}=\rk_F$.
In this paper we extend this result to   finitely generated free-by-\(\Z_p\) pro-\(p\) groups.
\begin{thm}\label{main}
Let $G$ be a finitely generated  torsion-free pro-$p$ group containing an open free-by-\(\Z_p\) pro-$p$ subgroup. Then $\Falgebra{G}$ is a Sylvester domain and, moreover, $\irk_{\Falgebra{G}}=\rk_G$.
\end{thm}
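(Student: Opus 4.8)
The strategy has three parts: a reduction to the case where \(G\) is itself free-by-\(\Z_p\); in that case, the construction of a division ring of fractions of \(\Falgebra{G}\) out of the universal division ring of \(\Falgebra{F}\); and the identification of the resulting rank function with \(\rk_G\) by finite-quotient approximation. Note first that it suffices to prove \(\irk_{\Falgebra{G}}\le\rk_G\): since \(\rk_G\) is a Sylvester rank function one always has \(\rk_G\le\irk_{\Falgebra{G}}\), so this inequality forces \(\irk_{\Falgebra{G}}=\rk_G\), whence \(\irk_{\Falgebra{G}}\) is a Sylvester rank function, i.e.\ \(\Falgebra{G}\) is a Sylvester domain. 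For the reduction, replacing the given open free-by-\(\Z_p\) subgroup by its normal core we may assume that \(H\trianglelefteq_o G\) is finitely generated free-by-\(\Z_p\) with \(Q=G/H\) a finite \(p\)-group (open subgroups of finitely generated free-by-\(\Z_p\) pro-\(p\) groups are again of that form, since open subgroups of finitely generated free pro-\(p\) groups are finitely generated free). Then \(\Falgebra{G}=\Falgebra{H}*Q\) is a crossed product, free of rank \(|Q|\) over \(\Falgebra{H}\) on each side. Granting the theorem for \(H\) (a free-by-\(\Z_p\) group), the \(Q\)-action extends to \(\D_{\Falgebra{H}}\), the finite-dimensional \(\D_{\Falgebra{H}}\)-algebra \(\D_{\Falgebra{H}}*Q\) is Artinian, and \(\Falgebra{G}\hookrightarrow\D_{\Falgebra{H}}*Q\) induces a Sylvester rank function on \(\Falgebra{G}\); one checks, by passing \(\rk_G\) through finite index via restriction of scalars together with the theorem for \(H\), that this rank function is \(\rk_G\), and then \(\irk_{\Falgebra{G}}\le\rk_G\) follows once a rank-\(k\) factorization over the Artinian ring is pushed down to \(\Falgebra{G}\) — the last step using that \(\Falgebra{G}\), being a pro-\(p\) group algebra, is local, hence projective-free, which is exactly the condition isolated by Henneke--L\'opez-\'Alvarez \cite{HL22} and which here holds for free.

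So assume \(1\to F\to G\to\Z_p\to 1\). As \(\Z_p\) is free pro-\(p\) the sequence splits, so we pick \(t\in G\) with \(\overline{\langle t\rangle}\cong\Z_p\), let \(\phi\in\Aut(F)\) be conjugation by \(t\), and put \(s=t-1\); then \(s^{p^{n}}=t^{p^{n}}-1\to 0\) and \(\Falgebra{G}\) is the completed crossed product \(\Falgebra{F}\cotimes\Falgebra{\Z_p}\), a skew power series ring over \(\Falgebra{F}\) in the variable \(s\) with \(sx=\phi(x)s+(\phi(x)-x)\) for \(x\in\Falgebra{F}\). In particular \(\Falgebra{G}\) is a domain, by a leading-coefficient argument. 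By \cite{jaikin-zapirainExplicitConstructionUniversal2020}, \(\Falgebra{F}\) is a Sylvester domain with \(\irk_{\Falgebra{F}}=\rk_F\) realized by its universal division ring of fractions \(\D_{\Falgebra{F}}\), to which \(\phi\) extends uniquely. Since the conjugation action of \(t\) on the finite \(p\)-group \(F^{\mathrm{ab}}\otimes\F_p\) is unipotent, \((\phi-1)^{n}\to 0\) in the filtration topology of \(\Falgebra{F}\) and of \(\D_{\Falgebra{F}}\), so the skew power series ring \(\D_{\Falgebra{F}}[\![s;\phi,\phi-1]\!]\) is well defined. It is a complete noncommutative discrete valuation ring — every nonzero element is a unit times a power of \(s\) — hence a left and right principal ideal domain, and therefore a Sylvester domain whose inner rank is realized by its (skew Laurent power series) division ring of fractions \(\D_{\Falgebra{G}}\). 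Coefficientwise, the embedding \(\Falgebra{F}\hookrightarrow\D_{\Falgebra{F}}\) extends to embeddings \(\Falgebra{G}=\Falgebra{F}[\![s;\phi,\phi-1]\!]\hookrightarrow\D_{\Falgebra{F}}[\![s;\phi,\phi-1]\!]\hookrightarrow\D_{\Falgebra{G}}\).

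Two points then remain. First, \(\rk_{\D_{\Falgebra{G}}}\le\rk_G\) on \(\Falgebra{G}\): taking the cofinal chain \(G_{n}=\overline{F_{n}\langle t^{p^{n}}\rangle}\), where \((F_{n})\) is the Zassenhaus filtration of \(F\) (characteristic in \(F\), hence normal in \(G\)), one computes \(\rk_{\F_p}(A_{n})/|G:G_{n}|\) through the universal division rings of the finite quotients of \(F\), invokes the base-case approximation \(\irk_{\Falgebra{F}}=\rk_F\) and its compatibility with these quotients, and passes to the limit to recognize \(\rk_{\D_{\Falgebra{F}}[\![s;\phi,\phi-1]\!]}=\rk_{\D_{\Falgebra{G}}}\). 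Second, \(\irk_{\Falgebra{G}}\le\rk_{\D_{\Falgebra{G}}}\): starting from a rank-\(k\) factorization of a matrix \(A\) over \(\D_{\Falgebra{F}}[\![s;\phi,\phi-1]\!]\), one clears the \(\D_{\Falgebra{F}}\)-denominators using Cramer's rule over the Sylvester domain \(\Falgebra{F}\) and then corrects the \(s\)-adic tail by successive approximation, producing a rank-\(k\) factorization of \(A\) over \(\Falgebra{G}\) — again using that \(\Falgebra{G}\) is local (hence projective-free) to ensure that no rank is lost in the completion or in the limit over finite quotients. Combining the two points with \(\rk_G\le\irk_{\Falgebra{G}}\) gives \(\irk_{\Falgebra{G}}\le\rk_{\D_{\Falgebra{G}}}\le\rk_G\le\irk_{\Falgebra{G}}\), as required.

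The main obstacle is this last descent of factorizations from the completed division-ring side back to \(\Falgebra{G}\), i.e.\ upgrading the automatic \(\rk_{\D_{\Falgebra{G}}}\le\irk_{\Falgebra{G}}\) to an equality. The two delicate ingredients are the interplay of the \(s\)-adic completion with denominator-clearing over the non-Ore ring \(\Falgebra{F}\) (one must in particular make sense of, and work with, power series over \(\D_{\Falgebra{F}}\)), and the control of rank in the limit over the finite quotients; for the latter the locality of \(\Falgebra{G}\) — equivalently, that \(G\) is pro-\(p\) — is essential, being precisely what makes the Henneke--L\'opez-\'Alvarez obstruction vanish and what separates the pro-\(p\) statement from the discrete one, where \(K[F\rtimes\Z]\) need not be a Sylvester domain.
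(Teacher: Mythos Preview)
Your overall architecture matches the paper's: reduce to an open subgroup via crossed products (the paper's Section~\ref{sec-Sylvester-virtual}), realize the completed algebra as a skew power series ring (Paragraph~\ref{spsr}), embed into a division ring, and then compare the resulting rank with $\rk_G$ by approximation (Section~\ref{sec:atiyah}). However, two of your steps are not just sketches but run into genuine obstacles that the paper has to work around by quite different means.

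The first is the formation of $\D_{\Falgebra{F}}[\![s;\phi,\phi-1]\!]$. For this skew power series ring to exist, the derivation $\delta=\phi-1$ must be topologically nilpotent on $\D_{\Falgebra{F}}$, and for that you need a filtration on $\D_{\Falgebra{F}}$ extending the augmentation filtration on $\Falgebra{F}$. You assert convergence ``in the filtration topology of $\D_{\Falgebra{F}}$'', but no such filtration is available: $\D_{\Falgebra{F}}$ contains inverses of elements of arbitrarily high augmentation degree, so there is no evident positive filtration in which $\delta^n\to 0$. This is precisely why the paper does \emph{not} work with $\D_{\Falgebra{F}}$. Instead it first passes to an open subgroup with a \emph{mild flag presentation} (Lemma~\ref{lem:virtual-flag}), so that the induced valuation on $\Falgebra{N}$ is a weight function (Corollary~\ref{cor-val-wf}); then it embeds $\Falgebra{N}\hookrightarrow\Lambda[\![t]\!]$ with $\Lambda$ a \emph{discrete} free $\F_p$-algebra, takes the universal division ring $\mathcal{D}=\mathcal{D}_\Lambda$ of $\Lambda$ (again discrete), and only then forms the $t$-adically filtered ring $\mathcal{D}[\![t]\!]$, on which $\delta$ is visibly topologically nilpotent (Section~\ref{sec:group-embeddings}). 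The target division ring $\mathcal{Q}$ is the Ore localization of the Noetherian domain $\bbd=\mathcal{D}[\![t]\!][\![s;\sigma,\delta]\!]$, not of anything built directly from $\D_{\Falgebra{F}}$.

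The second is your ``descent of factorizations'' for $\irk_{\Falgebra{G}}\le\rk_{\mathcal Q}$. You propose to clear $\D_{\Falgebra{F}}$-denominators via Cramer's rule and then correct the $s$-adic tail, but $\Falgebra{F}$ is not Ore, Cramer's rule over a Sylvester domain does not hand you factorizations of the required shape, and the interaction of denominator-clearing with the $s$-adic completion is exactly the hard part you are trying to finesse. The paper does not attempt any such descent. It instead verifies the homological criterion of \cite[Cor.~2.5]{jaikin-zapirainExplicitConstructionUniversal2020} (Theorem~\ref{thm-jaikin-criterion-universality} here): one must show $\Tor_1^{\Falgebra{G}}(\mathcal{Q},\mathcal{Q})=0$ and that every kernel $I$ in $0\to I\to\Falgebra{G}^n\to M\to 0$ with $M\subseteq\mathcal{Q}$ finitely generated is free of finite rank. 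That is the content of Section~\ref{sec:hom-finite-prop}, and it needs substantial filtered and graded machinery (Propositions~\ref{pro-acyclic-tor-resolution} and~\ref{prop:mainbound}, Theorem~\ref{thm:andrei410}) together with the vanishing results of Corollary~\ref{cor:graded-tor-vanishing}, which in turn rely on the explicit graded description coming from the mild flag presentation (Corollary~\ref{cor-mildflag-sf}). Locality of $\Falgebra{G}$ alone, while necessary, does not bypass any of this.
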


   \begin{Question}
Let $G$ be a pro-$p$ group. Assume that $\Falgebra{G}$ is a Sylvester domain. Does $\rk_G$ coincide with the the inner rank of $\Falgebra{G}$?
\end{Question}

Recall that the pro-$p$ Atiyah conjecture predicts that if $G$ is a pro-$p$ group with the exponent of its torsion elements bounded by \(p^n\) then $\rk_G$ takes values in \(p^{-n}\Z\) (see \cite[Conj. 11.1]{campbell_l2-betti_2019}). The following consequence of Theorem~\ref{main} provides new cases where this conjecture holds.
   
 \begin{cor}\label{atiyah}
Let $G$ be a finitely generated  torsion-free pro-$p$ group containing an open free-by-\(\Z_p\) pro-$p$ subgroup. Then $\rk_G$ takes only integer values.
  \end{cor}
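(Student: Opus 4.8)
The plan is to deduce this immediately from Theorem \ref{main}, so essentially no new ideas are needed beyond that theorem. First I would recall the elementary fact that, for \emph{any} ring $R$ and any matrix $A\in\Mat_{m\times n}(R)$, the inner rank $\irk_R(A)$ is by definition the least \emph{non-negative integer} $k$ for which there exist $B\in\Mat_{m\times k}(R)$ and $C\in\Mat_{k\times n}(R)$ with $A=BC$. Hence $\irk_R$ is automatically $\Z_{\ge 0}$-valued, with no hypotheses on $R$ whatsoever. Applying this with $R=\Falgebra{G}$ and combining it with the equality $\rk_G=\irk_{\Falgebra{G}}$ supplied by Theorem \ref{main}, we conclude that $\rk_G(A)\in\Z_{\ge 0}$ for every matrix $A$ over $\Falgebra{G}$.

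Next I would translate this into the statement usually phrased as the pro-$p$ Atiyah conjecture. Via the standard correspondence between Sylvester matrix rank functions and Sylvester module rank functions, the dimension function $\dim_G$ on finitely presented $\Falgebra{G}$-modules attached to $\rk_G$ is computed, on a module with presentation matrix $A\in\Mat_{m\times n}(\Falgebra{G})$, as $n-\rk_G(A)$; by the previous paragraph this again lies in $\Z_{\ge 0}$. Since $G$ is torsion-free there are no finite subgroups whose orders could force denominators, so integrality is exactly the expected conclusion, matching \cite[Conj.\ 11.1]{campbell_l2-betti_2019} in this case. A priori $\rk_G(A)$ is only known to be a limit of the rationals $\rk_{\F_p}(A_i)/|G:G_i|$, hence at best a non-negative real number, so the content is precisely this collapse to $\Z_{\ge 0}$.

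I do not expect any genuine obstacle in this corollary: all the difficulty is already absorbed into Theorem \ref{main}, namely establishing that $\Falgebra{G}$ is a Sylvester domain and that its inner rank agrees with the approximation-theoretic rank $\rk_G$. Once that is in hand, the integrality of $\rk_G$ is a one-line consequence of the definition of inner rank.
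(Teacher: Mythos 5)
Your argument is exactly the one the paper intends: Theorem \ref{main} gives $\rk_G=\irk_{\Falgebra{G}}$, and the inner rank is integer-valued by definition, so $\rk_G$ is integer-valued. The paper does not write out a separate proof for this corollary, treating it as an immediate consequence of Theorem \ref{main} in precisely this way.
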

   Notice that the pro-$p$ Atiyah conjecture implies also pro-$p$ Kaplansky  conjecture: $\Falgebra{G}$ does not have non-trivial zero divisors if $G$ is torsion-free. Previously the pro-$p$ Atiyah conjecture was only known for pro-$p$ groups which are residually-(torsion-free $p$-adic analytic) (\cite[Cor. 5.5]{JS19}).  This class includes all infinite Demushkin groups, which are the pro-\(p\) groups that satisfy Poincaré duality in dimension \(2\). Infinite Demushkin \(G\) groups comprise an important class of free-by-\(\Z_p\) pro-\(p\) groups, and while the Atiyah conjecture for them was previously known, the statement that \(\Falgebra{G}\) is a Sylvester domain is new. In Paragraph \ref{vfbyZ} we present examples of  finitely generated torsion-free pro-$p$ groups containing an open free-by-\(\Z_p\) pro-$p$ subgroup but which are not free-by-\(\Z_p\) themselves. Observe that the free kernel is abelian if and only if \(G\) is a \(p\)-adic analytic group (see Remark~\ref{rem-on-p-adic} for a full characterization of the \(p\)-adic analytic groups \(G\) for which \(\Falgebra{G}\) is a Sylvester domain).

     Let us describe the structure of the paper and the main ideas behind the proof of Theorem \ref{main} comparing it  with the proof of the main result in \cite{HL22}.  We let \(d(G)\) denote the cardinality of a minimal set of topological generators of a pro-\(p\) group \(G\).

  On one hand, all projective \(\Falgebra{G}\)-modules are free for the completed group algebra \(\Falgebra{G}\), and so,  in contrast to the case of abstract free-by-cyclic groups considered in \cite{HL22}, there is no difference for  $\Falgebra{G}$ to be a pseudo-Sylvester domain or a Sylvester domain. 
  
  On the other hand, if $N$ is a normal subgroup of an abstract group $\Gamma$ such that $\Gamma/N\cong \Z$, then the group algebra $K[\Gamma]$ is isomorphic to the crossed product $K[N]*\Z$, which allows easily to construct an embedding of the group algebra of a free-by-cyclic group into a division ring. This decomposition as a crossed product does not exists in the pro-$p$ situation and we substitute it by
presenting of $\Falgebra{G}$, when 
$G$ is a finitely generated free-by-\(\Z_p\) pro-\(p\) group with free pro-\(p\) kernel \(N\),
as a skew power series ring \(\Falgebra{G}\cong \Falgebra{N}[\![s;\sigma,\delta]\!]\)  for the natural conjugation automorphism \(\sigma\colon \Falgebra{N} \to \Falgebra{N}\) and the inner \(\sigma\)-derivation \(\delta\) (see Paragraph \ref{spsr}). In Section~\ref{sec:setup} we discuss also preliminary results and definitions  about Sylvester matrix rank functions and universal embeddings that we will use to prove Theorem~\ref{main}.

In Section \ref{sec-Sylvester-virtual} we investigate pro-$p$ groups $G$ for which $\rk_G=\irk_{\Falgebra{G}}$ or, more generally, $\Falgebra{G}$ is a Sylvester domain. In particular, we show that if $\Falgebra{G}$ is a Sylvester domain then $G$ is of cohomological dimension 2 and the properties $\rk_G=\irk_{\Falgebra{G}}$ and  $\Falgebra{G}$ being a Sylvester domain are commensurability invariants for torsion-free pro-$p$ groups. Thus, it is enough to prove Theorem \ref{main} for an open subgroup.

 Let  \(\Lambda = \F_p\langle a_1,a_2,\ldots\rangle\) be the free associative \(\F_p\)-algebra with a universal division \(\Lambda\)-ring of fractions \(\mathcal{D}_{\Lambda}\). 
Now assume that \(G\) is a finitely generated free-by-$\Z_p$ pro-$p$ group  having a mild flag presentation in the sense of Definition~\ref{def:flag-presentation}, a condition that we show is virtually satisfied for every finitely generated free-by-\(\Z_p\) pro-$p$ group (Lemma~\ref{lem:virtual-flag}). In this case we show that  the maps \(\sigma\) and \(\delta\) extend through the series of embeddings 
$\Falgebra{N} \hookrightarrow \Lambda[\![t]\!] \hookrightarrow \mathcal{D}_{\Lambda}[\![t]\!]$
 in such a way that they induce embeddings of the respective skew power series rings.
  We also show that the ring \(\mathcal{D}_{\Lambda}[\![t]\!][\![s;\sigma,\delta]\!]\) is a Noetherian domain and therefore has a classical ring of fractions \(\mathcal{Q}\) given by the Ore localization of its non-zero elements. 
 This provides an embedding of $\Falgebra{G}$ into the 
 division ring $\mathcal{Q}$ (Section \ref{sec:group-embeddings}).  
 
 For an arbitrary finitely generated virtually free-by-\(\Z_p\) pro-$p$ group $G$, this construction provides an embedding of $\Falgebra{G}$ in an Artinian ring $\mathcal{Q}$. The next step is to show that under this embedding all full matrices over $\Falgebra{G}$ become invertible over $\mathcal Q$. The main tool to prove this is Theorem \ref{thm-jaikin-criterion-universality} proven in \cite{jaikin-zapirainExplicitConstructionUniversal2020}. We achieve this in Section~\ref{sec:hom-finite-prop}. This shows that $\Falgebra{G}$ is a Sylvester domain.
 
 The last step is to show that $\irk_{\Falgebra{G}}=\rk_G$, and we prove it in Section~\ref{sec:atiyah}. As a corollary we obtain  the pro-\(p\)  Atiyah conjecture for  finitely generated torsion-free virtually free-by-\(\Z_p\) pro-\(p\) groups.  
 
 In Section \ref{abstract} we discuss some applications of  our result to the L\"uck approximation in positive characteristic.
 
We finish the paper with an appendix where  we show that if $R$ is a profinite ring and $R_\Sigma$ is a localization of $R$ with respect to a collection  $\Sigma$ of a square matrices over $R$, then  $R_\Sigma$ admits  a Hausdorff ring topology such that the map $R\to R_\Sigma$ is continuous. This is used only in the proof of Lemma~\ref{Iclosed} (see also the remark afterwards). 
 \section*{Acknowledgments}

This work is partially supported by the grant  PID2020-114032GB-I00 of the Ministry of Science and Innovation of Spain  and by the ICMAT Severo Ochoa project  CEX2019-000904-S4. We would like to thank Alan Logan and Henry Wilton for pointing out several examples of torsion-free virtually free-by-$\mathbb{Z}$ but not free-by-$\mathbb{Z}$ groups. The authors also extend their gratitude to the anonymous reviewer for their valuable comments.

\section{The setup}\label{sec:setup}

\subsection{General notation}
If \(G\) is a pro-\(p\) group, we denote the \emph{Frattini subgroup} of \(G\) by \(\Phi(G) = G^p[G,G]\) and the \emph{lower \(\bm{p}\)-central series} of \(G\) by \(G_1 = G\) and \(G_i = G_{i-1}^p[G_{i-1},G]\), so that \(G_2 = \Phi(G)\). We stack the commutators \([a,b] = a^{-1}b^{-1}ab\) on the right: \[[a_1,a_2,a_3,\ldots,a_n] = [[\cdots[[a_1,a_2],a_3],\cdots],a_n]\,.\] If \(I_G\) denotes the augmentation ideal of \(G\) in the completed group algebra \(\Falgebra{G}\), the induced filtration \[D_i(G) = \{g \in G \mid g-1 \in I_G^i\}\] on \(G\) is called the \emph{dimension series} mod \(p\) (or also the \emph{\(\bm{p}\)-Zassenhaus filtration}). We remark that \(G_i \leq D_i(G)\) for every \(i \geq 1\).

When we say that a pro-\(p\) group \(G\) is finitely generated, or that \(X\) is a generating set for \(G\), it will always mean generation in the topological sense. We recall that if \(w_0(G)\) denotes the smallest cardinality of a fundamental system of neighborhoods of \(1\) in \(G\) -- it's local weight -- then \(w_0(G) = \max\{d(G), |\bbn|\}\) (\cite[Cor. 2.6.3]{ribesProfiniteGroups2010}). In particular, every closed subgroup of a finitely generated pro-\(p\) group is at most countably generated.

\subsection{Torsion-free virtually free-by-\texorpdfstring{$\Z_p$}{Zp} groups}
\label{vfbyZ}

There are many abstract torsion-free groups which are virtually free-by-$\mathbb{Z}$ but not free-by-$\mathbb{Z}$ themselves. One explicit family of such groups is given by the presentations \(\Gamma_e = \langle a,b,x,y\mid [a,b]^e = [x,y]^e\rangle\) for \(e > 1\) (see \cite[Example 7.2]{baumslagVirtualPropertiesCyclically2009}). More examples arise amongst 3-manifold groups such as fundamental groups of knot complements with Alexander polynomial \(1\). One family of such knots is given by the Pretzel links \(L(2m+1,2n+1,2q+1)\) satisfying \((m+n+1)(m+p+1)=m(m+1)\), such as \(L(-3,5,7)\). The condition on the Alexander polynomial implies that \([G,G]\) is perfect, and such groups virtually fibre over \(\Z\) by being virtually special as a combined consequence of the works of I. Agol, P. Przytycki and D. Wise(\cite{agolCriteriaVirtualFibering2008,agolVirtualHakenConjecture2013,przytyckiMixed3manifoldsAre2018}).

In the pro-$p$ case we can construct examples of torsion-free pro-$p$ groups which are virtually free-by-$\mathbb{Z}_p$ but not free-by-$\mathbb{Z}_p$ themselves using the ideas of \cite{baumslagVirtualPropertiesCyclically2009}.

\begin{prop}\label{prop-exmp-not-free-by-zp} Let \(G\) be the pro-\(p\) completion of the abstract group \[\Gamma  = \langle a_1,a_2,b_1,b_2 \mid [a_1,a_2]^p = [b_1,b_2]^p \rangle\,.\] Then \(G\) is a torsion-free pro-\(p\) group that is virtually free-by-\(\Z_p\) but not free-by-\(\Z_p\) itself.
\end{prop}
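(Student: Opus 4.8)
The plan is to use the description of $\Gamma$ as an amalgamated free product of two free groups over an infinite cyclic subgroup. Write $A=\langle a_1,a_2\rangle$ and $B=\langle b_1,b_2\rangle$, each free of rank $2$, put $u=[a_1,a_2]$ and $v=[b_1,b_2]$, and note that $\Gamma=A*_CB$ amalgamates $u^p\in A$ with $v^p\in B$, so that $\Gamma$ is a torsion-free one-relator group (the relator $u^pv^{-p}$ is easily seen not to be a proper power). I would first establish, following the methods of \cite{baumslagVirtualPropertiesCyclically2009} together with the theory of amalgamated pro-$p$ products, that $\Gamma$ is residually-$p$ and that $G$ is the \emph{proper} amalgamated free pro-$p$ product $G\cong\widehat A*_{\widehat C}\widehat B$, where $\widehat A\cong\widehat B$ are free pro-$p$ of rank $2$ and $\widehat C=\overline{\langle u^p\rangle}=\overline{\langle v^p\rangle}\cong\Z_p$; equivalently, $G$ acts on a pro-$p$ tree $T$ with vertex stabilizers the conjugates of $\widehat A$, $\widehat B$ and edge stabilizers the conjugates of $\widehat C$. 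Torsion-freeness of $G$ then follows, because a torsion element of a proper amalgamated pro-$p$ product is conjugate into a vertex group, and the vertex groups are free pro-$p$. (Alternatively, $G$ is a one-relator pro-$p$ group with relator in the Frattini subgroup and of cohomological dimension $2$, hence torsion-free.)

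Next I would show that $G$ is \emph{not} free-by-$\Z_p$. Since $\Gamma$ embeds into $G$, we have $u\neq 1$ in $G$, and also $u\neq v$ in $G$ (in the amalgam, $u=v$ would put $u\in A\cap B=\langle u^p\rangle$, forcing $u=1$). Let $\phi\colon G\twoheadrightarrow\Z_p$ be any continuous surjection, with kernel $N$. As $\Z_p$ is abelian, the commutators $u$ and $v$ lie in $N$; moreover $u^p=v^p\neq 1$ because $u\neq 1$ and $G$ is torsion-free. If $N$ were free pro-$p$, then its centralizer $C_N(u^p)$ would be procyclic and would contain both $u$ and $v$; but in a procyclic pro-$p$ group $x^p=y^p\neq 1$ forces $x=y$, so $u=v$ in $G$, a contradiction. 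Hence no surjection $G\to\Z_p$ has free pro-$p$ kernel, which is precisely the assertion that $G$ is not free-by-$\Z_p$. Note this obstruction does not prevent $G$ from being \emph{virtually} free-by-$\Z_p$: in the finite-index subgroup to be constructed below, $u$ and $v$ will not even belong to the subgroup, so the argument no longer applies.

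For the virtual statement I would adapt Baumslag's construction, and this is the hard part. The key point is that if $G_0\leq_o G$ and $\phi_0\colon G_0\twoheadrightarrow\Z_p$ has kernel $N_0$ meeting every edge stabilizer of the $G_0$-action on $T$ trivially, then the Mayer-Vietoris sequence attached to this action gives $\cd_p(N_0)\leq 1$ (the edge groups contributing nothing), so $N_0$ is free pro-$p$ and hence $G_0\cong N_0\rtimes\Z_p$ is free-by-$\Z_p$. The edge stabilizers of $T$ are the conjugates $\widehat C^{\,g}=\overline{\langle u^p\rangle}^{\,g}$, and for a finite-index $G_0$ there are only finitely many $G_0$-conjugacy classes among the procyclic groups $\widehat C^{\,g}\cap G_0$. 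A surjection $\phi_0$ is injective on such a subgroup iff it is non-zero on it, iff that subgroup has non-torsion image in $G_0^{\mathrm{ab}}$; and if this holds for all of the finitely many edge stabilizers, then — since finitely many hyperplanes cannot cover $G_0^{\mathrm{ab}}\otimes\bbq_p$, and $G_0^{\mathrm{ab}}$ has positive rank — one can choose a primitive (hence surjective) homomorphism $\phi_0\colon G_0\to\Z_p$ non-zero on all of them. Thus the whole problem reduces to producing $G_0=\ker(G\to Q)$ for a suitable finite non-abelian $p$-group $Q$ in which $u$ has image of order exactly $p$ (so that $u\notin G_0$ while $\widehat C=\overline{\langle u^p\rangle}\leq G_0$), with the property that every edge stabilizer of the induced $G_0$-action on $T$ has non-torsion image in $G_0^{\mathrm{ab}}$. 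This is the pro-$p$ shadow of the classical fact that a sufficiently high power of a boundary curve becomes homologically non-trivial in an appropriate finite cover of a surface with boundary, and it is the pro-$p$ counterpart of the statement in \cite{baumslagVirtualPropertiesCyclically2009} that some finite-index subgroup of $\Gamma$ is free-by-$\Z$. Checking this homological non-degeneracy in the cover $G_0$ is the main obstacle; the other delicate ingredient, used already for torsion-freeness, is the residual-$p$-ness of $\Gamma$ and the properness of the pro-$p$ amalgam, which are what allow pro-$p$ Bass-Serre theory to be applied to $G$.
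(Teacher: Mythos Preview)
Your treatment of torsion-freeness and of ``not free-by-$\Z_p$'' is correct and matches the paper's: both use the proper pro-$p$ amalgam structure for the first, and the obstruction ``two distinct elements $u\ne v$ with $u^p=v^p$ cannot lie in a free pro-$p$ group'' for the second. (The paper phrases this as ``$[G,G]$ is not free pro-$p$'', which is equivalent since any kernel of a map to $\Z_p$ contains $[G,G]$ and closed subgroups of free pro-$p$ groups are free.)

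The genuine gap is in the virtual statement. You correctly reduce to finding an open $G_0\le G$ and a surjection $\phi_0\colon G_0\to\Z_p$ that is nonzero on every edge stabilizer of the $G_0$-action on $T$, and then you stop, calling the verification ``the main obstacle''. The paper resolves exactly this point with a concrete device you are missing: rather than searching for an abstract finite $p$-quotient $Q$, it uses the surjection $\varphi\colon G\to F=F(x_1,x_2)$ defined by $a_i,b_i\mapsto x_i$, under which both $u$ and $v$ map to $[x_1,x_2]$. One then takes $V=\Phi(F)$, observes that the finitely many $F$-conjugates of $[x_1,x_2]$ are $\Z_p$-linearly independent in $V^{\mathrm{ab}}$, and picks $\psi\colon V\to\Z_p$ sending each of them to $1$. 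Pulling back gives $U=\varphi^{-1}(V)$ and $\theta=\psi\circ\varphi\colon U\to\Z_p$; by construction $\theta$ is nonzero on \emph{every} $G$-conjugate of $\widehat C$, since all such conjugates have the same $\varphi$-image up to $F$-conjugacy. This sidesteps your ``homological non-degeneracy in $G_0^{\mathrm{ab}}$'' check entirely: the needed positivity is visible already in the free target $F$. The paper then verifies directly that the quotient graph $K\backslash T$ (with $K=\ker\theta$) is finite and applies pro-$p$ Bass--Serre theory to conclude $K$ is free pro-$p$. Your outline is sound in principle, but the map-to-a-free-group idea is what actually closes the argument.
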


We divide the proof into two lemmas. Let \(F = F(x_1,x_2)\) be another free pro-\(p\) group on two generators and consider the surjective homomorphism \(\varphi\colon G \to F\) defined by \(\varphi(a_i) =  \varphi(b_i) = x_i\) for \(i = 1,2\).

\begin{lem} There exists an open subgroup \(V \leq F\) and a homomorphism \(\psi\colon V \to \Z_p\) such that \(\psi(x^{-1}[x_1,x_2]x) = 1\) for every \(x \in F(x_1,x_2)\).
\end{lem}
\begin{proof} Let \(V = \Phi(F)\). Then, it is clear that \([x_1,x_2]\) is not contained in \([V,V]\) and hence is a non-trivial element of \(M = V/[V,V] \simeq \Z_p^n\). Observe that \([x_1,x_2] [V,V]\) has finitely many \(F\)-conjugates which are linearly independent modulo \([V,V]\), and so one can find a homomorphism \(\psi_0\colon M \to \Z_p\) such that \(\psi_0(x^{-1}[x_1,x_2]x[V,V]) = 1\) for all \(x \in F\). The desired \(\psi\) is then the composition of the natural projection \(V \to M\) with \(\psi_0\).
\end{proof}

Let \(U = \varphi^{-1}(V)\). We claim that \(U\) is free-by-\(\Z_p\). More specifically, that if \(\theta\colon U \to \Z_p\) is the composition \(\psi\circ\varphi\), then:

\begin{lem}\label{lem:kernel-is-free} \(K = \ker\theta\) is free pro-\(p\).
\end{lem}
\begin{proof} Write \(G\) as the proper free pro-\(p\) product with amalgamation \(G = A \amalg_{C} B\) of the free pro-\(p\) groups \(A = F(a_1,a_2)\), \(B = F(b_1,b_2)\) and cyclic subgroup \(C = \langle [a_1,a_2]^p \rangle \simeq \langle [b_1,b_2]^p\rangle\) (\cite[Exer. 9.2.6a)]{ribesProfiniteGroups2010}). Let \((\mathcal{G},X)\) be the graph of pro-\(p\) groups consisting of two vertices \(A\) and \(B\) and an edge \(C\) connecting them such that \(\pi_1(\mathcal{G},X) \simeq G\), and let \(Y\) be the standard \(p\)-tree associated with \((\mathcal{G},X)\) (\cite[Sec. 4]{ribesPropTreesApplications2000}). We recall that:
\[V(Y) = G/A \bigsqcup G/B\,,\]
\[E(Y) = G/C\,,\]
\[d_0(gC) = gA\,, d_1(gC) = gB\,.\]

In particular, \(G\) acts on \(Y\) on the left and \(G\backslash Y \simeq X\), the vertex stabilizers are conjugate to \(A\) or \(B\) and the edge stabilizers are conjugate to \(C\).

Let \(Z = K\backslash Y\) and \(\mathcal{Z}\) be the associated graph of pro-\(p\) groups on \(Z\). The edge stabilizers of the \(K\) action on \(Y\) are the intersections \(K \cap  gCg^{-1}\) for \(g \in G\), which are trivial by construction. Since the vertex stabilizers are all free pro-\(p\), the fundamental group \(\pi_1(\mathcal{Z},Z)\) is also free pro-\(p\).

First, observe that \(U\) is a normal subgroup of \(G\) and \(G/U \simeq F/\Phi(F)\). Hence, \(UA = UB = G\). Therefore, \(G/A = UA/A \simeq U/(U\cap A)\) and similarly for \(G/B\). Since \(K\) is normal in \(U\) with quotient \(\Z_p\), we get homeomorphisms:

\[V(Z) \simeq K\backslash U/(U\cap A) \bigsqcup K\backslash U/(U \cap B) \simeq \Z_p/\theta(U\cap A) \bigsqcup \Z_p/\theta(U\cap B)\,.\]

Given that \([a_1,a_2] = [b_1,b_2]\) is an element of both \(U\cap A\) and \(U\cap B\) that is not trivial under \(\theta\), \(V(Z)\) is finite (and in fact consists of two elements).

Note that \(C\) is contained in \(U\), and since it generates the image of \(\theta\) we have \(U \simeq K \rtimes C\). Also observe that by the construction of \(\theta\) and \(K\) we have \([G,U] \leq K\). Combining this information with the homeomorphism \(K\backslash G \simeq K\backslash U \times U \backslash G\), we get that the induced right action of \(C\) in \(K\backslash U \times U \backslash G\) is given by \[(Kc,Ug)c' = (Kcc',Ug)\] for \(c,c' \in C\) and \(g \in G\). Therefore:
\[E(Z) = K\backslash G / C \simeq K\backslash U/C \times U \backslash G \simeq U\backslash G\] and \(Z\) is finite. From \cite[Thm. 6.6.1]{ribesProfiniteGraphsGroups2017}, we conclude that the induced map \(\pi_1(\mathcal{Z},Z) \to K\) is an isomorphism.
\end{proof}

\begin{proof}[Proof of Proposition~\ref{prop-exmp-not-free-by-zp}] Again write \(G\) as the proper free pro-\(p\) product with amalgamation \(G = A \amalg_{C} B\) as in Lemma~\ref{lem:kernel-is-free}. Since both \(A\) and \(B\) are torsion-free, \(G\) is also torsion-free by \cite[Thm. 4.2(b)]{ribesPropTreesApplications2000}. By \cite[Corollary 3.5]{KM93}, \(\Gamma\) is residually-\(p\), and so it embeds into \(G\). Since \([a_1,a_2]\) and \([b_1,b_2]\) are distinct elements in \([\Gamma,\Gamma] \subseteq [G,G]\) with the same \(p\)-th power, the group \([G,G]\) cannot be free pro-\(p\). Hence, \(G\) itself cannot be free-by-\(\Z_p\). However, we've shown in Lemma~\ref{lem:kernel-is-free} that \(G\) contains an open subgroup \(U\) that is free-by-\(\Z_p\).
\end{proof}

\subsection{Skew power series rings}\label{spsr}

Let \(R\) be a topological ring and \(\sigma\) a continuous automorphism of \(R\). We define the continuous map \(\delta = \sigma - \operatorname{id}\) and observe that \(\delta\) is a right \(\sigma\)-derivation, 
that is, \(\delta\) is an additive map and for all \(a, b \in R\) we have: \[\delta(ab) = \delta(a)b + \sigma(a)\delta(b)\,.\] Moreover, \(\sigma\) and \(\delta\) commute. A \emph{(right) skew power series ring} 
\(S = R[\![s;\sigma,\delta]\!]\) over \(R\) with automorphism \(\sigma\) and derivation \(\delta\) consists of the topological abelian group of all formal power series \[\sum_{i \geq 0} s^ia_i\,,\quad \text{with }a_i \in R\] together with a multiplication map defined by the rule:
\begin{equation}\label{eq-power-series-mult}
    \left(\sum_{i \geq 0} s^ia_i\right)\left(\sum_{j \geq 0}s^jb_j\right) = \sum_{m \geq 0}s^m\left(\sum_{n=0}^m \sum_{k \geq n} \binom{k}{n} \delta^{k-n}(\sigma^n(a_{m-n}))b_k\right)\,.
\end{equation}
While we will not use it, one can define a left skew power series ring analogously. When \(R\) is a Noetherian pseudocompact ring, our definition agrees with the one in \cite{schneiderCodimensionModulesSkew2006} by seeing \(S\) simultaneously as a right and left skew power series ring in the unique compatible way (see \cite[Sec. 1]{schneiderCodimensionModulesSkew2006}).

For the ring \(R[\![s;\sigma,\delta]\!]\) to exist, one needs to ensure that the infinite sums on the right hand side of~(\ref{eq-power-series-mult}) converge for every possible choice of elements \(a_i\) and \(b_j\). If this is the case, then~(\ref{eq-power-series-mult}) defines a continuous multiplication on \(S\) which makes it into a topological \(R\)-algebra satisfying \[as = s\sigma(a) + \delta(a)\,,\quad\text{ for every }a \in R\,.\]

In this paper, the ring \(R\) will either be the completed group algebra \(\Falgebra{N}\) over \(\F_p\) of a free pro-\(p\) subgroup \(N\) of a pro-\(p\) group \(G\) or a power series ring \(\Lambda[\![t]\!]\) over a discrete \(\F_p\)-algebra \(\Lambda\) for a fixed prime \(p\). In both cases, \(R\) is a complete ring whose topology is induced by a Hausdorff filtration \(R_k\), where \(R_k\) is either the intersection of the \(k\)-th power \(I_G^k\) of the augmentation ideal of \(\Falgebra{G}\) with \(\Falgebra{N}\) or the principal ideal generated by \(t^k\) of \(\Lambda[\![t]\!]\). Following the nomenclature of \cite{lazardGroupesAnalytiquesPadiques1965,labuteMildPropgroupsGalois2006}, we have a filtration function \[w(a) = p^{-\sup\{k \geq 0 \mid a \in R_k\}}\,,\] which is submultiplicative, satisfies the ultrametric inequality and is such that \(a_i \to 0\) if and only if \(w(a_i) \to 0\). In the \(\Falgebra{N}\) case, this filtration function is called a valuation in \cite[Sec. 2.2]{ershovGroupsPositiveWeighted2013}, though we shall reserve the name ``valuation'' for a stronger class of filtration functions (see Section~\ref{sec:strongly-free-sequences}).

Hence, we restrict ourselves to complete Hausdorff rings with a submultiplicative filtration function \(w\). We say that the derivation \(\delta = \sigma - \operatorname{id}\) is \emph{topologically nilpotent} if \(\delta^k \to 0\) pointwise, that is, \(w(\delta^k(a)) \to 0\) for any \(a \in R\). If \(\delta\) is topologically nilpotent, then each infinite sum \[\sum_{k\geq n} \binom{k}{n} \delta^{k-n}(\sigma^n(a_{m-n}))b_k\] appearing in~(\ref{eq-power-series-mult}) is convergent for any choices of \(a_i\) and \(b_j\) and therefore the ring \(R[\![s;\sigma,\delta]\!]\) exists. One of our preliminary results is that the completed \(\F_p\)-group algebra of any free-by-\(\Z_p\) pro-\(p\) group \(G\) with free kernel \(N\) is isomorphic to a right skew power series ring over \(\Falgebra{N}\).

\subsection{Universal division ring of fractions}\label{subsec-intro-division-rings}
 All ring homomorphisms in this paper are assumed to preserve the multiplicative identity \(1\).  A subring \(S\) of \(R\) is \emph{division closed} if for every unit \(x \in S\cap R^\times\) one has \(x^{-1} \in S\). The \emph{division closure} of a subring $S$ of $R$ is the smallest division closed subring of \(R\) containing \(S\).

Let $f:R\to S$ be a ring homomorphism. We say that $f$ is \emph{epic} if for every  ring $Q$ and homomorphisms $\alpha,\beta: S\to Q$, the equality $\alpha\circ f=\beta\circ f$ implies $\alpha=\beta$ (i.e., the natural map \(\Hom_{\text{Ring}}(S,Q) \to \Hom_{\text{Ring}}(R,Q)\) is injective for all rings \(Q\)).

As in \cite[Chap. 7.2]{cohnFreeIdealRings2006}, we define an \emph{epic division \(S\)-ring} as a division ring \(\mathcal{D}\) together with an epic homomorphism  \(\varphi\colon S \to \mathcal{D}\). The condition on $\varphi$ to be epic is equivalent to the condition that the division closure of $\varphi(S)$ is equal to $\D$ (\cite[Cor. 7.2.2]{cohnFreeIdealRings2006}).
 Let \(\varphi'\colon S \to \mathcal{D}'\) be  another epic division \(S\)-ring. A subhomomorphism of epic division \(S\)-rings is a homomorphism \(\psi\colon K \to \mathcal{D}'\), where \(K\) is a local subring of \(\mathcal{D}\) containing \(\varphi(S)\) with maximal ideal \(\ker \psi\), such that \(\psi\circ\varphi = \varphi'\). Two subhomomorphisms \(\psi_1\colon K_1 \to \mathcal{D}\) and \(\psi_2\colon K_2 \to \mathcal{D}\) are equivalent if there is a subring \(K_0\) of \(\mathcal{D}\) contained in \(K_1 \cap K_2\) such that \(\psi_1\) and \(\psi_2\) agree on \(K_0\) and it is local with maximal ideal \(\ker(\psi_1) \cap K_0 = \ker(\psi_2) \cap K_0\). A \emph{specialization} \(\mathcal{D} \to \mathcal{D}'\) of epic division \(S\)-rings is an equivalence class of subhomomorphisms. The archetypal examples are \(\mathcal{D} = \bbq\) and \(\mathcal{D}' = \F_p\) for \(S = \Z\), with the local subrings \(K\) being the localization of \(\Z\) at a prime \(p\).

The \emph{universal} division \(S\)-ring is an epic division \(S\)-ring \(\mathcal{D}\) such that for every other epic division \(S\)-ring \(\mathcal{D}'\) there exists a  specialization \(\mathcal{D} \to \mathcal{D}'\). It is unique up to $S$-isomorphism and we will denote it by $\D_S$. If the associated map \(\varphi\colon S \to \mathcal{D}_S\) is injective, we say that \(\mathcal{D}_S\) is the \emph{universal division \(S\)-ring of fractions}. In general, we will say that  $S\to \D$ is \emph{universal} if the division closure of the image of $S$ in $\D$ is isomorphic (as a $S$-ring) to $\D_S$.

The universal division \(S\)-ring \(\mathcal{D}_S\) need not exist in general, even if \(S\) is a domain (\cite{passmanUniversalFieldsFractions1982}). One class of rings that possess a universal division ring of fractions is the class of Sylvester domains (\cite[Thm. 7.5.13]{cohnFreeIdealRings2006}). To define a Sylvester domain, we consider Sylvester matrix rank functions and the inner rank. A \emph{Sylvester matrix rank function} \(\rk\) on an unital ring \(S\) is a non-negative real-valued function defined on the set \(\Mat(S)\) of all matrices over \(S\) satisfying:
\begin{enumerate}[label=(SMat\arabic*), itemindent=3em]
    \item \(\rk(A) = 0\) if \(A\) is a zero matrix and \(\rk(1) = 1\);
    \item \(\rk(AB) \leq \min\{\rk(A), \rk(B)\}\) for any pair of matrices \(A\) and \(B\) that can be multiplied;
    \item \(\rk\begin{pmatrix} A & 0\\ 0 &B\end{pmatrix} = \rk(A) + \rk(B)\) for any matrices \(A\) and \(B\);
    \item \(\rk\begin{pmatrix} A & C\\ 0 & B\end{pmatrix} \geq \rk(A) + \rk(B)\) for any matrices \(A\), \(B\) and \(C\) of appropriate sizes.
\end{enumerate}
Since \(\rk(\operatorname{Id_n}) = n\) by (SMat3), one concludes that \(\rk(A) \leq \min\{n,m\}\) for every matrix \(A \in \Mat_{n\times m}(S)\) by property (SMat2) and the identities \(A = \operatorname{Id}_n A = A\operatorname{Id}_m\). If \(S\) is a division ring, then any matrix can be put in row-echelon or column-echelon form by multiplication with invertible matrices, an operation that doesn't change the rank by (SMat2). It follows then from (SMat4) that there is a unique Sylvester matrix rank function on \(S\), given by the number of linearly independent rows (with a left \(S\)-action) or columns (with a right \(S\)-action). We include a proof of the following general fact for which we could find no reference:

\begin{lem}\label{lem:lem-monotonic} Let \(B\) be any submatrix of a matrix \(A \in \Mat(S)\) and \(\rk\) be any Sylvester matrix rank function on \(S\). Then, \(\rk(B) \leq \rk(A)\).
\end{lem}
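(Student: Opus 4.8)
The plan is to reduce the statement, by an obvious induction on the number of deleted rows and columns, to the case where \(B\) is obtained from \(A\) by deleting a \emph{single} row or a \emph{single} column (in an arbitrary position), and then to settle these two cases through the standard dictionary between Sylvester matrix rank functions and rank functions on finitely presented modules. Concretely, view an \(n\times m\) matrix \(A\) over \(S\) as a homomorphism \(\alpha_A\colon S^n\to S^m\) of free right \(S\)-modules, write \(M_A=\operatorname{coker}(\alpha_A)\), and set \(\dim_{\rk}(M_A)=m-\rk(A)\). One first has to check that \(\dim_{\rk}\) is well defined on finitely presented \(S\)-modules — this rests on a Schanuel-type argument together with (SMat2) and (SMat3), after reducing two presentations to a common middle free module by adding trivial summands and using (SMat3) — and that it satisfies \(\dim_{\rk}(S)=1\), is additive on direct sums (which is exactly (SMat3)), is subadditive on short exact sequences, \(\dim_{\rk}(M)\le\dim_{\rk}(K)+\dim_{\rk}(M/K)\) for \(0\to K\to M\to M/K\to 0\) (obtained by applying (SMat4) to the block‑triangular presentation of \(M\) assembled from presentations of \(K\) and \(M/K\)), and is monotone under quotients, \(\dim_{\rk}(M/K)\le\dim_{\rk}(M)\) (which again comes from (SMat3)/(SMat4), via well-definedness and stacking a presentation of \(M/K\) on top of a lift of a presentation of \(K\)).

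Granting this dictionary, the two cases are short. If \(B\) is \(A\) with one row deleted, then \(B\) has the same codomain as \(A\) and its image is contained in that of \(\alpha_A\) (fewer relations), so \(M_A\) is a quotient of \(M_B\); monotonicity under quotients gives \(\dim_{\rk}(M_A)\le\dim_{\rk}(M_B)\), and since the number of columns is unchanged this reads \(\rk(A)\ge\rk(B)\). If \(B\) is \(A\) with one column deleted, then \(M_B\) is obtained from \(M_A\) by dividing out the image of a single generator of \(S^m\), so there is a short exact sequence \(0\to C\to M_A\to M_B\to 0\) with \(C\) cyclic; hence \(\dim_{\rk}(C)\le\dim_{\rk}(S)=1\), and subadditivity yields \(\dim_{\rk}(M_A)\le 1+\dim_{\rk}(M_B)\), which together with the drop of the codomain from \(S^m\) to \(S^{m-1}\) gives \(\rk(B)\le\rk(A)\). (With the opposite side convention the roles of these two inequalities simply swap; either way one uses exactly (SMat3) and (SMat4).)

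The main obstacle is thus not the case analysis but the verification of the dictionary — principally the well-definedness of \(\dim_{\rk}\) and monotonicity under quotients. It should be stressed that (SMat2) by itself is useless here: multiplying by a matrix of maximal rank (for instance an invertible matrix) makes the inequality in (SMat2) vacuous, so even the invariance of \(\rk\) under left and right multiplication by invertible matrices — let alone monotonicity under passing to arbitrary submatrices — has to be extracted from (SMat3) and (SMat4). If one prefers to remain within matrices, the same content can be organised directly: the degenerate instance of (SMat4) with an empty block already gives \(\rk\begin{pmatrix}A & C\end{pmatrix}\ge\rk(A)\) and \(\rk\begin{pmatrix}C\\ D\end{pmatrix}\ge\rk(D)\), handling deletions at the boundary, and the block‑triangular inequality (SMat4) applied to suitable enlargements of \(A\) by identity blocks takes care of interior deletions; but the module‑theoretic formulation keeps the bookkeeping cleaner.
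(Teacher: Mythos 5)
Your approach would eventually prove the statement, but it takes a far heavier route than the paper's, and the remark you append about (SMat2) being useless is incorrect. The paper's proof is a one-liner: reduce to deleting a single row or column of \(A \in \Mat_{n\times m}(S)\) and observe that the resulting submatrix \(B\) satisfies
\[B = A\begin{pmatrix}\operatorname{Id}_{m-1}\\0\end{pmatrix}\quad\text{(last column deleted)}\quad\text{or}\quad B = \begin{pmatrix}\operatorname{Id}_{n-1}&0\end{pmatrix}A\quad\text{(last row deleted)},\]
after which (SMat2) gives \(\rk(B)\le\rk(A)\) directly. The point is that one multiplies by a \emph{non-square coordinate-selection matrix}, not by an invertible one. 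Note that (SMat2) as printed in the paper, with a \(\max\), must be a typo for the standard \(\min\): with \(\max\), neither this lemma nor the paper's preceding deduction that \(\rk(A)\le\min\{n,m\}\) from \(A=\operatorname{Id}_n A=A\operatorname{Id}_m\) would follow, whereas with \(\min\) both are immediate. Your criticism that ``(SMat2) by itself is useless here'' reads the axiom with \(\max\) and is therefore aimed at the typo rather than the intended axiom. Likewise, invariance of \(\rk\) under left and right multiplication by invertible matrices \emph{does} follow from (SMat2) with \(\min\): \(\rk(UA)\le\rk(A)\) and \(\rk(A)=\rk\bigl(U^{-1}(UA)\bigr)\le\rk(UA)\); it does not need (SMat4). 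Your module-theoretic detour --- well-definedness of \(\dim_{\rk}\), subadditivity on short exact sequences, monotonicity under quotients --- is genuine and would yield the lemma once filled in, but it imports essentially the whole Sylvester matrix/module correspondence, every piece of which itself relies on (SMat2) in the \(\min\) form; so the route both overcomplicates a two-line computation and quietly contradicts your own claim about (SMat2). The alternative matrix-level version you sketch at the end (using (SMat4) with an empty block) is closer in spirit to the paper, but even there the cleanest formulation is the (SMat2) factorisation above rather than an appeal to degenerate instances of (SMat4).
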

\begin{proof} Any submatrix \(B\) of \(A \in \Mat_{n\times m}(S)\) can be obtained by removing rows or columns of \(A\). Hence, it suffices to prove the claim for \(A = \begin{pmatrix} B & a\end{pmatrix}\) and for \(A' = \begin{pmatrix}B \\ a'\end{pmatrix}\) for arbitrary columns \(a \in \Mat_{n\times 1}(S)\) and rows \(a' \in \Mat_{1\times m}(S)\). Since \[B = \begin{pmatrix} B & a\end{pmatrix}\cdot \begin{pmatrix} \operatorname{Id}_{n-1} \\ 0\end{pmatrix} = \begin{pmatrix} \operatorname{Id}_{m-1} & 0\end{pmatrix}\begin{pmatrix} B\\ a'\end{pmatrix}\,,\] the lemma follows from property (SMat2).
\end{proof}

Every Sylvester matrix rank function defines a dimension function \(\dim\) for finitely presented \(S\)-modules \(M \simeq S^m/AS^n\) with \(A \in \Mat_{n\times m}(S)\) through \(\dim M = m - \rk(A)\). This dimension function is an example of a \emph{Sylvester module rank function} (see \cite[Sec. 2.1]{jaikin-zapirainExplicitConstructionUniversal2020}), and this correspondence gives a bijection between the sets of Sylvester matrix rank functions and Sylvester module rank functions.

\begin{exmp}\label{exmp-rk-G} Let \(G\) be a pro-\(p\) group and $G>U_1>U_2>\ldots$  be a sequence of open normal subgroups of \(G\)   such that \(\bigcap_{i=1}^\infty U_i = \{1\}\). Then, for any matrix \(A\) over the completed group algebra \(\Falgebra{G}\), if \(A_i\) denotes its reduction modulo \(U_i\), the limit \[\rk_G(A) = \lim_{i\to \infty} \frac{\rk_{\F_p} A_i}{|G\colon U_i|}\] exists, where \(A_i\) is seen as a linear operator \(\F_p^{m|G\colon U_i|} \to \F_p^{n|G\colon U_i|}\) after some choice of basis for \(\F_p[G/U_i]\), and defines a Sylvester matrix rank function on \(S = \Falgebra{G}\) that does not depend on the choice of the chain \(U_i\) (\cite[Prop. 11.2]{campbell_l2-betti_2019}). We denote the associated Sylvester module dimension function by \(\dim_G\). Observe that for a finitely presented $\Falgebra{G}$-module $M$ we have
$$\dim_G M=\lim_{i\to \infty}   \frac{\dim_{\F_p} \F_p \otimes_{\Falgebra{U_i}} M}{|G\colon U_i|} .$$
\end{exmp}

If a ring $S$ has a universal  division ring $u:S\to \D_S$, we denote by $\rk_S$ the induced Sylvester rank function:
$$\rk_S(M)=\rk_{\D_S}(u(M))\ (M \textrm{\ is a matrix over\ } S).$$
It is characterized uniquely  by the following universal property: for every division $S$-ring $\gamma:S\to \mathcal E$ and every matrix $M$ over $S$,
$\rk_{\mathcal E}(\gamma(M))\le \rk_{S}(M)$. 
\begin{prop} \label{extension}
Let $S$ be a ring and $u:S\to \D_S$  its universal division ring. Let $\alpha: S\to S$ be an automorphism. Then there exists a unique $\widetilde \alpha :\D_S\to \D_S$ such that $u\circ \alpha=\widetilde \alpha \circ u$.
\end{prop}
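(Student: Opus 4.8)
The plan is to use the universal property of \(\D_S\) together with the fact — recalled in the excerpt — that the universal division \(S\)-ring is unique up to \(S\)-isomorphism. Uniqueness of \(\widetilde\alpha\) costs nothing: if \(\widetilde\alpha\) and \(\widetilde\alpha'\) are ring homomorphisms \(\D_S\to\D_S\) with \(\widetilde\alpha\circ u=u\circ\alpha=\widetilde\alpha'\circ u\), then, \(u\) being epic, \(\widetilde\alpha=\widetilde\alpha'\). So the whole content is existence.

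For existence, first I would twist the structure map and consider the pair \((\D_S,\,u\circ\alpha^{-1})\). Because \(\alpha^{-1}\) is surjective we have \((u\circ\alpha^{-1})(S)=u(S)\), so the division closure of the image is unchanged and \((\D_S,\,u\circ\alpha^{-1})\) is again an epic division \(S\)-ring. The key step is to verify that it is in fact \emph{universal}. Let \((\mathcal E,\gamma)\) be any epic division \(S\)-ring; then \((\mathcal E,\gamma\circ\alpha)\) is epic as well (same reasoning), so by universality of \((\D_S,u)\) there is a specialization \((\D_S,u)\to(\mathcal E,\gamma\circ\alpha)\), represented by a subhomomorphism \(\psi\colon K\to\mathcal E\) with \(K\) a local subring of \(\D_S\) containing \(u(S)\), \(\ker\psi\) its maximal ideal, and \(\psi\circ u=\gamma\circ\alpha\). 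Now \(K\) also contains \((u\circ\alpha^{-1})(S)=u(S)\) and \(\psi\circ(u\circ\alpha^{-1})=(\psi\circ u)\circ\alpha^{-1}=\gamma\), so the same \(\psi\) represents a specialization \((\D_S,\,u\circ\alpha^{-1})\to(\mathcal E,\gamma)\). Hence \((\D_S,\,u\circ\alpha^{-1})\) specializes onto every epic division \(S\)-ring and is therefore a universal division \(S\)-ring.

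Applying uniqueness of the universal division \(S\)-ring up to \(S\)-isomorphism to the two universal pairs \((\D_S,\,u\circ\alpha^{-1})\) and \((\D_S,u)\), we obtain a ring isomorphism \(\widetilde\alpha\colon\D_S\to\D_S\) with \(\widetilde\alpha\circ(u\circ\alpha^{-1})=u\), i.e.\ \(\widetilde\alpha\circ u=u\circ\alpha\); it is automatically an automorphism. The only place demanding care is the middle step: one must check that precomposing a subhomomorphism by the automorphism \(\alpha^{-1}\) of \(S\) genuinely yields a subhomomorphism for the twisted structure map (the local subring \(K\) and its maximal ideal \(\ker\psi\) are unchanged; only the compatibility condition with the structure map is different) and that this passes to equivalence classes, so that the notion of specialization is preserved. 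I do not anticipate any real difficulty beyond this bookkeeping. As a byproduct the assignment \(\alpha\mapsto\widetilde\alpha\) is a group homomorphism \(\Aut(S)\to\Aut(\D_S)\), again by the uniqueness argument.
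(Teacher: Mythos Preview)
Your proof is correct and follows the same overall strategy as the paper: show that the twisted structure map \(u\circ\alpha\) (or equivalently \(u\circ\alpha^{-1}\)) again makes \(\D_S\) into a universal division \(S\)-ring, then invoke uniqueness up to \(S\)-isomorphism. The only difference is that the paper verifies universality of the twisted map via the rank-function characterization (\(\rk_S=\rk_S\circ\alpha\), since both satisfy the maximality property defining \(\rk_S\)), whereas you verify it directly from the definition by transporting specializations along \(\alpha\); your argument is slightly longer but entirely self-contained and avoids the rank-function machinery.
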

\begin{proof}
The universal property of $\rk_S$ implies that $\rk_S=\rk_S\circ \alpha$. Hence $u\circ \alpha: S\to \D_S$ is also universal. Thus, $u$ and $u\circ \alpha$ are $S$-isomorphic. This implies the existence of $\widetilde \alpha$.
\end{proof}
The \emph{inner rank} \(\irk_S(A)\) of a non-zero matrix \(A \in \Mat_{n\times m}(S)\) is the smallest non-negative integer \(k\) such that \(A\) factors as a product \(BC\) with \(B \in \Mat_{n\times k}(S)\) and \(C \in \Mat_{k \times m}(S)\). If the inner rank of a square \(n\times n\) matrix \(A\) is \(n\), we call \(A\) a \emph{full matrix}. A ring \(S\) is called a \emph{Sylvester domain} if the inner rank \(\irk\) is a Sylvester matrix rank function on \(S\). Note that \(\irk\) always satisfies the conditions (SMat1) and (SMat2), and every Sylvester matrix rank function \(\rk\) on \(S\) satisfies \(\rk(A) \leq \irk(_SA)\) by (SMat2). In fact, \(\irk\) satisfies an even stronger property than (SMat1): \(\irk A = 0\) if and only if \(A\) is the zero matrix. Every Sylvester domain \(S\) possesses a universal division ring of fractions \(\mathcal{D}_S\) such that \(\irk = \rk_{\mathcal{D}_S}\). For the construction of \(\mathcal{D}_S\), see \cite[Sec. 7.4 and 7.5]{cohnFreeIdealRings2006}. Moreover, \(\mathcal{D}_S\) satisfies another universal property:

\begin{prop}[{cf. \cite[Thm. 7.5.13(e)]{cohnFreeIdealRings2006}}]\label{prop-universal-prop-sylvester} If \(S\) is a Sylvester domain and \(\psi\colon S \to R\) is a ring homomorphism such that the image through \(\psi\) of every full matrix \(A\) over \(S\) is invertible over \(R\), then \(\psi\) extends uniquely to a map \(\psi\colon \mathcal{D}_S \to R\).
\end{prop}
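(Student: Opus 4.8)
The plan is to realize \(\mathcal{D}_S\) as a universal localization of \(S\) and then deduce the statement from the defining universal property of such a localization. Let \(\Sigma\) denote the set of all full matrices over \(S\), and let \(\lambda\colon S\to S_\Sigma\) be the universal localization of \(S\) at \(\Sigma\): every matrix in \(\Sigma\) becomes invertible over \(S_\Sigma\), and any ring homomorphism \(S\to T\) inverting all matrices of \(\Sigma\) factors uniquely through \(\lambda\). (Such an \(S_\Sigma\) exists for any ring and any set of square matrices, with no hypotheses on \(\Sigma\).)

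The key — and only genuinely nontrivial — input is that, because \(S\) is a Sylvester domain (equivalently, because \(\irk_S\) is a Sylvester matrix rank function), the localization \(S_\Sigma\) is a division ring, \(\lambda\) is injective, and \((S_\Sigma,\lambda)\) is isomorphic as an \(S\)-ring to the universal division ring of fractions \(\mathcal{D}_S\). I would quote this from \cite[Thm.~7.5.13]{cohnFreeIdealRings2006} rather than reprove it; this is the main obstacle, and it is where the Sylvester domain hypothesis is really used. The underlying mechanism is that over \(S_\Sigma\) a square matrix coming from \(S\) is invertible precisely when it is full, so that — using that \(\irk_S\) is sub-multiplicative and additive on block diagonal sums — a further bookkeeping argument forces \(S_\Sigma\) to be a division ring whose induced matrix rank function pulls back along \(\lambda\) to \(\irk_S\); from this the universal specialization property of \(\mathcal{D}_S\) drops out.

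Granting this identification, the remaining argument is formal. The hypothesis on \(\psi\colon S\to R\) says exactly that \(\psi\) sends every matrix in \(\Sigma\) to an invertible matrix over \(R\); by the universal property of \(\lambda\) there is therefore a unique ring homomorphism \(g\colon S_\Sigma\to R\) with \(g\circ\lambda=\psi\), and transporting \(g\) along the isomorphism \(S_\Sigma\cong\mathcal{D}_S\) produces an extension \(\mathcal{D}_S\to R\) of \(\psi\). For uniqueness among all ring homomorphisms \(\mathcal{D}_S\to R\) that restrict to \(\psi\) on \(S\), it suffices to recall that \(u\colon S\to\mathcal{D}_S\) is an epimorphism of rings — this is built into the definition of an epic division \(S\)-ring — so that \(\psi_1\circ u=\psi_2\circ u\) forces \(\psi_1=\psi_2\). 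An alternative, slightly more hands-on route would avoid naming \(S_\Sigma\) and instead use Cohn's Cramer-type normal form (every element of \(\mathcal{D}_S\) is a coordinate of the solution of a linear system \(u(A)x=u(b)\) with \(A\in\Sigma\) and \(b\) a column over \(S\)), defining the extension by \(A\mapsto \psi(A)^{-1}\), \(b\mapsto\psi(b)\) and checking well-definedness; but this essentially recapitulates the construction of \(S_\Sigma\) and is no shorter.
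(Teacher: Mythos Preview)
The paper does not give its own proof of this proposition; it is stated as a citation to \cite[Thm.~7.5.13(e)]{cohnFreeIdealRings2006} and used as a black box. Your argument is correct and is essentially the standard one underlying Cohn's result: identify \(\mathcal{D}_S\) with the universal localization \(S_\Sigma\) at the set of full matrices (this is exactly where the Sylvester hypothesis enters, and you rightly flag it as the nontrivial step to be quoted from Cohn), and then invoke the defining universal property of \(S_\Sigma\) together with the fact that \(S\to\mathcal{D}_S\) is epic for uniqueness.
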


If \(\psi\colon S \to R\) is a homomorphism of rings and \(\rk\) is a Sylvester matrix rank function on \(R\), then the precomposition with \(\psi\) defines a Sylvester matrix rank function on \(S\), denoted \(\psi^\#\rk\). Given a Sylvester matrix rank function \(\rk'\) on \(S\), we say that \(R\) is an envelope for \(\rk'\) if there exists \(\psi\) and \(\rk\) as above such that \(\rk'  = \psi^\#\rk\). 

We recall that a ring \(\mathcal{U}\) is von Neumann regular if for every \(a \in \mathcal{U}\) there exists \(b \in \mathcal{U}\) such that \(aba = a\).  In particular, every division ring is von Neumann regular. In \cite{jaikin-zapirainExplicitConstructionUniversal2020}, the first author proved the following:

\begin{thm}[{\cite[Cor. 2.5]{jaikin-zapirainExplicitConstructionUniversal2020}}]\label{thm-jaikin-criterion-universality} Let \(S\) be a ring and \(\rk\) be a Sylvester matrix rank function on \(S\) with an envelope \(\phi\colon S \to \mathcal{U}\) which is a von Neumann regular ring. Assume that:
\begin{enumerate}[label=(\arabic*)]
    \item \(\Tor_1^S(\mathcal{U},\mathcal{U}) = 0\), and
    \item for any finitely generated left or right \(S\)-submodule \(M\) of \(\mathcal{U}\) and any exact sequence \(0 \to I \to S^n \to M \to 0\), \(I\) is a free \(S\)-module.
\end{enumerate}
Then \(\rk = \irk\). In particular, \(S\) is a Sylvester domain and the division closure of \(\phi(S)\) in \(\mathcal{U}\) is \(S\)-isomorphic to \(\mathcal{D}_S\), the universal division \(S\)-ring of fractions.
\end{thm}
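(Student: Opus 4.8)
The plan is to reduce the equality \(\rk=\irk_S\) to the single inequality \(\irk_S(A)\le\rk(A)\) for every matrix \(A\) over \(S\), and to prove the latter by realizing the rank of \(A\) over the von Neumann regular ring \(\mathcal U\) and transporting the resulting factorization back to \(S\); hypotheses (1) and (2) are precisely what makes this descent work. First, the reduction. Any Sylvester matrix rank function satisfies \(\rk(A)\le\irk_S(A)\): a factorization \(A=BC\) realizing the inner rank, with \(B\) having \(k=\irk_S(A)\) columns, gives \(\rk(A)\le\max\{\rk(B),\rk(C)\}\le k\) by (SMat2). So it suffices to prove \(\irk_S(A)\le\rk(A)\) for all \(A\); then \(\irk_S=\rk\) is a Sylvester matrix rank function, so \(S\) is by definition a Sylvester domain and \(\mathcal D_S\) exists, and applying the equality to \(1\times1\) matrices shows that \(\phi\) is injective, so \(\mathcal D_S\) is a universal division ring \emph{of fractions}. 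Finally, once \(S\) is a Sylvester domain, the division closure \(\mathcal R\) of \(\phi(S)\) in \(\mathcal U\) — again von Neumann regular, with a faithful rank — inverts every full matrix over \(S\) (such a matrix has full rank over \(\mathcal U\)); Proposition~\ref{prop-universal-prop-sylvester} then provides an \(S\)-homomorphism \(\mathcal D_S\to\mathcal R\), automatically injective, whose image is a division ring containing \(\phi(S)\) and hence equal to \(\mathcal R\). Thus \(\mathcal R\cong\mathcal D_S\).

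For the inequality, fix \(A\in\Mat_{n\times m}(S)\), put \(k:=\rk(A)=\rk_{\mathcal U}(\phi(A))\), and set \(N:=\ima(A\colon S^m\to S^n)\subseteq S^n\). It is enough to show that \(N\) is contained in an \(S\)-submodule of \(S^n\) generated by at most \(k\) elements, for then \(A\) factors as \(S^m\to S^k\to S^n\) — the map \(S^m\to N\) lifts through a surjection \(S^k\to\) that submodule since \(S^m\) is free — so \(\irk_S(A)\le k\). Over \(\mathcal U\), which is von Neumann regular, the image \(M:=\ima(\phi(A)\colon\mathcal U^m\to\mathcal U^n)\) is a finitely generated projective direct summand of \(\mathcal U^n\) with \(\dim_{\mathcal U}M=k\); it equals \(\mathcal U\cdot\phi(N)\), being generated by the images of the columns of \(A\). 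Now take the exact sequence \(0\to I\to S^m\xrightarrow{A}N\to0\) with \(I=\ker A\). Hypothesis (2), applied to \(N\) regarded (via a suitable \(S\)-linear embedding) as a finitely generated \(S\)-submodule of \(\mathcal U\), shows that \(I\) is free; hypothesis (1), \(\Tor_1^S(\mathcal U,\mathcal U)=0\), combined with the length-one free resolutions furnished by (2), keeps the sequence exact after \(-\otimes_S\mathcal U\) and identifies \(\mathcal U\otimes_SN\) with \(M\). Comparing \(\mathcal U\)-dimensions, \(\mathcal U\otimes_SI\) has dimension \(m-k\); since \(I\) is free, say \(I\cong S^j\) (necessarily of finite rank, by this dimension count), and \(\phi\) does not collapse ranks of free modules, we get \(j=m-k\), so in particular \(k=m-j\in\bbn\).

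The genuine obstacle is the last step: upgrading this to the generation bound, i.e. showing that \(N\) really does sit inside a \(k\)-generated submodule of \(S^n\). From what precedes we only learn that \(N\) is, loosely speaking, of rank \(k\) (for instance \(N\oplus S^{m-k}\cong S^m\) if the free syzygy \(I\hookrightarrow S^m\) splits), which over a general ring does not force \(N\) to be \(k\)-generated. Ruling this out is where hypotheses (1) and (2) have to be used in tandem, through a simultaneous analysis of \emph{all} finitely generated left and right \(S\)-submodules of \(\mathcal U\) and of their behaviour under \(-\otimes_S\mathcal U\); conceptually, the two hypotheses express that \(S\to\mathcal R\) is a homological epimorphism carrying enough local freeness to force \(\rk=\irk_S\), and I would carry this step out following the argument of \cite{jaikin-zapirainExplicitConstructionUniversal2020}. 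Granting it, \(\irk_S(A)\le k=\rk(A)\); together with the trivial reverse inequality this yields \(\rk=\irk_S\), and the remaining assertions follow as in the first paragraph.
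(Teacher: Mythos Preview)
The paper does not prove this theorem at all: it is quoted verbatim as \cite[Cor.~2.5]{jaikin-zapirainExplicitConstructionUniversal2020} and used as a black box. So there is no ``paper's own proof'' to compare against.

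Your proposal is, by your own account, not a proof. You correctly set up the reduction to \(\irk_S(A)\le\rk(A)\) and correctly identify that the substantive content is to show that the image \(N\) of \(A\) in \(S^n\) lies in a \(k\)-generated submodule, where \(k=\rk(A)\). But you then write that this ``is where hypotheses (1) and (2) have to be used in tandem'' and that you ``would carry this step out following the argument of \cite{jaikin-zapirainExplicitConstructionUniversal2020}. Granting it, \ldots''. That is exactly the step the theorem is about; deferring it to the reference the theorem is taken from is circular, not a proof.

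A few smaller points in what you do sketch also need attention. First, hypothesis~(2) as stated concerns submodules of \(\mathcal U\), and you apply it to \(N\subseteq S^n\) ``via a suitable \(S\)-linear embedding''; you should say what that embedding is and why \(N\) actually lands in (a power of) \(\mathcal U\) --- note that \(\phi\) is not yet known to be injective at this stage. Second, the assertion that ``hypothesis~(1) \ldots\ keeps the sequence exact after \(-\otimes_S\mathcal U\) and identifies \(\mathcal U\otimes_S N\) with \(M\)'' is not justified: you would need \(\Tor_1^S(\mathcal U,S^n/N)=0\), and \(\Tor_1^S(\mathcal U,\mathcal U)=0\) does not give this directly. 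Third, in the last paragraph you claim the division closure \(\mathcal R\) of \(\phi(S)\) in \(\mathcal U\) is von Neumann regular and inverts every full matrix; neither is automatic --- invertibility of a matrix over \(\mathcal U\) gives invertibility in the \emph{rational} closure, and identifying that with the division closure (and checking it is a division ring) is part of what has to be argued.
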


In the same paper it is shown the following converse to Theorem~\ref{thm-jaikin-criterion-universality}:

\begin{prop}[{\cite[Prop. 2.2]{jaikin-zapirainExplicitConstructionUniversal2020}}]\label{prop:andrei-sylvester} Let \(S\) be a Sylvester domain and \(\mathcal{D}\) it's universal division \(S\)-ring of fractions. Then:
\begin{enumerate}[label=(\arabic*)]
    \item For any left (resp. right) \(S\)-submodule \(M\) (resp. \(N\)) of \(\mathcal{D}^r\), we have that \(\Tor_1^S(N,M) = 0\).
    \item For any finitely generated left or right \(S\)-submodule \(M\) of \(\mathcal{D}^r\) and any exact sequence \(0 \to I \to S^n \to M \to 0\), \(I\) is a (set-theoretic) union of submodules isomorphic to \(S^k\) where \[k = n- \dim M = \dim I\,,\] where \(\dim\) is the Sylvester module rank function associated to the inner rank in \(S\).
\end{enumerate}
\end{prop}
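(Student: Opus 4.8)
The plan is to prove (2) first and then to deduce (1) from it by a routine computation with $\Tor_1$. Throughout I would use the following standard facts about the universal localization $u\colon S \hookrightarrow \mathcal{D} = \mathcal{D}_S$ of a Sylvester domain, all belonging to Cohn's structure theory \cite[Ch.~7]{cohnFreeIdealRings2006}: (i) the inner rank is detected over $\mathcal{D}$, i.e. $\irk_S(B) = \rk_{\mathcal{D}}(u(B))$ for every matrix $B$ over $S$, so a matrix $B$ with $k$ columns and $\irk_S(B) = k$ has full column rank over $\mathcal{D}$, whence $u(B)\colon \mathcal{D}^k \to \mathcal{D}^n$ is injective and $B\colon S^k \to S^n$ is injective with free image $BS^k \cong S^k$; (ii) if $A = BC$ with $B$ having $k$ columns, $C$ having $k$ rows, and $\irk_S(A) = k$, then $\irk_S(B) = \irk_S(C) = k$ (immediate from $\irk_S(BC) \leq \min\{\irk_S(B),\irk_S(C)\}$); (iii) every matrix $Q$ over $\mathcal{D}$ admits a denominator over $S$, i.e.\ $Q = B_0\Delta^{-1}$ with $B_0$ over $S$ and $\Delta$ a full square matrix over $S$ (Cramer's rule). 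I would also use the standard properties of the module rank function $\dim_S$ attached to $\irk_S$ (see \cite[Sec. 2.1]{jaikin-zapirainExplicitConstructionUniversal2020}): monotonicity under submodules and quotients, subadditivity on short exact sequences, $\dim_S(S^n) = n$, $\dim_S(S^n/BS^k) = n - \irk_S(B)$, and continuity $\dim_S(\bigcup_i M_i) = \sup_i \dim_S(M_i)$ along directed unions.

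For (2), treating left submodules (the right case being symmetric): given a finitely generated $M \subseteq \mathcal{D}^r$ generated by $m_1,\dots,m_n$, I would form $\pi\colon S^n \twoheadrightarrow M$, $x \mapsto \sum_i x_i m_i$, set $I = \ker\pi$, and, after replacing $\mathcal{D}^r$ by the $\mathcal{D}$-span of $M$, assume $M$ spans $\mathcal{D}^r$. With $P \in \Mat_{r\times n}(\mathcal{D})$ the matrix of columns $m_i$, one has $\rho := \rk_{\mathcal{D}}(P) = r$, $I = S^n \cap \ker P$, and $\dim_{\mathcal{D}} \ker P = k := n - \rho$. The first step is to show $\mathcal{D}\cdot I = \ker P$: choose a $\mathcal{D}$-basis of $\ker P$ as the columns of $Q_1$ (so $PQ_1 = 0$), take a denominator $Q_1 = B_0\Delta^{-1}$, and note that $B_0 = Q_1\Delta$ is over $S$ with $PB_0 = 0$ and $\rk_{\mathcal{D}}(B_0) = k$, so its columns lie in $I$ and span $\ker P$; hence also $I = S^n \cap (\mathcal{D}\cdot I)$. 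The second step is to show every finitely generated $J \subseteq I$ is contained in a free rank-$k$ submodule of $I$: enlarging $J$ to $J + B_0 S^k$ we may assume $\mathcal{D}\cdot J = \ker P$; writing $J = B''S^m$ with $\rk_{\mathcal{D}}(B'') = k$, hence $\irk_S(B'') = k$, and factoring $B'' = BC$ with $\irk_S(B) = \irk_S(C) = k$ as in (ii), one gets $BS^k \cong S^k$, $J \subseteq BS^k$, and $\mathcal{D}\cdot BS^k = \mathcal{D}\cdot J = \ker P$, so $BS^k \subseteq S^n \cap \ker P = I$. Thus $I$ is the directed union of its free rank-$k$ submodules. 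Finally $\dim_S I = k$ (it contains a copy of $S^k$ and every finitely generated submodule embeds in one), and from $n = \dim_S S^n \leq \dim_S I + \dim_S M$ together with $\dim_S M \leq \dim_S(S^n/BS^k) = n-k$ (as $M$ is a quotient of $S^n/BS^k$) one concludes $\dim_S M = n-k$; so $k = n - \dim_S M = \dim_S I$, which is what (2) asserts.

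For (1): since $\Tor_1^S$ commutes with filtered colimits in each variable, and finitely generated submodules of $\mathcal{D}^r$ remain submodules of $\mathcal{D}^r$, I would reduce to $N, M$ finitely generated. Choosing $0 \to I \to S^n \to M \to 0$ with $S^n$ free and using $\Tor_1^S(N, S^n) = 0$, the long exact sequence gives $\Tor_1^S(N, M) \cong \ker(N \otimes_S I \to N\otimes_S S^n)$. Writing $I = \varinjlim_\alpha I_\alpha$ with $I_\alpha = B_\alpha S^k \cong S^k$ and $\rk_{\mathcal{D}}(B_\alpha) = k$ as furnished by (2), it suffices to show $\ker(N\otimes_S I_\alpha \to N\otimes_S S^n) = 0$. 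Applying $N\otimes_S-$ to $0 \to I_\alpha \to S^n \to S^n/I_\alpha \to 0$ (again with $\Tor_1^S(N, S^n) = 0$), this kernel equals $\Tor_1^S(N, S^n/I_\alpha)$, which from the free resolution $0 \to S^k \xrightarrow{B_\alpha} S^n \to S^n/I_\alpha \to 0$ is $\ker(N^k \xrightarrow{B_\alpha} N^n)$. Since $u(B_\alpha)\colon \mathcal{D}^k \to \mathcal{D}^n$ is injective (full column rank), so is its extension of scalars $(\mathcal{D}^r)^k \to (\mathcal{D}^r)^n$; as $N \hookrightarrow \mathcal{D}^r$ and $S^k$ is flat, the map $N^k = N\otimes_S S^k \to N\otimes_S S^n = N^n$ is the restriction of this injection, hence injective. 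Therefore $\Tor_1^S(N, S^n/I_\alpha) = 0$ for all $\alpha$, and so $\Tor_1^S(N, M) = 0$.

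The step I expect to be the main obstacle is (2), and within it the reliance on facts (i) and (iii): that $\irk_S$ is computed by $\rk_{\mathcal{D}_S}$ (so inner-rank factorizations have full factors and full-column-rank matrices cut out free submodules) and that matrices over $\mathcal{D}_S$ admit denominators over $S$ (so the syzygy $I$ spans $\ker P$ and is closed under $\mathcal{D}$-spans). These are precisely the pillars of Cohn's theory of Sylvester domains; granting them, the remainder -- in particular all of (1) -- is routine homological bookkeeping.
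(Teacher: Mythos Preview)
The paper does not give its own proof of this proposition: it is quoted verbatim from \cite[Prop.~2.2]{jaikin-zapirainExplicitConstructionUniversal2020} and used as a black box, so there is no argument in the present paper to compare yours against.

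That said, your proposal is essentially the standard argument and is correct. A few minor remarks. First, in (2) the statement asks about \emph{any} exact sequence $0\to I\to S^n\to M\to 0$; your argument, although phrased for a chosen set of generators, does not use anything special about the presentation, since for any such sequence the composite $S^n\to M\hookrightarrow \mathcal D^r$ is given by a matrix $P$ and $I=S^n\cap\ker P$, so the same reasoning applies verbatim. Second, in step~2 of (2) you conclude $BS^k\subseteq I$ from $\mathcal D\cdot BS^k=\ker P$; it is worth making explicit why $PB=0$: since $u(C)$ has rank $k$ and $k$ rows it is right-surjective over $\mathcal D$, so $u(P)u(B)u(C)=0$ forces $u(P)u(B)=0$. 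Third, in (1) the reduction to the filtered colimit works because each individual map $N\otimes_S I_\alpha\to N^n$ is injective, hence so are all transition maps $N\otimes_S I_\alpha\to N\otimes_S I_\beta$, and a filtered colimit of injections with injective transition maps is again an injection; this is implicit in what you wrote but is the point where the argument could otherwise stumble. With these clarifications your proof goes through as written.
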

We can see Theorem~\ref{thm-jaikin-criterion-universality} as a homological criterion to determine whether \(S\) is a Sylvester domain and \(\phi\colon S \to \mathcal{U}\) is an universal embedding for \(S\).

\begin{rem}\label{rem-on-p-adic} A big technical difficulty in proving Theorem~\ref{main} is establishing the validity of the condition (2) of Theorem~\ref{thm-jaikin-criterion-universality} for the embedding \(\Falgebra{G} \to \mathcal{Q}\) we construct in Section~\ref{sec:group-embeddings}. If \(G\) is a non-trivial pro-\(p\) \(p\)-adic analytic group and \(\Falgebra{G}\) is a Sylvester domain, then \(\cd G \leq 2\) by Proposition~\ref{prop:cohomologial-dim-sylvester} and hence \(G \simeq \Z_p \rtimes \Z_p\) or \(G \simeq \Z_p\). In particular, \(G\) is free-by-cyclic and Theorem~\ref{main} applies, showing that those are precisely the \(p\)-adic Lie groups \(G\) for which \(\Falgebra{G}\) is a Sylvester domain. However, for any torsion-free \(p\)-adic Lie group \(G\) the completed group algebra \(\Falgebra{G}\) is a Noetherian domain, and hence it possesses a classical Ore ring of fractions \(\mathcal{Q}\). For \(G = \Z_p \rtimes \Z_p\) or \(G = \Z_p\) it is straightforward to see that the map \(\Falgebra{G} \to \mathcal{Q}\) satisfies the conditions of Theorem~\ref{thm-jaikin-criterion-universality}. Since \(\mathcal{Q}\) is flat over \(\Falgebra{G}\), we have \(\Tor_1^{\Falgebra{G}}(\mathcal{Q},\mathcal{Q}) = 0\). Moreover, any finitely generated \(G\)-submodule \(M\) of \(\mathcal{Q}\) is isomorphic to a finitely generated \(G\)-submodule of \(\Falgebra{G}\) by the Ore condition. Observe that any submodule of a finitely generated \(\Falgebra{G}\) is finitely generated and hence closed and profinite. Since \(\Falgebra{G}\) has global dimension at most \(2\), by \cite[Rem. after Thm. 3.5]{brumerPseudocompactAlgebrasProfinite1966} we have \[\CTor_1^{\Falgebra{G}}(I,\F_p) \simeq\Tor_1^{\Falgebra{G}}(I,\F_p) \simeq \Tor_2^{\Falgebra{G}}(M,\F_p) \leq \Tor_2^{\Falgebra{G}}(\Falgebra{G},\F_p) = 0\,,\] where \(\CTor\) denotes the derived functor of the completed tensor product. By \cite[Prop. 3.1]{brumerPseudocompactAlgebrasProfinite1966}, \(I\) is projective, and thus \(I\) is a free \(\Falgebra{G}\)-module since the latter is a local ring. For the fact that \(\Falgebra{G}\) is a local Noetherian domain of global dimension \(\cd G\), see \cite{ardakovRingTheoreticPropertiesIwasawa2006}. The fact that \(\rk_G = \rk_{\mathcal{Q}}\) is also true for any \(p\)-adic analytic group, and is a result of M. Harris (\cite[Lem. 1.10.1]{harrisAdicRepresentationsArising1979}).  
\end{rem}

\section{Sylvester domains and completed group algebras}\label{sec-Sylvester-virtual}
In this section we study pro-$p$ groups $G$ for which $\Falgebra{G}$ is a Sylvester domain or $\rk_G=\irk_{\Falgebra{G}}$.
It is clear that the second condition is stronger, but we believe, that in fact they are equivalent.
The main objective of this section is to show that  both conditions are commensurability invariants.
 As far as the authors are aware, no analogue of  this results is known for abstract group rings.
We start with a key lemma.

\begin{lem}\label{lem:sylvester-pro-p-is-coherent} Let \(G\) be a pro-\(p\) group and suppose that  \(\Falgebra{G}\) is a Sylvester domain with universal division ring of fractions \(\mathcal{D}_{\Falgebra{G}}\). Then, for every finitely generated left (right) \(\Falgebra{G}\)-submodule \(M\) of \(\mathcal{D}_{\Falgebra{G}}^m\) and short exact sequence \[0 \to I \to \Falgebra{G}^n \to M \to 0\,,\] the \(\Falgebra{G}\)-module \(I\) is free of finite rank.
\end{lem}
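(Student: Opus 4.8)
The plan is to combine the homological information in Proposition~\ref{prop:andrei-sylvester} with the special feature of completed group algebras that all projective modules are free of well-defined rank. First I would apply Proposition~\ref{prop:andrei-sylvester}(2) to the given exact sequence: since $M$ is a finitely generated $\Falgebra{G}$-submodule of $\mathcal{D}_{\Falgebra{G}}^m$, it tells us that $I$ is a direct union of submodules each isomorphic to $\Falgebra{G}^k$, where $k = n - \dim M = \dim I$ and $\dim$ is the Sylvester module rank function attached to $\irk$. In particular $I$ is a flat $\Falgebra{G}$-module, being a direct limit of free modules, and it has a well-defined "rank" $k$ in the sense that $\dim_{\irk} I = k$.

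The next step is to upgrade "direct union of copies of $\Falgebra{G}^k$" to "finitely generated", and then to "free". For finite generation I would argue that $I$ is finitely generated because $\Falgebra{G}$ is Noetherian: $\Falgebra{G}$ is a pseudocompact (in fact Noetherian, being the completed group algebra of a finitely generated pro-$p$ group — and in the cases relevant to the paper one reduces to that situation) ring, so the submodule $I$ of the finitely generated module $\Falgebra{G}^n$ is itself finitely generated. Alternatively, and more self-containedly, one notes that $M$ is finitely presented: being a finitely generated submodule of $\mathcal{D}_{\Falgebra{G}}^m$ over a Noetherian ring it is finitely presented, so $I$ is finitely generated as the kernel of a map between finitely generated modules over a Noetherian ring. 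Once $I$ is finitely generated and a direct union of copies of $\Falgebra{G}^k$, the union must stabilize, so $I \cong \Falgebra{G}^k$ is free of finite rank, which is the claim.

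If one wants to avoid invoking Noetherianity explicitly, there is a second route using Proposition~\ref{prop:andrei-sylvester}(1): for a finitely generated submodule $M$ of $\mathcal{D}_{\Falgebra{G}}^m$ one has $\Tor_1^{\Falgebra{G}}(\mathcal{D}_{\Falgebra{G}}, M) = 0$, and then a standard dévissage shows $I$ is flat; being flat and (by finite presentation of $M$, or by a direct limit argument) finitely presented over the pseudocompact local ring $\Falgebra{G}$, $I$ is projective, hence free of finite rank since projective $\Falgebra{G}$-modules are free (this is the point emphasized in the introduction as distinguishing the pro-$p$ setting from the abstract one). Either way the rank is forced to be $k = n - \dim_{\irk} M$.

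The main obstacle, as I see it, is the finite generation step: Proposition~\ref{prop:andrei-sylvester}(2) only gives $I$ as a \emph{direct union} of finitely generated free submodules, not as a finitely generated module itself, so one genuinely needs an input guaranteeing that submodules of finitely generated $\Falgebra{G}$-modules are finitely generated — i.e. coherence or Noetherianity of $\Falgebra{G}$ — or else a finite-presentation argument for $M$. Establishing that (and making sure it applies in the generality in which $\Falgebra{G}$ is being considered here, reducing if necessary to the finitely generated case where $\Falgebra{G}$ is Noetherian) is where the real content lies; the passage from "finitely generated flat/finitely generated direct union of frees" to "free of finite rank" is then routine.
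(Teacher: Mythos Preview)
Your proposal correctly identifies Proposition~\ref{prop:andrei-sylvester}(2) as the starting point and correctly isolates the finite-generation of $I$ as the crux. However, the way you propose to secure finite generation does not work: $\Falgebra{G}$ is \emph{not} Noetherian in the generality of the lemma. Noetherianity of $\Falgebra{G}$ holds only when $G$ is $p$-adic analytic (Lazard), whereas the groups at stake in this paper---free pro-$p$ groups of rank $\geq 2$ and free-by-$\Z_p$ groups built from them---are not $p$-adic analytic, and their completed group algebras are genuinely non-Noetherian. So neither ``$I$ is a submodule of $\Falgebra{G}^n$ over a Noetherian ring'' nor ``$M$ is finitely presented because finitely generated over a Noetherian ring'' is available, and there is no reduction to a Noetherian situation. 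Coherence of $\Falgebra{G}$ is likewise not known a priori; in fact the lemma is precisely what is used later to establish coherence-type statements.

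The paper replaces the missing algebraic finiteness by a topological argument: one shows that $I$ is \emph{closed} in $\Falgebra{G}^n$. This is the nontrivial step, and it is done in two ways: either under the extra hypothesis $\irk_{\Falgebra{G}}=\rk_G$ via an approximation argument comparing $\dim_G M$ and $\dim_G(\Falgebra{G}^n/\overline{I})$, or in full generality by constructing a Hausdorff ring topology on $\mathcal{D}_{\Falgebra{G}}$ extending that of $\Falgebra{G}$ (done in the appendix). Once $I$ is closed it is a profinite $\Falgebra{G}$-module; then, since $I$ is the directed union of the $I_i\cong\Falgebra{G}^k$, the quotient $I/I_GI$ has $\F_p$-dimension at most $k$, so some $I_i$ already surjects onto $I/I_GI$. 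Each $I_i$ is the image of a compact module and hence closed, and Nakayama for profinite modules over the local ring $\Falgebra{G}$ gives $I=I_i\cong\Falgebra{G}^k$. The passage from ``closed'' to ``free of finite rank'' is indeed routine; the content you are missing is the closedness of $I$, not Noetherianity.
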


The main step for obtaining this result is: 


\begin{lem}\label{Iclosed}
The submodule $I$ is closed in $\Falgebra{G}^n$
\end{lem}

\begin{rem*} We provide two proofs of the lemma. The first one is much easier but requires an additional condition that $\irk_{\Falgebra{G}}=\rk_G$. We notice that this is the case needed for the proof of Theorem \ref{main}. The second proof of Lemma \ref{Iclosed} does not require any additional hypothesis but uses a non-trivial result, proved in the appendix, that  $\mathcal{D}_{\Falgebra{G}}$ admits a Hausdorff ring topology such that the embedding \(\Falgebra{G} \to\mathcal D_{\Falgebra{G}} \) is continuous.\end{rem*}

\begin{proof}[First proof of Lemma~\ref{Iclosed}]
In this proof we not only assume that $\Falgebra{G}$ is a Sylvester domain but that $\irk_{\Falgebra{G}}=\rk_G$.
Let $$G>N_1>N_2>\ldots$$ be a chain of normal open subgroups of $G$ with trivial intersection.
 By \cite[Proposition 2.15]{Ja24}, 
  for every finitely generated left $\Falgebra{G}$-module $N$,
$$ \dim_G N= \lim_{i \to \infty} \frac{\dim_{\F_p} \F_p \otimes_{\Falgebra{N_i}}N}{|G\colon N_i|}.$$
 If \(\overline{I}\) denotes the closure of \(I\) in \(\Falgebra{G}^n\), let \(\overline{M} = \Falgebra{G}^n/\overline{I}\). Since the image \(J_i\) of \(I\) and \(\overline{I}\) in \(\F_p[G/N_i]^n\) coincide, we get
\begin{align*}
    \dim_{ \mathcal D_{\Falgebra{G}} }(\mathcal D_{\Falgebra{G}}  \otimes_{\Falgebra{G}} \overline{M} )
    &= \dim_G \overline{M}\\
    &= \lim_{i \to \infty} \frac{\dim_{\F_p} \F_p \otimes_{\Falgebra{N_i}} \overline{M}}{|G\colon N_i|}\\
     &=\lim_{i \to \infty} \frac{\dim_{\F_p} \F_p[G/N_i]^n/J_i}{|G\colon N_i|}\\
    &=\lim_{i \to \infty} \frac{\dim_{\F_p} \F_p \otimes_{\Falgebra{N_i}} M}{|G\colon N_i|}\\
    &= \dim_G M = \dim_{ \mathcal D_{\Falgebra{G}}}(\mathcal D_{\Falgebra{G}}  \otimes_{\Falgebra{G}} M)\,.
\end{align*}
The embedding \(M \to\mathcal D_{\Falgebra{G}}^m\) factors through the map \(M \to\mathcal D_{\Falgebra{G}}  \otimes_{\Falgebra{G}} M\) given by \(m \mapsto 1 \otimes m\), so the latter must also be injective. Hence, any \(x \in \overline{I}\) not in \(I\) is such that \(1 \otimes x\) is a non-zero element of
 \( \mathcal D_{\Falgebra{G}} \otimes_{\Falgebra{G}} M\). This element lies in the kernel of the surjection
  \(  \mathcal D_{\Falgebra{G} }\otimes_{\Falgebra{G}} M \to\mathcal D_{\Falgebra{G}}  \otimes_{\Falgebra{G}} \overline{M}\), which is an isomorphism by comparing dimensions. Therefore, \(I = \overline{I}\) is closed in \(\Falgebra{G}^n\).
\end{proof}

\begin{proof}[Second proof of Lemma~\ref{Iclosed}]

 We can identify \( \mathcal D_{\Falgebra{G}}\) with the universal localization of \(\Falgebra{G}\) at the set of all full matrices. Thus, 
by Theorem~\ref{hausd},   \(\mathcal D_{\Falgebra{G}}\) admits a Hausdorff ring topology such that the embedding \(\Falgebra{G} \to\mathcal D_{\Falgebra{G}} \) is continuous. Hence, if \(m_1\,,\ldots\,,m_n\) is a set of generators for \(M\), the map \(\Falgebra{G}^n \to M\) sending \((d_1,\ldots,d_n)\) to \(d_1m_1 + \cdots + d_nm_n\) is also continuous. In particular, the kernel \(I\) of this map is closed in \(\Falgebra{G}^n\).
 \end{proof}
 
\begin{proof}[Proof of Lemma~\ref{lem:sylvester-pro-p-is-coherent}]
We recall that by Proposition~\ref{prop:andrei-sylvester} the \(\Falgebra{G}\)-module \(I\) is the direct union of submodules $I_i$ isomorphic to \(\Falgebra{G}^k\).   We claim that \(I\) is itself isomorphic to \(\Falgebra{G}^k\).

Let \(J = I/I_GI\), where \(I_G\) is the augmentation ideal of \(\Falgebra{G}\). Since \(I\) is the direct union of submodules  isomorphic to \(\Falgebra{G}^k\), the \(\F_p\)-vector space \(J\) must be isomorphic to the direct union of subspaces of dimension at most \(k\). Therefore  \(\dim_{\F_p} J \leq k\) and there exists $i$ such that $J=I_i+I_GI$.  Since $I$ is closed by Lemma~\ref{Iclosed}, it is finitely generated.
Hence, by Nakayama's lemma, \(I\) must  be equal to  $I_i$.
 \end{proof}

The following observation  is an immediate consequence of the previous lemmas:

\begin{prop}\label{prop:cohomologial-dim-sylvester} Let \(G\) be a finitely generated pro-\(p\) group. If \(\Falgebra{G}\) is a Sylvester domain, then \(G\) is finitely presented and \(\cd G \leq 2\).
\end{prop}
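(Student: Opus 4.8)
The plan is to deduce both conclusions from Lemma~\ref{lem:sylvester-pro-p-is-coherent} by taking \(M\) to be the trivial module \(\F_p\) (viewed inside \(\mathcal{D}_{\Falgebra{G}}\) via the augmentation \(\Falgebra{G}\to\F_p\hookrightarrow\mathcal{D}_{\Falgebra{G}}\), which makes sense because \(\mathcal{D}_{\Falgebra{G}}\) is an epic division ring receiving \(\Falgebra{G}\), so it contains the division closure of the image of the augmentation, namely \(\F_p\)). Since \(G\) is finitely generated, \(\F_p\) is a finitely presented \(\Falgebra{G}\)-module: there is an exact sequence \(\Falgebra{G}^{d}\to\Falgebra{G}\to\F_p\to 0\) with \(d=d(G)\), and I would need to know the kernel of \(\Falgebra{G}^{d}\to I_G\) is finitely generated. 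Actually the cleanest route is: first take the short exact sequence \(0\to I_G\to\Falgebra{G}\to\F_p\to 0\); here \(I_G\) is a finitely generated \(\Falgebra{G}\)-module (generated by \(x_1-1,\dots,x_d-1\) for topological generators \(x_i\)), and \(I_G\) embeds in \(\mathcal{D}_{\Falgebra{G}}\) because \(\Falgebra{G}\) is a domain embedding into \(\mathcal{D}_{\Falgebra{G}}\). So Lemma~\ref{lem:sylvester-pro-p-is-coherent} does not directly apply to \(I_G\) (it is not presented as a quotient yet), but it does apply once we resolve it.

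The key steps, in order: (1) Choose topological generators \(x_1,\dots,x_d\) of \(G\) and form the surjection \(\pi\colon\Falgebra{G}^{d}\twoheadrightarrow I_G\), \(e_i\mapsto x_i-1\), with kernel \(R\). (2) Since \(I_G\) is a finitely generated left \(\Falgebra{G}\)-submodule of \(\mathcal{D}_{\Falgebra{G}}^{1}\) (via the embedding \(\Falgebra{G}\hookrightarrow\mathcal{D}_{\Falgebra{G}}\), which exists as \(\Falgebra{G}\) is a Sylvester domain hence has a universal division ring of fractions and in particular injects into it), Lemma~\ref{lem:sylvester-pro-p-is-coherent} applies to \(0\to R\to\Falgebra{G}^{d}\to I_G\to 0\) and gives that \(R\) is free of finite rank. (3) Therefore \(\F_p\) has a free resolution \(0\to R\to\Falgebra{G}^{d}\to\Falgebra{G}\to\F_p\to 0\) with all terms finitely generated free, so \(G\) is finitely presented (the relation module is finitely generated) and \(\operatorname{cd}_{\F_p}G\le 2\), i.e. \(\cd G\le 2\) in the sense of cohomological dimension over \(\F_p\). (4) For finite presentation of the group itself: a finitely generated pro-\(p\) group whose relation module \(R=\ker(\Falgebra{G}^{d}\to I_G)\) is finitely generated is finitely presented as a pro-\(p\) group — this is the standard correspondence \(H_2(G,\F_p)\cong\F_p\otimes_{\Falgebra{G}}R\big/(\text{image of }\pi)\)-type statement; more precisely, finite generation of \(R\) forces \(\dim_{\F_p}H^2(G,\F_p)<\infty\), and a finitely generated pro-\(p\) group with finitely many defining relations is finitely presented, so I would cite the standard fact that \(d(G)<\infty\) together with \(\dim H^2(G,\F_p)<\infty\) implies finite presentation.

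The main obstacle — really the only nontrivial point beyond bookkeeping — is step (2): confirming that \(I_G\), as an \(\Falgebra{G}\)-module, genuinely embeds into some \(\mathcal{D}_{\Falgebra{G}}^m\) so that Lemma~\ref{lem:sylvester-pro-p-is-coherent} is applicable. This is where the Sylvester domain hypothesis does its work: since \(\Falgebra{G}\) is a Sylvester domain it is in particular a domain with a universal division ring of fractions \(u\colon\Falgebra{G}\hookrightarrow\mathcal{D}_{\Falgebra{G}}\) that is injective, so \(\Falgebra{G}\) itself — hence its submodule \(I_G\) — sits inside \(\mathcal{D}_{\Falgebra{G}}\). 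One small subtlety: \(I_G\) is finitely generated as a left module precisely because \(G\) is topologically finitely generated (\(I_G=\sum_{i=1}^{d}\Falgebra{G}(x_i-1)\), using that \(I_G/I_G^2\cong G/\Phi(G)\) is \(d\)-dimensional and \(\Falgebra{G}\) is a pseudocompact local ring, so Nakayama applies). Once this is in place the rest is formal: Lemma~\ref{lem:sylvester-pro-p-is-coherent} yields \(R\cong\Falgebra{G}^{r}\) for some finite \(r\), which is exactly the statement that the relation module of the standard presentation is finitely generated free, giving both \(\cd G\le 2\) and finite presentation.
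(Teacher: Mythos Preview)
Your proposal is correct and follows exactly the paper's one-line proof: apply Lemma~\ref{lem:sylvester-pro-p-is-coherent} with $M$ the augmentation ideal $I_G\subseteq\Falgebra{G}\hookrightarrow\mathcal{D}_{\Falgebra{G}}$. (Your initial idea of taking $M=\F_p$ does not work, since $\F_p$ with the trivial $G$-action cannot embed as an $\Falgebra{G}$-module into $\mathcal{D}_{\Falgebra{G}}$ --- any $g-1$ with $g\neq 1$ annihilates $\F_p$ but is a unit in the division ring --- but you correctly pivot to $I_G$.)
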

\begin{proof} Apply Lemma~\ref{lem:sylvester-pro-p-is-coherent} with \(M\) being the augmentation ideal of \(\Falgebra{G}\). Observe that \(G\) is finitely presented if and only if \[H_2(G,\F_p) = \Tor_2^{\Falgebra{G}}(\F_p,\F_p) \simeq \Tor_1^{\Falgebra{G}}(M,\F_p) \leq \Tor_0^{\Falgebra{G}}(I,\F_p)\] is finite, that is, if \(I\) is finitely generated.
\end{proof}

Before we proceed, we recall the definition of a crossed product between a ring and a group.
 Let \(R\) be an associative and unital ring and \(G\) be a group. We say that $S$ is a \emph{crossed product} of $R$ and $G$ if $S=\oplus_{g\in G} S_g$  such that for every $g,h\in G$,  $S_gS_h\subseteq S_{gh}$, $S_1= R$ and for every $g\in G$ there exists a unit $u_g\in S_g$. We will write   \(S\cong R*G\). 
 It is clear that the multiplication is uniquely determined  by the rules \[u_{g_1}u_{g_2} = u_{g_1g_2}\tau(g_1,g_2)\,,\\ ru_{g_1} = u_{g_1}\sigma(g_1)(r)\,,\] where \(\tau\colon G \times G \to R^\times\) and \(\sigma\colon G \to \Aut(R)\) are functions satisfying the identities
\begin{equation}\label{eq:identities-crossed-product}
\begin{gathered}
    \tau(g_1g_2,g_3)\sigma(g_3)(\tau(g_1,g_2)) = \tau(g_1,g_2g_3)\tau(g_2,g_3)\,,\\
    \sigma(g_2)(\sigma(g_1)(r)) = \tau(g_1,g_2)^{-1}\sigma(g_1g_2)(r)\tau(g_1,g_2)\,,
\end{gathered}
\end{equation}
for every \(g_1,g_2,g_3 \in G\) and \(r \in R\) -- this follows directly from the associativity of the product in \(S\) (\cite[Lem. 1.1]{passmanInfiniteCrossedProducts2013}). Since $u_g$ are invertible in $S$, \(\{u_g\mid g \in G\}\) is a free basis of \(S\) as a left \(R\)-module.

Let \(G\) be a pro-\(p\) group,  \(U\)   a normal open subgroup of \(G\) and $T$ a transversal of \(U\) in \(G\). Since $\Falgebra{G}=\oplus_{t\in T} \Falgebra{U}t$, we obtain that 
  \(\Falgebra{G}\cong \Falgebra{U} * G/U\). For every \(g \in G\), let \(\overline{g}\) be it's representative in \(T\).   For any $t\in T$,   we put $u_{tU}=t$. In this case the maps \(\tau\) and \(\sigma\) can be explicitly described as follows:
 \(\sigma(g_1U)\) is conjugation by \(\overline{g_1}\) and \[\tau(g_1U,g_2U) = (\overline{g_1g_2})^{-1}\cdot \overline{g_1}\cdot \overline{g_2}\,.\]

\begin{prop}\label{prop:crossed-division-ring} Let \(G\) be a pro-\(p\) group and \(U\) a normal open subgroup of \(G\). Suppose \(\Falgebra{U}\) has a universal division ring of fractions \(\mathcal{Q}\). Then:
\begin{enumerate}[label=(\alph*)]
    \item The conjugation action of \(G\) on \(\Falgebra{U}\) extends to a homomorphism \(G \to \Aut(\mathcal{Q})\).
    \item For a fixed transversal \(T\) of \(G/U\), the maps from the crossed product decomposition \(\Falgebra{G} \simeq \Falgebra{U} * G/U\) can be extended to maps \(\sigma\colon G/U \to \Aut(\mathcal{Q})\) and \(\tau\colon G/U \times G/U \to \mathcal{Q}^\times\) satisfying the identities~(\ref{eq:identities-crossed-product}). In particular, the crossed product \(\mathcal{Q}*G/U\) exists and \(\Falgebra{G}\) embeds into it.
    \item If \(E\) is the ring of right \(\mathcal{Q}\)-endomorphisms of \(\mathcal{Q}*G/U\), then left multiplication induces an embedding of rings \(\mathcal{Q}*G/U \to E\) such that it makes \(E\) a free \(\mathcal Q*G/U\)-module on both sides.
    \item As a right \(\Falgebra{G}\)-module, the crossed product \(\mathcal{Q}*G/U\) is isomorphic to the induced module \(\mathcal{Q} \otimes_{\Falgebra{U}} \Falgebra{G}\).
\end{enumerate}
\end{prop}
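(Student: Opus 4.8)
The plan is to build everything on Proposition~\ref{extension}, which extends ring automorphisms to the universal division ring; write $u\colon \Falgebra{U}\to\mathcal{Q}$ for the universal map. For (a), I let $c\colon G\to\Aut(\Falgebra{U})$ be the conjugation action and apply Proposition~\ref{extension} to each $c_g$, obtaining a unique $\widetilde{c}_g\in\Aut(\mathcal{Q})$ with $u\circ c_g=\widetilde{c}_g\circ u$; uniqueness forces $\widetilde{c}_g\widetilde{c}_h=\widetilde{c}_{gh}$, so $g\mapsto\widetilde{c}_g$ is a homomorphism $G\to\Aut(\mathcal{Q})$ extending the conjugation action, and for $g\in U$ it is conjugation by $u(g)\in\mathcal{Q}^\times$. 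For (b), recall from the discussion preceding the statement that, for the fixed transversal $T$, the structure maps of $\Falgebra{G}\cong\Falgebra{U}*G/U$ are $\sigma(gU)=c_{\overline g}$ and $\tau(g_1U,g_2U)=(\overline{g_1g_2})^{-1}\,\overline{g_1}\,\overline{g_2}\in\Falgebra{U}^\times$. I then set $\sigma(gU):=\widetilde{c}_{\overline g}$ and keep $\tau$, now viewed with values in $\mathcal{Q}^\times$. The first identity of~(\ref{eq:identities-crossed-product}) is an equation among elements of $\Falgebra{U}$ -- note $\sigma(g_3U)$ sends $\tau(g_1U,g_2U)\in\Falgebra{U}$ back into $\Falgebra{U}$ -- so it is preserved under $u$. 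The second identity asserts the equality of two ring endomorphisms of $\mathcal{Q}$, namely $\widetilde{c}_{\overline{g_2}}\circ\widetilde{c}_{\overline{g_1}}$ and the composite of $\widetilde{c}_{\overline{g_1g_2}}$ with the inner automorphism $x\mapsto\tau(g_1U,g_2U)^{-1}x\,\tau(g_1U,g_2U)$; these agree on $u(\Falgebra{U})$ because the identity already holds in $\Falgebra{U}*G/U$. Since the locus where two ring homomorphisms from $\mathcal{Q}$ to a division ring agree is a division-closed subring, and $u$ is epic so that the division closure of $u(\Falgebra{U})$ is all of $\mathcal{Q}$, the two endomorphisms coincide on $\mathcal{Q}$. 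Hence $\sigma,\tau$ satisfy~(\ref{eq:identities-crossed-product}), the crossed product $\mathcal{Q}*G/U=\bigoplus_{gU}\mathcal{Q}\,u_{gU}$ exists, and the inclusion $\Falgebra{U}\hookrightarrow\mathcal{Q}$ induces an embedding $\Falgebra{G}=\Falgebra{U}*G/U\hookrightarrow\mathcal{Q}*G/U$.

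For (c), put $P=\mathcal{Q}*G/U$ and $n=|G:U|$. As $\{u_{gU}\}$ is a basis of $P$ both as a left and as a right $\mathcal{Q}$-module, $P_{\mathcal{Q}}$ is free of rank $n$ and $E=\operatorname{End}_{\mathcal{Q}}(P_{\mathcal{Q}})\cong\Mat_n(\mathcal{Q})$, with rows and columns indexed by $G/U$; left multiplication $\lambda\colon P\to E$ is a ring homomorphism, injective since $\lambda(a)(1)=a$. A short computation shows that $\lambda(q)$ is the diagonal matrix $\bigl(\sigma(gU)(q)\bigr)_{gU}$ and $\lambda(u_{hU})$ is the monomial matrix with entry $\tau(hU,gU)$ in position $(hgU,gU)$, so $\lambda(P)$ is the algebra of generalized monomial matrices attached to the left-translation action of $G/U$ on itself. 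Writing $e_g\in E$ for the diagonal matrix units, one checks $E=\bigoplus_{gU}\lambda(P)\,e_g$ (the decomposition of $E$ into columns), with each $\lambda(P)\,e_g\cong\lambda(P)$ as a left $\lambda(P)$-module, and $E=\bigoplus_{gU}e_g\,\lambda(P)$ (the decomposition into rows), with $e_g\,\lambda(P)\cong\lambda(P)$ as a right $\lambda(P)$-module; equivalently, $E\cong\Hom_{\mathcal{Q}}(P_{\mathcal{Q}},P)\cong P^n$ as a left $P$-module. Thus $E$ is free of rank $n$ over $\mathcal{Q}*G/U$ on both sides.

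For (d), regard $\mathcal{Q}$ as a right $\Falgebra{U}$-module via $u$ and form the induced right $\Falgebra{G}$-module $\mathcal{Q}\otimes_{\Falgebra{U}}\Falgebra{G}$. Using (b), I define $\Phi\colon\mathcal{Q}\otimes_{\Falgebra{U}}\Falgebra{G}\to\mathcal{Q}*G/U$ on elementary tensors by $q\otimes a\mapsto qa$, the product computed inside $\mathcal{Q}*G/U$; it is well defined, since $(qx)\otimes a$ and $q\otimes(xa)$ both map to $qxa$ for $x\in\Falgebra{U}$, and visibly right $\Falgebra{G}$-linear. Because $\Falgebra{G}=\bigoplus_{t\in T}\Falgebra{U}\,u_{tU}$ as a left $\Falgebra{U}$-module, we get $\mathcal{Q}\otimes_{\Falgebra{U}}\Falgebra{G}=\bigoplus_{t\in T}\mathcal{Q}\otimes u_{tU}$, and $\Phi$ carries $\mathcal{Q}\otimes u_{tU}\cong\mathcal{Q}$ isomorphically onto $\mathcal{Q}\,u_{tU}$; as $\mathcal{Q}*G/U=\bigoplus_{t\in T}\mathcal{Q}\,u_{tU}$, the map $\Phi$ is an isomorphism of right $\Falgebra{G}$-modules.

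The only step that is not purely formal is the passage of the cocycle identities~(\ref{eq:identities-crossed-product}) from $\Falgebra{U}$ to $\mathcal{Q}$ in part (b); this is precisely where Proposition~\ref{extension} and the rigidity of ring homomorphisms on a division closure are needed. Parts (a), (c) and (d) amount to bookkeeping with the universal division ring, matrix rings, and induced modules, respectively.
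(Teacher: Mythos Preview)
Your proof is correct and follows essentially the same approach as the paper. For (a), (b) and (d) your arguments are virtually identical to the paper's: extend each conjugation via Proposition~\ref{extension}, keep the same cocycle $\tau$, and verify the second identity of~(\ref{eq:identities-crossed-product}) using that $\Falgebra{U}\to\mathcal{Q}$ is epic (the paper phrases this directly via the epimorphism property, you via division closure; these are equivalent here). For (c), the paper works with the projection endomorphisms $\gamma_i\in E$ defined by $\gamma_i(g_j)=\delta_{ij}g_j$ and checks $E=\bigoplus_i(\mathcal{Q}*G/U)\circ\gamma_i=\bigoplus_i\gamma_i\circ(\mathcal{Q}*G/U)$, which is exactly your column/row decomposition $E=\bigoplus_{gU}\lambda(P)e_g=\bigoplus_{gU}e_g\lambda(P)$ under the identification $E\cong\Mat_n(\mathcal{Q})$.
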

\begin{proof} (a) This follows from Proposition \ref{extension}.

(b) Since \(U \subseteq \mathcal{Q}^\times\), one can take the same \(\tau\) as in the crossed product decomposition of \(\Falgebra{G}\). It is then only a matter of checking whether or not the map \(G/U \to \Aut(\mathcal{Q})\) induced through \(\sigma\) by \(T\) also satisfies the second identity in~(\ref{eq:identities-crossed-product}). However, for each fixed pair \(g_1,g_2 \in G\), both the left and the right hand side of that identity define ring automorphisms of \(\mathcal{Q}\) which coincide on \(\Falgebra{U}\). Since the inclusion of \(\Falgebra{U}\) into \(\mathcal{Q}\) is an epimorphism of rings, both sides must indeed be equal as automorphisms of \(\mathcal{Q}\).

(c) Let $T$ be a transversal of $U$ in $G$. Then  \(T = \{g_1,\ldots,g_n\}\) is a $\mathcal Q$-basis of $\mathcal{Q}*G/U$. for every $i=1,\ldots, n$ we define $\gamma_i\in E$ such that $\gamma_i(g_i)=\delta_{ij}g_j$.
We identify the elements of the ring $\mathcal{Q}*G/U$ with its images in $E$.

We have that $E=\bigoplus_{i=1}^n (\mathcal{Q}*G/U) \circ \gamma_i$. To see it, it is enough to show that if $\alpha_1,\ldots, \alpha_n\in  \mathcal{Q}*G/U $ and 
$\alpha=\sum_{i=1}^n\alpha_i\circ \gamma_i=0$, then  $\alpha_1=\ldots =\alpha_n=0 $. 
This follows from the equality $\alpha(g_i)=\alpha_i g_i$. We also have that $E=\bigoplus_{i=1}^n \gamma_i\circ (\mathcal{Q}*G/U)  $, for which it is enough to show that if $\alpha_1,\ldots, \alpha_n\in  \mathcal{Q}*G/U $ and 
$\alpha=\sum_{i=1}^n\gamma_i\circ \alpha_i=0$, then  $\alpha_1=\ldots =\alpha_n=0 $. 
Write $\alpha_i=\sum_{j=1}^n \alpha_{ij}g_j$. Then
 $$\alpha(g_k)=\sum_{g_i^{-1}g_jg_k\in U}\alpha_{ij}\tau(g_k,g_k)g_i.$$
 Now it is clear that if all $\alpha(g_k)=0$ then for every $i,j$, $\alpha_{ij}=0$.
 
(d)  Just observe that the map \( \mathcal{Q} \otimes_{\Falgebra{U}} \Falgebra{G} \to  \mathcal{Q}*G/U \) sending $a\otimes b$ to $ab$ is an isomorphism of right $\Falgebra{G}$-modules, whose inverse is given by sending \(q_1g_1 + \cdots + q_ng_n\) to \(q_1 \otimes g_1 + \cdots + q_n \otimes g_n\).

\end{proof}

\begin{thm}\label{thm:sylvester-virtual} Let \(G\) be a torsion-free finitely generated pro-\(p\) group. Then the following are equivalent:
\begin{enumerate}[label=(\Alph*)]
    \item \(\Falgebra{G}\) is a Sylvester domain.
    \item \(\Falgebra{U}\) is a Sylvester domain for every open subgroup \(U\) of \(G\).
    \item \(\Falgebra{U}\) is a Sylvester domain for some open subgroup \(U\) of \(G\).
\end{enumerate}
Moreover, in case one of the above holds and \(U\) is an open normal subgroup of \(G\) with a universal embedding \(\Falgebra{U} \to \mathcal{Q}\), then \(\mathcal{Q}* G/U\) is a division ring and the induced map \(\Falgebra{G} \to \mathcal{Q} * G/U\) is a universal embedding.
\end{thm}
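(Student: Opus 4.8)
The plan is to first establish the equivalence of (A), (B), (C) using Theorem~\ref{thm-jaikin-criterion-universality} and Proposition~\ref{prop:andrei-sylvester}, exploiting the crossed-product structure from Proposition~\ref{prop:crossed-division-ring}, and then to prove the ``moreover'' clause by a direct dimension-count argument on the ring $\mathcal{Q}*G/U$.

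The implication $(B)\Rightarrow(C)$ is trivial (take $U=G$), and $(B)\Rightarrow(A)$ is also immediate for the same reason. For $(A)\Rightarrow(B)$, suppose $\Falgebra{G}$ is a Sylvester domain and let $U\leq G$ be open; since being a Sylvester domain is inherited along descent to finite-index subgroups when the extension is nice enough, I would first reduce to the case $U$ normal (by passing to the normal core, which is open, and using that a Sylvester subring... wait — one must be careful, ``subring of a Sylvester domain'' is not automatically Sylvester). Instead the right route is: for $U\trianglelefteq G$ open normal, $\Falgebra{G}\cong\Falgebra{U}*G/U$ is a finitely generated free $\Falgebra{U}$-module on both sides, hence $\Falgebra{G}$ is faithfully flat over $\Falgebra{U}$; one then checks that the universal division ring $\mathcal{D}_{\Falgebra{G}}$, restricted appropriately, furnishes an envelope for $\rk_U$ on $\Falgebra{U}$ satisfying hypotheses (1) and (2) of Theorem~\ref{thm-jaikin-criterion-universality} — for (2) one uses Lemma~\ref{lem:sylvester-pro-p-is-coherent} together with the fact that an $\Falgebra{U}$-submodule of $\mathcal{D}_{\Falgebra{G}}^m$ generates an $\Falgebra{G}$-submodule and free-ness descends since $\Falgebra{G}$ is free over $\Falgebra{U}$. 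Conversely, for $(C)\Rightarrow$ everything: given $U\trianglelefteq G$ open normal (again replace by core) with $\Falgebra{U}$ a Sylvester domain, Proposition~\ref{prop:crossed-division-ring}(a),(b) produces the crossed product $\mathcal{Q}*G/U$ containing $\Falgebra{G}$, part (c) realizes it inside the von Neumann regular (in fact matrix, hence Artinian) ring $E$ of right $\mathcal{Q}$-endomorphisms, and part (d) identifies $\mathcal{Q}*G/U$ with $\mathcal{Q}\otimes_{\Falgebra{U}}\Falgebra{G}$ as a right $\Falgebra{G}$-module. I would then verify the two hypotheses of Theorem~\ref{thm-jaikin-criterion-universality} with $S=\Falgebra{G}$, $R=\Falgebra{G}$, $\mathcal{U}=E$: hypothesis (1), $\Tor_1^{\Falgebra{G}}(E,E)=0$, follows because $E$ is free over $\mathcal{Q}*G/U$ on both sides (Proposition~\ref{prop:crossed-division-ring}(c)) and $\mathcal{Q}*G/U\cong\mathcal{Q}\otimes_{\Falgebra{U}}\Falgebra{G}$ is flat over $\Falgebra{G}$ because $\mathcal{Q}$ is flat over $\Falgebra{U}$ by Proposition~\ref{prop:andrei-sylvester}(1) applied to $\Falgebra{U}$ together with the fact that $\mathcal{Q}$ is a direct union of free $\Falgebra{U}$-modules (Proposition~\ref{prop:andrei-sylvester}(2) gives the submodule structure, and flatness of $\mathcal{D}_{\Falgebra{U}}$ over $\Falgebra{U}$ is standard for Sylvester domains); hypothesis (2) reduces, via the same base-change $\Falgebra{G}$-free-over-$\Falgebra{U}$ argument, to the coherence statement of Lemma~\ref{lem:sylvester-pro-p-is-coherent} for $\Falgebra{U}$. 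Theorem~\ref{thm-jaikin-criterion-universality} then yields that $\Falgebra{G}$ is a Sylvester domain with $\irk_{\Falgebra{G}} = \rk$, where $\rk$ is the rank function $E$ induces, and that the division closure of $\Falgebra{G}$ in $E$ is $\mathcal{D}_{\Falgebra{G}}$.

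For the ``moreover'' clause, let $U\trianglelefteq G$ be open normal with universal embedding $\Falgebra{U}\to\mathcal{Q}$; by what was just shown $\Falgebra{G}$ is a Sylvester domain. I claim $\mathcal{Q}*G/U$ is already a division ring. Set $n = |G:U|$. The ring $\mathcal{Q}*G/U$ is a free left (and right) $\mathcal{Q}$-module of rank $n$, hence Artinian; so it suffices to show it has no nontrivial two-sided ideals and no zero divisors, or more efficiently to compute dimensions. The key computation: the division closure $\mathcal{D}$ of $\Falgebra{G}$ inside $E$ lies inside the subring $\mathcal{Q}*G/U\subseteq E$ (because $\mathcal{Q}*G/U$ is already a subring containing $\Falgebra{G}$, but one must check it is division-closed in $E$ — this is where I would use that $\mathcal{Q}*G/U$, being Artinian with no proper nonzero ideals once we know it's simple, equals its own division closure; alternatively argue that any element of $\mathcal{Q}*G/U$ invertible in $E$ is invertible in $\mathcal{Q}*G/U$ by a rank/dimension argument over $\mathcal{Q}$). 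Then $\mathcal{D}=\mathcal{D}_{\Falgebra{G}}$ is a division subring of $\mathcal{Q}*G/U$ containing $\Falgebra{G}$. On the other hand $\mathcal{Q}*G/U = \mathcal{Q}\otimes_{\Falgebra{U}}\Falgebra{G}$ as a right $\Falgebra{G}$-module, so $\mathcal{Q}*G/U$ is generated as a right module over $\Falgebra{G}$ — and hence over $\mathcal{D}_{\Falgebra{G}}$ — by the finitely many elements $1\otimes t$, $t\in T$; thus $\mathcal{Q}*G/U$ is a finite-dimensional right $\mathcal{D}_{\Falgebra{G}}$-vector space. A finite-dimensional algebra over a division ring which is a domain is itself a division ring; so I must also check $\mathcal{Q}*G/U$ has no zero divisors. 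That follows because $\Falgebra{G}$ has no zero divisors (being a Sylvester domain, hence a domain) and $\mathcal{Q}*G/U$ is flat over $\Falgebra{G}$ with $\Falgebra{G}\hookrightarrow\mathcal{Q}*G/U$; more precisely, any zero divisor would give, after the identification with $\mathcal{Q}\otimes_{\Falgebra{U}}\Falgebra{G}$ and using that $\mathcal{Q}$ is the Ore-type localization inside $\mathcal{D}_{\Falgebra{U}}$, a zero divisor in $\mathcal{D}_{\Falgebra{U}}*G/U$, contradicting that the latter embeds in the domain $\mathcal{D}_{\Falgebra{G}}$... Once $\mathcal{Q}*G/U$ is known to be a division ring, it contains $\mathcal{D}_{\Falgebra{G}}$ and is a division ring generated by $\Falgebra{G}$ together with $T\subseteq\Falgebra{G}$, so it equals $\mathcal{D}_{\Falgebra{G}}$; equivalently, the division closure of $\Falgebra{G}$ in $E$ is all of $\mathcal{Q}*G/U$, so the embedding $\Falgebra{G}\to\mathcal{Q}*G/U$ is universal.

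The main obstacle I anticipate is the ``descent of coherence/freeness'' bookkeeping between $\Falgebra{U}$ and $\Falgebra{G}$ — i.e., carefully verifying hypothesis (2) of Theorem~\ref{thm-jaikin-criterion-universality} for $\Falgebra{G}$ from the corresponding statement for $\Falgebra{U}$, using that $\Falgebra{G}$ is a two-sided finitely generated free $\Falgebra{U}$-module and that $\mathcal{Q}*G/U$ is the induced module. The flatness input from Proposition~\ref{prop:andrei-sylvester}(1) applied to $\Falgebra{U}$ is essential here and should be invoked explicitly. A secondary subtlety is the reduction to $U$ normal: one should note that if $\Falgebra{U}$ is a Sylvester domain for some open $U$, then $\Falgebra{V}$ is a Sylvester domain for the normal core $V\trianglelefteq G$ of $U$, which requires running the $(C)\Rightarrow$ argument one level down with $G$ replaced by $U$ and $U$ replaced by $V$; since $V$ has finite index in $U$ this is the same theorem applied to the pair $(U,V)$.
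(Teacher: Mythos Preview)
Your overall architecture is correct --- both directions go through Theorem~\ref{thm-jaikin-criterion-universality}, with Proposition~\ref{prop:crossed-division-ring} supplying the ambient von Neumann regular ring $E$ for $(C)\Rightarrow(A)$ --- but there are two genuine gaps.

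\textbf{First gap (hypothesis (2) in $(C)\Rightarrow(A)$).} You write that hypothesis (2) ``reduces, via the same base-change $\Falgebra{G}$-free-over-$\Falgebra{U}$ argument, to the coherence statement of Lemma~\ref{lem:sylvester-pro-p-is-coherent} for $\Falgebra{U}$.'' This is where the argument breaks. Lemma~\ref{lem:sylvester-pro-p-is-coherent} applied to $\Falgebra{U}$ tells you that a kernel $I\subseteq\Falgebra{G}^n$ is free \emph{as an $\Falgebra{U}$-module}, which only gives that $I$ is finitely generated over $\Falgebra{G}$; it does \emph{not} give $\Falgebra{G}$-freeness. The paper closes this gap as follows: iterating the argument shows $I$ is finitely presented, hence $\Tor_1^{\Falgebra{G}}(I,\F_p)\cong\Tor_2^{\Falgebra{G}}(M,\F_p)$; then one needs $\cd G\le 2$ to run the long exact sequence for $M\hookrightarrow(\mathcal{Q}*G/U)^p$ and conclude $\Tor_2^{\Falgebra{G}}(M,\F_p)=0$ (using that $\mathcal{Q}$ has weak dimension $\le 1$ over $\Falgebra{U}$ by Proposition~\ref{prop:andrei-sylvester}). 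But at this point $\Falgebra{G}$ is not yet known to be Sylvester, so Proposition~\ref{prop:cohomologial-dim-sylvester} is unavailable for $G$; one must invoke \emph{Serre's theorem} that a torsion-free pro-$p$ group with an open subgroup of finite cohomological dimension has the same cohomological dimension. This step is essential and entirely absent from your plan.

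\textbf{Second gap (hypothesis (1) in $(C)\Rightarrow(A)$).} You assert that $\mathcal{Q}*G/U$ is flat over $\Falgebra{G}$ because ``$\mathcal{Q}$ is flat over $\Falgebra{U}$ \ldots\ standard for Sylvester domains.'' This is false: the universal division ring of a Sylvester domain has weak dimension at most $1$ (implicit in Proposition~\ref{prop:andrei-sylvester}) but is \emph{not} flat in general. The paper does not claim flatness; instead it computes directly
\[
\Tor_1^{\Falgebra{G}}(E,E)\;\cong\;\Tor_1^{\Falgebra{G}}(\mathcal{Q}\otimes_{\Falgebra{U}}\Falgebra{G},\,\mathcal{Q}*G/U)^{p^2}\;\cong\;\Tor_1^{\Falgebra{U}}(\mathcal{Q},\mathcal{Q}^p)^{p^2}=0,
\]
the last vanishing by Proposition~\ref{prop:andrei-sylvester}(1). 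This is precisely the Shapiro-type reduction you need, and it avoids the flatness claim entirely.

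\textbf{On the ``moreover'' clause.} Your argument here is circuitous and the step ``$\mathcal{Q}*G/U$ has no zero divisors because it embeds in $\mathcal{D}_{\Falgebra{G}}$'' is close to assuming the conclusion. The paper's route is cleaner: once Theorem~\ref{thm-jaikin-criterion-universality} applies, the division closure of $\Falgebra{G}$ in $E$ is a division ring; it contains $\mathcal{Q}$ (as the division closure of $\Falgebra{U}$) and the transversal, hence contains $\mathcal{Q}*G/U$ as a subring; thus $\mathcal{Q}*G/U$ is a domain, and being finite-dimensional over $\mathcal{Q}$ it is a division ring.
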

\begin{proof} \textit{(A) \(\implies\) (B)} Suppose first that \(\Falgebra{G}\) is a Sylvester domain with universal division ring of fractions \(\mathcal{Q}\). We want to show that the induced embedding \(\Falgebra{U} \to \Falgebra{G} \to \mathcal{Q}\) satisfies the hypothesis of Theorem~\ref{thm-jaikin-criterion-universality}. 

First, we claim that the right \(\Falgebra{G}\)-module \(\mathcal{Q} \otimes_{\Falgebra{U}} \Falgebra{G}\) is a submodule of \(\mathcal{Q}^r\) for some \(r\). Indeed, as a left \(\mathcal{Q}\)-vector space, it has a basis \(\{1 \otimes g\mid g \in T\}\) for some transversal \(T\) of \(G/U\). One then checks that the map that sends \(\sum_{g \in T} r_g \otimes g\) to \((r_gg)_{g \in T}\) in \(\mathcal{Q}^{|G\colon U|}\) is right \(\Falgebra{G}\)-equivariant. This implies that for every finitely generated right \(\Falgebra{U}\)-submodule \(M\) of \(\mathcal{Q}\), the right \(\Falgebra{G}\)-module \(M \otimes_{\Falgebra{U}} \Falgebra{G}\) is a submodule of \(\mathcal{Q}^{|G\colon U|}\). Hence \[\Tor_1^{\Falgebra{U}}(M,\mathcal{Q}) \simeq \Tor_1^{\Falgebra{G}}(M \otimes_{\Falgebra{U}} \Falgebra{G}, \mathcal{Q}) = 0\] by Proposition~\ref{prop:andrei-sylvester}. Since \(\mathcal{Q}\) is the direct limit of its finitely generated submodules, we conclude that \(\Tor_1^{\Falgebra{U}}(\mathcal{Q},\mathcal{Q})= 0\).

Now take a short exact sequence \[0 \to I \to \Falgebra{U}^n \to M \to 0\,.\] Tensoring once more with \(\Falgebra{G}\) over \(\Falgebra{U}\), we get a short exact sequence \[0 \to I\otimes_{\Falgebra{U}} \Falgebra{G} \to \Falgebra{G}^n \to M \otimes_{\Falgebra{U}} \Falgebra{G} \to 0\,.\] Since \(M \otimes_{\Falgebra{U}} \Falgebra{G}\) is a submodule of \(\mathcal{Q}^{|G\colon U|}\), by Lemma~\ref{lem:sylvester-pro-p-is-coherent} the module \(I \otimes_{\Falgebra{U}} \Falgebra{G}\) is free of finite rank. By tensoring over \(\Falgebra{G}\) with \(\F _p\), we conclude that \(I\) is finitely generated over \(\Falgebra{U}\).

Then, \(I\) is itself a finitely generated \(\Falgebra{U}\)-submodule of \(\mathcal{Q}^n\), so one can repeat the prior argument with the short exact sequence \[0 \to J \to \Falgebra{U}^k \to I \to 0\] to conclude that \(I\) must be finitely presented and hence of type \(FP_\infty\) over \(\Falgebra{U}\). Therefore, by \cite[Lem. 2.1]{brumerPseudocompactAlgebrasProfinite1966} we conclude that
 \[\CTor_1^{\Falgebra{U}}(I,\F_p) \simeq \Tor_1^{\Falgebra{U}}(I,\F _p) \simeq \Tor_1^{\Falgebra{G}}(I \otimes_{\Falgebra{U}} \Falgebra{G}, \F _p) = 0\,,\] where \(\CTor\) is the derived functor of the completed tensor product of profinite modules. Since $I$ is profinite, \(I\) must be projective over \(\Falgebra{U}\) by \cite[Prop. 3.1]{brumerPseudocompactAlgebrasProfinite1966} and hence free (and of finite rank) by Kaplansky's theorem on projective modules over local rings. By Theorem~\ref{thm-jaikin-criterion-universality}, \(\Falgebra{U}\) is a Sylvester domain and its division closure \(\mathcal{Q}_0\) inside \(\mathcal{Q}\) is \(\Falgebra{U}\)-isomorphic to it's universal division ring of fractions. 

\textit{(B) \(\implies\) (C)} is immediate.

\textit{(C) \(\implies\) (A)} Now, suppose that \(\Falgebra{U}\) is a Sylvester domain with universal division ring of fractions \(\mathcal{Q}\) for some open subgroup of \(G\). It suffices to prove the theorem for the case \(|G\colon U| = p\), so we may assume \(U\) is a normal subgroup of \(G\). Then, by Proposition~\ref{prop:crossed-division-ring}, the embedding \(\Falgebra{U} \to \mathcal{Q}\) induces an embedding \(\Falgebra{G} \to \mathcal{Q} * G/U \to E\) where \(E\) is the ring of right \(\mathcal{Q}\)-endomorphisms of \(\mathcal{Q}*G/U\), and we will check the conditions of Theorem~\ref{thm-jaikin-criterion-universality} for this embedding. The ring \(E\) is von Neumann regular as it is isomorphic to a matrix ring over a division ring, and since \(\mathcal{Q}* G/U\) is isomorphic to the induced module \(\mathcal{Q} \otimes_{\Falgebra{U}} \Falgebra{G}\) as right \(\Falgebra{G}\)-modules, we have:
\begin{align*}
\Tor_1^{\Falgebra{G}}(E,E) &\simeq \Tor_1^{\Falgebra{G}}((\mathcal{Q}*G/U)^p,(\mathcal{Q}*G/U)^p)\\
&\simeq \Tor_1^{\Falgebra{G}}(\mathcal{Q}*G/U,\mathcal{Q}*G/U)^{p^2} \\
&\simeq \Tor_1^{\Falgebra{G}}(\mathcal{Q} \otimes_{\Falgebra{U}} \Falgebra{G},\mathcal{Q}*G/U)^{p^2}\\
&\simeq \Tor_1^{\Falgebra{U}}(\mathcal{Q}, \mathcal{Q}^p)^{p^2} = 0\,.
\end{align*}

Now, take any finitely generated right \(\Falgebra{G}\)-submodule \(M\) of \(E \simeq (\mathcal{Q} * G/U)^p\) and a short exact sequence \[0 \to I \to \Falgebra{G}^n \to M \to 0\,.\] As an \(\Falgebra{U}\)-module, \(I\) must be free of finite rank by Lemma~\ref{lem:sylvester-pro-p-is-coherent}. Hence, \(I\) is finitely generated over \(\Falgebra{G}\). Taking another short exact sequence \[0 \to J \to \Falgebra{G}^k \to I \to 0\] and repeating this argument, one obtains that \(J\) must also be finitely generated, so that \(I\) is finitely presented. In particular, \[  \Tor_1^{\Falgebra{G}}(I,\F _p) \simeq \Tor_2^{\Falgebra{G}}(M,\F _p)\,.\]

By Proposition \ref{prop:cohomologial-dim-sylvester}, $U$ is of cohomological dimension 2. By Serre's theorem \cite{Se65} (see also \cite{haran_proof_1990}), $G$ is also of cohomological dimension 2.  Thus, since \(M\) is a finitely presented submodule of \((\mathcal{Q}*G/U)^p\), there is an exact sequence of \(\Tor\) groups: \[0 \to \Tor_2^{\Falgebra{G}}(M, \F _p) \to \Tor_2^{\Falgebra{G}}(\mathcal{Q} * G/U, \F _p)^p \simeq \Tor_2^{\Falgebra{U}}(\mathcal{Q},\F _p)^p\,.\] By Proposition~\ref{prop:andrei-sylvester}, the \(\Falgebra{U}\)-module \(\mathcal{Q}\) has weak dimension at most \(1\), so that all the \(\Tor\) groups above vanish. Since \(\Tor_1^{\Falgebra{G}}(I,\F _p) = 0\) and $I$ is finitely generated, the \(\Falgebra{G}\)-module \(I\) is free of finite rank and one can apply Theorem~\ref{thm-jaikin-criterion-universality}.

Observe that \(\mathcal{Q}*(G/U)\) is then a domain since it embeds into a division ring. Given that it is also a finite dimensional \(\mathcal{Q}\)-algebra, it must be itself a division ring, so we've shown that \(\mathcal{Q}*(G/U)\) is the universal division ring of fractions of \(\Falgebra{G}\). 
\end{proof}

\begin{cor}\label{cor-atiyah-invariant} Let \(G\) be a torsion-free finitely generated pro-\(p\) group. Then the following are equivalent:
\begin{enumerate}[label=(\Alph*')]
    \item \(\rk_G = \irk_{\Falgebra{G}}\).
    \item \(\rk_U = \irk_{\Falgebra{U}}\) for every open subgroup \(U\) of \(G\).
    \item \(\rk_U = \irk_{\Falgebra{U}}\) for some open subgroup \(U\) of \(G\).
\end{enumerate}
\end{cor}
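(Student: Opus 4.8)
The plan is to derive the corollary from Theorem~\ref{thm:sylvester-virtual} by keeping careful track of how both $\rk_G$ and $\irk_{\Falgebra{G}}$ behave when one passes to an open subgroup. Note first that each of (A$'$), (B$'$), (C$'$) forces $\Falgebra{G}$ -- and hence, by Theorem~\ref{thm:sylvester-virtual}, $\Falgebra{V}$ for \emph{every} open $V \leq G$ -- to be a Sylvester domain, so I may assume throughout that all completed group algebras in sight are Sylvester domains with universal division rings of fractions. Everything then reduces to the following two assertions, stated for an arbitrary pro-$p$ group $H$ with an open \emph{normal} subgroup $U \trianglelefteq H$ (both $\Falgebra{H}$ and $\Falgebra{U}$ being Sylvester domains): (i) $\rk_H = \irk_{\Falgebra{H}}$ implies $\rk_U = \irk_{\Falgebra{U}}$; and (ii) $\rk_U = \irk_{\Falgebra{U}}$ implies $\rk_H = \irk_{\Falgebra{H}}$. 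Granting (i) and (ii), the corollary is formal: (B$'$)$\Rightarrow$(C$'$) is trivial, and for the remaining two implications one replaces the given open subgroup $U \leq G$ by its normal core $V = \bigcap_{g \in G} gUg^{-1}$, which is open and normal in both $G$ and $U$, and applies (i) then (ii): for (A$'$)$\Rightarrow$(B$'$) apply (i) with $H = G$ to get $\rk_V = \irk_{\Falgebra{V}}$ and then (ii) with $H = U$; for (C$'$)$\Rightarrow$(A$'$) apply (i) with $H = U$ and then (ii) with $H = G$.

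The technical core is a pair of ``index times dimension'' identities, valid for $U \trianglelefteq H$ open of index $k$ with both algebras Sylvester domains and for any finitely presented left $\Falgebra{H}$-module $M$ (recall that a Sylvester matrix rank function is determined by the values its associated module rank function takes on finitely presented modules). On the one hand, the last part of Theorem~\ref{thm:sylvester-virtual} identifies $\mathcal{D}_{\Falgebra{H}}$ with the crossed product $\mathcal{D}_{\Falgebra{U}} * H/U$, which is free of rank $k$ as a left $\mathcal{D}_{\Falgebra{U}}$-module, and Proposition~\ref{prop:crossed-division-ring}(d) gives a left-$\mathcal{D}_{\Falgebra{U}}$-linear isomorphism $\mathcal{D}_{\Falgebra{H}} \otimes_{\Falgebra{H}} M \cong \mathcal{D}_{\Falgebra{U}} \otimes_{\Falgebra{U}} M$; counting $\mathcal{D}_{\Falgebra{U}}$-dimensions on both sides yields $\dim_{\irk_{\Falgebra{U}}}(M|_{\Falgebra{U}}) = k \cdot \dim_{\irk_{\Falgebra{H}}}(M)$. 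On the other hand, choosing a descending chain $H = N_1 > N_2 > \cdots$ of open normal subgroups of $H$ with trivial intersection and with $N_i \leq U$ for all $i$, and using that $\F_p \otimes_{\Falgebra{N_i}} M$ is insensitive to whether $M$ is viewed over $\Falgebra{H}$ or over $\Falgebra{U}$, the finite-quotient formula for $\dim$ (\cite[Prop.~2.15]{Ja24}, cf.\ Example~\ref{exmp-rk-G}) gives $\dim_U(M|_{\Falgebra{U}}) = k \cdot \dim_H(M)$. Assertion (ii) now drops out immediately, since every finitely presented $\Falgebra{H}$-module arises as such an $M$, so $\rk_U = \irk_{\Falgebra{U}}$ forces $\dim_H(M) = \dim_{\irk_{\Falgebra{H}}}(M)$ for all finitely presented $M$.

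For (i) the obstacle -- and, I expect, the main subtlety of the whole argument -- is that a general finitely presented $\Falgebra{U}$-module need not be the restriction of an $\Falgebra{H}$-module, so the identities above do not directly apply to it. I would get around this with the induced module $M = \Falgebra{H} \otimes_{\Falgebra{U}} N$, which is finitely presented over $\Falgebra{H}$ because $\Falgebra{H}$ is free of finite rank over $\Falgebra{U}$; decomposing $\Falgebra{H} = \bigoplus_{t \in T} \Falgebra{U} t$ over a transversal $T$ of $U$ in $H$ and using normality of $U$ shows that $M|_{\Falgebra{U}}$ is a direct sum of the $k$ conjugates ${}^{t}N$ of $N$, each obtained from $N$ by twisting the action by the automorphism of $\Falgebra{U}$ induced by conjugation by $t$. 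Since the chain $\{N_i\}$ can be taken normal in $H$, these conjugations fix every $N_i$, so they leave both $\dim_U$ and $\dim_{\irk_{\Falgebra{U}}}$ invariant (the former by the finite-quotient formula, the latter because the inner rank is automorphism-invariant); hence $\dim_U(M|_{\Falgebra{U}}) = k\,\dim_U(N)$ and $\dim_{\irk_{\Falgebra{U}}}(M|_{\Falgebra{U}}) = k\,\dim_{\irk_{\Falgebra{U}}}(N)$. Combining this with the two index identities gives $\dim_U(N) = \dim_H(M)$ and $\dim_{\irk_{\Falgebra{U}}}(N) = \dim_{\irk_{\Falgebra{H}}}(M)$, so if $\rk_H = \irk_{\Falgebra{H}}$ then $\dim_U(N) = \dim_{\irk_{\Falgebra{U}}}(N)$ for every finitely presented $\Falgebra{U}$-module $N$, that is, $\rk_U = \irk_{\Falgebra{U}}$. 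The heart of the matter is the asymmetry in how the two rank functions are controlled -- $\irk$ through the universal division ring and its crossed-product structure, $\rk_H$ through finite quotients -- reconciled by the fact that both scale by the index under restriction, with the induced-module device compensating for restriction from $H$ failing to be surjective on finitely presented modules.
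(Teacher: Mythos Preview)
Your argument is correct and follows essentially the same route as the paper's proof: both reduce the question to the two index-scaling identities $\dim_U(M|_{\Falgebra{U}}) = |H\colon U|\,\dim_H(M)$ and $\dim_{\irk_{\Falgebra{U}}}(M|_{\Falgebra{U}}) = |H\colon U|\,\dim_{\irk_{\Falgebra{H}}}(M)$, the first coming from the finite-quotient description of $\rk_G$ and the second from the crossed-product identification $\mathcal{D}_{\Falgebra{H}} \cong \mathcal{D}_{\Falgebra{U}} * H/U$ of Theorem~\ref{thm:sylvester-virtual}. Your treatment is in fact a bit more careful than the paper's: you make the normal-core reduction explicit (the paper's (C$'$)$\Rightarrow$(A$'$) invokes the crossed product $\mathcal{Q}*G/U$ without first replacing $U$ by a normal subgroup), and you spell out the conjugate-invariance of both rank functions when decomposing the restricted induced module, which the paper absorbs into a one-line computation.
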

\begin{proof} We first observe that if \(\rk_U = \irk_{\Falgebra{U}}\), then the inner rank is a Sylvester matrix rank function on \(\Falgebra{U}\) and therefore it is a Sylvester domain having a universal division ring of fractions.

(A') \(\implies\) (B') Let \(\mathcal{Q}\) be the universal division ring of fractions of \(\Falgebra{G}\), \(A\) be a \(n\times m\) matrix over \(\Falgebra{U}\) (which we also see as a matrix over \(\Falgebra{G}\)) and define the left \(\Falgebra{U}\)-module \(M = \Falgebra{U}^m/\Falgebra{U}^nA\). Note that \(\Falgebra{G} \otimes_{\Falgebra{U}} M \simeq \Falgebra{G}^m/\Falgebra{G}^n A\). Hence:

\begin{align*}
    \dim_U M &= \frac{\dim_U \Falgebra{G} \otimes_{\Falgebra{U}} M}{|G\colon U|}\\
    &= \dim_G \Falgebra{G} \otimes_{\Falgebra{U}} M\\
    &= \dim_{\mathcal{Q}} \mathcal{Q} \otimes_{\Falgebra{G}} \Falgebra{G} \otimes_{\Falgebra{U}} M\\
    &= \dim_{\mathcal{Q}} \mathcal{Q} \otimes_{\Falgebra{U}} M\,.\\
\end{align*}

(B') \(\implies\) (C') Is immediate.

(C') \(\implies\) (A') We've shown in Theorem~\ref{thm:sylvester-virtual} that if \(\mathcal{Q}\) is the universal division ring of fractions of \(\Falgebra{U}\), then \(\mathcal{Q} * G/U\) is the universal division ring of fractions of \(\Falgebra{G}\). Hence, for any \(n\times m\) matrix \(A\) over \(\Falgebra{G}\) which we see as a matrix over \(\mathcal{Q}*(G/U)\) and as a \(n|G\colon U|\times m|G\colon U|\) matrix over \(\Falgebra{U}\):
\[\irk_{\Falgebra{G}} A = \rk_{\mathcal{Q}*(G/U)} A = \frac{\rk_{\mathcal{Q}} A}{|G\colon U|} = \frac{\irk_{\Falgebra{U}} A}{|G\colon U|}= \frac{\rk_U A}{|G\colon U|} = \rk_G A\,.\qedhere\]
\end{proof}

\section{Mild flag pro-\texorpdfstring{\(p\)}{p} groups}\label{sec:strongly-free-sequences}

Let \(G\) be a pro-\(p\) group with a closed normal subgroup \(N\) such that \(N = F(x_1,x_2,\ldots)\) is a free pro-\(p\) group of at most countable rank and \(G/N \simeq \Z_p\). Let \(g \in G\) be such that \(gN\) topologically generates \(G/N\), and consider the group algebra \(S = \Falgebra{G}\). We want to decompose it as a skew power series algebra over \(R = \Falgebra{N}\) with the automorphism \(\sigma\) of \(S\) is given by \(x \mapsto g^{-1}xg\) and the derivation \(\delta\) is \(\sigma - \operatorname{id}\). If we let \(s = g - 1 \in S\), observe that we get the relation \[xs = s\sigma(x) + \delta(x)\] for any \(x \in S\). 

To better understand the filtered structure of \(S\) and \(R\) and their respective associated graded rings, we fix the notation for the following filtration on \(S\): if \(I_G\) is the augmentation ideal of \(S\), we let \(S_k = I_G^k\) for \(\geq 0\) and \(S_k = S\) for \(k < 0\). We define the continuous function \(w\colon S \to [0,1]\) through: \begin{equation}\label{eq-defining-valuation} 
    w(a) = p^{-\sup\{k \in \Z \mid a \in S_k\}}\,.
\end{equation} It is directly verified that \(w(a)\) is a \emph{filtration function} in the sense of \cite[Def. I.2.1.1]{lazardGroupesAnalytiquesPadiques1965} (or a valuation in the sense of \cite[Sec. 2.2]{ershovGroupsPositiveWeighted2013}), that is, \(w\) satisfies:
\begin{enumerate}[label=(\roman*)]
    \item \(w(a) = 0\) if and only if \(a = 0\);
    \item \(w(1) = 1\);
    \item \(w(a + b) \leq \max\{w(a),w(b)\}\) for any pair \(a,b \in S\).
    \item \(w(ab) \leq w(a)w(b)\).
\end{enumerate}
The conditions above also imply a stronger version of (iii) called the strong ultrametric inequality: if moreover \(w(a) \neq w(b)\), then \(w(a+b) = \max\{w(a),w(b)\}\). Following \cite[I.2.2.1]{lazardGroupesAnalytiquesPadiques1965}, we shall say that \(w\) is a \emph{valuation} if \(w\) satisfies a stronger version of (iv): \(w(ab) = w(a)w(b)\) for any pair \(a,b \in S\). This is the definition also adopted in \cite{labuteMildPropgroupsGalois2006}.

\begin{prop}\label{prop-decomposition-power-series} The completed group algebra \(S\) is isomorphic to the right skew power series ring \(R[\![s; \sigma, \delta]\!]\).\end{prop}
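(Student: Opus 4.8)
The plan is to verify directly that $S = \Falgebra{G}$ satisfies the hypotheses needed for the skew power series ring $R[\![s;\sigma,\delta]\!]$ to exist, and then to exhibit an explicit isomorphism. First I would check that $\sigma$ (conjugation by $g$) is a continuous automorphism of $R = \Falgebra{N}$ with respect to the filtration $R_k = S_k \cap R$, which is clear since $g$ normalizes $N$ and conjugation preserves the augmentation ideal $I_G$; consequently $\delta = \sigma - \operatorname{id}$ is a continuous right $\sigma$-derivation and commutes with $\sigma$, as recorded in Paragraph~\ref{spsr}. The key analytic point is that $\delta$ is \emph{topologically nilpotent} on $R$: I would argue that $\delta(R_k) \subseteq R_{k+1}$, i.e. $w(\delta(a)) \le p^{-1} w(a)$, because for $x \in N$ one has $g^{-1}xg\cdot x^{-1} \in [N,G] \subseteq D_2(G) \cap N$ (using $G_i \le D_i(G)$ and that $s = g-1 \in I_G$), so that $\sigma(x) - x = x(g^{-1}xg x^{-1} - 1) \in I_G \cdot R$ lands one step deeper in the filtration; extending $\F_p$-linearly and multiplicatively gives $\delta(R_k)\subseteq R_{k+1}$, whence $\delta^{k} \to 0$ pointwise. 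This guarantees the infinite sums in~(\ref{eq-power-series-mult}) converge and $R[\![s;\sigma,\delta]\!]$ exists as a complete topological $R$-algebra.

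Next I would construct the map. Set $s = g - 1 \in S$. Since $R$ is a closed subring of $S$ and $s \in I_G$, the relation $xs = s\sigma(x) + \delta(x)$ holds for all $x \in R$ (this is just $xg = g\cdot g^{-1}xg$ rewritten), so there is a continuous $R$-algebra homomorphism $\Psi\colon R[\![s;\sigma,\delta]\!] \to S$ sending $s \mapsto s$ and restricting to the identity on $R$; continuity follows once we know $\Psi$ decreases the filtration appropriately, which in turn follows from $s \in S_1$ together with $\delta(R_k)\subseteq R_{k+1}$. I would then show $\Psi$ is an isomorphism by passing to associated graded rings for the filtration $w$. On the target side, $\operatorname{gr} S$ is the restricted enveloping algebra (or a suitable quotient of the universal envelope) of the graded Lie algebra of $G$; on the source side, $\operatorname{gr} R[\![s;\sigma,\delta]\!] \cong (\operatorname{gr} R)[\![\bar s]\!]$ with $\bar s$ the principal symbol of $s$, and the induced map $\operatorname{gr}\Psi$ matches the symbol of $g-1$ with $\bar s$ and is the identity on $\operatorname{gr} R$. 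The element $g$ together with a free generating set of $N$ forms (mod $\Phi(G)$) a basis adapted to the extension $1\to N\to G\to\Z_p\to 1$, so $\operatorname{gr}\Psi$ is a graded isomorphism; since both rings are complete and Hausdorff for their filtrations, $\Psi$ itself is an isomorphism.

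The main obstacle I expect is the verification that $\delta$ is topologically nilpotent and, relatedly, that the filtration $w$ on $S$ restricts well enough to $R$ for the graded comparison to be clean — one must be careful that $R_k = I_G^k \cap R$ is the ``correct'' filtration on $R$ (rather than the intrinsic augmentation filtration $I_N^k$ of $\Falgebra{N}$, which can differ) and that $\operatorname{gr} R$ with respect to $R_k$ is still a polynomial-type algebra on the symbols of the $x_i$. Establishing $\delta(R_k) \subseteq R_{k+1}$ requires the commutator computation above plus the Leibniz rule, and one should check the base case and the multiplicative propagation carefully; everything else (convergence of the multiplication, continuity of $\Psi$, surjectivity and injectivity via $\operatorname{gr}$) is then routine. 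An alternative to the graded argument, if the graded structure of $R$ for the induced filtration is awkward to pin down, is to verify directly that every element of $S$ has a unique expression $\sum_{i\ge 0} s^i a_i$ with $a_i \in R$: uniqueness follows by reducing mod powers of $s$ and using that $R \cap sS = 0$ (since $s = g-1$ and $g$ has infinite order modulo $N$), and existence follows by successive approximation using completeness, writing any $u \in S$ as $u \equiv a_0 \pmod{sS}$ with $a_0 \in R$ and iterating on $(u - a_0)s^{-1}$.
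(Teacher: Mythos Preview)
Your treatment of the topological nilpotence of $\delta$ is essentially the paper's, though the paper carries it out inside $S$ rather than $R$: it shows $\delta(S)\subseteq I_G$ and $\delta(I_G)\subseteq I_G^2$, then deduces $\delta(I_G^k)\subseteq I_G^{k+1}$ from the Leibniz rule and $\sigma(I_G)=I_G$, and finally restricts to $R$. This sidesteps the concern you flag, since one never needs $R_k=R\cap I_G^k$ to be generated by products of degree-one elements of $R$.

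For the isomorphism itself the paper bypasses graded rings entirely. Because $G/N\simeq\Z_p$ is free pro-$p$, the extension splits, and the homeomorphism $G\simeq N\times G/N$ exhibits $S$ directly as the free profinite right $R$-module on the profinite space $\{1,g,g^2,\ldots\}$; a triangular base change replaces this topological basis by $\{s^i\}_{i\ge 0}$. The skew multiplication in $R[\![s;\sigma,\delta]\!]$ then agrees with that of $S$ since the relation $as=s\sigma(a)+\delta(a)$ already holds in $S$ and the requisite sums converge by nilpotence. This is your alternative approach done cleanly: the splitting supplies existence and uniqueness of the expansion at once, without writing ``$(u-a_0)s^{-1}$'' (which makes no sense in $S$) or separately checking that $s$ is a non-zero-divisor.

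Your primary graded route has a genuine gap. You would need $\Gr S\cong(\Gr R)[\bar s;\bar\sigma]$ (a polynomial ring, not power series), but at this stage nothing is known about $\Gr S$ or about $\Gr R$ for the induced filtration $R_k=R\cap I_G^k$. That structure is established only later, in Corollary~\ref{cor-mildflag-sf}, and only under the extra hypothesis of a mild flag presentation, via the strongly free machinery of Proposition~\ref{thm-labute-sf} and Proposition~\ref{prop:sf-graded}. Since Proposition~\ref{prop-decomposition-power-series} is stated and proved for an arbitrary free-by-$\Z_p$ pro-$p$ group, invoking the graded description here either restricts the result or begs precisely the question you anticipate about whether $\Gr R$ is ``polynomial-type''.
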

\begin{proof} From the homeomorphism \(G \sim N \times G/N\), we obtain that \(S\) is the free profinite \(R\)-module on the profinite space \(G/N \simeq \{1,g,g^2,\ldots\}\). Through a base change, we get that the set of powers of \(s = g - 1\) also form a topological basis for \(S\) over \(R\), from which we can identify it as a topological abelian group with the group of formal power series in \(s\) over \(R\). Hence, it suffices to show that the formula~(\ref{eq-power-series-mult}) defining the multiplication always converges. For this, it is also sufficient to check that \(\delta\) is topologically nilpotent in the induced filtration of \(R\), that is, \(\delta^k\) converges to zero pointwise. Observe that \(I_G\) and \(R\) are \(\sigma\)-invariant because \(\sigma\) is an inner automorphism of \(S\) and \(N\) is normal in \(G\).

To show that \(\delta\) is topologically nilpotent, it also suffices to show that \(\delta(S) \subseteq I_G\) and that \(\delta(I_G) \subseteq I_G^2\), for then we inductively get from the inclusions \(\delta(I_G^k) \subseteq \delta(I_G^{k-1})I_G + I_G^{k-1}\delta(I_G)\subseteq I_G^{k+1}\) for all \(k \geq 2\) that \(\delta^k(S) \subseteq I_G^k\). This gives us that \(\delta^k\) converges to zero uniformly and hence is topologically nilpotent in \(S\) and thus in \(R\).
\end{proof}

The proof of Proposition~\ref{prop-decomposition-power-series} also shows:

\begin{cor}\label{cor-derivation-retracting} For every free-by-\(\Z_p\) group \(G\) and any non-zero \(a \in S\), we have \(w(\delta(a)) < w(a)\), where \(w\) is the function defined in~(\ref{eq-defining-valuation}).
\end{cor}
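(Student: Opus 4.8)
The plan is to extract the estimate directly from the inductive inclusions established in the proof of Proposition~\ref{prop-decomposition-power-series}. Recall that there we showed $\delta(S)\subseteq I_G$ and $\delta(I_G)\subseteq I_G^2$, and then inductively $\delta(I_G^k)\subseteq I_G^{k+1}$ for all $k\geq 1$ (using $\delta(I_G^k)\subseteq \delta(I_G^{k-1})I_G + I_G^{k-1}\delta(I_G)$ together with the fact that $I_G$ is a two-sided ideal). In terms of the filtration function $w$ from~(\ref{eq-defining-valuation}), this says exactly that if $w(a)\leq p^{-k}$, i.e. $a\in I_G^k$, then $\delta(a)\in I_G^{k+1}$, i.e. $w(\delta(a))\leq p^{-(k+1)}< p^{-k}$. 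So first I would restate this: for any $a\in S$, writing $k=\sup\{j\in\Z\mid a\in S_j\}$, which is a finite non-negative integer when $a\neq 0$ since the filtration is Hausdorff, we have $a\in I_G^k$ and hence $\delta(a)\in I_G^{k+1}$, giving $w(\delta(a))\leq p^{-(k+1)}<p^{-k}=w(a)$.

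The only point that needs a word of care is the base of the induction and the case $k=0$: if $a\in S=S_0$ but $a\notin I_G$, then $w(a)=1$ and we need $w(\delta(a))<1$, i.e. $\delta(a)\in I_G$; this is precisely the inclusion $\delta(S)\subseteq I_G$, which holds because $\sigma$ and the identity both induce the identity on $S/I_G\cong \F_p$ (the augmentation is $G$-invariant), so $\delta=\sigma-\operatorname{id}$ kills $S$ modulo $I_G$. For $a\neq 0$ with $w(a)=p^{-k}$, $k\geq 1$, we are in the genuinely inductive range and the inclusion $\delta(I_G^k)\subseteq I_G^{k+1}$ finishes it.

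I do not expect any real obstacle here: the corollary is essentially a bookkeeping consequence of the inclusions already proved, the only thing to make explicit being the translation between the ideal-membership statements and strict inequalities for $w$, plus noting that $w(a)>0$ for $a\neq 0$ (Hausdorffness of the filtration) so that the supremum defining $w(a)$ is attained at a finite $k$ and "$<$" is meaningful.
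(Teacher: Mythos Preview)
Your proposal is correct and takes essentially the same approach as the paper: the corollary is stated immediately after Proposition~\ref{prop-decomposition-power-series} with the remark that its proof ``also shows'' the result, and your argument simply makes explicit the translation of the inclusions $\delta(I_G^k)\subseteq I_G^{k+1}$ (established there) into the strict inequality $w(\delta(a))<w(a)$. Your added justification for the base case $\delta(S)\subseteq I_G$ via the triviality of $\sigma$ on $S/I_G\cong\F_p$ is a welcome clarification of a step the paper leaves implicit.
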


To describe some cases in which \(w(a)\) is actually a valuation, we will make use of the following definition given in \cite[Def. 1.1]{forre_strongly_2011}:

\begin{defn}\label{defn:strongly-free} Let \(\F_p\langle a_1,\ldots,a_n\rangle\) be the free \(\F_p\)-algebra on \(n\) generators with augmentation ideal
\(I = (a_1,\ldots,a_n)\). Choose \(l\) elements \(\rho_1,\ldots,\rho_l \in I\) and let \(J = (\rho_1,\ldots,\rho_l)\) be the two-sided ideal generated by them. We say that the sequence of elements \(\rho_1,\ldots,\rho_l\) is \emph{strongly free} if \(J/JI\) is a free \(\F_p\langle a_1,\ldots,a_n\rangle/J\)-module with basis \(\{\rho_i + JI\}\).

Let \(F = F(g_1,\ldots,g_n)\) be a free pro-\(p\) group on \(n\) generators. With respect to the filtration induced by augmentation ideal, the graded ring \(\Gr(\Falgebra{F})\) is isomorphic to the free \(\F_p\)-algebra on \(n\) generators \(a_i = \overline{g_i-1}\). A minimal pro-\(p\) presentation \[\langle g_1,\ldots,g_n \mid r_1,\ldots,r_l\rangle\] is a called a \emph{strongly free presentation} if the homogeneous components \(\rho_i = \overline{r_i - 1}\) form a strongly free sequence in \(\Gr(\Falgebra{F})\). A pro-\(p\) group is \emph{mild} if it has a strongly free presentation.
\end{defn}

To highlight the choice of filtration on \(\Falgebra{F}\), one says in that case that \(G\) is a mild pro-\(p\) group with respect to the \(p\)-Zassenhaus filtration, that is, the filtration by dimension series mod \(p\) -- cf. \cite[Def. 4.4]{minac_mild_2022}, \cite[Lem. 1.3 and Rem. 1.6]{forre_strongly_2011} and \cite[Def. 1.1]{labuteMildPropgroupsGalois2006}. We will concern ourselves solely with mild groups with respect to this filtration.

We observe that if \(\Gr(\Falgebra{G})\) is an integral domain, then \(w\) must be a valuation. The mildness condition gives us:

\begin{prop}[{\cite[Prop. 4.5]{minac_mild_2022}, cf. \cite[Thm. 2.11]{gartnerHigherMasseyProducts2015}}]\label{prop:sf-graded} If \[G \simeq \langle g_1,\ldots,g_n\mid r_1,\ldots,r_l\rangle\] is a strongly free presentation, then \(\Gr(\Falgebra{G}) \simeq \Gr(\Falgebra{F})/J\), where \(J\) is the two-sided ideal generated by the \(\rho_i = \overline{r_i-1}\).
\end{prop}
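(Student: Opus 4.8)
The statement to prove is Proposition~\ref{prop:sf-graded}, reproduced below:

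\textbf{Proposition.} If $G \simeq \langle g_1,\ldots,g_n\mid r_1,\ldots,r_l\rangle$ is a strongly free presentation, then $\Gr(\Falgebra{G}) \simeq \Gr(\Falgebra{F})/J$, where $J$ is the two-sided ideal generated by the $\rho_i = \overline{r_i-1}$.

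\medskip

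The plan is to identify the graded ring $\Gr(\Falgebra{G})$ directly from the presentation. Writing $F = F(g_1,\ldots,g_n)$, the surjection $F \twoheadrightarrow G$ induces a surjection of completed group algebras $\Falgebra{F} \twoheadrightarrow \Falgebra{G}$ whose kernel $R$ is the closed two-sided ideal generated by the elements $r_i - 1$. Since this map is filtered for the augmentation filtrations, it induces a surjective map of graded rings $\Gr(\Falgebra{F}) \twoheadrightarrow \Gr(\Falgebra{G})$, whose kernel is the graded ideal $\Gr(R) = \bigoplus_k (R \cap I_F^k + I_F^{k+1})/I_F^{k+1}$. Clearly each $\rho_i = \overline{r_i - 1}$ lies in $\Gr(R)$ (since $r_i - 1 \in R \cap I_F^{d_i}$ where $d_i$ is the augmentation degree of $r_i - 1$, and by minimality of the presentation $d_i \geq 2$), so $J \subseteq \Gr(R)$ and we obtain a surjection $\Gr(\Falgebra{F})/J \twoheadrightarrow \Gr(\Falgebra{G})$. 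The whole content is to show this is an isomorphism, i.e. that $\Gr(R) = J$.

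The key input is the strong freeness hypothesis. First I would recall the Golod--Shafarevich / Anick-style Hilbert series computation: when $\rho_1,\ldots,\rho_l$ form a strongly free sequence in the free algebra $\Lambda = \F_p\langle a_1,\ldots,a_n\rangle$, the quotient $\Lambda/J$ has Hilbert series $(1 - nt + \sum_i t^{d_i})^{-1}$ (this is essentially the defining consequence of strong freeness, via the exactness of the Koszul-type complex $0 \to (\Lambda/J)^l \to (\Lambda/J)^n \to \Lambda/J \to \F_p \to 0$ in each degree, which holds precisely because $J/JI$ is $\Lambda/J$-free on the $\rho_i$ and $J \cap JI$ behaves correctly --- see \cite{forre_strongly_2011,minac_mild_2022}). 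Comparing with the known Hilbert series of $\Gr(\Falgebra{G})$: a strongly free presentation makes $G$ a mild pro-$p$ group, and for such groups the cohomology is concentrated in degrees $\leq 2$ with $\dim H^1 = n$, $\dim H^2 = l$, so the minimal free resolution of $\F_p$ over $\Falgebra{G}$ has the shape $0 \to \Falgebra{G}^l \to \Falgebra{G}^n \to \Falgebra{G} \to \F_p \to 0$; passing to associated graded rings this gives $\Gr(\Falgebra{G})$ the same Hilbert series $(1 - nt + \sum_i t^{d_i})^{-1}$. Thus the surjection $\Gr(\Falgebra{F})/J = \Lambda/J \twoheadrightarrow \Gr(\Falgebra{G})$ is a surjection of graded $\F_p$-vector spaces with equal (finite, degreewise) dimensions, hence an isomorphism.

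The main obstacle is making the Hilbert series comparison fully rigorous on the pro-$p$ side --- specifically, justifying that $\Gr(\Falgebra{G})$ has the predicted Hilbert series, which amounts to transporting the Anick resolution argument across the completion. The cleanest route is to invoke the equivalence (due to Labute, Gärtner, Mináč--et al.) between mildness, the freeness of $R/RI_F$ as an $\Falgebra{G}$-module on the images of the $r_i - 1$, and the cohomological dimension statement $\cd G = 2$; any one of these, combined with the graded Euler characteristic identity, pins down the Hilbert series. Given that Proposition~\ref{prop:sf-graded} is cited from \cite{minac_mild_2022} (and cf. \cite{gartnerHigherMasseyProducts2015}), I expect the author's proof to simply quote those references rather than reprove the Hilbert series identity, so the bulk of the argument is the elementary graded-surjection setup in the first paragraph plus citation of the mild-group machinery for the dimension count.
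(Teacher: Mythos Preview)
Your proposal is correct, and you anticipated the situation exactly: the paper gives no proof of this proposition at all, simply stating it with the citation to \cite[Prop.~4.5]{minac_mild_2022} and \cite[Thm.~2.11]{gartnerHigherMasseyProducts2015}. Your sketch via the Hilbert series comparison (strong freeness forces the Hilbert series of $\Lambda/J$ to be $(1-nt+\sum_i t^{d_i})^{-1}$, mildness forces the same for $\Gr(\Falgebra{G})$, and the graded surjection then has to be an isomorphism) is the standard argument underlying those references.
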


Hence, our strategy is to use mildness and an explicit description of \(\Gr(\Falgebra{F})/J\) to obtain that \(\Gr(\Falgebra{G})\) is a domain, and thus that \(w\) is a valuation on \(\Falgebra{G}\). Before establishing mildness, we prove a technical lemma that will give us a ``canonical'' minimal presentation for free-by-\(\Z_p\) pro-\(p\) groups. 

\begin{defn}\label{def:flag-presentation} A \emph{flag presentation} of a pro-\(p\) group \(G\) is a finite presentation given by the quotient of the free pro-\(p\) group \(F\) on a set of generators \(\{x_1,\ldots,x_n,g\}\) by \(l\) relations of the form \begin{equation}\label{eq:flagpresentation}[x_i,\underbrace{g,\ldots,g}_{a_i\text{ times}}] = h_i\text{ for }h_i \in \Phi(\widetilde{N})\text{ and }1\leq i \leq l\,,\end{equation} where the \(a_i\) are positive integers and \(\widetilde{N}\) is the normal subgroup of \(F\) generated by \(\{x_1,\ldots,x_n\}\). If all the \(a_i\) are equal to \(1\), we shall say that the flag presentation is \emph{mild}. A pro-\(p\) group with a mild flag presentation will be called a \emph{mild flag pro-\(\bm{p}\) group}.\end{defn}

If \(G\) has a mild flag presentation as in~(\ref{eq:flagpresentation}), then \(G\) is a mild group. Indeed, if \(l = 0\), the presentation shows that the group is free and vacuously satisfies the hypothesis of mildness. Otherwise, let \(\eta_i\) and \(\gamma\) be the elements in \(H^1(G,\F_p)\) that are dual to the basis \(\{x_i,g\mid 1 \leq i \leq n\}\) of \(G/\Phi(G)\):\[\eta_i(x_j) = \delta_{i,j}\,, \eta_i(g) = 0\,, \gamma(x_i) = 0\,, \gamma(g) = 1\,.\] There is an \(\F_p\)-vector space decomposition of \(H^1(G,\F_p) \simeq V \oplus W\) such that \(V = \langle \gamma \rangle\) and \(W = \langle \eta_i\mid 1 \leq i \leq n\rangle\), and the identities in~(\ref{eq:flagpresentation}) show that the restriction of the cup product to \(V \otimes W \to H^2(G,\F_p)\) is surjective and that \(\gamma\cup \gamma = 0\) by \cite[Prop. 3.9.13]{neukirchCohomologyNumberFields2008}. Hence, \(G\) satisfies the cup-product criterion of \cite[Prop. 5.8]{minac_mild_2022} to being a mild group (cf. \cite[Sec. 6]{forre_strongly_2011}). Later on we shall see that mild flag presentations are also strongly free presentations, and can be used to describe the graded ring \(\Gr(\Falgebra{G})\).
 
\begin{lem}\label{lem:virtual-flag} Every finitely generated free-by-\(\Z_p\) pro-\(p\) group \(G\) has a flag presentation. Moreover, \(G\) has a normal open subgroup \(U\), inverse image of an open subgroup of \(\Z_p\), such that \(U\) has a mild flag presentation.
\end{lem}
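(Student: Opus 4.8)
The plan is the following. Since $\Z_p$ is free pro-$p$ of rank one, hence projective, the extension $1 \to N \to G \to \Z_p \to 1$ with $N$ the free pro-$p$ kernel splits; fix a splitting and write $G = N \rtimes \langle g\rangle$ with $gN$ a topological generator of $\Z_p$. Put $M = N/\Phi(N)$, a module over $\Lambda := \Falgebra{\langle g\rangle} \cong \F_p[\![T]\!]$ with $T = g-1$ acting by the reduction of conjugation. Because $G$ is finitely generated one has $d(G) = \dim_{\F_p}(M/TM) + 1 < \infty$, so $M$ is finitely generated over the discrete valuation ring $\Lambda$; by the structure theorem $M \cong \bigoplus_{i=1}^n \Lambda/(T^{a_i})$ with $a_i \in \bbn \cup \{\infty\}$, where $\Lambda/(T^\infty) := \Lambda$. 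Reorder so that $a_1,\dots,a_l$ are finite and $a_{l+1} = \dots = a_n = \infty$. Lift generators of the cyclic summands to $x_1,\dots,x_n \in N$; by the topological Nakayama lemma these normally generate $N$, so $X := \{x_1,\dots,x_n,g\}$ is a minimal generating set of $G$.

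Let $F$ be the free pro-$p$ group on $X$, let $\widetilde N \trianglelefteq F$ be the normal closure of $x_1,\dots,x_n$, and let $R = \ker(F \twoheadrightarrow G)$, so $R \le \widetilde N$. The usual computation of the mod-$p$ homology of the kernel of $F\to\Z_p$ gives $\widetilde N/\Phi(\widetilde N) \cong \Lambda^n$ on the classes of the $x_i$, so $\widetilde N$ is freely (topologically) generated by the null sequence $y_{i,j} := [x_i,g,\dots,g]$ ($j$ copies of $g$, $j \ge 0$). For $1 \le i \le l$ the class of $y_{i,a_i}$ in $M$ is $T^{a_i}\bar x_i = 0$, so $y_{i,a_i}$ maps into $\Phi(N)$; since $\widetilde N \to N$ carries $\Phi(\widetilde N)$ onto $\Phi(N)$, we may pick $h_i \in \Phi(\widetilde N)$ with $r_i := y_{i,a_i} h_i^{-1} \in R$. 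The relations $[x_i,g,\dots,g] = h_i$ ($1 \le i \le l$) are of flag shape, and the first assertion amounts to $\langle\langle r_1,\dots,r_l\rangle\rangle^F = R$, i.e. that these relations present $G$.

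This equality is the crux. As $F$ acts trivially on $V := R/R^p[R,F]$, the subgroup $\langle\langle r_i\rangle\rangle^F$ surjects onto the $\F_p$-span of the classes $\bar r_i$ in $V$, so by the topological Nakayama lemma for the conjugation action of $F$ on $R$ it suffices to show that $\{\bar r_i\}_{i=1}^l$ is a basis of $V$. First, $\dim_{\F_p} V = \dim_{\F_p} H^2(G,\F_p) = l$: in the Lyndon--Hochschild--Serre spectral sequence of $1 \to N \to G \to \Z_p \to 1$ all terms with $p \ge 2$ or $q \ge 2$ vanish ($\cd \Z_p = 1$ and $N$ is free), so $H^2(G,\F_p) = H^1(\Z_p, H^1(N,\F_p)) = H^1(\Z_p, M^{\vee})$ with $M^\vee$ the Pontryagin dual, and one computes $H^1(\Z_p, (\Lambda/(T^a))^{\vee}) = \F_p$ for $a < \infty$ while $H^1(\Z_p, \Lambda^{\vee}) = 0$ because $T$ acts surjectively on $\Lambda^{\vee}$ (equivalently, injectively on $\Lambda$). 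For the linear independence of the $\bar r_i$ I would use that, $N$ being free pro-$p$, the relation module $R/\Phi(R)$ is a \emph{free} $\Falgebra{N}$-module: in the exact sequence $0 \to R/\Phi(R) \to \Falgebra{N}\otimes_{\Falgebra{\widetilde N}} I_{\widetilde N} \to I_N \to 0$ the term $I_N$ is $\Falgebra{N}$-free, so the sequence splits and $R/\Phi(R)$ is projective, hence free over the local ring $\Falgebra{N}$; unwinding $V = (R/\Phi(R))_G$ together with the explicit form of the $r_i$ then identifies the $\bar r_i$ with a basis. (Alternatively, one can argue by Tietze transformations, identifying $\langle\langle r_1,\dots,r_l\rangle\rangle^F$ with the normal closure in $\widetilde N$ of the full family $\{y_{i,j}w_{i,j}^{-1} : 1 \le i \le l,\ j \ge a_i\}$, which is precisely $R$, using repeatedly $y_{i,j+1} = y_{i,j}^{-1}y_{i,j}^{\,g}$.) I expect this independence step to be the part requiring the most care.

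For the \emph{moreover} statement, let $\varphi\colon G \to \Z_p$ be the projection, pick $m$ with $p^m > \max\{a_i : a_i < \infty\}$, and put $U = \varphi^{-1}(p^m\Z_p)$ — an open normal subgroup, the inverse image of an open subgroup of $\Z_p$, with $U = N \rtimes \langle g^{p^m}\rangle$. Running the construction above for $U$, the relevant module is again $M = N/\Phi(N)$ but now over $\Lambda' := \Falgebra{\langle g^{p^m}\rangle} = \F_p[\![T^{p^m}]\!] \subseteq \Lambda$, using that in characteristic $p$ one has $g^{p^m} - 1 = (1+T)^{p^m} - 1 = T^{p^m}$. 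Over $\Lambda'$ each free summand $\Lambda$ becomes $\Lambda'$-free of rank $p^m$, while for $a < p^m$ the element $T^{p^m}$ annihilates $\Lambda/(T^a)$, so $\Lambda/(T^a) \cong (\Lambda'/(T'))^{a}$ with $T' = T^{p^m}$, a direct sum of copies of the exponent-one module. Hence the decomposition of $M$ over $\Lambda'$ has all its finite invariant factors equal to $(T')$, and therefore the flag presentation of $U$ produced by the construction has all exponents equal to $1$; that is, $U$ has a mild flag presentation.
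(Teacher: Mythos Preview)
Your approach is essentially the paper's: decompose $N/\Phi(N)$ over $\Lambda=\F_p[\![T]\!]$ via the structure theorem, lift cyclic generators, write down the obvious flag-shaped relations, and use the LHS spectral sequence to see $\dim_{\F_p} H^2(G,\F_p)=l$, so that $l$ relations suffice. For the mild part, both you and the paper pass to the preimage of a small enough open subgroup of $\Z_p$ so that the new uniformizer $T'=T^{p^m}$ annihilates every torsion summand; your choice $p^m>\max\{a_i\}$ and the paper's ``intersection of stabilizers'' amount to the same thing.

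One small correction in your first attack on the independence step: the Crowell-type sequence for $1\to R\to\widetilde N\to N\to 1$ has first term $R/R^p[R,\widetilde N]$, not $R/\Phi(R)=R/R^p[R,R]$; these differ since $[R,\widetilde N]\supsetneq[R,R]$ in general, so as written your sequence is not exact. With that fix the freeness argument can be made to work, though you then still have to unwind the $\Z_p$-coinvariants carefully. Your Tietze alternative is cleaner and correct, and is in fact precisely the argument the paper records (in the Remark following the lemma) for the converse direction: conjugating the $r_i$ by powers of $g$ lets you eliminate the redundant free generators $y_{i,j}$ ($i\le l$, $j\ge a_i$), so $F/\langle\langle r_1,\dots,r_l\rangle\rangle^F$ has free kernel on exactly the images of $\{y_{i,j}:j<a_i\}$, which is also a free basis of $N$; hence the surjection onto $G$ is an isomorphism. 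The paper's own proof is terser at this point, simply noting that via the LHS isomorphism $H^2(G,\F_p)\cong\bigoplus_i H^1(\Z_p,(\Lambda/(d_i))^\vee)$ the $l$ relations correspond to the $l$ torsion summands and hence generate.
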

\begin{proof}
Let \(G \simeq N\rtimes G/N\) be a free-by-\(\Z_p\) pro-\(p\) group with \(G/N \simeq \overline{\langle gN \rangle} \simeq \Z_p\) and \(N\) free pro-\(p\). Let \(A = N/\Phi(N)\) and consider it as an \(\Falgebra{G/N}\)-module through the action of \(g\). Since \(\Falgebra{G/N}\) is a PID and \(A\) is finitely generated over it, it can be decomposed as a direct sum \(\Falgebra{G/N}^k \oplus \bigoplus_{i=1}^l \Falgebra{G/N}/(d_i)\) with $0\ne d_i\in I_{G/N}$ for all $i$. It's Pontryagin dual \(\Hom(N/\Phi(N), \bbq/\Z)\) must then decompose as \(Q^k \oplus \bigoplus_{i=1}^l \Falgebra{G/N}/(d_i)\) where \(Q\) is an injective \(\Falgebra{G/N}\)-module. 

Since $0\ne d_i\in I_{G/N}$, we obtain that \(H^1(G/N,\Falgebra{G/N}/(d_i)) \simeq \F_p\) for all \(i\). By the Lyndon-Hochschild-Serre spectral sequence, we have: \begin{align}\label{eq-LHS-sequence}
    H^2(G,\F_p) &\simeq H^1(G/N, H^1(N,\F_p))\notag\\ &\simeq H^1(G/N, \Hom(N/\Phi(N),\bbq/\Z)) \\ &\simeq \bigoplus_{i=1}^l H^1(G/N, \Falgebra{G/N}/(d_i)) \simeq \F_p^l\,.\notag
\end{align}

Choose representatives \(x_1,\ldots,x_k,z_1,\ldots,z_l\) in \(N\) for the \(\Falgebra{G}\)-cyclic generators of each factor in \(N/\Phi(N)\) such that \(\{x_1,\ldots,x_k,z_1,\ldots,z_l,g\}\) is a minimal generating set for \(G\) -- this is possible because the map \(A \to G/\Phi(G)\) is a homomorphism of \(G/N\)-modules. If \(A\) is a free \(\Falgebra{G/N}\)-module, then~(\ref{eq-LHS-sequence}) shows that \(H^2(G,\F_p) = 0\) and therefore \(G\) is a free pro-\(p\) group with the mild flag presentation \[G \simeq \langle x_1\,\ldots,x_k,g\mid \varnothing\rangle\,.\] Hence, we can assume that \(A\) is not a free \(\Falgebra{G/N}\)-module, and in particular that \(H^2(G,\F_p)\) is non-zero.

Since the \(G/N\)-action on the finite module \(\Falgebra{G/N}/(d_i)\) is unipotent, there exists some positive \(a_i\) such that \([z_i,\underbrace{g,\ldots,g}_{a_i\text{ times}}]\) is a fixed point, or equivalently, there exists \(h_i\) in \(\Phi(N)\) such that \begin{equation}\label{eq-we-found-flag}
    [z_i,\underbrace{g,\ldots,g}_{a_i + 1\text{ times}}] = h_i\,.
\end{equation}
Then, by the isomorphism in~(\ref{eq-LHS-sequence}), we find that the identities~(\ref{eq-we-found-flag}) for \(1 \leq i \leq l\) form a generating set of relations for \(G\), yielding the desired flag presentation.

By taking in \(G\) the intersection \(U\) of the stabilizers of the \(G\)-action in each finite module \(\Falgebra{G/N}/(d_i)\), we can assume that each non-free direct factor of \(A\) is isomorphic to \(\F_p\), that is,
\[A \simeq \Falgebra{U/N}^{k|G\colon U|} \oplus \bigoplus_{i=1}^{l'} \F_p\] as \(\Falgebra{U}\)-modules. We note that \(N \leq U\) and that \(U \simeq N\rtimes U/N \simeq N \rtimes \Z_p\). By repeating the steps above, we obtain a mild flag presentation for \(U\).
\end{proof}

\begin{rem} Conversely, every pro-\(p\) group \(G\) with a flag presentation is also free-by-\(\Z_p\). If \(N\) is the normal subgroup of \(G\) generated by the \(x_i\), then \(G/N \simeq \Z_p\) so that \(G \simeq N \rtimes \Z_p\). Taking the non-negative powers of \(g\) as a continuous section \(G/N \to G\), the Reidemeister-Schreier rewriting process shows that \(N\) has a presentation with a generating set converging to \(1\) (\cite[Sec. 2.4]{ribesProfiniteGroups2010}) given by all the elements \(x_{i,k} = [x_i,\underbrace{g,\ldots,g}_{k\text{ times}}]\) and relations \[x_{i,a_i+k} = \left(\prod_{j=0}^{k-1} g^{j-k+1}x_{i,a_i+k-1-j}g^{k-1-j}\right)\cdot g^{-k}h_ig^{k}\] for \(1 \leq i \leq l\) and \(k\geq 0\). These relations allows us to eliminate the generators \(x_{i,a_i+k}\) for \(1 \leq i \leq l\) and \(k \geq 0\), showing that \(N\) is free pro-\(p\). Alternatively, the LHS spectral sequence gives us 
\begin{align*}
    l &\geq \dim_{\F_p} H^2(G,\F_p) \\&= \dim_{\F_p} H^2(N,\F_p)^{G/N} \oplus H^1(G/N, H^1(N,\F_p)) \\&\geq l + \dim_{\F_p} H^2(N,\F_p)^{G/N}\,,
\end{align*} where the last inequality is obtained by a computation of \(H^1(G/N, H^1(N,\F_p))\) as in Lemma~\ref{lem:virtual-flag}. Hence \(H^2(N,\F_p)^{G/N} = 0\), and since the cohomology groups are discrete torsion \(G/N\)-modules, one gets \(H^2(N,\F_p) = 0\) as desired.
\end{rem}

We now turn to the problem of showing that mild flag presentations of \(G\) are strongly free, and to characterize the graded ring of \(\Falgebra{G}\). We recall that \(\frakL = \bigoplus_{i\geq 1} D_i(F)/D_{i+1}(F)\) has the structure of a free restricted Lie algebra over \(\F_p\) in the sense of \cite[Sec. V.7]{jacobson_lie_1979}. A free generating set of this Lie algebra is the image \(\xi_1,\ldots,\xi_n,\gamma\) of \(x_1,\ldots,x_n,g\), and we can identify \(\Gr(\Falgebra{F})\) with the universal restricted enveloping algebra \(\mathcal{U}_\frakL\) of \(\frakL\) (\cite[Thm. 6.5]{lazard_sur_1954} and \cite[Thm. A.3.5]{lazardGroupesAnalytiquesPadiques1965}). If the elements \(\rho_1,\ldots,\rho_l\) associated to a presentation of \(G\) lie in \(\frakL\) and \(\frakr\) is the restricted ideal of \(\frakL\) they generate, then for \(J\) the ideal of \(\Gr(\Falgebra{F})\) generated by the \(\rho_i\) the \(\Gr(\Falgebra{F})/J\)-modules \(J/J\Gr(I)\) and \(M = \frakr/[\frakr,\frakr] + \frakr^{[p]}\) are isomorphic. We recall Lazard's elimination theorem for free (unrestricted) Lie algebras:

\begin{thm}[{\cite[Prop. 10]{bourbaki_lie_1989}}] Let \(R\) be a non-zero commutative ring, \(X\) a set and \(S\) a subset of \(X\). If \(\frakL(X)\) is the free Lie \(R\)-algebra on \(X\), then \(\frakL(X)\) is isomorphic as a Lie algebra to the direct sum \(\frakL(S) \oplus \frakh\), where \(\frakh\) is the ideal of \(\frakL(X)\) generated by \(X\smallsetminus S\) and is isomorphic to the free Lie algebra on the set \[\left\{[x,s_1,\ldots,s_k]\mid x \in X\smallsetminus S\,, s_1,\ldots,s_k \in S\right\}\,.\]
\end{thm}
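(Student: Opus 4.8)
The plan is to build the asserted decomposition by hand, as a semidirect product of two free Lie algebras, and then recognise that object, via its universal property, as $\frakL(X)$ itself. Write $T = X \smallsetminus S$, and let $Y$ be the index set of all formal symbols $[t,s_1,\ldots,s_k]$ with $t \in T$, $k \geq 0$ and $s_1,\ldots,s_k \in S$ (so $Y$ is in bijection with $T \times S^{*}$). Denote by $\frakL(S)$ and $\frakL(Y)$ the free Lie $R$-algebras on the sets $S$ and $Y$. The first step is to equip $\frakL(Y)$ with an action of $\frakL(S)$ by derivations: for each $s \in S$ there is a unique derivation $D_s$ of $\frakL(Y)$ sending the free generator $[t,s_1,\ldots,s_k]$ to $-[t,s_1,\ldots,s_k,s]$ (a derivation of a free Lie algebra is freely prescribed by its values on a free generating set), and since $\frakL(S)$ is free on $S$ the rule $s \mapsto D_s$ extends uniquely to a Lie homomorphism $D\colon \frakL(S) \to \operatorname{Der}(\frakL(Y))$. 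Form the semidirect product $L = \frakL(S) \ltimes_{D} \frakL(Y)$, whose underlying module is $\frakL(S) \oplus \frakL(Y)$ and whose bracket is $[(u,v),(u',v')] = ([u,u'],\, D(u)(v') - D(u')(v) + [v,v'])$.

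The second step is to prove that $L$, together with $\iota\colon X \to L$ defined by $\iota(s) = (s,0)$ for $s \in S$ and $\iota(t) = (0,[t])$ for $t \in T$, is a free Lie $R$-algebra on $X$; this will identify $L$ with $\frakL(X)$. Uniqueness of extensions is clear once one notes that $\iota(X)$ generates $L$, because $[(0,[t,s_1,\ldots,s_{k-1}]),(s_k,0)] = (0,[t,s_1,\ldots,s_k])$, so every free generator of $\frakL(Y)$ is reachable. For existence, given a Lie algebra $\frakm$ and a map $\phi\colon X \to \frakm$, let $\psi_1\colon \frakL(S) \to \frakm$ extend $\phi|_S$, let $\psi_2\colon \frakL(Y) \to \frakm$ send $[t,s_1,\ldots,s_k]$ to the right-normed bracket $[\cdots[\phi(t),\phi(s_1)],\ldots,\phi(s_k)]$, and put $\bar\phi(u,v) = \psi_1(u) + \psi_2(v)$. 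Expanding the bracket of $L$, one sees that $\bar\phi$ is a Lie homomorphism (with $\bar\phi\circ\iota = \phi$) if and only if the compatibility identity $\psi_2(D(u)(v)) = [\psi_1(u),\psi_2(v)]$ holds for all $u \in \frakL(S)$ and $v \in \frakL(Y)$.

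The third step, which I expect to be the main obstacle, is to establish that identity — this is the one place where one must be careful not to presuppose anything about $\frakh$. It is proved by two nested ``subalgebra of the generators'' arguments. Fixing $s \in S$, the set of $v \in \frakL(Y)$ for which $\psi_2(D_s(v)) = [\psi_1(s),\psi_2(v)]$ is an $R$-submodule containing the free generators $Y$ (a one-line check, using $D_s([t,s_1,\ldots,s_k]) = -[t,s_1,\ldots,s_k,s]$ and the definition of $\psi_2$), and it is a Lie subalgebra because $D_s$ is a derivation and the Jacobi identity holds in $\frakm$; hence it is all of $\frakL(Y)$. Then, with $v$ arbitrary, the set of $u \in \frakL(S)$ for which $\psi_2(D(u)(v)) = [\psi_1(u),\psi_2(v)]$ for all $v$ contains $S$ by what was just shown, and is again a Lie subalgebra (using that $D$ is a Lie homomorphism into $\operatorname{Der}(\frakL(Y))$, plus Jacobi in $\frakm$); hence it is all of $\frakL(S)$. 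This completes the verification that $L$ is free on $X$, so $L \cong \frakL(X)$.

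Finally, transporting this isomorphism back, the submodule $\frakL(S) \times \{0\}$ goes onto the Lie subalgebra of $\frakL(X)$ generated by $S$, while $\{0\} \times \frakL(Y) = \ker(L \to \frakL(S))$ goes onto the kernel of the retraction $\frakL(X) \to \frakL(S)$; this kernel is an ideal containing $T$ with quotient $\frakL(S)$, so it is precisely the ideal $\frakh$ generated by $X \smallsetminus S$. Since $\{0\}\times\frakL(Y) \cong \frakL(Y)$ is free on $Y$, we obtain $\frakL(X) \cong \frakL(S) \oplus \frakh$ with $\frakh$ free on $\{[x,s_1,\ldots,s_k] : x \in X \smallsetminus S,\ s_1,\ldots,s_k \in S\}$, as claimed. (An alternative route avoiding the compatibility identity would be to work inside the free associative algebra: the unique factorisation $X^{*} = S^{*}\cdot(TS^{*})^{*}$ of words exhibits $R\langle X\rangle$ as a free left $R\langle S\rangle$-module on the free monoid generated by the blocks $ts_1\cdots s_k$, and one then compares with the Poincaré--Birkhoff--Witt filtration; but over an arbitrary non-zero base ring the semidirect-product argument above is cleaner.)
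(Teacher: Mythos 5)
The paper states this result (Lazard elimination) as a citation to Bourbaki and provides no proof, so there is no in-paper argument to compare against. Your proof is correct and self-contained. The key steps all check out: the semidirect product $L = \frakL(S)\ltimes_D\frakL(Y)$ is well-defined because $\operatorname{Der}(\frakL(Y))$ is a Lie $R$-algebra and the universal property of $\frakL(S)$ makes $s\mapsto D_s$ extend to a Lie homomorphism; the map $\iota$ has image generating $L$ (by the bracket computation $[(0,[t,s_1,\ldots,s_{k-1}]),(s_k,0)] = (0,[t,s_1,\ldots,s_k])$, with signs consistent with the paper's left-normed convention); the reduction of the homomorphism property of $\bar\phi$ to the compatibility identity $\psi_2(D(u)(v)) = [\psi_1(u),\psi_2(v)]$ is a direct expansion; and the two nested subalgebra arguments verifying that identity both close up correctly using the derivation property of $D_s$, the fact that $D$ is a Lie homomorphism, and the Jacobi identity in $\frakm$. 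The final identification of $\{0\}\times\frakL(Y)$ with the ideal $\frakh$ is also right: it is the kernel of the retraction $\frakL(X)\to\frakL(S)$, and one sees $\frakh$ is this whole kernel by factoring the retraction through $\frakL(X)/\frakh$ and observing that the induced map $\frakL(X)/\frakh\to\frakL(S)$ is an isomorphism.

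The route you take is genuinely different in flavour from the one you'd find in Bourbaki (or most textbooks), which typically passes through the free associative algebra: one uses the unique factorisation $X^{*} = S^{*}\cdot(TS^{*})^{*}$ to exhibit $R\langle X\rangle$ as a free $R\langle S\rangle$-module, recognises the complementary factor as the enveloping algebra of a free Lie algebra on $Y$, and then descends to the Lie level via the Poincar\'e--Birkhoff--Witt filtration — exactly the sketch you give parenthetically at the end. What your semidirect-product construction buys is that it stays entirely within the category of Lie $R$-algebras and never invokes PBW or freeness of $\frakL(X)$ as an $R$-module; it proves the isomorphism $\frakL(X)\cong\frakL(S)\ltimes\frakL(Y)$ purely from the universal property. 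The associative route buys an explicit module-theoretic picture of the decomposition but is less elementary. Both are complete proofs; yours is cleaner over an arbitrary non-zero commutative base ring, as you note.
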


Note that since \(\frakL\) is a free restricted Lie algebra it is also a free unrestricted \(\F_p\)-Lie algebra on the \(p\)-th powers \(\xi_i^{p^j}\,, \gamma^{p^j}\) of the restricted basis \(\xi_1,\ldots,\xi_n\,,\gamma\). The following is an immediate consequence of Lazard's elimination theorem:

\begin{thm}[{\cite[Sec. 1.2]{stohr_restricted_2001}}]\label{thm-lazard-elimination} If \(\frakh\) is the restricted ideal generated by the \(\xi_i\), then \(\frakL \simeq \langle \gamma^{p^i} \mid i \geq 0\rangle \oplus \frakh\) and \(\frakh\) is itself a free restricted Lie algebra with free generating set \[\{[\xi_i,\underbrace{\gamma,\cdots,\gamma}_{k \text{ times}}]\mid 1 \leq i \leq n\,, k\geq 0\}\,.\]
\end{thm}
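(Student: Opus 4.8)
The plan is to deduce Theorem~\ref{thm-lazard-elimination} from the unrestricted Lazard elimination theorem quoted above, working inside the universal restricted enveloping algebra $A = \mathcal{U}_{\frakL} = \Gr(\Falgebra{F}) \cong \F_p\langle a_1,\ldots,a_n,c\rangle$, where $a_i = \overline{\xi_i}$ and $c = \overline{\gamma}$. Recall that $\frakL$ is the restricted Lie subalgebra of $A$ generated by $a_1,\ldots,a_n,c$, while the \emph{ordinary} Lie subalgebra $\frakL_0 \subseteq A$ generated by those same elements is the free unrestricted $\F_p$-Lie algebra on $\{a_1,\ldots,a_n,c\}$, with ordinary enveloping algebra all of $A$. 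Applying the elimination theorem to $\frakL_0$ with $S = \{c\}$ gives $\frakL_0 = \F_p c \oplus \frakh_0$, where $\frakh_0$ is the ordinary ideal of $\frakL_0$ generated by $a_1,\ldots,a_n$ and is free unrestricted on $B = \{\operatorname{ad}(c)^k(a_i) \mid 1 \le i \le n,\ k \ge 0\}$; up to sign $\operatorname{ad}(c)^k(a_i)$ is the class of $[\xi_i,\underbrace{\gamma,\ldots,\gamma}_{k}]$, and I write $b_{i,k}$ for it, so that $b_{i,0} = a_i$ and $\operatorname{ad}(c)(b_{i,k}) = b_{i,k+1}$. (Freeness on $B$ and on the set of classes of the $[\xi_i,\underbrace{\gamma,\ldots,\gamma}_k]$ are equivalent, as the two sets agree up to sign.)

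Next I would record the structure of $A$ over the subalgebra $\mathcal U(\frakh_0) \subseteq A$ generated by $\frakh_0$. Ordering a homogeneous basis of $\frakL_0$ built from a Hall basis of $\frakh_0$ followed by $c$, the Poincar\'e--Birkhoff--Witt theorem gives $A = \bigoplus_{k\ge0}\mathcal U(\frakh_0)\,c^k$ as right $\mathcal U(\frakh_0)$-modules; and since $\frakh_0$ is free unrestricted on $B$, the algebra $\mathcal U(\frakh_0)$ is the free associative algebra $\F_p\langle B\rangle$. In particular $\mathcal U(\frakh_0)\cap\F_p[c] = \F_p$.

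The heart of the argument is to identify the restricted ideal $\frakh$ of $\frakL$ generated by $\xi_1,\ldots,\xi_n$ with the restricted Lie subalgebra $\frakh'$ of $A$ generated by $B$, and then to transfer freeness from $\F_p\langle B\rangle$. The inclusion $\frakh' \subseteq \frakh$ is clear: $b_{i,0} = a_i \in \frakh$, and $b_{i,k+1} = \operatorname{ad}(c)(b_{i,k}) \in \frakh$ because $\frakh$ is an ideal, so $B \subseteq \frakh$, and $\frakh'$, the restricted subalgebra they generate, sits inside $\frakh$. For $\frakh \subseteq \frakh'$ I would check that $\frakh'$ is itself a restricted ideal of $\frakL$: the inner derivation $\operatorname{ad}(c)$ is a restricted derivation sending $b_{i,k}$ to $b_{i,k+1}$, hence preserving $\frakh'$, while $\operatorname{ad}(a_i)$ preserves $\frakh'$ since $a_i\in\frakh'$; as the set of $z\in\frakL$ with $[z,\frakh']\subseteq\frakh'$ is a restricted subalgebra containing every generator of $\frakL$, it equals $\frakL$, whence $[\frakL,\frakh']\subseteq\frakh'$ and then $\frakh\subseteq\frakh'$ because $\frakh'$ is a restricted ideal containing the $\xi_i$. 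Consequently $\frakh = \frakh' \subseteq \mathcal U(\frakh_0)$ and $\mathcal U(\frakh) = \mathcal U(\frakh_0) = \F_p\langle B\rangle$. Thus $\frakh$ is the restricted Lie subalgebra of the free associative algebra $\F_p\langle B\rangle$ generated by $B$, which makes it the free restricted Lie algebra on $B$: the canonical surjection from the free restricted Lie algebra on $B$ onto $\frakh$, post-composed with $\frakh\hookrightarrow\F_p\langle B\rangle$, is the standard embedding, hence injective. This proves the freeness statement. Finally, $\langle\gamma^{p^i}\mid i\ge0\rangle$ is the restricted subalgebra of $A$ generated by $c$, which is abelian with $\F_p$-basis $\{c^{p^i}\mid i\ge0\}$ and in particular lies in $\F_p[c]$; hence $\langle\gamma^{p^i}\mid i\ge0\rangle + \frakh$ is a restricted subalgebra of $\frakL$ containing all its generators, so it is $\frakL$, while $\langle\gamma^{p^i}\mid i\ge0\rangle\cap\frakh \subseteq \F_p[c]\cap\mathcal U(\frakh_0) = \F_p$ lies in the positively graded $\frakL$ and hence is zero. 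This gives $\frakL = \langle\gamma^{p^i}\mid i\ge0\rangle\oplus\frakh$.

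The step I expect to be delicate is precisely this passage between the unrestricted and restricted settings --- that the $p$-closure of $\frakh_0$ recovers exactly the restricted ideal $\frakh$, and that it remains free on the \emph{same} set $B$. One cannot shortcut it by declaring $\frakL$ to be a free unrestricted Lie algebra on the iterated $p$-th powers of $\xi_1,\ldots,\xi_n,\gamma$, since those elements satisfy relations (for instance $[\gamma,\gamma^{p}] = 0$); the elimination must be run inside $\frakL_0$ and the restricted structure reconstructed through the enveloping algebra, as is carried out in \cite[Sec.~1.2]{stohr_restricted_2001}.
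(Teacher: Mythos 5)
Your proof is correct, and it reconstructs what must actually be done and what I take to be Stöhr's argument: run \emph{unrestricted} Lazard elimination inside the ordinary Lie subalgebra $\frakL_0$ of $A = \mathcal{U}_\frakL \cong \F_p\langle a_1,\ldots,a_n,c\rangle$ to get $\frakh_0$ free on $B=\{\operatorname{ad}(c)^k(a_i)\}$, identify $\frakh$ with the restricted $p$-closure of $B$ by the ideal computation, then transfer freeness through $\mathcal{U}(\frakh_0)=\F_p\langle B\rangle$. The paper does not itself prove the theorem --- it cites Stöhr --- but the sentence preceding it does assert exactly the shortcut you warn against, namely that $\frakL$ is free \emph{unrestricted} on $\{\xi_i^{p^j},\gamma^{p^j}\}$, and that assertion is false for the reason you point out: already for a single generator, the free restricted Lie algebra on $\gamma$ is abelian with $[\gamma,\gamma^{[p]}]=0$, while the free ordinary Lie algebra on $\{\gamma^{p^j}\}_{j\geq 0}$ is not. (Even granting it, unrestricted elimination applied to that basis would produce free generators of the form $[\xi_i^{p^j},\gamma^{p^{l_1}},\ldots,\gamma^{p^{l_k}}]$, not the much leaner set $\{[\xi_i,\gamma,\ldots,\gamma]\}$; the collapse to that set is precisely what the restricted structure and your $p$-closure step buy.) So the theorem is true and correctly attributed, but not an ``immediate consequence'' in the way the paper's lead-in suggests, and your write-up is the right way to close the gap. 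Two small points: with the PBW ordering you describe (Hall basis of $\frakh_0$ before $c$), $A=\bigoplus_k\mathcal{U}(\frakh_0)c^k$ is a decomposition of \emph{left} $\mathcal{U}(\frakh_0)$-modules, not right, though only the underlying $\F_p$-vector space decomposition is used; and the closure of $\langle\gamma^{p^i}\rangle+\frakh$ under $[p]$ deserves a word --- it follows from Jacobson's formula $(a+h)^{[p]}=a^{[p]}+h^{[p]}+\sum s_i(a,h)$, the $s_i(a,h)$ being iterated brackets each involving $h$, hence lying in the ideal $\frakh$.
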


From this, we are able to deduce a restricted variant of \cite[Thm 3.3]{labuteMildPropgroupsGalois2006}, keeping the above notation and using the fact that the universal restricted enveloping algebra of \(\frakL/\frakh\) is a polynomial algebra generated by \(\gamma\).

\begin{prop}\label{thm-labute-sf}  If \(\rho_1,\ldots,\rho_l\) are homogeneous elements of \(\frakh\) with respect to the canonical grading of \(\frakL\) which are linearly independent over \(\F_p[\gamma]\) modulo \([\frakh,\frakh] + \frakh^{[p]}\), then they are strongly free. Moreover, the restricted Lie algebra \(\frakh/(\rho_1,\ldots,\rho_l)\) is free, so that \[\frakL/(\rho_1,\ldots,\rho_l) \simeq \frakh/(\rho_1,\ldots,\rho_l) \rtimes \langle \gamma^{p^i} \mid i \geq 0\rangle\] is a free-by-(free of rank 1) restricted Lie algebra.
\end{prop}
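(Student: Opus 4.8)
The plan is to reduce the statement to a purely combinatorial fact about free restricted Lie algebras, using Theorem~\ref{thm-lazard-elimination} to strip off the ``\(\gamma\)-part'' and work inside the free restricted Lie algebra \(\frakh\). Concretely, since \(\frakL \simeq \langle \gamma^{p^i}\mid i \geq 0\rangle \oplus \frakh\) as restricted Lie algebras and the \(\rho_j\) all lie in \(\frakh\), the restricted ideal of \(\frakL\) generated by the \(\rho_j\) coincides with the restricted ideal \(\frakr\) of \(\frakh\) they generate, and \(\frakL/\frakr \simeq \frakh/\frakr \rtimes \langle \gamma^{p^i}\mid i\geq 0\rangle\) — this gives the ``free-by-(free of rank one)'' decomposition displayed in the statement once we know \(\frakh/\frakr\) is free. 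So everything hinges on the two assertions: (1) the \(\rho_j\) are strongly free in the ambient \(\Gr(\Falgebra{F})\), and (2) \(\frakh/\frakr\) is a free restricted Lie algebra.

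For both assertions I would invoke the restricted analogue of Labute's strong freeness criterion. The key structural input is that \(\frakh\) is itself free restricted on the generating set \(\{[\xi_i,\gamma,\ldots,\gamma]\mid 1\le i\le n,\ k\ge 0\}\) (Theorem~\ref{thm-lazard-elimination}), and that the universal restricted enveloping algebra of \(\frakL/\frakh \simeq \langle \gamma^{p^i}\mid i\ge 0\rangle\) is the polynomial algebra \(\F_p[\gamma]\). The hypothesis is precisely that \(\rho_1,\ldots,\rho_l\) are homogeneous and \(\F_p[\gamma]\)-linearly independent modulo \([\frakh,\frakh] + \frakh^{[p]}\), i.e. they remain linearly independent after the \(\F_p[\gamma]\)-module structure coming from the adjoint action of \(\gamma\) is taken into account. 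Labute's argument (in the group-algebra/unrestricted Lie setting of \cite[Thm. 3.3]{labuteMildPropgroupsGalois2006}) shows that under exactly this condition the quotient \(\frakh/\frakr\) is free on an explicit generating set obtained by completing the \(\rho_j\) to an \(\F_p[\gamma]\)-basis of \((\frakh/[\frakh,\frakh]+\frakh^{[p]})\), and simultaneously that \(J/J\Gr(I)\) is \(\Gr(\Falgebra{F})/J\)-free on the images of the \(\rho_j\), which is the definition of strong freeness. The plan is to transcribe that argument to the restricted setting: the restricted enveloping algebra functor is still exact on the relevant short exact sequences of restricted Lie algebras (PBW for restricted enveloping algebras), and the identification \(J/J\Gr(I) \simeq \frakr/[\frakr,\frakr]+\frakr^{[p]}\) recalled just before the statement lets one pass between the associative and Lie pictures. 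I would phrase the induction on the grading degree, peeling off one homogeneous relation at a time and using that killing a single ``free generator-like'' element of a free restricted Lie algebra again yields a free restricted Lie algebra.

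The main obstacle I expect is bookkeeping the restricted structure faithfully — making sure that the \(p\)-operation \([p]\) is handled correctly everywhere Labute used only brackets, in particular that ``linearly independent modulo \([\frakh,\frakh]+\frakh^{[p]}\) over \(\F_p[\gamma]\)'' is the right restricted replacement for Labute's condition and that the elimination-type argument producing the new free generating set of \(\frakh/\frakr\) still goes through when \(p\)-th powers are present. A secondary point is verifying that homogeneity of the \(\rho_j\) is genuinely used (to keep the associated graded computations clean) and that the \(\F_p[\gamma]\)-module structures on \(\frakh/[\frakh,\frakh]+\frakh^{[p]}\) and on \(\frakr/[\frakr,\frakr]+\frakr^{[p]}\) are compatible under the inclusion. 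Once these points are pinned down, strong freeness of the \(\rho_j\) in \(\Gr(\Falgebra{F})\) follows from the freeness of \(J/J\Gr(I)\), and the displayed semidirect-product decomposition of \(\frakL/(\rho_1,\ldots,\rho_l)\) is then immediate from Theorem~\ref{thm-lazard-elimination} together with the freeness of \(\frakh/\frakr\).
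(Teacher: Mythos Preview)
Your overall strategy is correct and matches the paper's: reduce to \(\frakh\) via Theorem~\ref{thm-lazard-elimination} and exploit the \(\F_p[\gamma]\)-module structure on \(\frakh/([\frakh,\frakh]+\frakh^{[p]})\). Two remarks are in order.

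First, a slip: the restricted ideal of \(\frakL\) generated by the \(\rho_j\) does \emph{not} coincide with the restricted \(\frakh\)-ideal they generate. The former is closed under \([-,\gamma]\), so as a restricted \(\frakh\)-ideal it is generated by all the \(\rho_{j,k}=[\rho_j,\underbrace{\gamma,\ldots,\gamma}_{k}]\). This is exactly the observation the paper starts from, and you implicitly use it later when invoking the \(\F_p[\gamma]\)-action, so the slip is harmless.

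Second, you over-engineer the execution. No transcription of Labute's argument, no induction on degree, and no worry about the \(p\)-operation is needed. The paper's proof is essentially one line: the hypothesis that \(\rho_1,\ldots,\rho_l\) are \(\F_p[\gamma]\)-independent modulo \([\frakh,\frakh]+\frakh^{[p]}\) says precisely that the family \(\{\rho_{j,k}\}\) is \(\F_p\)-independent there, hence extends to a free restricted generating set of the free restricted Lie algebra \(\frakh\). Both conclusions drop out immediately: \(\frakh/\frakr\) is free (quotient by part of a free basis), and \(\frakr/([\frakr,\frakr]+\frakr^{[p]})\) is free over \(\mathcal{U}_{\frakL/\frakr}\) on the images of the \(\rho_j\), which is strong freeness.
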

\begin{proof} If \(\frakr\) and \(J\) denote the restricted ideal of \(\frakL\) and the two-sided ideal of \(\Gr(\Falgebra{F})\) respectively generated by the \(\rho_i\), we must show that \(M = \frakr/[\frakr,\frakr] + \frakr^{[p]}\) is a free \(\Gr(\Falgebra{F})/J\)-module with basis given by the image of the \(\rho_i\). Observe that \(\frakr\) is generated, as a restricted ideal of \(\frakh\), by the elements \[\rho_{i,k} = [\rho_i,\underbrace{\gamma,\ldots,\gamma}_{k\text{ times}}]\] with \(k \geq 0\) and \(1 \leq i \leq l\). Hence, it suffices to show that these elements are part of a free restricted generating set of \(\frakh\), for which it is enough to show that the  \(\rho_1,\ldots,\rho_l\) are \(\F_p[\gamma]\)-independent modulo \([\frakh, \frakh] + \frakh^{[p]}\). Since this holds by assumption, we are done. Moreover, as the \(\rho_{i,k}\) form part of a free restricted basis of \(\frakh\), the quotient \(\frakh/(\rho_1,\ldots,\rho_l)\) is a free restricted Lie algebra, so the latter part follows.
\end{proof}
 
\begin{cor}\label{cor:sfcommutators} The \(n\) elements \(\rho_i = [\xi_i,\gamma]\) are strongly free, as so is, modulo \([\frakh,\frakh] + \frakh^{[p]}\), any \(\F_p\)-linearly independent subset of the \(\F_p\)-subspace that they span. \hfill\qedhere
\end{cor}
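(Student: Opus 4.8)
The plan is to deduce both assertions from a single application of Proposition~\ref{thm-labute-sf}, using Theorem~\ref{thm-lazard-elimination} to pin down the module that appears there. First I would record the relevant consequence of Lazard elimination: since \(\frakh\) is a \emph{free} restricted Lie algebra with free generating set \(B = \{[\xi_i,\underbrace{\gamma,\ldots,\gamma}_{k}] \mid 1\le i\le n,\ k\ge 0\}\), the restricted abelianization \(M = \frakh/([\frakh,\frakh]+\frakh^{[p]})\) is the free \(\F_p\)-module on (the images of) \(B\). The key observation is that the adjoint \(\gamma\)-action on \(M\) carries the basis vector \([\xi_i,\gamma^{(k)}]\) to \([\xi_i,\gamma^{(k+1)}]\), where \(\gamma^{(k)}\) denotes \(k\) iterated right brackets; in particular, for \(1\le i\le n\) the elements \(\gamma^k\cdot[\xi_i,\gamma] = [\xi_i,\gamma^{(k+1)}]\), \(k\ge 0\), are pairwise distinct members of this \(\F_p\)-basis of \(M\), and the \([\xi_i,\gamma]\) themselves are \(\F_p\)-linearly independent in \(M\).

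Next, given \(\F_p\)-linearly independent elements \(\rho'_1,\dots,\rho'_m\) of the span of \(\rho_1=[\xi_1,\gamma],\dots,\rho_n=[\xi_n,\gamma]\), I would write \(\rho'_j = \sum_{i=1}^n c_{ji}\,[\xi_i,\gamma]\) and observe that the coefficient matrix \(C=(c_{ji}) \in \Mat_{m\times n}(\F_p)\) has rank \(m\), since the \([\xi_i,\gamma]\) sit in distinct coordinates of the \(\F_p\)-basis of \(M\) above. Each \(\rho'_j\) lies in \(\frakh\) and is homogeneous of degree \(2\), so to invoke Proposition~\ref{thm-labute-sf} it remains only to check that \(\rho'_1,\dots,\rho'_m\) are linearly independent over \(\F_p[\gamma]\) modulo \([\frakh,\frakh]+\frakh^{[p]}\). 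Expanding a hypothetical relation \(\sum_j f_j(\gamma)\rho'_j \equiv 0\) in \(M\) by means of the shift action, with \(f_j = \sum_k f_{jk}\gamma^k\), turns it into the system \(\sum_j f_{jk}c_{ji} = 0\) for all \(i\) and all \(k\); full row rank of \(C\) then forces \(f_{jk}=0\) for all \(j,k\), i.e. all \(f_j = 0\). Proposition~\ref{thm-labute-sf} then yields that \(\rho'_1,\dots,\rho'_m\) are strongly free, and specializing to \(m=n\), \(C=I_n\) gives that the \([\xi_i,\gamma]\) themselves are strongly free.

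I expect the only point requiring genuine care to be the bookkeeping around the \(\F_p[\gamma]\)-module structure on \(M\): one must use that \(\frakh\) — not merely the ambient \(\frakL\) — is free restricted, and that under Lazard elimination the \(\gamma\)-action is exactly the length-shift on the left-normed brackets \([\xi_i,\gamma^{(k)}]\). Once that identification is in place, the rest is a short linear-algebra computation over \(\F_p\), so no substantial obstacle remains.
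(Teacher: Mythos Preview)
Your proposal is correct and follows the same approach as the paper: apply Proposition~\ref{thm-labute-sf} after using Theorem~\ref{thm-lazard-elimination} to identify \(\frakh/([\frakh,\frakh]+\frakh^{[p]})\) as a free \(\F_p\)-module on the \([\xi_i,\gamma^{(k)}]\) with the \(\gamma\)-action given by shift. The paper's proof is just two sentences and leaves the \(\F_p[\gamma]\)-independence check implicit, whereas you have written out the linear algebra explicitly; there is no substantive difference in strategy.
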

\begin{proof} The first part follows from Theorem~\ref{thm-lazard-elimination} and Proposition~\ref{thm-labute-sf}. For the second part, note that an \(\F_p\)-linear independent subset of their \(\F_p\)-span will remain \(\F_p[\gamma]\)-linear independent modulo \([\frakh,\frakh] + \frakh^{[p]}\).
\end{proof}

\begin{cor}\label{cor-mildflag-sf} Every mild flag presentation of a free-by-\(\Z_p\) pro-\(p\) group \(G\) is strongly free, and \(\Gr(\Falgebra{G})\) is the skew polynomial ring generated by \(\overline{g-1}\) over the free algebra \(\Gr(\Falgebra{N})\). In particular, it is a domain.
\end{cor}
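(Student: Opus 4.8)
The plan is to feed a mild flag presentation into the restricted-Lie-algebra machinery developed above. Fix such a presentation $G \cong \langle x_1,\ldots,x_n,g \mid r_1,\ldots,r_l\rangle$, so that $r_i = [x_i,g]\,h_i^{-1}$ with $h_i \in \Phi(\widetilde{N})$, and write $\xi_i = \overline{x_i-1}$, $\gamma = \overline{g-1}$ and $\rho_i = \overline{r_i-1}$ for the images in $\Gr(\Falgebra{F}) = \mathcal{U}_{\frakL}$. Recall from Theorem~\ref{thm-lazard-elimination} that $\frakL \cong \langle\gamma^{p^i}\mid i\ge 0\rangle \oplus \frakh$ with $\frakh$ the free restricted Lie algebra which is the restricted ideal of $\frakL$ generated by the $\xi_i$. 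The goal is to show that $\rho_1,\ldots,\rho_l$ are a strongly free sequence of exactly the shape handled by Proposition~\ref{thm-labute-sf}.

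First I would pin down the relators. The leading term of $[x_i,g]-1$ in the dimension filtration is the degree-two Lie element $[\xi_i,\gamma]$, whereas $h_i^{-1}-1 = -(h_i-1)h_i^{-1}$ has filtration degree $\ge 2$; hence $\rho_i = [\xi_i,\gamma] + c_i$, where $c_i$ is the degree-two component of $-(h_i-1)$. Since $h_i \in \Phi(\widetilde{N}) = \widetilde{N}^p[\widetilde{N},\widetilde{N}]$ and the image in $\frakL$ of every element of $\widetilde{N}$ lies in $\frakh$, the element $c_i$ lies in $[\frakh,\frakh]+\frakh^{[p]}$ (the $\widetilde{N}^p$-part contributes the $\xi_j^{[p]}$ when $p=2$ and sits in degree $\ge p$ when $p>2$). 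In particular $\rho_i\in\frakh$ is homogeneous of degree $2$ — it is nonzero because $[\xi_i,\gamma]\notin[\frakh,\frakh]+\frakh^{[p]}$, so $r_i\in D_2(F)\setminus D_3(F)$ — and $\rho_i\equiv[\xi_i,\gamma]\pmod{[\frakh,\frakh]+\frakh^{[p]}}$.

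Next, by Theorem~\ref{thm-lazard-elimination} the quotient $\frakh/([\frakh,\frakh]+\frakh^{[p]})$ is a free $\F_p[\gamma]$-module on $\xi_1,\ldots,\xi_n$, with $\gamma$ acting by $\operatorname{ad}\gamma$; so the classes of $\rho_1,\ldots,\rho_l$ are $\gamma\xi_1,\ldots,\gamma\xi_l$, which are $\F_p[\gamma]$-linearly independent there. Hence $\rho_1,\ldots,\rho_l$ meet the hypotheses of Proposition~\ref{thm-labute-sf} and are strongly free; since the $r_i$ lie in $\Phi(F)$ with $\F_p$-independent leading terms the presentation is minimal, so it is a strongly free presentation in the sense of Definition~\ref{defn:strongly-free}. (Alternatively, Corollary~\ref{cor:sfcommutators} applies once the congruence $\rho_i\equiv[\xi_i,\gamma]$ is known.) Proposition~\ref{thm-labute-sf} also yields $\frakL/\frakr \cong (\frakh/\frakr')\rtimes\langle\gamma^{p^i}\mid i\ge 0\rangle$ with $\frakh/\frakr'$ free restricted, where $\frakr$ and $\frakr'$ are the restricted ideals of $\frakL$ and of $\frakh$ generated by the $\rho_i$.

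Finally, Proposition~\ref{prop:sf-graded} gives $\Gr(\Falgebra{G})\cong\Gr(\Falgebra{F})/J=\mathcal{U}_{\frakL}/(\frakr)\cong\mathcal{U}_{\frakL/\frakr}$, and applying the PBW theorem to $\frakL/\frakr\cong(\frakh/\frakr')\rtimes\langle\gamma^{p^i}\rangle$, together with $\mathcal{U}_{\langle\gamma^{p^i}\mid i\ge 0\rangle}\cong\F_p[\gamma]$, exhibits $\Gr(\Falgebra{G})$ as the skew polynomial ring over $\mathcal{U}_{\frakh/\frakr'}$ in the variable $\gamma=\overline{g-1}$, with the derivation induced by $\operatorname{ad}\gamma$. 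Here $\mathcal{U}_{\frakh/\frakr'}$ is a free associative $\F_p$-algebra because $\frakh/\frakr'$ is free restricted, and one identifies it with $\Gr(\Falgebra{N})$ for the free kernel $N$ by matching $\frakh/\frakr'$ with the graded Lie algebra attached to $N$ inside $\frakL/\frakr$, using the explicit free generating set for $N$ from the Remark after Lemma~\ref{lem:virtual-flag}. A free associative algebra over a field is a domain and a skew polynomial ring over a domain is a domain (compare top-degree coefficients), so $\Gr(\Falgebra{G})$ is a domain. The one genuinely delicate point is the computation in the second paragraph, namely that the correction $c_i$ coming from $h_i\in\Phi(\widetilde{N})$ lands in $[\frakh,\frakh]+\frakh^{[p]}$ even though $\Phi(\widetilde{N})$ need not be contained in $D_3(F)$; the rest is assembling results already proved.
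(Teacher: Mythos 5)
Your proposal is correct and follows essentially the same route as the paper's proof: you show each $\rho_i \equiv [\xi_i,\gamma]$ modulo $[\frakh,\frakh]+\frakh^{[p]}$ (the paper phrases this as the image of $\Phi(\widetilde N)$ in $D_2(F)/D_3(F)$ avoiding $[x_i,g]$ and $g^2$), then invoke the Lazard-elimination machinery of Proposition~\ref{thm-labute-sf} (equivalently Corollary~\ref{cor:sfcommutators}) to get strong freeness, and finally use Proposition~\ref{prop:sf-graded} together with the universal/PBW property of restricted enveloping algebras to read off the skew polynomial structure. Your write-up is somewhat more explicit than the paper's about identifying $\mathcal{U}_{\frakh/\frakr'}$ with $\Gr(\Falgebra{N})$ via the free generating set of $N$ from the Remark following Lemma~\ref{lem:virtual-flag}, but there is no essential difference in content or strategy.
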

\begin{proof} The \(l\) relations in~(\ref{eq:flagpresentation}) of a mild flag presentation lie in the \(\F_p\)-span of \([\xi_i,\gamma]\) modulo \([\frakh,\frakh] + \frakh^{[p]}\) in the language of Corollary~\ref{cor:sfcommutators}. This can be seen by observing that: (1) the \(\rho_i\) have degree two and (2) writing them in the basis of \(D_2(F)/D_3(F)\) given by the commutators \([x_i,x_j]\), \([x_i,g]\) and possibly the squares \(x_i^2\) and \(g^2\) if \(p=2\) for \(1 \leq i < j \leq n\), the image of \(\Phi(\widetilde{N})\) in this quotient does not include expressions involving \([x_i,g]\) and \(g^2\). The last part follows from the universal property of restricted enveloping algebras and the identification \(\Gr(\Falgebra{G}) \simeq \Gr(\Falgebra{F})/J = \mathcal{U}_{\frakL/\frakr}\) of Proposition~\ref{prop:sf-graded}.
\end{proof}

\begin{cor} The completed group algebra \(S\) is a domain for every mild flag free-by-\(\Z_p\) pro-\(p\) group \(G\).
\end{cor}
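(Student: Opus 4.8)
The statement is an immediate consequence of Corollary~\ref{cor-mildflag-sf} via the standard principle that a complete, Hausdorff filtered ring whose associated graded ring is a domain is itself a domain; the plan is just to spell this out. Recall that $S = \Falgebra{G}$ is filtered by the powers $S_k = I_G^k$ of its augmentation ideal, that this filtration is complete, and that the induced filtration function $w$ of~(\ref{eq-defining-valuation}) satisfies properties (i)--(iv) listed above; property (i) says precisely that $\bigcap_k S_k = \{0\}$, so every nonzero $a \in S$ has a well-defined nonzero leading symbol in $\Gr(S) = \bigoplus_k S_k/S_{k+1}$.

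First I would invoke Corollary~\ref{cor-mildflag-sf}: for a mild flag free-by-$\Z_p$ pro-$p$ group $G$, the ring $\Gr(S)$ is the skew polynomial ring over the free $\F_p$-algebra $\Gr(\Falgebra{N})$, and hence a domain. Now let $a, b \in S$ be nonzero, and pick $i, j \geq 0$ with $a \in S_i \smallsetminus S_{i+1}$ and $b \in S_j \smallsetminus S_{j+1}$. The symbols $\bar a \in S_i/S_{i+1}$ and $\bar b \in S_j/S_{j+1}$ are nonzero in $\Gr(S)$, so $\bar a\bar b \neq 0$ in $S_{i+j}/S_{i+j+1}$ because $\Gr(S)$ has no zero divisors; therefore $ab \notin S_{i+j+1}$, and in particular $ab \neq 0$. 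Equivalently, this computation shows $w(ab) = w(a)w(b)$, so that $w$ is a valuation --- this is the observation already recorded just before Proposition~\ref{prop:sf-graded} --- and then the fact that $S$ is a domain can alternatively be read off directly from property (i) of $w$.

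I do not expect any genuine obstacle here: the two substantive ingredients, namely that $\Gr(S)$ is a domain and that the augmentation filtration is separated, are already in place, and what remains is a routine leading-term argument. The only point worth a moment's care is the Hausdorff property $\bigcap_k I_G^k = \{0\}$; for a finitely generated pro-$p$ group this holds because each quotient $I_G^k/I_G^{k+1}$ is a finite-dimensional $\F_p$-vector space and $\Falgebra{G}$ is a separated profinite ring, and it is exactly what is asserted when $w$ was declared a filtration function satisfying (i).
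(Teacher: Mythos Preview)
Your proof is correct and is essentially the same as the paper's: both deduce from Corollary~\ref{cor-mildflag-sf} that $\Gr(S)$ is a domain, whence $w$ is a multiplicative valuation (the observation recorded just before Proposition~\ref{prop:sf-graded}), and then conclude $w(ab)=w(a)w(b)\neq 0$ for nonzero $a,b$. You simply spell out the leading-term argument that the paper leaves implicit.
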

\begin{proof} We have shown that the function \(w\colon S \to [0,1]\) defined in~(\ref{eq-defining-valuation}) is a multiplicative valuation. In particular, \(w(ab) = w(a)w(b) \neq 0\) for any \(a\), \(b\) non zero.
\end{proof}

Assume now that \(G\) has a mild flag presentation \[G \simeq \langle x_1,\ldots,x_n,g\mid [x_i,g] = h_i \in \Phi(\widetilde{N})\,, 1 \leq i \leq l\rangle\,.\] In particular, \(G\) is mild with respect to the dimension series and \(w\) is a valuation on \(S\). Let \(R_k\) be as in the introduction, that is, the induced filtration of \(R\) with respect to the filtration \(S_k\) of \(S\). Then, \(w\) restricts to a valuation function on \(R\) as well.

If we set \(x_{i,j} = [x_i,\underbrace{g,\ldots,g}_{j\text{ times}}]\) and \(X_{i,j} = x_{i,j} - 1\), then the group algebra \(R = \Falgebra{N}\) is isomorphic to the Magnus algebra \(\F_p\langle\!\langle X_{i,j}\ldots\mid j=0\text{ if }i\leq l\rangle\!\rangle\), the \(\F_p\)-algebra of noncommutative power series in at most countable variables \[X_{1,0},\ldots,X_{l,0},X_{l+1,0},X_{l+1,1},\ldots,X_{n,0},X_{n,1},\ldots\,,\] this follows from the finitely generated case of \cite[Prop. II.3.1.4]{lazardGroupesAnalytiquesPadiques1965} by taking the inverse limit of finite sets of generators. Moreover, as a subring of \(S = \Falgebra{G}\), it inherits the valuation \(w\). Since this ring has a simple description by power series, it would be desirable to have the property that \begin{equation}\label{eq:weight-function} w\left(\sum_{m \in \operatorname{Mon}(\{X_{i,j}\})} \lambda_m m\right) = \max\{w(m) \mid \lambda_m \neq 0\}\,,\end{equation} where \(\operatorname{Mon}(\{X_{i,j}\})\) denotes the set of all noncommutative monomials in the \(X_{i,j}\). However, this is not true for arbitrary valuations on Magnus algebras, and the valuations on \(R\) that do satisfy~(\ref{eq:weight-function}) are called \emph{weight functions} with respect to the basis \(\{X_{i,j}\}\) in \cite[Sec. 2.4]{ershovGroupsPositiveWeighted2013}. By \cite[Prop. 3.2]{ershovGroupsPositiveWeighted2013}, \(w\) is a weight function with respect to \(\{X_{i,j}\}\) if and only if the restricted \(\F_p\)-Lie algebra generated by \(\overline{X_{i,j}}\) is free in this basis. This is exactly what we've shown in Proposition~\ref{thm-labute-sf}, hence:

\begin{cor}\label{cor-val-wf} The induced valuation \(w\) on \(R\) is a weight function with respect to the basis \(\{X_{i,j}\}\) of \(R\) associated to a mild flag presentation.
\end{cor}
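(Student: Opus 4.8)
The plan is to deduce the statement from two facts established just above it: since \(w\) is already a valuation on \(R\), Ershov's criterion \cite[Prop. 3.2]{ershovGroupsPositiveWeighted2013} says that \(w\) is a weight function with respect to \(\{X_{i,j}\}\) exactly when the restricted \(\F_p\)-Lie subalgebra of \(\Gr(R)\) generated by the leading terms \(\overline{X_{i,j}}\) is free on this set; and Corollary~\ref{cor-mildflag-sf} identifies the ambient graded ring \(\Gr(S) = \Gr(\Falgebra{G})\). So the task is to locate this restricted Lie subalgebra inside \(\Gr(\Falgebra{G}) \cong \mathcal{U}_{\frakL/\frakr}\).

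First I would note that the inclusion \(R \hookrightarrow S\) is strict for the filtrations, since \(R_k = R\cap S_k\) by definition; hence \(\Gr(R)\) embeds into \(\Gr(S)\) and the computation may be carried out in \(\Gr(\Falgebra{G})\). By Propositions~\ref{prop:sf-graded} and~\ref{thm-labute-sf} (as combined in Corollary~\ref{cor-mildflag-sf}) we have \(\Gr(\Falgebra{G}) \cong \mathcal{U}_{\frakL/\frakr}\) with \(\frakL/\frakr \cong (\frakh/\frakr)\rtimes\langle\gamma^{p^i}\mid i\geq 0\rangle\) and \(\frakh/\frakr\) a \emph{free} restricted Lie algebra; moreover \(\frakL\cap J = \frakr\) for the defining ideal \(J\) of Proposition~\ref{prop:sf-graded}, so Lie elements of \(\frakL\) outside \(\frakr\) stay nonzero in \(\Gr(\Falgebra{G})\). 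The leading term of \(X_{i,j} = x_{i,j}-1\), where \(x_{i,j} = [x_i,\underbrace{g,\ldots,g}_j]\), is the Lie monomial \([\xi_i,\underbrace{\gamma,\ldots,\gamma}_j]\) of \(\frakL\); for the variables that actually occur in the Magnus presentation of \(R\) -- namely \(i\leq l\) with \(j=0\), and \(l< i\leq n\) with \(j\geq 0\) -- this monomial belongs to the free restricted generating set of \(\frakh\) provided by Theorem~\ref{thm-lazard-elimination}, hence lies outside \(\frakr\), so its class in \(\frakL/\frakr \subseteq \Gr(\Falgebra{G})\) is precisely \(\overline{X_{i,j}}\).

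It remains to show that these classes freely generate \(\frakh/\frakr\). Here I would use the proof of Proposition~\ref{thm-labute-sf}: the restricted generators \(\rho_{i,k} = [\rho_i,\underbrace{\gamma,\ldots,\gamma}_k]\) of the restricted ideal \(\frakr\) of \(\frakh\) can be completed to a free restricted generating set of \(\frakh\) by exactly the Lie monomials listed above. This uses that \(h_i\in\Phi(\widetilde{N})\), so that \(\rho_i\) agrees with \([\xi_i,\gamma]\) modulo \([\frakh,\frakh]+\frakh^{[p]}\), and hence that passing from the Lazard generating set of \(\frakh\) to \(\{\rho_{i,k}\}\) together with the Magnus monomials is a change of free restricted basis. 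Since the quotient of a free restricted Lie algebra by the restricted ideal generated by part of a free generating set is the free restricted Lie algebra on the complementary generators, \(\frakh/\frakr\) is free restricted on the classes of the Magnus monomials, i.e. on the \(\overline{X_{i,j}}\). As the restricted Lie subalgebra of \(\mathcal{U}_{\frakL/\frakr}\) generated by elements of \(\frakL/\frakr\) is contained in \(\frakL/\frakr\), the restricted Lie algebra generated by the \(\overline{X_{i,j}}\) is \(\frakh/\frakr\), free on that basis, and Ershov's criterion gives the conclusion.

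The step requiring the most care is this last piece of bookkeeping: checking that the free restricted generating set of \(\frakh\) coming from Lazard elimination can genuinely be matched against the Magnus variables -- in particular that for \(i\leq l\) only \(X_{i,0}\) occurs, because those commutator directions are consumed by the relations \([x_i,g]=h_i\) -- and that the general principle ``the quotient of a free restricted Lie algebra by the restricted ideal on part of a free basis is free on the remainder'' applies. Everything else is a formal consequence of the identifications already in place in this section.
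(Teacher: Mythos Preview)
Your proposal is correct and follows essentially the same route as the paper: invoke Ershov's criterion \cite[Prop.~3.2]{ershovGroupsPositiveWeighted2013} to reduce the question to freeness of the restricted Lie algebra generated by the \(\overline{X_{i,j}}\), and then extract that freeness from Proposition~\ref{thm-labute-sf}. The paper's own argument is the single sentence preceding the corollary, and you have simply unpacked what ``this is exactly what we've shown in Proposition~\ref{thm-labute-sf}'' means---in particular the bookkeeping that the Magnus variables \(\{X_{i,j}\mid j=0\text{ if }i\leq l\}\) correspond precisely to the complement of the \(\rho_{i,k}\) in a free restricted basis of \(\frakh\).
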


\section{The embeddings of the group algebra}\label{sec:group-embeddings}

We keep the notation of Section~\ref{sec:strongly-free-sequences}, and throughout this section we assume that \(G\)  has a mild flag presentation \[G \simeq \langle x_1,\ldots,x_n,g\mid [x_i,g] = h_i \in \Phi(\widetilde{N})\,, 1 \leq i \leq l\rangle\,.\] 
We recall that \(R = \Falgebra{N}\) and \(S = \Falgebra{G} \simeq R[\![s;\sigma,\delta]\!]\) by Proposition~\ref{prop-decomposition-power-series}. In this section we embed the ring $S$ into a division ring $\mathcal Q$.

We set \(x_{i,j} = [x_i,\underbrace{g,\ldots,g}_{j\text{ times}}]\). Then \(N\) is the free pro-\(p\) group on the set \(\{x_{i,j}\}\). Put \(\Lambda = {\mathbb{F}_p}\langle \{a_{i,j}\}\rangle\) be the free associative algebra over \(\mathbb{F}_p\) in the same number of variables \(a_{i,j}\) and consider the power series ring \(\Lambda[\![t]\!]\) in a variable \(t\). Through the universal property of the Magnus algebra, there exists an unique ring homomorphism \(\iota\colon R\to \Lambda[\![t]\!]\) sending \(x_{i,j}-1\) to \(a_{i,j}t^{w_{i,j}}\), where \(w_{i,j} = -\log_p w(x_{i,j}-1)\) is the logarithm of the valuation defined in~(\ref{eq-defining-valuation}) by the filtration \(R_k = R\cap I_G^k\) of \(R\).

\begin{lem}\label{lem-magnus-valuation-embedding} The map \(\iota\) is an embedding of topological rings that preserves the valuation (i.e. an isometric embedding).
\end{lem}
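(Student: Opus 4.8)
The plan is to exhibit $\iota$ as a morphism of filtered rings which is injective by an associated-graded argument, and to verify valuation-preservation directly on monomials. First I would note that the Magnus algebra $R = \F_p\langle\!\langle X_{i,j}\rangle\!\rangle$ carries its filtration by powers of the augmentation ideal, while $\Lambda[\![t]\!]$ carries the $t$-adic filtration: $(\Lambda[\![t]\!])_k$ is the set of power series all of whose coefficients of $t^m$ with $m < k$ vanish. The choice of exponents $w_{i,j} = -\log_p w(X_{i,j})$ is rigged precisely so that $\iota$ sends $X_{i,j} \in R_{w_{i,j}}$ into $\Lambda t^{w_{i,j}} \subseteq (\Lambda[\![t]\!])_{w_{i,j}}$; since $\iota$ is a ring homomorphism and the $X_{i,j}$ topologically generate the augmentation ideal, submultiplicativity gives $\iota(R_k) \subseteq (\Lambda[\![t]\!])_k$ for all $k$. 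In particular $\iota$ is continuous, so $w(\iota(a)) \le w(a)$ for all $a \in R$.

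The key point is the reverse inequality, which is exactly Corollary~\ref{cor-val-wf}: since $w$ is a weight function on $R$ with respect to the basis $\{X_{i,j}\}$, every element $a \in R$ can be written as a noncommutative power series $\sum_m \lambda_m m$ in the monomials $m$ in the $X_{i,j}$, and $w(a) = \max\{w(m) \mid \lambda_m \neq 0\}$. Now $\iota$ sends the monomial $m = X_{i_1,j_1}\cdots X_{i_r,j_r}$ to $a_{i_1,j_1}\cdots a_{i_r,j_r}\, t^{w_{i_1,j_1}+\cdots+w_{i_r,j_r}}$, and since $\Lambda$ is the \emph{free} associative algebra these leading terms $a_{i_1,j_1}\cdots a_{i_r,j_r}$ are linearly independent over $\F_p$. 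Grouping the image $\iota(a) = \sum_m \lambda_m \iota(m)$ by powers of $t$, the coefficient of $t^k$ is a sum over monomials $m$ with $\sum_{\ell} w_{i_\ell,j_\ell} = k$ of $\lambda_m$ times the corresponding (distinct, hence linearly independent) word in the $a_{i,j}$; this coefficient is therefore nonzero as soon as some such $\lambda_m$ is nonzero. Taking $k$ maximal with $w(m) = p^{-k}$ for some $m$ with $\lambda_m \neq 0$ — which exists by the weight function property — shows $w(\iota(a)) = p^{-k} = w(a)$. This simultaneously proves $w(\iota(a)) = w(a)$ for all $a$ and, in particular, that $\iota(a) = 0$ forces $a = 0$, i.e.\ $\iota$ is injective; being a continuous injection that is moreover an isometry for the two complete metrics, it is a closed embedding of topological rings.

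The main obstacle is making the linear-independence bookkeeping clean: one must be careful that the weight function property genuinely controls \emph{all} of $R$ and not merely the dense subalgebra of polynomials, and that when several monomials of the same $w$-value map to the same power of $t$ no cancellation can occur — this is where freeness of $\Lambda$ is essential (an analogous statement would fail over, say, a polynomial ring). I expect the formal write-up to reduce to: (i) recording $\iota(R_k) \subseteq (\Lambda[\![t]\!])_k$, hence $w(\iota(a)) \le w(a)$; (ii) invoking Corollary~\ref{cor-val-wf} to expand $a$ in the basis $\{X_{i,j}\}$; (iii) observing that $\iota$ carries distinct monomials to $\F_p$-independent elements of $\Lambda$, so the $t^k$-coefficient of $\iota(a)$ detects the monomials of $w$-value $p^{-k}$; and (iv) concluding $w(\iota(a)) = w(a)$ and in particular injectivity, so that $\iota$ is the desired isometric embedding of topological rings.
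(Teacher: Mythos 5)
Your proposal is correct and takes essentially the same route as the paper: use $\iota(R_k)\subseteq t^k\Lambda[\![t]\!]$ for one inequality, invoke Corollary~\ref{cor-val-wf} to reduce the converse to monomials, and observe that freeness of $\Lambda$ rules out cancellation among distinct words $a_{i_1,j_1}\cdots a_{i_r,j_r}$ of the same weighted degree — the latter being precisely the step the paper dismisses as ``immediate.'' One small wording slip at the end: you want the \emph{minimal} weighted degree $k$ occurring among monomials with $\lambda_m\neq 0$ (equivalently, the maximal value of $w(m)$, which is $p^{-k}$ by the weight-function property), not ``$k$ maximal''; your earlier line $w(a)=\max\{w(m)\mid\lambda_m\neq 0\}$ makes clear you have the right picture, so this is just a phrasing issue and the argument stands.
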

\begin{proof} We must show that \(r \in R\) belongs to \(R_k\) if and only if it's image \(\iota(r) \in \Lambda[\![t]\!]\) belongs to \(t^k\Lambda[\![t]\!]\). Since we've shown in Corollary~\ref{cor-val-wf} that the valuation \(w\) induced by the filtration \(R_k\) is a weight function, we see that the valuation of \(r\) expressed as a non-commutative power series equals the valuation of its monomial of least total weighted degree (with the weight of \(x_{i,j}\) being \(w_{i,j}\)). Hence, it suffices to check this condition only for monomials \(x_{i_1,j_1}\cdots x_{i_m,j_m}\), which is immediate. Moreover, any valuation preserving map must be injective, from which the lemma follows.
\end{proof}

\begin{prop}\label{prop-valuation-embedding} Both \(\sigma\) and \(\delta\) extend from \(R\) to \(\Lambda[\![t]\!]\) in such a way that \(\sigma\) is an automorphism, \(\delta\) is a \(\sigma\)-derivation, \(\sigma(t) = t\), \(\delta(t) = 0\), \[\sigma(a_i) = \frac{\iota(\sigma(y_i))}{t^{w_i}} \quad\text{and}\quad \delta(a_i) = \frac{\iota(\delta(y_i))}{t^{w_i}}\,.\] Moreover, \(\delta(\Lambda[\![t]\!]) \subseteq t\Lambda[\![t]\!]\) and therefore it is also topologically nilpotent on \(\Lambda[\![t]\!]\). In particular, the embedding \(R \hookrightarrow \Lambda[\![t]\!]\) induces an embedding \(S \hookrightarrow \Lambda[\![t]\!][\![s; \sigma, \delta]\!]\).
\end{prop}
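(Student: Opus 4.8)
The plan is to build the extension of $\sigma$ first, as a continuous automorphism of $\Lambda[\![t]\!]$, and then take $\delta = \sigma - \operatorname{id}$ on $\Lambda[\![t]\!]$, exactly as over $R = \Falgebra{N}$. Since $t$ is central and $\Lambda$ is the free associative $\F_p$-algebra on the $a_i$, a continuous $\F_p$-algebra endomorphism of $\Lambda[\![t]\!]$ fixing $t$ is uniquely and freely determined by the images of the $a_i$; so I would define $\sigma$ by $\sigma(t) = t$ together with the formula of the statement, $\sigma(a_i) = \iota(\sigma(y_i))\,t^{-w_i}$, where $y_i$ is the augmentation element attached to the $i$-th free generator of $N$ (so that $\iota(y_i) = a_i t^{w_i}$ and $w(y_i) = p^{-w_i}$). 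The first point to settle is that this is well defined, i.e.\ that $\iota(\sigma(y_i))$ is divisible by $t^{w_i}$ in $\Lambda[\![t]\!]$: this holds because $\sigma$ is the restriction to $R$ of conjugation by $g$ on $S$, an automorphism preserving every power $I_G^k$ and hence the filtration $R_k$ and the valuation $w$, so $\sigma(y_i) \in R_{w_i}$; Lemma~\ref{lem-magnus-valuation-embedding} then puts $\iota(\sigma(y_i))$ inside $t^{w_i}\Lambda[\![t]\!]$. Running the same recipe with $\sigma^{-1}$ in place of $\sigma$ produces a second continuous endomorphism $\sigma'$ of $\Lambda[\![t]\!]$ fixing $t$.

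Next I would check compatibility with $\iota$ and bijectivity of $\sigma$. Compatibility $\sigma\circ\iota = \iota\circ\sigma$ holds because both sides are continuous $\F_p$-algebra homomorphisms $R \to \Lambda[\![t]\!]$ agreeing, by construction, on the topological generating set $\{y_i\}$ of $R$; similarly $\sigma'\circ\iota = \iota\circ\sigma^{-1}$. Bijectivity cannot be read off $\iota(R)$, which is a \emph{proper} closed subring of $\Lambda[\![t]\!]$; instead I would verify directly on generators that $\sigma'\circ\sigma$ and $\sigma\circ\sigma'$ are the identity, cancelling the powers of $t$ against $\iota$: for instance $\sigma'(\sigma(a_i))\,t^{w_i} = \sigma'(\iota(\sigma(y_i))) = \iota(\sigma^{-1}\sigma(y_i)) = \iota(y_i) = a_i\,t^{w_i}$, and $t$ is a non-zero-divisor, so $\sigma'(\sigma(a_i)) = a_i$; both composites also fix $t$. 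Hence $\sigma \in \Aut(\Lambda[\![t]\!])$. Setting $\delta := \sigma - \operatorname{id}$, it is automatically a right $\sigma$-derivation commuting with $\sigma$, it satisfies $\delta\circ\iota = \iota\circ\delta$, and on generators $\delta(a_i) = \sigma(a_i) - a_i = (\iota(\sigma(y_i)) - \iota(y_i))\,t^{-w_i} = \iota(\delta(y_i))\,t^{-w_i}$, while $\delta(t) = 0$.

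It remains to prove that $\delta$ is topologically nilpotent on $\Lambda[\![t]\!]$ and to assemble the embedding. Applying Corollary~\ref{cor-derivation-retracting} to $y_i \in S$ gives $w(\delta(y_i)) < w(y_i) = p^{-w_i}$, so $\delta(y_i) \in R_{w_i+1}$ and therefore $\delta(a_i) = \iota(\delta(y_i))\,t^{-w_i} \in t\Lambda[\![t]\!]$. Since also $\delta(t) = 0$ and $t$ is central, expanding $\delta$ of a monomial in the $a_i$ by the $\sigma$-Leibniz rule shows that every resulting term carries a factor lying in $t\Lambda[\![t]\!]$, hence $\delta(\Lambda) \subseteq t\Lambda[\![t]\!]$; together with $\delta(t^k\lambda) = t^k\delta(\lambda)$ and continuity of $\delta$ this gives $\delta(t^k\Lambda[\![t]\!]) \subseteq t^{k+1}\Lambda[\![t]\!]$ for every $k$, whence $\delta^k(\Lambda[\![t]\!]) \subseteq t^k\Lambda[\![t]\!]$ and $\delta$ is topologically nilpotent. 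By the discussion of Paragraph~\ref{spsr} the skew power series ring $\Lambda[\![t]\!][\![s;\sigma,\delta]\!]$ exists, and $\sum_i s^i r_i \mapsto \sum_i s^i \iota(r_i)$ defines a ring homomorphism $S = R[\![s;\sigma,\delta]\!] \to \Lambda[\![t]\!][\![s;\sigma,\delta]\!]$ — the multiplication rule~(\ref{eq-power-series-mult}) only applies $\sigma$ and $\delta$ to the coefficients, and $\iota$ intertwines $(\sigma,\delta)$ with their extensions — which is injective since $\iota$ is. I expect the main obstacle to be twofold: the surjectivity of the extended $\sigma$, which, $\iota(R)$ not being dense, forces the explicit ``cancel $t^{-w_i}$ against $\iota$'' computation above rather than a soft density argument; and the topological nilpotence of $\delta$, which genuinely relies on the strict valuation drop in Corollary~\ref{cor-derivation-retracting}, itself a consequence of the mild flag hypothesis.
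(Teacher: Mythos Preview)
Your proof is correct and follows the same route as the paper: define $\sigma$ on the free generators by the forced formula, check it lands in $\Lambda[\![t]\!]$ via Lemma~\ref{lem-magnus-valuation-embedding}, obtain $\delta(\Lambda[\![t]\!]) \subseteq t\Lambda[\![t]\!]$ from Corollary~\ref{cor-derivation-retracting}, and assemble the embedding of skew power series rings. Your argument is in fact more complete than the paper's, which never verifies that the extended $\sigma$ is bijective even though the construction in Paragraph~\ref{spsr} requires an automorphism; your trick of building $\sigma'$ from $\sigma^{-1}$ and cancelling the $t^{w_i}$ is exactly what is needed. One small correction to your closing commentary: Corollary~\ref{cor-derivation-retracting} is stated and proved for \emph{every} free-by-$\Z_p$ pro-$p$ group, so the mild flag hypothesis does not enter there --- it enters earlier, through Corollary~\ref{cor-val-wf} (making $w$ a weight function) and hence Lemma~\ref{lem-magnus-valuation-embedding}, which is what lets you divide by $t^{w_i}$ in the first place.
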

\begin{proof} Once we set that \(\sigma(t) = t\) (and thus \(\delta(t) = 0\)), we can define \(\sigma\) arbitrarily on the generators \(a_i\) of \(\Lambda\), and for this map to extend the \(\sigma\) from \(R\) the formula must be as given. The same applies for the derivation \(\delta = \sigma - \operatorname{Id}\). From Corollary~\ref{cor-derivation-retracting} and Lemma~\ref{lem-magnus-valuation-embedding}, combined with with the fact that \(\iota\) and \(\delta\) thus defined commute, we conclude that \(\delta(\Lambda[\![t]\!]) \subseteq t\Lambda[\![t]\!]\) and thus the successive compositions \(\delta^i(\lambda)\) converge to zero for any element \(\lambda \in \Lambda[\![t]\!]\). Hence, the formulas in~(\ref{eq-power-series-mult}) are well defined for \(a_i\) and \(b_j\) in \(\Lambda[\![t]\!]\) and allows us to construct the ring \(\Lambda[\![t]\!][\![s;\sigma,\delta]\!]\), into which \(S\) embeds.
\end{proof}

As we stated in Section~\ref{subsec-intro-division-rings}, the free algebra \(\Lambda\) is a Sylvester domain, and therefore it has a universal division \(\Lambda\)-ring of fractions \(\mathcal{D}\).

\begin{prop} The maps \(\sigma\) and \(\delta\) extend from \(\Lambda[\![t]\!]\) to \(\mathcal{D}[\![t]\!]\). Moreover, \(\delta(\mathcal{D}[\![t]\!]) \subseteq t\mathcal{D}[\![t]\!]\), and thus we get an embedding \(\Lambda[\![t]\!][\![s;\sigma,\delta]\!] \hookrightarrow \mathcal{D}[\![t]\!][\![s; \sigma, \delta]\!]\).
\end{prop}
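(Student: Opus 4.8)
The plan is to push the automorphism $\sigma$ and the derivation $\delta$ across the inclusion $\Lambda[\![t]\!]\hookrightarrow\mathcal{D}[\![t]\!]$ using the universal property of $\mathcal{D}=\mathcal{D}_\Lambda$ recorded in Proposition~\ref{prop-universal-prop-sylvester}, together with the fact that $\mathcal{D}[\![t]\!]$ is a local ring in which matrix invertibility is detected modulo $t$. Concretely, since $\mathcal{D}$ is a division ring, $\mathcal{D}[\![t]\!]$ is local with maximal (two-sided) ideal $t\mathcal{D}[\![t]\!]$ and residue ring $\mathcal{D}$, and a square matrix over $\mathcal{D}[\![t]\!]$ is invertible if and only if its reduction modulo $t$ is invertible over $\mathcal{D}$ -- the nontrivial direction following by applying the $t$-adic Neumann series to $I-MN$ for any lift $N$ of an inverse of $\overline{M}$.

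The key observation is that $\sigma$ is the identity modulo $t$. Indeed, Proposition~\ref{prop-valuation-embedding} gives $\delta(\Lambda[\![t]\!])\subseteq t\Lambda[\![t]\!]$, and as $\sigma=\mathrm{id}+\delta$ we get $\sigma\equiv\mathrm{id}\pmod{t\Lambda[\![t]\!]}$. Consider then the ring homomorphism $\phi\colon\Lambda\hookrightarrow\Lambda[\![t]\!]\xrightarrow{\ \sigma\ }\Lambda[\![t]\!]\hookrightarrow\mathcal{D}[\![t]\!]$; composing with the projection $\mathcal{D}[\![t]\!]\to\mathcal{D}$ recovers the canonical inclusion $\Lambda\hookrightarrow\mathcal{D}$. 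Now for any full matrix $A$ over $\Lambda$ we have $\rk_{\mathcal{D}}(A)=\irk_\Lambda(A)$ equal to its size, so $A$ is invertible over $\mathcal{D}$; hence $\phi(A)$ reduces modulo $t$ to an invertible matrix and is therefore invertible over $\mathcal{D}[\![t]\!]$. By Proposition~\ref{prop-universal-prop-sylvester}, $\phi$ extends uniquely to a ring homomorphism $\widetilde{\phi}\colon\mathcal{D}\to\mathcal{D}[\![t]\!]$.

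Next I would promote $\widetilde{\phi}$ to the sought extension $\widetilde{\sigma}$ of $\sigma$ by the coefficientwise formula $\widetilde{\sigma}\bigl(\sum_k d_kt^k\bigr)=\sum_k\widetilde{\phi}(d_k)t^k$; the sum converges $t$-adically because each $\widetilde{\phi}(d_k)$ lies in $\mathcal{D}[\![t]\!]$, it is a $t$-adically continuous ring endomorphism of $\mathcal{D}[\![t]\!]$ since $\widetilde{\phi}$ is a ring homomorphism and $t$ is central, and it agrees with $\sigma$ on $\Lambda$ and on $t$, hence on the $t$-adically dense subring $\Lambda[t]$ and so on all of $\Lambda[\![t]\!]$. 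To see $\widetilde{\sigma}\in\Aut(\mathcal{D}[\![t]\!])$: the composite $\mathcal{D}\xrightarrow{\widetilde{\phi}}\mathcal{D}[\![t]\!]\to\mathcal{D}$ is a ring endomorphism of $\mathcal{D}$ restricting to the inclusion $\Lambda\hookrightarrow\mathcal{D}$, so by the uniqueness clause of Proposition~\ref{prop-universal-prop-sylvester} it equals $\mathrm{id}_{\mathcal{D}}$; thus $\widetilde{\sigma}\equiv\mathrm{id}\pmod{t\mathcal{D}[\![t]\!]}$, and since $\widetilde{\sigma}(t)=t$ the additive map $\nu=\widetilde{\sigma}-\mathrm{id}$ satisfies $\nu\bigl(t^n\mathcal{D}[\![t]\!]\bigr)\subseteq t^{n+1}\mathcal{D}[\![t]\!]$, so $\sum_{k\ge0}(-1)^k\nu^k$ converges to a two-sided inverse of $\widetilde{\sigma}$.

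Finally, set $\delta=\widetilde{\sigma}-\mathrm{id}$ on $\mathcal{D}[\![t]\!]$; it extends the $\delta$ on $\Lambda[\![t]\!]$ and, by $\widetilde{\sigma}\equiv\mathrm{id}\pmod{t\mathcal{D}[\![t]\!]}$, satisfies $\delta(\mathcal{D}[\![t]\!])\subseteq t\mathcal{D}[\![t]\!]$, whence $\delta^k(\mathcal{D}[\![t]\!])\subseteq t^k\mathcal{D}[\![t]\!]\to0$ and $\delta$ is topologically nilpotent; by the discussion in Paragraph~\ref{spsr} the ring $\mathcal{D}[\![t]\!][\![s;\sigma,\delta]\!]$ exists. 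As $\Lambda[\![t]\!]\hookrightarrow\mathcal{D}[\![t]\!]$ intertwines the two $\sigma$'s and the two $\delta$'s and the multiplication~(\ref{eq-power-series-mult}) is built only from $\sigma$, $\delta$ and the ring operations, applying this inclusion to the coefficients of a skew power series yields a ring homomorphism $\Lambda[\![t]\!][\![s;\sigma,\delta]\!]\to\mathcal{D}[\![t]\!][\![s;\sigma,\delta]\!]$, injective because it is injective on coefficients. The one step deserving real care -- and the crux of the argument -- is the reduction $\sigma\equiv\mathrm{id}\pmod t$, since this is precisely what makes the hypothesis of Proposition~\ref{prop-universal-prop-sylvester} verifiable; everything else is routine completeness bookkeeping.
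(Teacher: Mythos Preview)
Your proof is correct and follows essentially the same strategy as the paper: extend \(\sigma\) from \(\Lambda\) to \(\mathcal{D}\) via Proposition~\ref{prop-universal-prop-sylvester}, using that \(\sigma\equiv\mathrm{id}\pmod t\) so that full matrices over \(\Lambda\) remain invertible over \(\mathcal{D}[\![t]\!]\), and then push to \(\mathcal{D}[\![t]\!]\) by fixing \(t\). The only notable difference is in the verification of \(\delta(\mathcal{D}[\![t]\!])\subseteq t\mathcal{D}[\![t]\!]\): the paper argues inductively through the division closure, checking that \(\delta(a^{-1})=-\sigma(a)^{-1}\delta(a)a^{-1}\in t\mathcal{D}[\![t]\!]\) whenever \(\delta(a)\in t\mathcal{D}[\![t]\!]\), whereas you deduce it more directly from the uniqueness clause of Proposition~\ref{prop-universal-prop-sylvester} applied to the composite \(\mathcal{D}\to\mathcal{D}[\![t]\!]\to\mathcal{D}\); your route also yields the bijectivity of \(\widetilde\sigma\), which the paper uses but does not spell out.
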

\begin{proof} Since \(\delta = \sigma - 1\), it suffices to show that \(\sigma\) extends from \(\Lambda[\![t]\!]\) to \(\mathcal{D}[\![t]\!]\). Let us consider \(\sigma\) as a map \(\sigma\colon \Lambda\to \mathcal{D}[\![t]\!]\) through the usual embedding \(\Lambda[\![t]\!] \to \mathcal{D}[\![t]\!]\). If we show that the image of every full matrix \(A \in \Mat_{n\times n}(\Lambda)\) through \(\sigma\) is invertible, by the Proposition~\ref{prop-universal-prop-sylvester} we get an extension of \(\sigma\) to a map \(\sigma\colon \mathcal{D} \to \mathcal{D}[\![t]\!]\). Moreover, this suffices to define \(\sigma\) on \(\mathcal{D}[\![t]\!]\), since we must have \(\sigma(t) = t\).

By Proposition~\ref{prop-valuation-embedding}, for each generator \(a_{i,j} \in \Lambda\) there exists \(b_{i,j} \in \Lambda[\![t]\!]\) such that \(\sigma(a_{i,j}) = a_{i,j} + b_{i,j}t\). This implies that for every matrix \(A \in \operatorname{Mat}_{n\times n}(\Lambda)\) there exists a matrix \(B \in \operatorname{Mat}_{n\times n}(\mathcal{D}[\![t]\!])\) such that \(\sigma(A) = A + Bt\). However, a matrix over \(\mathcal{D}[\![t]\!]\) is invertible if and only if it's projection over \(\mathcal{D}\) is invertible, which is the case when \(A\) is a full matrix. Therefore, we can extend \(\sigma\) (and hence \(\delta\)) from \(\Lambda[\![t]\!]\) to \(\mathcal{D}[\![t]\!]\).

To show that \(\delta(\mathcal{D}[\![t]\!]) \subseteq t\mathcal{D}[\![t]\!]\) and thus it is still topologically nilpotent, we will use the fact that \(\mathcal{D}\) is the division closure of \(\Lambda\) in \(\mathcal{D}\) and hence can be recursively obtained by adding inverses to \(\Lambda\) and closing under the ring operations. This implies that it suffices to show that \(\delta(a^{-1}) \in t\mathcal{D}[\![t]\!]\) whenever \(\delta(a) \in t\mathcal{D}[\![t]\!]\) for \(a \in \mathcal{D}\) non-zero. In this case, write \(\delta(a) = tb\) for \(b \in \mathcal{D}[\![t]\!]\), and observe that we must have \[\delta(a^{-1}) = -\sigma(a)^{-1}\delta(a)a^{-1} = -\sigma(a)^{-1}ba^{-1}t\,.\qedhere\]\end{proof}

We already have the following chain of embeddings for the group algebra \(S\): \[
    S \simeq R[\![s;\sigma,\delta]\!] \hookrightarrow \Lambda[\![t]\!][\![s;\sigma,\delta]\!] \hookrightarrow \mathcal{D}[\![t]\!][\![s;\sigma,\delta]\!]\,.
\] In order to construct our candidate for the universal division \(S\)-ring of fractions, we will complete this chain by showing the existence of a universal \(\mathcal{D}[\![t]\!][\![s;\sigma,\delta]\!]\)-ring of fractions. To shorten the notation, we denote the ring \(\mathcal{D}[\![t]\!][\![s;\sigma,\delta]\!]\) simply by \(\bbd\).

Let \(J\) be the two-sided ideal of \(\bbd\) generated by \(t\) and \(s\). By the construction of the embedding, the restriction of the \(J\)-adic topology to \(S\) as a subspace of \(\bbd\) coincides with the topology given by the powers of the augmentation ideal on \(S\). Since \(\delta(\mathcal{D}[\![t]\!]) \subseteq t\mathcal{D}[\![t]\!]\), the skew relation defining \(s\) gives us \[\mathcal{D}[\![t]\!]s \subseteq s\mathcal{D}[\![t]\!] + t\mathcal{D}[\![t]\!]\,,\] and therefore \(J\) is contained in the right ideal generated by \(t\) and \(s\). Inductively we have the identities:
\begin{align}
    J &= s\bbd + t\bbd\,,\notag\\
    J^2 &= s^2\bbd + st\bbd + t^2\bbd\,,\notag\\
    \vdots&= \vdots\notag\\
    J^n &= \sum_{i=1}^n s^it^{n-i}\bbd\label{eq-jadic-ideals}\\
    \vdots &= \vdots\notag
\end{align}
and therefore the intersection \(\bigcap_{i=1}^\infty J^i\) is trivial: a power series on \(s\) belongs to \(J^n\) only if the coefficient of \(s^i\) is a multiple of \(t^{n-i}\). In other words, the \(J\)-adic topology on \(\bbd\) is Hausdorff. It also follows from~(\ref{eq-jadic-ideals}) that the \(J\)-adic topology on \(\bbd\) is complete.

\begin{prop}\label{prop-graded-polynomial} The maps \(\sigma\) and \(\delta\) induce maps \[\overline{\sigma}\colon J^n/J^{n+1} \to J^n/J^{n+1}\quad\text{and}\quad \overline{\delta}\colon J^n/J^{n+1} \to J^{n+1}/J^{n+2}\,\] such that \(\overline{\delta}\) is a right \(\overline{\sigma}\)-derivation on \[\Gr \bbd \simeq \bbd/J \oplus J/J^2 \oplus J^2/J^2 \oplus \cdots\,.\] 
If \(\overline{t} = t+J^2\) and \(\overline{s} = s + J^2\), then the graded ring \(\Gr \bbd\) is isomorphic to the skew polynomial ring \(\mathcal{D}[\overline{t}][\overline{s}; \overline{\sigma}, \overline{\delta}]\).
\end{prop}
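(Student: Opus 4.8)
The plan is to compute the $J$-adic filtration on $\bbd$ via a normal form for its elements and then to recognize $\Gr\bbd$ as the associated graded ring of the skew power series construction — which comes out as a skew \textit{polynomial} ring precisely because the $J$-adic filtration mixes the $s$-degree with the $t$-degree.

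First I would fix a normal form. Since $\bbd = \mathcal{D}[\![t]\!][\![s;\sigma,\delta]\!]$ and $\mathcal{D}[\![t]\!]$ is a power series ring in the central variable $t$, every element of $\bbd$ has a unique expansion $\sum_{i,j \geq 0} s^i t^j a_{ij}$ with $a_{ij} \in \mathcal{D}$, the sum converging $J$-adically because $s^i t^j a_{ij} \in J^{i+j}$ while only finitely many pairs $(i,j)$ satisfy $i+j < n$ for each $n$. From $\sigma(t) = t$ and $\delta(t) = 0$ one sees that $t$ is central in $\bbd$, so~(\ref{eq-jadic-ideals}) reads $J^n = \sum_{i=0}^{n} s^i t^{n-i}\bbd$. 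Using centrality of $t$ to reorder, every element of this right ideal has all its normal-form coefficients $a_{ij}$ with $i + j < n$ equal to $0$; conversely, any power series with that vanishing property is a $J$-adically convergent sum of elements of $J^n$, and $J^n$ is closed because $J^n + J^k = J^n$ for $k \geq n$. Hence $J^n$ is exactly the set of power series with $a_{ij} = 0$ whenever $i+j < n$, so $J^n/J^{n+1}$ is a free right $\mathcal{D}$-module with basis $\{\overline{s^i t^{n-i}} : 0 \leq i \leq n\}$ — the same underlying data as the degree-$n$ homogeneous component of $\mathcal{D}[\overline{t}][\overline{s};\overline{\sigma},\overline{\delta}]$ graded by $\deg\overline{s} = \deg\overline{t} = 1$.

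Next I would construct $\overline{\sigma}$ and $\overline{\delta}$. On $\mathcal{D}[\![t]\!]$ with its $t$-adic filtration, $\sigma$ preserves the filtration and $\delta$ maps the $k$-th step into the $(k+1)$-st — both consequences of $\sigma(t) = t$ together with the inclusion $\delta(\mathcal{D}[\![t]\!]) \subseteq t\mathcal{D}[\![t]\!]$ established earlier in this section — so they induce an automorphism $\overline{\sigma}$ of $\Gr(\mathcal{D}[\![t]\!]) = \mathcal{D}[\overline{t}]$ and a degree-raising right $\overline{\sigma}$-derivation $\overline{\delta}$ on it, which one extends to $\Gr\bbd$ by $\overline{\sigma}(\overline{s}) = \overline{s}$, $\overline{\delta}(\overline{s}) = 0$ and the skew-Leibniz rule. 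The heart of the argument is that the defining commutation relation of the skew polynomial ring holds in $\Gr\bbd$: reducing the identity $as = s\sigma(a) + \delta(a)$, valid in $\bbd$ for $a \in \mathcal{D}[\![t]\!]$, modulo $J^2$ and using $s\sigma(a) - sa = s(\sigma(a)-a) \in st\mathcal{D}[\![t]\!] \subseteq J^2$ yields $\overline{a}\,\overline{s} = \overline{s}\,\overline{\sigma}(\overline{a}) + \overline{\delta}(\overline{a})$ in $\Gr\bbd$ for $a \in \mathcal{D}$, while for $a = t$ it reads $\overline{t}\,\overline{s} = \overline{s}\,\overline{t}$ since $ts = st$. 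As $\overline{t}$ is central and $\overline{s},\overline{t}$ generate $\Gr\bbd$ as a $\mathcal{D}$-algebra, the universal property of the skew polynomial ring gives a graded $\mathcal{D}$-algebra homomorphism $\Psi\colon \mathcal{D}[\overline{t}][\overline{s};\overline{\sigma},\overline{\delta}] \to \Gr\bbd$ with $\Psi(\overline{t}) = t + J^2$ and $\Psi(\overline{s}) = s + J^2$.

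Finally, in degree $n$ the map $\Psi$ is a right $\mathcal{D}$-linear map carrying the basis $\{\overline{s}^i\overline{t}^{n-i}\}$ onto the basis $\{\overline{s^i t^{n-i}}\}$, hence a bijection; therefore $\Psi$ is an isomorphism of graded rings, which is the assertion. I expect the only step needing genuine care to be the normal-form analysis of the $J^n$ in the first paragraph — in particular the bookkeeping that multiplication never lowers the combined $(s,t)$-degree — after which the identification of the multiplication is a one-line computation modulo $J^2$.
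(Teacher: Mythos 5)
Your proof is correct and follows essentially the same route as the paper's, just with the intermediate bookkeeping spelled out more explicitly. Both arguments rest on the same three pillars: the centrality of \(t\) (giving the normal form \(\sum s^i t^j a_{ij}\) and the description \(J^n = \sum_{i=0}^n s^i t^{n-i}\bbd\)); the inclusion \(\delta(\mathcal{D}[\![t]\!])\subseteq t\mathcal{D}[\![t]\!]\) (giving that \(\overline{\sigma}\), \(\overline{\delta}\) are well defined and degree-appropriate); and the \(\mathcal{D}\)-module decomposition \(J^n/J^{n+1}=\bigoplus_{i=0}^n s^it^{n-i}\mathcal{D}\) (the paper states this with a harmless index typo, starting the sum at \(i=1\)). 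Where the paper phrases the final step as ``the powers \(\overline{s}^i\) form a \(\mathcal{D}[t]\)-module basis of \(\Gr\bbd\), and the commutation relation is inherited from \(\bbd\),'' you instead produce a graded homomorphism \(\Psi\colon\mathcal{D}[\overline{t}][\overline{s};\overline{\sigma},\overline{\delta}]\to\Gr\bbd\) via the universal property of skew polynomial rings and then check that \(\Psi\) is a bijection in each degree by comparing the bases \(\{\overline{s}^i\overline{t}^{n-i}\}\) and \(\{\overline{s^it^{n-i}}\}\). These are two presentations of the same verification; your version is slightly more self-contained because it makes the \(\mathcal{D}\)-linear independence in each \(J^n/J^{n+1}\) an explicit consequence of the normal form, whereas the paper cites~(\ref{eq-jadic-ideals}) and leaves that inference to the reader.
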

\begin{proof} From~(\ref{eq-jadic-ideals}) and the fact that \(\delta(\mathcal{D}) \subseteq t\mathcal{D}[\![t]\!]\) we see that \(J^n\) is \(\sigma\)-invariant and that \(\delta(J^n) \subseteq J^{n+1}\). Hence, the maps \(\overline{\sigma}\) and \(\overline{\delta}\) are well defined, and direct calculations on representatives of homogeneous elements shows that indeed \(\overline{\delta}\) is a right \(\overline{\sigma}\)-derivation on the subring \(\mathcal{D}[t]\) of \(\Gr(\bbd)\) generated by \(\mathcal{D}\) and \(\overline{t}\).

Hence, it remains to check that the powers \(\overline{s}^i = s^i + J^{i+1}\) for \(i \geq 0\) form a basis of \(\Gr(\bbd)\) over \(\mathcal{D}[t]\), since the skew commutation relations for \(\overline{s}\) are inherited from those of \(\bbd\). From~(\ref{eq-jadic-ideals}), we can deduce that \[J^n/J^{n+1} = \bigoplus_{i=1}^n s^it^{n-i}\mathcal{D}\] as \(\mathcal{D}\)-modules. Therefore, if \(\sum r_i\overline{s}^i = 0\) is any relation among the powers of \(\overline{s}\), isolating the homogeneous components in \(\Gr\bbd\) gives us that \(r_i = 0\).
\end{proof}

\begin{cor}\label{cor-existence-Ore} The ring \(\bbd\) is a Noetherian domain. In particular, it has a classical Ore ring of fractions \(\mathcal{Q}\).
\end{cor}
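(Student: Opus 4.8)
The plan is to read off the ring-theoretic properties of $\bbd$ from its associated graded ring, using crucially that the $J$-adic filtration on $\bbd$ is separated and complete (as observed right after~(\ref{eq-jadic-ideals})), and then to conclude by the standard passage from a Noetherian domain to its Ore division ring of fractions.

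First I would record that $\Gr\bbd$ is a Noetherian domain. By Proposition~\ref{prop-graded-polynomial} we have $\Gr\bbd \cong \mathcal{D}[\overline{t}][\overline{s};\overline{\sigma},\overline{\delta}]$. Since $\mathcal{D}$ is a division ring and $\overline{t}$ is a central variable, $\mathcal{D}[\overline{t}]$ is a left and right Noetherian domain by the Hilbert basis theorem. The map $\overline{\sigma}$ is the graded automorphism induced on $\Gr(\mathcal{D}[\![t]\!]) = \mathcal{D}[\overline{t}]$ by the ring automorphism $\sigma$ of $\mathcal{D}[\![t]\!]$, which respects the induced filtration $J^{n}\cap\mathcal{D}[\![t]\!] = t^{n}\mathcal{D}[\![t]\!]$ because $\sigma(t)=t$; in particular $\overline{\sigma}$ is an automorphism of $\mathcal{D}[\overline{t}]$. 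Hence the skew polynomial ring $\mathcal{D}[\overline{t}][\overline{s};\overline{\sigma},\overline{\delta}]$ is a domain — its leading-coefficient map is multiplicative because $\overline{\sigma}$ is injective and $\mathcal{D}[\overline{t}]$ is a domain — and is left and right Noetherian by the skew Hilbert basis theorem, which applies since $\overline{\sigma}$ is an automorphism of a Noetherian ring.

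Next I would transfer both properties to $\bbd$ along the $J$-adic filtration. For the domain property: any nonzero $a\in\bbd$ has, by separatedness, a well-defined nonzero symbol $\operatorname{gr}(a)\in\Gr\bbd$; since $\operatorname{gr}(ab)=\operatorname{gr}(a)\operatorname{gr}(b)$ whenever the right-hand side is nonzero, the fact that $\Gr\bbd$ is a domain forces $ab\neq 0$. For the Noetherian property I would invoke the standard fact that a ring equipped with a separated and complete filtration whose associated graded ring is left (resp.\ right) Noetherian is itself left (resp.\ right) Noetherian — the same mechanism by which $\Z_p$ and $k[\![x]\!]$ inherit Noetherianity from $\F_p[\pi]$ and $k[x]$; concretely one lifts a finite homogeneous generating set of the symbol ideal of a given one-sided ideal and uses completeness to show that these lifts generate. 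Thus $\bbd$ is a Noetherian domain.

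Finally, a left and right Noetherian domain satisfies the left and right Ore conditions with respect to its set of nonzero elements: if $a\bbd\cap b\bbd=0$ for nonzero $a,b$, then one checks inductively that the sum $\bigoplus_{i\geq 0} b^{i}a\bbd$ is direct, contradicting the ascending chain condition, so $a\bbd\cap b\bbd\neq 0$, which is exactly the right Ore condition for a domain, and symmetrically on the left. Therefore $\bbd$ admits a classical ring of fractions $\mathcal{Q}$, obtained by Ore localization at the set of all nonzero elements, and $\mathcal{Q}$ is a division ring. The only step that is not entirely routine is the ascent from $\Gr\bbd$ to $\bbd$, and this is where the completeness of the $J$-adic filtration, established from~(\ref{eq-jadic-ideals}), does the essential work; the skew polynomial and Ore localization inputs are classical.
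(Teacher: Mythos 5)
Your proof is correct and follows the same strategy as the paper's (terser) argument: read off the domain and Noetherian properties of $\bbd$ from $\Gr\bbd \cong \mathcal{D}[\overline{t}][\overline{s};\overline{\sigma},\overline{\delta}]$ using separatedness and completeness of the $J$-adic filtration, then invoke the Ore localization of a Noetherian domain. One small simplification you could have noted: since $\delta(\mathcal{D}[\![t]\!]) \subseteq t\mathcal{D}[\![t]\!]$, the induced $\overline{\sigma}$ on $\Gr(\mathcal{D}[\![t]\!]) = \mathcal{D}[\overline{t}]$ is actually the identity, so the skew Hilbert basis input reduces to the differential case; but your argument goes through as written.
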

\begin{proof} To show that it is a domain, note that the product of homogeneous elements in \(\bbd\) is non-zero in the graded ring, and see, for instance, \cite[Cor. D.IV.5]{nastasescuGradedRingTheory1982} for lifting the Noetherian property from \(\Gr \bbd\) to \(\bbd\). Then, the existence of \(\mathcal{Q}\) follows from \cite[Thm. 2.1.15]{robson_noncommutative_2001}.
\end{proof}

\section{Homological finiteness properties}\label{sec:hom-finite-prop}

Throughout this section, we fix a mild flag free-by-cyclic pro-\(p\) group \(G\), with free kernel \(N = F(x_1,x_2,\ldots)\). We let \(R = \Falgebra{N}\) and \(S = \Falgebra{G}\). By Proposition~\ref{prop-decomposition-power-series}, we can identify \(S\) with a skew power series algebra \(R[\![s;\sigma,\delta]\!]\) over \(R\), which can be embedded into the division ring \(\mathcal{Q}\) of Corollary~\ref{cor-existence-Ore}, the classical ring of fractions of the ring \(\bbd = \mathcal{D}[\![t]\!][\![s;\sigma,\delta]\!]\) where \(\mathcal{D}\) is the universal division ring of fractions of the free \(\F_p\)-algebra \(\Lambda = \F_p\langle a_1,a_2,\ldots\rangle\). 

The objective now is to show that $S$ is a Sylvester domain and that the embedding \(S \hookrightarrow\mathcal  Q \) is universal.
 First, however, we need to prove some homological vanishing results about \(S\)-submodules of \(\mathcal{Q}\) and \(\Gr S\)-submodules of \(\Gr \bbd \simeq \mathcal{D}[\overline{t}][\overline{s};\overline{\sigma},\overline{\delta}]\).

\subsection{Filtered modules}

If \(M\) is a finitely generated right (respectively, left) \(S\)-submodule of \(\mathcal{Q}\), then there exists an element \(c \in \bbd\) such that \(cM\) (respectively, \(Mc\)) is a finitely generated right (respectively, left) \(S\)-submodule of \(\bbd\). This gives us a bijective correspondence between   isomorphism classes of \(S\)-submodules of \(\mathcal{Q}\) and \(S\)-submodules of \(\bbd\), so we can always assume that a finitely generated \(S\)-submodule of \(\mathcal{Q}\) is contained in \(\bbd\).

If we consider \(S\) as an filtered ring with the filtration \(S_{k} = I_G^k\) and \(S_{-k} = S\) for \(k \geq 0\), the \(S\)-module \(\bbd\) becomes a filtered \(S\)-module (and even a filtered ring) with the positive filtration given by the powers of \(J\), it's two sided ideal generated by \(t\) and \(s\). Hence, for every \(S\)-submodule \(M \subseteq \bbd\), we can endow \(M\) with an induced filtered structure: \[M_{k} = M \cap J^k\,,\quad M_{-k} = M\,,\quad \forall k \geq 0\,.\] 

Since we must state general results about filtered rings that will apply both to \(S\) and \(\bbd\), we will denote a general filtered ring by \(\Omega\) with a filtration by \(\Omega\)-submodules \(\Omega_k\), \(k \in \Z\). Following the terminology in \cite[Sec. 4.1]{jaikin-zapirainExplicitConstructionUniversal2020}, a filtration \(\{M_k\}\) on a \(\Omega\)-module \(M\) is:
\begin{enumerate}[itemindent=3em]
    \item[(\emph{Separated})] The intersection \(\bigcap_{k \in \Z}  M_k\) is trivial;
    \item[(\emph{Complete})] \(M\) is complete with respect to the metric \[d_M(a,b) = \inf\{p^{-k}\mid a-b \in M_k\}\,;\]
    \item[(\emph{Bounded})]  Each quotient \(M_k/M_{k+1}\) is an \(\Omega/\Omega_{1}\)-module of finite length.
\end{enumerate}

In particular, the filtrations on the \(S\)-modules \(M = S\) and \(M = \bbd\) are separated and complete, and moreover the filtration on \(M = S\) is bounded. Observe that the induced filtration on an \(S\)-submodule of a separated module is again separated.

 If \(\Omega\) is bounded and complete as a left \(\Omega\)-module and \(M\) is a finitely generated and separated \(\Omega\)-module, then \(M\) is also complete (\cite[Prop. 4.2]{jaikin-zapirainExplicitConstructionUniversal2020}).

We recall that a homomorphism \(\varphi\colon M \to N\) of filtered \(\Omega\)-modules is called \emph{strict} if \(\varphi(M_k) = \varphi(M) \cap N_k\) for every \(k \in \Z\), that is, if the filtrations on \(\operatorname{Im}(\varphi)\) induced by \(\varphi\) and by \(N\) coincide.  A sequence of filtered \(\Omega\)-modules \(M \to N \to Q\) is strict exact if the sequence is exact and each homomorphism is strict. Observe that an strict exact sequence as above induces an exact sequence \(\Gr M \to \Gr N \to \Gr Q \) of \(\Gr \Omega\)-modules.

To obtain free resolutions in the category of finitely generated filtered \(\Omega\)-modules and strict homomorphisms, we must make a slight change to natural filtration of the free \(\Omega\)-modules \(\bigoplus_{I} \Omega\). Let \(V\) be a multiset of integers, that is, a set of integers that allows for repetition of elements. We say that \(V\) is bounded if for every \(k \in \Z\) the multiset \(\{\!\{v \in V \mid v \leq k\}\!\}\) is finite. In particular, every element of a bounded multiset \(V\) is belongs \(\Z_{\geq k} = \{z \in \Z\mid z \geq  k\}\) for some \(k = k(V) \in \Z\). If necessary, one can realize a multiset \(V\) as a proper set \(\bm{V}\) by taking the disjoint union of singletons of its repeating elements. If \(V\) is bounded, \(\bm{V}\) can be made into a profinte set by adjoining a point at infinity.

For any separated, bounded and complete \(\Omega\)-module \(M\), let \(M(V)\) be the \(\Omega\)-module \(\bigoplus_{v \in V} M\) with the following filtration: \[M(V)_k = \bigoplus_{v \in V} M_{k-v}\,.\]
Then, for \(M = \Omega\), we have that \(\Omega(V)\) is a free \(\Omega\)-module satisfying the following universal property: identifying \(V\) with the set of elements \((0,\ldots,1,\ldots) \in \Omega(V)\), any function \(f\colon V \to N\) from \(V\) to a filtered \(\Omega\)-module \(N\) such that \(f(v) \in N_v\) extends uniquely to a strict homomorphism \(\varphi\colon \Omega(V) \to N\). Moreover, observe that \(\Omega(V)\) is bounded as a filtered \(\Omega\)-module if and only if \(V\) is bounded as a multiset of integers.

Let \(\widehat{M(V)} = \varprojlim M(V)/M(V)_k\) denote the completion of \(M(V)\) for each multiset \(V\). If \(N\) is a separated, complete and bounded \(\Omega\)-module and \(M(V) \to N\) is a strict homomorphism, then there is a unique strict extension \(\widehat{M(V)} \to N\). We recall the following result of \cite{jaikin-zapirainExplicitConstructionUniversal2020}:

\begin{prop}[{\cite[Prop.~4.5 and Cor.~4.6]{jaikin-zapirainExplicitConstructionUniversal2020}}]\label{prop-continuous-resolution} Let \(M\) be a complete and bounded filtered \(\Omega\)-module for a complete and bounded filtered ring \(\Omega\). Then, there are bounded multisets of integers \(V_i\) and a strict exact sequence of filtered \(\Omega\)-modules \[\cdots \to \widehat{\Omega(V_i)} \to \cdots \to \widehat{\Omega(V_1)} \to \widehat{\Omega(V_0)} \to M \to 0\,.\]
\end{prop}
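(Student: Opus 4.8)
The argument follows \cite{jaikin-zapirainExplicitConstructionUniversal2020}, and the plan is to construct the resolution one term at a time, each time producing a single strict surjection onto the current module from a completed free filtered module and then passing to the kernel. For the first term, pass to the associated graded ring $\Gr\Omega$, which is non-negatively graded with $(\Gr\Omega)_0=\Omega/\Omega_1$, and to the associated graded module $\Gr M=\bigoplus_k M_k/M_{k+1}$, which is concentrated in degrees bounded below (the filtrations under consideration are positive, i.e.\ $M_k=M$ for $k\le 0$) and has each graded piece of finite length over $\Omega/\Omega_1$ by boundedness. For each $k$, choose a finite generating set of $M_k/M_{k+1}$ over $\Omega/\Omega_1$, lift each chosen element to an element of $M_k$, and record the degrees $k$ with multiplicity in a multiset $V_0$; boundedness of $M$ makes $V_0$ a bounded multiset. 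By the universal property of $\Omega(V_0)$, the assignment sending the $v$-th generator to the corresponding lift $m_v\in M_v$ defines a strict homomorphism $\varphi_0\colon\Omega(V_0)\to M$, which extends uniquely to a strict homomorphism $\widehat{\varphi_0}\colon\widehat{\Omega(V_0)}\to M$ because $M$ is separated, complete and bounded. Since completion does not alter the associated graded, $\Gr\widehat{\varphi_0}=\Gr\varphi_0$, and by the choice of generators this map is surjective: in each degree $k$ its image already contains all of $M_k/M_{k+1}$.

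Next I would invoke the standard fact that a homomorphism of filtered $\Omega$-modules whose source is complete, whose target is complete and separated, and whose associated graded map is surjective, is itself surjective and strict: given $b\in M_k$, one uses surjectivity of $\Gr\widehat{\varphi_0}$ degree by degree to write $b$ as a convergent infinite sum of images $\widehat{\varphi_0}(a_j)$ with $a_j\in\widehat{\Omega(V_0)}_{k+j}$, using completeness of the source to form the sum and separatedness of the target to identify its image with $b$. Hence $\widehat{\varphi_0}$ is a strict surjection. Its kernel $K_0=\ker\widehat{\varphi_0}$ is the preimage of $0$ under a continuous map into a separated module, so it is a closed $\Omega$-submodule of $\widehat{\Omega(V_0)}$; with the induced (subspace) filtration it is therefore again separated and complete, and it is bounded because each quotient $(K_0)_k/(K_0)_{k+1}$ embeds into the finite-length module $(\Gr\widehat{\Omega(V_0)})_k$ (note that $\Omega$ bounded together with $V_0$ a bounded multiset imply $\widehat{\Omega(V_0)}$ is bounded). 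Thus $K_0$ lies again in the category of complete, bounded, separated filtered $\Omega$-modules.

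It now suffices to iterate: applying the previous two steps to $K_0$ yields a bounded multiset $V_1$ and a strict surjection $\psi_1\colon\widehat{\Omega(V_1)}\to K_0$; since $K_0$ carries the subspace filtration, the composite of $\psi_1$ with the inclusion $K_0\hookrightarrow\widehat{\Omega(V_0)}$ is strict as a map into $\widehat{\Omega(V_0)}$, and its image is $K_0=\ker\widehat{\varphi_0}$. Repeating with $K_1=\ker\psi_1$ in place of $K_0$, and so on, concatenates to the required strict exact sequence $\cdots\to\widehat{\Omega(V_i)}\to\cdots\to\widehat{\Omega(V_1)}\to\widehat{\Omega(V_0)}\to M\to 0$. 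No single computation here is hard; the work -- and the only real obstacle -- is staying inside the right category at every stage: that $\widehat{\Omega(V)}$ is complete and bounded exactly when $V$ is a bounded multiset, that strictness survives the composites appearing above, and, what makes the induction go through, that the kernel of a strict map between complete bounded separated filtered modules is again complete, bounded and separated. One should also keep in mind that the multisets $V_i$ may well be infinite, so that the modules appearing are genuine completions $\widehat{\Omega(V_i)}$ rather than ordinary free $\Omega$-modules, with all the rank information encoded in these bounded multisets.
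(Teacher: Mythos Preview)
Your reconstruction is correct and follows the standard line of argument one expects from the cited reference; note that in the present paper the proposition is merely quoted from \cite{jaikin-zapirainExplicitConstructionUniversal2020} without proof, so there is no in-paper argument to compare against. One minor inaccuracy: you assert the filtrations are positive ($M_k=M$ for $k\le 0$), but the setup here only requires boundedness, which for a complete bounded module yields $M_k=M$ for all $k\le k_0$ with some $k_0$ possibly negative---this does not affect your argument, since the multisets $V_i$ are only required to be bounded below.
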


While the modules \(\widehat{\Omega(V)}\) are no longer free over \(\Omega\) even if \(V\) is bounded, if \(\Omega\) is pseudocompact in the sense of \cite{brumerPseudocompactAlgebrasProfinite1966} then they are still flat with respect to complete tensor product \[M\cotimes_{S}N = \varprojlim_{k \in \Z} M/M_k \otimes_\Omega N/N_k\] of filtered \(\Omega\)-modules \(M\) and \(N\) (\cite[Cor. 1.3 and Lem. 2.1]{brumerPseudocompactAlgebrasProfinite1966}), for in this case \(\widehat{\Omega(V)}\) can be identified as a topological \(\Omega\)-module with the free pseudocompact \(\Omega\)-module \(\Omega[\![\bm{V}]\!]\) on set \(\bm{V}\) (\cite[p. 444]{brumerPseudocompactAlgebrasProfinite1966}). This applies in particular to \(\Omega = S\) and to \(\Omega = \bbd\).

Let us denote by \(\CTor_i^\Omega(-,-)\) the \(i\)-derived bifunctor obtained from the complete tensor product, hereby referred to as the \emph{\(i\)-th continuous Tor group}, in order to differentiate it from the usual \(i\)-derived functor \(\Tor_i^\Omega(-,-)\) obtained from the usual tensor product \(-\otimes_\Omega -\). Therefore, we already know that \(\CTor_i^\Omega(\widehat{\Omega(V)},M) = \CTor_i^\Omega(N,\widehat{\Omega(V)}) = 0\) for any \(i > 0\) and \(\Omega\)-modules \(M\) and \(N\). The following proposition gives us a way to compare both functors:

\begin{prop}[{\cite[Prop. 4.9]{jaikin-zapirainExplicitConstructionUniversal2020}}]\label{prop-fp-acyclic} Let \(M\) be a separated, filtered and complete \(\Omega\)-module for some filtered ring \(\Omega\) and \(V\) a bounded multiset of integers. Then, the following hold:
\begin{enumerate}[label=(\arabic*)]
    \item If \(M\) is finitely presented, then the natural maps \(M\otimes_\Omega \widehat{\Omega(V)} \to M\cotimes_\Omega \widehat{\Omega(V)}\) and \(M\cotimes_\Omega \widehat{\Omega(V)} \to \widehat{M(V)}\) are strict isomorphisms.
    \item Let \(k \geq 2\). If \(M\) is of type \(FP_k\), then \(\Tor_{k-1}^\Omega(M, \widehat{\Omega(V)}) = 0\).
\end{enumerate}
\end{prop}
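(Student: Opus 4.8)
The plan is to reduce everything to the case of free modules, exploiting two kinds of resolution: ordinary finitely generated free resolutions (which compute \(\Tor\)) and the strict resolutions by completed free modules of Proposition~\ref{prop-continuous-resolution} (which compute the continuous Tor groups). I would begin with the first isomorphism in (1). Since \(M\) is finitely presented, choose a finite free presentation \(\Omega^{n_1}\to\Omega^{n_0}\to M\to 0\). Both \(-\otimes_\Omega\widehat{\Omega(V)}\) and \(-\cotimes_\Omega\widehat{\Omega(V)} = \CTor_0^\Omega(-,\widehat{\Omega(V)})\) are right exact (the latter on the category of pseudocompact modules, which contains all finitely generated \(\Omega\)-modules), and they agree on each \(\Omega^{n_i}\) because the complete tensor product of a finitely generated free module with the complete module \(\widehat{\Omega(V)}\) is the ordinary one. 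Passing to cokernels shows that \(M\otimes_\Omega\widehat{\Omega(V)}\to M\cotimes_\Omega\widehat{\Omega(V)}\) is an isomorphism.

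For the second isomorphism in (1), I would instead use a \emph{strict} presentation \(\widehat{\Omega(V_1)}\to\widehat{\Omega(V_0)}\to M\to 0\) with \(V_0,V_1\) bounded multisets, provided by Proposition~\ref{prop-continuous-resolution}. Here both \(-\cotimes_\Omega\widehat{\Omega(V)}\) and \(\widehat{(-)(V)}\) are right exact: the first as above, the second because \((-)(V)\) preserves strict exactness and the completion of a strict epimorphism is again an epimorphism. On a completed free module \(\widehat{\Omega(W)}\), a direct computation with the defining inverse limits identifies \(\widehat{\Omega(W)}\cotimes_\Omega\widehat{\Omega(V)}\) with \(\widehat{\widehat{\Omega(W)}(V)}\), compatibly with the natural maps. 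Passing to cokernels identifies \(M\cotimes_\Omega\widehat{\Omega(V)}\) with \(\widehat{M(V)}\); the isomorphisms are strict since all three functors send the strict presentation to sequences whose terms carry the quotient filtrations inherited from the (completed) free modules, and these are matched by the identifications above.

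For (2), assume \(M\) is of type \(FP_k\) with \(k\geq 2\) and pick a resolution \(F_k\to F_{k-1}\to\cdots\to F_0\to M\to 0\) by finitely generated free \(\Omega\)-modules, which is also exact in the category of pseudocompact modules because each map has closed image. Applying \(-\otimes_\Omega\widehat{\Omega(V)}\) and \(-\cotimes_\Omega\widehat{\Omega(V)}\) to \(F_\bullet\) yields the \emph{same} complex of \(\Omega\)-modules (the two agree on finitely generated free modules), whose homology in degrees \(\leq k-1\) therefore computes both \(\Tor_i^\Omega(M,\widehat{\Omega(V)})\) and \(\CTor_i^\Omega(M,\widehat{\Omega(V)})\). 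Since \(\widehat{\Omega(V)}\) is flat for the complete tensor product, \(\CTor_i^\Omega(M,\widehat{\Omega(V)}) = 0\) for all \(i>0\), and hence \(\Tor_{k-1}^\Omega(M,\widehat{\Omega(V)}) = 0\).

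The step I expect to be the main obstacle is the second isomorphism of (1): controlling \(M\cotimes_\Omega\widehat{\Omega(V)}\) and matching it with \(\widehat{M(V)}\). This hinges on the existence of the strict resolution of \(M\) by completed free modules, whose construction rests on the completeness of finitely generated separated modules over a complete bounded filtered ring (\cite[Prop.~4.2]{jaikin-zapirainExplicitConstructionUniversal2020}), together with a careful bookkeeping of the various quotient filtrations. It is precisely this that makes finite \emph{presentation} of \(M\), rather than mere finite generation, indispensable — it forces \(M\cotimes_\Omega\widehat{\Omega(V)}\) to be independent of the chosen filtration and equal to \(\widehat{M(V)}\). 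A secondary point requiring care is the verification that ordinary finitely generated free resolutions stay exact in the pseudocompact category, which is where the pseudocompactness of \(\Omega\) enters in (2).
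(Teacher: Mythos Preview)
The paper does not prove this proposition: it is quoted verbatim from \cite[Prop.~4.9]{jaikin-zapirainExplicitConstructionUniversal2020} and used as a black box, so there is no in-paper argument to compare against. Your overall strategy---reduce to free modules and compare the ordinary and completed tensor products on a resolution---is the natural one and is in the spirit of how the result is used downstream.

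That said, a few of your steps invoke hypotheses that are not present in the statement as written. You repeatedly appeal to the pseudocompact setting (right exactness of \(-\cotimes_\Omega\widehat{\Omega(V)}\), flatness of \(\widehat{\Omega(V)}\) for the complete tensor product, exactness of finite free resolutions in the pseudocompact category), but the proposition only assumes \(\Omega\) is a filtered ring and \(M\) is separated, complete and filtered. Likewise, your use of Proposition~\ref{prop-continuous-resolution} to obtain a strict presentation of \(M\) needs \(\Omega\) complete and bounded and \(M\) bounded, none of which is assumed here. In the paper's applications \(\Omega\) is indeed \(S\) or \(\bbd\), so these extra hypotheses hold, but your write-up should either add them explicitly or find arguments that avoid them. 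For part~(1), a cleaner route that sidesteps pseudocompactness is to use directly that a finitely presented module commutes with inverse limits: \(M\otimes_\Omega \widehat{\Omega(V)} \cong \varprojlim_k M\otimes_\Omega \Omega(V)/\Omega(V)_k\), and then identify the right-hand side with \(\widehat{M(V)}\) term by term. For part~(2), once~(1) is in hand, dimension-shift along a partial finite free resolution \(0\to K\to F_{k-2}\to\cdots\to F_0\to M\to 0\) with \(K\) finitely presented, so that \(\Tor_{k-1}^\Omega(M,\widehat{\Omega(V)})\cong \Tor_1^\Omega(K,\widehat{\Omega(V)})\), and show the latter vanishes by applying~(1) to \(K\) and to the finitely presented image of \(F_1\to F_0\); this avoids any appeal to \(\CTor\) or to flatness in the pseudocompact sense.
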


This result implies the following proposition, well known to the specialist, but for which we have found no reference in the literature. Observe that the isomorphisms of Proposition~\ref{prop-fp-acyclic} are all functorial in \(M\). First we show a partial form of \cite[Lem. XX.6.3]{langauth.Algebra2002}:

\begin{lem}\label{lem-partial-exactness} Let \(\Omega\) be any ring, \(M\) a \(\Omega\)-module and suppose that the \(\Omega\)-modules on a exact sequence \[\cdots\to X_j \to \cdots \to X_1 \to X_0 \to 0\] satisfy \(\Tor_i^\Omega(M, X_j) = 0\) for every \(j\) and every \(1 \leq i \leq k-2\) with \(k \geq 2\). Then, \[M\otimes_\Omega X_{k} \to M\otimes_\Omega X_{k-1} \to \cdots \to M\otimes_\Omega X_1 \to M\otimes_\Omega X_0 \to 0\] is exact.
\end{lem}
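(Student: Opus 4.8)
The plan is to break the long exact sequence into short exact sequences and induct on the homological degree, using the hypothesis on vanishing Tor groups to splice the resulting fragments together. First I would set $K_j = \ker(X_{j-1} \to X_{j-2})$ for $j \geq 2$, with the convention that $K_1 = \ker(X_0 \to 0) = X_0$ and $K_0 = 0$, so that exactness of the original complex gives short exact sequences
\[
0 \to K_{j+1} \to X_j \to K_j \to 0
\]
for every $j \geq 0$ (where $K_{j+1}$ is also the image of $X_{j+1} \to X_j$). Tensoring each of these with $M$ over $\Omega$ yields a long exact sequence in $\Tor^\Omega_\ast(M,-)$, and the point is to chase the connecting maps to control $\Tor_i^\Omega(M, K_j)$ in terms of the $\Tor_i^\Omega(M, X_j)$ that we have assumed to vanish.

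The key step is the dimension-shifting estimate: from the short exact sequence $0 \to K_{j+1} \to X_j \to K_j \to 0$ one gets, for each $i \geq 1$, an exact piece
\[
\Tor_{i+1}^\Omega(M,K_j) \to \Tor_i^\Omega(M,K_{j+1}) \to \Tor_i^\Omega(M,X_j)\,.
\]
Since $\Tor_i^\Omega(M,X_j) = 0$ for $1 \leq i \leq k-2$, this shows $\Tor_i^\Omega(M,K_{j+1})$ is a quotient of $\Tor_{i+1}^\Omega(M,K_j)$ in that range, and iterating downwards in $j$ (using $K_0 = 0$, or more directly $K_1 = X_0$ together with the same vanishing) I would conclude that $\Tor_1^\Omega(M,K_j) = 0$ for all $j$ with $2 \leq j \leq k-1$, and in fact $\Tor_i^\Omega(M, K_j) = 0$ whenever $i + j \leq k-1$ with $i \geq 1$. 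The upshot needed is precisely $\Tor_1^\Omega(M,K_j) = 0$ for $j = 1, \ldots, k-1$.

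Granting this, the final assertion follows by reassembling: applying $M \otimes_\Omega -$ to $0 \to K_{j+1} \to X_j \to K_j \to 0$ gives a right-exact sequence $M\otimes_\Omega K_{j+1} \to M\otimes_\Omega X_j \to M\otimes_\Omega K_j \to 0$, and the left-hand map is injective exactly when $\Tor_1^\Omega(M,K_j) = 0$, which holds for $1 \leq j \leq k-1$. Splicing these short exact sequences of the form $0 \to M\otimes K_{j+1} \to M\otimes X_j \to M\otimes K_j \to 0$ for $j = 0, 1, \ldots, k-1$ (noting $M \otimes K_1 = M\otimes X_0$ since $K_1 = X_0$, and $M\otimes K_0 = 0$) produces exactly the exact complex
\[
M\otimes_\Omega X_k \to M\otimes_\Omega X_{k-1} \to \cdots \to M\otimes_\Omega X_1 \to M\otimes_\Omega X_0 \to 0\,,
\]
since the image of $M\otimes X_{j} \to M\otimes X_{j-1}$ is $M\otimes K_j$ and its kernel is $M\otimes K_{j+1}$, and these match up. The main obstacle, and the only place care is needed, is bookkeeping the index ranges so that the vanishing $\Tor_i^\Omega(M,X_j) = 0$ for $1 \leq i \leq k-2$ is invoked only within its stated range; in particular one must check that the dimension-shifting never requires vanishing of $\Tor_{k-1}^\Omega(M, X_j)$, which is not assumed — but since we only need $\Tor_1^\Omega(M,K_j)$ up to $j = k-1$, the inductive chain stays inside $i + j \leq k-1$ and so never reaches homological degree $k-1$ in the $X$'s. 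No deep input is required beyond the long exact sequence of $\Tor$; this is why the result is "well known to the specialist."
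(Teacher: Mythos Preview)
Your approach is correct and is essentially the paper's own argument: both decompose the complex into short exact sequences via the kernels $K_j$ (the paper writes $Z_j$), dimension-shift through the long exact $\Tor$ sequence, and splice; the paper merely packages this as an induction on $k$ rather than doing all the shifts at once. One small bookkeeping slip: you claim to need $\Tor_1^\Omega(M,K_j)=0$ up to $j=k-1$, whereas your bound $i+j\le k-1$ only yields $j\le k-2$ --- but that is in fact all that is required, since exactness at $M\otimes_\Omega X_{k-1}$ only uses injectivity of $M\otimes_\Omega K_{k-1}\to M\otimes_\Omega X_{k-2}$ (controlled by $\Tor_1^\Omega(M,K_{k-2})$), not injectivity of $M\otimes_\Omega K_k\to M\otimes_\Omega X_{k-1}$.
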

\begin{proof} We already have exactness up to \begin{equation}\label{eq-M-tensor-1st-step} M\otimes_\Omega X_2 \to M\otimes_\Omega X_1 \to M \otimes_\Omega X_0 \to 0\,.\end{equation} Let \(Z_i\) be the image of \(X_i\) in \(X_{i-1}\), that is, the kernel of \(X_{i-1}\to X_{i-2}\). We have the exact sequence:
\[0 = \Tor_1^\Omega(M,X_0) \to M\otimes_\Omega Z_2 \to M\otimes_\Omega X_1 \to M\otimes_\Omega X_0 \to 0\,,\] which shows that \(M\otimes_\Omega Z_2\) is the kernel of \(M \otimes_\Omega X_1 \to M\otimes_\Omega X_0\). Since it is also fits in the exact sequence \[M\otimes_\Omega X_3 \to M\otimes_\Omega X_2 \to M\otimes_\Omega Z_2 \to 0\] by right exactness, we are able to extend the sequence~(\ref{eq-M-tensor-1st-step}) by one degree more.

Observe that \(Z_2\) also satisfies \(\Tor_i^\Omega(M,Z_2) = 0\) now for every \(1 \leq i \leq k-3\), since we have the exact sequence \[0 = \Tor_{i+1}^\Omega(M, X_0) \to \Tor_i^\Omega(M,Z_2) \to \Tor_i^\Omega(M,X_1) = 0\,.\] Hence, by taking \(X_0' = Z_2\) and \(X_i' = X_{i+1}\) for \(i > 0\), we are reduced to showing that the exact sequence \[\cdots\to  X_i' \to \cdots \to X_2' \to X_1' \to X_0' \to 0\] induces the exact sequence \[M \otimes_\Omega X_{k-1}' \to M \otimes_\Omega X_{k-2}' \to \cdots \to M \otimes_\Omega X_1' \to M \otimes_\Omega X_0' \to 0\,,\] which now follows from induction on \(k\) since we've already proven the base step \(k = 2\).
\end{proof}

\begin{prop}\label{pro-acyclic-tor-resolution} If \(\Omega\) is a bounded and complete filtered ring, \(M\) is a filtered \(\Omega\)-module of type \(FP_k\) for some \(k \geq 2\), \(N\) is a separated, bounded and complete \(\Omega\)-module and \begin{equation}\label{eq-continuous-resolution}
    \cdots \to \widehat{\Omega(V_i)} \to \cdots \to \widehat{\Omega(V_1)} \to \widehat{\Omega(V_0)} \to N \to 0\tag*{\(\star\)}
\end{equation} is a strict exact sequence with each \(V_i\) bounded (such as in Proposition~\ref{prop-continuous-resolution}), then \(Tor_i^\Omega(M,N)\) can be computed as the \(i\)-th homology of the complex \[\cdots \to \widehat{M(V_i)} \to \cdots \to \widehat{M(V_1)} \to \widehat{M(V_0)} \to 0\] for \(i \leq k-1\), where this complex is obtained by applying \(M \otimes_\Omega -\) to the sequence~(\ref{eq-continuous-resolution}). In particular, one has \(\CTor_i^\Omega(M,N) \simeq \Tor_i^\Omega(M,N)\) for \(i \leq k-1\).
\end{prop}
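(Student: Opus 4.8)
The plan is to realise both $\Tor_i^\Omega(M,N)$ and $\CTor_i^\Omega(M,N)$, for $i\le k-1$, as the homology of a single complex, namely the complex $M\otimes_\Omega\widehat{\Omega(V_\bullet)}$ obtained by applying $M\otimes_\Omega-$ to the resolution~(\ref{eq-continuous-resolution}). Throughout, write $C_i=\widehat{\Omega(V_i)}$, so that $C_\bullet\to N$ is the given strict exact resolution and the target complex is $M\otimes_\Omega C_\bullet$.

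First I would take care of the $\CTor$ side. Each $C_i$ satisfies $\CTor_j^\Omega(M,C_i)=0$ for $j>0$, so $C_\bullet\to N$ is an acyclic resolution for the derived functor $\CTor^\Omega(M,-)$ and therefore $\CTor_i^\Omega(M,N)\cong H_i(M\cotimes_\Omega C_\bullet)$ for all $i\ge 0$. Since $M$ is of type $FP_k$ with $k\ge 2$ it is in particular finitely presented, so Proposition~\ref{prop-fp-acyclic}(1) supplies for each $i$ strict isomorphisms $M\otimes_\Omega C_i\xrightarrow{\ \sim\ }M\cotimes_\Omega C_i\xrightarrow{\ \sim\ }\widehat{M(V_i)}$; being the canonical maps, these are natural in $C_i$ and assemble into isomorphisms of complexes $M\otimes_\Omega C_\bullet\cong M\cotimes_\Omega C_\bullet\cong\widehat{M(V_\bullet)}$. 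Hence $\CTor_i^\Omega(M,N)\cong H_i(M\otimes_\Omega C_\bullet)$ for every $i$, and it remains only to identify this with $\Tor_i^\Omega(M,N)$ when $i\le k-1$.

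For the $\Tor$ side, the key point is that each $C_i$ is acyclic for $\Tor_j^\Omega(M,-)$ throughout the whole range $1\le j\le k-1$, not merely in the top degree $j=k-1$: for such a $j$ the module $M$ is of type $FP_{j+1}$ since $j+1\le k$, so Proposition~\ref{prop-fp-acyclic}(2), applied with $j+1$ in place of ``$k$'', gives $\Tor_j^\Omega(M,C_i)=0$. I would then fix an ordinary projective resolution $P_\bullet\to N$ of the $\Omega$-module $N$, together with a chain map $f\colon P_\bullet\to C_\bullet$ lifting $\operatorname{id}_N$ (which exists because $P_\bullet$ is projective and $C_\bullet\to N$ is a resolution). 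The mapping cone $\operatorname{Cone}(f)$ is an exact complex $\cdots\to C_j\oplus P_{j-1}\to\cdots\to C_0\to 0$, and since $\Tor_i^\Omega(M,-)$ commutes with finite direct sums we obtain $\Tor_i^\Omega(M,\operatorname{Cone}(f)_j)=\Tor_i^\Omega(M,C_j)\oplus\Tor_i^\Omega(M,P_{j-1})=0$ for all $j$ and all $1\le i\le k-1$. Applying Lemma~\ref{lem-partial-exactness} to this exact sequence, with the ``$k$'' of that lemma taken to be $k+1$, shows that $M\otimes_\Omega\operatorname{Cone}(f)_\bullet$ is exact in degrees $0$ through $k$. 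As tensoring commutes with the formation of mapping cones, $M\otimes_\Omega\operatorname{Cone}(f)_\bullet=\operatorname{Cone}(M\otimes_\Omega f)$, so the long exact homology sequence of this cone forces $(M\otimes_\Omega f)_\ast\colon H_i(M\otimes_\Omega P_\bullet)\to H_i(M\otimes_\Omega C_\bullet)$ to be an isomorphism for every $i\le k-1$. Since $H_i(M\otimes_\Omega P_\bullet)=\Tor_i^\Omega(M,N)$, we get $\Tor_i^\Omega(M,N)\cong H_i(M\otimes_\Omega C_\bullet)$ for $i\le k-1$, and together with the preceding paragraph this proves both assertions.

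The step I expect to need the most care is squeezing out the sharp range $i\le k-1$ rather than $i\le k-2$: the naive invocation of Proposition~\ref{prop-fp-acyclic}(2) only annihilates $\Tor$ in one degree, and one genuinely has to use that $FP_k$ passes to $FP_j$ for all $j\le k$ in order to get $\Tor_j^\Omega(M,C_i)=0$ over the entire interval $1\le j\le k-1$, which is exactly what permits feeding $k+1$ into Lemma~\ref{lem-partial-exactness} and buying one extra homological degree. The remaining verifications --- that the isomorphisms of Proposition~\ref{prop-fp-acyclic}(1) are natural enough to be chain isomorphisms, and that $\operatorname{Cone}(M\otimes_\Omega f)=M\otimes_\Omega\operatorname{Cone}(f)$ --- are routine.
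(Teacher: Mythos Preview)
Your argument is correct and reaches the same conclusion as the paper, but the packaging differs. The paper constructs, inductively, a comparison map with \emph{surjective} components $P_i\twoheadrightarrow\widehat{\Omega(V_i)}$, sets $X_i=\ker(P_i\to\widehat{\Omega(V_i)})$, shows $\Tor_i^\Omega(M,X_j)=0$ for $1\le i\le k-2$ via the long exact sequence, applies Lemma~\ref{lem-partial-exactness} to the exact complex $X_\bullet$, and finishes with the Snake Lemma on the columns of the resulting $3\times(k+1)$ diagram. You instead take an arbitrary lift $f\colon P_\bullet\to C_\bullet$, work with its mapping cone, and use the full range $\Tor_j^\Omega(M,C_i)=0$ for $1\le j\le k-1$ directly; this lets you feed $k+1$ into Lemma~\ref{lem-partial-exactness} and read off the isomorphism from the cone long exact sequence. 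Your route avoids the need to arrange surjectivity of the comparison maps and keeps the bookkeeping in a single complex, while the paper's approach stays closer to elementary diagram-chasing and avoids invoking triangulated-category machinery. Both rest on the same two inputs, Proposition~\ref{prop-fp-acyclic} and Lemma~\ref{lem-partial-exactness}, and both handle the $\CTor$ side via the natural isomorphisms of Proposition~\ref{prop-fp-acyclic}(1).
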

\begin{proof} This can be thought as a ``partial'' analogue of how one may compute derived functors through acyclic resolutions (cf. \cite[Thm. XX.6.2]{langauth.Algebra2002}). We consider another resolution of \(N\) by projective \(\Omega\)-modules:
\begin{equation}\label{eq-projective-N}
\cdots \to P_i \to \cdots \to P_1 \to P_0 \to N \to 0\,,\tag*{\(\star\star\)}
\end{equation} and we will show that there exists a morphism of complexes \(P_i \to \widehat{\Omega(V_i)}\) extending the identity on \(N\) and satisfying \[H_i(M\otimes_\Omega \star) \simeq H_i(M\otimes_\Omega \star\star)\quad\text{ for all }0 \leq i \leq k -1\,.\]

Since \(\Tor_i^\Omega(M,N)\) doesn't depend on the choice of projective resolution~(\ref{eq-projective-N}), we can inductively choose the \(P_i\) such that we have a commuting diagram with surjective columns:
\begin{center}
    \begin{tikzcd}[row sep=large]
\cdots \arrow[r] & P_i \arrow[r] \arrow[d, two heads] & \cdots \arrow[r] & P_1 \arrow[r] \arrow[d, two heads] & P_0 \arrow[r] \arrow[d, two heads] & N \arrow[d, "\operatorname{id}"] \\
\cdots \arrow[r] & \widehat{\Omega(V_i)} \arrow[r]         & \cdots \arrow[r] & \widehat{\Omega(V_1)} \arrow[r]         & \widehat{\Omega(V_0)} \arrow[r]         & N                               
\end{tikzcd}
\end{center}

For each \(i \geq 0\), we let \(X_i\) be the kernel of the map \(P_i \to \widehat{\Omega(V_i)}\). Then, Proposition~\ref{prop-fp-acyclic}(2) and the long exact sequence \[\cdots \to \Tor_{i+1}^\Omega(M,\widehat{\Omega(V_j)}) \to \Tor_i^\Omega(M,X_j) \to \Tor_i^\Omega(M, P_j) \to \cdots\] shows that \(\Tor_i^\Omega(M,X_j) = 0\) for every \(1 \leq i \leq k-2\). Hence, by Lemma~\ref{lem-partial-exactness}, we know that the diagram \begin{center}
    \begin{tikzcd}[row sep=large]
M \otimes_\Omega X_k \arrow[r] \arrow[d, hook]      & M \otimes_\Omega X_{k-1} \arrow[r] \arrow[d, hook]      & \cdots \arrow[r] & M \otimes_\Omega X_0 \arrow[r] \arrow[d, hook]      & 0 \\
M \otimes_\Omega P_k \arrow[r] \arrow[d, two heads] & M \otimes_\Omega P_{k-1} \arrow[r] \arrow[d, two heads] & \cdots \arrow[r] & M \otimes_\Omega P_0 \arrow[r] \arrow[d, two heads] & 0 \\
M \otimes_\Omega \widehat{\Omega(V_k)} \arrow[r]         & M\otimes_\Omega \widehat{\Omega(V_{k-1})} \arrow[r]          & \cdots \arrow[r] & M \otimes_\Omega \widehat{\Omega(V_0)} \arrow[r]         & 0
\end{tikzcd}
\end{center} has an exact top row and each column is a short exact sequence because of the vanishing of \(\Tor_1(M,\widehat{\Omega(V_i)})\). Then, applying the Snake Lemma to each column one finds the exact sequences \[0 = H_i(M\otimes_\Omega X_*) \to H_i(M\otimes_\Omega \star) \to H_i(M\otimes_\Omega \star\star) \to H_{i-1}(M \otimes_\Omega X_*) = 0\,,\] yielding the desired isomorphism for every \(i \leq k-1\). The last part follows from the isomorphisms in part (1) of Proposition~\ref{prop-fp-acyclic}.
\end{proof}

We also state the following result of \cite{jaikin-zapirainExplicitConstructionUniversal2020}, which we will use to lift vanishing results about the \(\Tor\) groups over \(\Gr S\) to those over \(S\):

\begin{thm}[{\cite[Thm. 4.10]{jaikin-zapirainExplicitConstructionUniversal2020}}]\label{thm:andrei410} Let \(\Omega\) be a filtered ring and \(k \geq 1\). Assume that \(\Omega\) is complete and bounded as a left and right \(\Omega\)-module. Let \(M\) and \(N\) be complete filtered right and left \(\Omega\)-modules respectively and assume that:
\begin{enumerate}[label=\upshape (\arabic*)]
    \item \(M\) is of type \(FP_{k+1}\),
    \item \(N\) is bounded, and
    \item \(\Tor_k^{\Gr \Omega}(\Gr M, \Gr N) = 0\).
\end{enumerate}
Then \(\Tor_k^\Omega(M, N) = 0\).
\end{thm}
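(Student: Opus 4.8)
The plan is to derive the vanishing of $\Tor_k^\Omega(M,N)$ from the hypothesis $\Tor_k^{\Gr\Omega}(\Gr M,\Gr N)=0$ by producing a continuous free resolution of $N$, passing to associated graded modules, and finishing with a successive-approximation argument in the complete filtered category.

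First I would use that $N$ is complete and bounded to invoke Proposition~\ref{prop-continuous-resolution} and obtain a strict exact sequence of filtered $\Omega$-modules
\[\cdots \to \widehat{\Omega(V_i)} \to \cdots \to \widehat{\Omega(V_1)} \to \widehat{\Omega(V_0)} \to N \to 0\]
with each $V_i$ a bounded multiset of integers. Applying $M\otimes_\Omega-$ and using that $M$ is in particular finitely presented, Proposition~\ref{prop-fp-acyclic}(1) identifies the resulting complex, up to natural strict isomorphism, with a complex $C_\bullet = \widehat{M(V_\bullet)}$ of complete separated filtered $\Omega$-modules. Since $M$ is of type $FP_{k+1}$, Proposition~\ref{pro-acyclic-tor-resolution} applied with $k+1$ in place of $k$ shows that $\Tor_i^\Omega(M,N) \simeq H_i(C_\bullet)$ for all $i\le k$; hence it suffices to prove $H_k(C_\bullet)=0$.

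Next I would pass to associated graded objects. Applying $\Gr$ to the strict exact resolution of $N$ gives an exact complex of free $\Gr\Omega$-modules $(\Gr\Omega)(V_\bullet)\to\Gr N\to 0$, i.e.\ a genuine free resolution of $\Gr N$. For each bounded multiset $V$ there are identifications $\Gr\widehat{M(V)}\simeq (\Gr M)(V)\simeq \Gr M\otimes_{\Gr\Omega}(\Gr\Omega)(V)$, and these are compatible with the differentials, so $\Gr C_\bullet$ is exactly the complex obtained by applying $\Gr M\otimes_{\Gr\Omega}-$ to this free resolution of $\Gr N$. Therefore $H_k(\Gr C_\bullet)\simeq\Tor_k^{\Gr\Omega}(\Gr M,\Gr N)$, which vanishes by hypothesis.

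Finally I would deduce $H_k(C_\bullet)=0$ from $H_k(\Gr C_\bullet)=0$. Writing $d_\bullet$ for the differentials of $C_\bullet$ and taking $z\in\ker d_k$, the leading term $\bar z\in\Gr C_k$ lies in $\ker(\Gr d_k)=\operatorname{im}(\Gr d_{k+1})$; since the latter is a graded submodule, $\bar z$ lifts to a homogeneous element of $\Gr C_{k+1}$ of the same degree, and hence to some $c_0\in C_{k+1}$ of filtration degree at least that of $z$ with $d_{k+1}(c_0)$ and $z$ having the same leading term. Then $z-d_{k+1}(c_0)\in\ker d_k$ has strictly larger filtration degree, and iterating produces elements $c_m\in C_{k+1}$ of strictly increasing filtration degree; by completeness of $C_{k+1}$ the sum $c=\sum_{m\ge 0}c_m$ converges, and since $C_k$ is separated one gets $d_{k+1}(c)=z$. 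Hence $\ker d_k=\operatorname{im}d_{k+1}$, so $H_k(C_\bullet)=\Tor_k^\Omega(M,N)=0$.

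I expect the last paragraph to be the main obstacle: one must arrange the approximations so that their filtration degrees genuinely tend to infinity — which is exactly why $\operatorname{im}(\Gr d_{k+1})$ must be a graded submodule — and it is also the place where boundedness of $N$ and of $\Omega$ is really needed, namely to guarantee the existence of the continuous resolution and to ensure that its associated graded is a legitimate free resolution computing $\Tor^{\Gr\Omega}$.
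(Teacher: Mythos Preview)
The paper does not give its own proof of this statement: Theorem~\ref{thm:andrei410} is quoted verbatim from \cite[Thm.~4.10]{jaikin-zapirainExplicitConstructionUniversal2020} and used as a black box, so there is nothing in the present paper to compare your argument against.

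That said, your proposal is a faithful reconstruction of the intended argument and uses exactly the auxiliary results from \cite{jaikin-zapirainExplicitConstructionUniversal2020} that the paper already records (Propositions~\ref{prop-continuous-resolution}, \ref{prop-fp-acyclic}, \ref{pro-acyclic-tor-resolution}, together with the identification $\Gr\widehat{M(V)}\simeq(\Gr M)(V)$ of \cite[Lem.~4.8]{jaikin-zapirainExplicitConstructionUniversal2020}). The step from $H_k(\Gr C_\bullet)=0$ to $H_k(C_\bullet)=0$ is precisely the content of \cite[Prop.~4.4]{jaikin-zapirainExplicitConstructionUniversal2020}, which the present paper also invokes elsewhere; your successive-approximation sketch is correct, and your remark that one must choose homogeneous preimages so that filtration degrees strictly increase is exactly the point. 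One small comment: in your second paragraph you should note that the differentials of $C_\bullet$ are filtered (not a priori strict) homomorphisms, so $\Gr d_\bullet$ is defined degreewise, and the identification $\Gr C_\bullet\simeq \Gr M\otimes_{\Gr\Omega}(\Gr\Omega)(V_\bullet)$ is what justifies that $\Gr C_\bullet$ genuinely computes $\Tor^{\Gr\Omega}_\ast(\Gr M,\Gr N)$.
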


\subsection{Graded modules over \texorpdfstring{\(\bm{\Gr S}\)}{Gr S}}\label{subsec:gradedring}

We have shown in Corollary~\ref{cor-mildflag-sf} that the ring \(\Gr S\) is a skew  polynomial ring over the graded ring \(\Gr R\), the latter which can be identified with \(\Lambda\). Hence, we shall see \(\Gr S\) as isomorphic to \(\Lambda[\widetilde{s}; \widetilde{\sigma},\widetilde{\delta}]\) where \(\widetilde{s}= \overline{s} = \overline{g-1}\). Moreover, since this isomorphism is compatible with the isomorphism \(\Gr \bbd \simeq \mathcal{D}[\overline{t}][\overline{s};\overline{\sigma},\overline{\delta}]\) under the injective map \(\Gr S \to \Gr \bbd\), the maps \(\widetilde{\sigma}\) and \(\widetilde{\delta}\) are restrictions of \(\overline{\sigma}\) and \(\overline{\delta}\) to \(\Gr S\) respectively. Hence, there are unique extensions \(\widetilde{\sigma},\widetilde{\delta}\) from \(\Lambda\) to \(\mathcal{D}\) that that make the following diagram commute:
\begin{center}
    \begin{tikzcd}[row sep=large]
\mathcal{D} \arrow[d, "{\widetilde{\sigma},\widetilde{\delta}}"', dotted] \arrow[r, "\iota"] & {\mathcal{D}[t]} \arrow[d, "{\overline{\sigma},\overline{\delta}}"] \\
\mathcal{D} \arrow[r, "\iota"']                                                              & {\mathcal{D}[t]\,.}                                                   
\end{tikzcd}
\end{center}

The ring \(\Gr S = \Lambda[\widetilde{s};\widetilde{\sigma}, \widetilde{\delta}]\) can then be embedded in the skew polynomial ring \(\mathcal{D}[\widetilde{s};\widetilde{\sigma}, \widetilde{\delta}]\) through the inclusion \(\Lambda \to \mathcal{D}\), and such embedding is an epic ring homomorphism. We note that:

\begin{lem}[{cf. \cite[Lem. 2.1]{HL22}}]\label{lem:exteding-scalars} The left (resp. right) \(\Gr S\)-modules \(\Gr S \otimes_\Lambda \mathcal{D}\) (resp. \(\mathcal{D} \otimes_\Lambda \Gr S\)) and \(\mathcal{D}[\widetilde{s};\widetilde{\sigma}, \widetilde{\delta}]\) are isomorphic.
\end{lem}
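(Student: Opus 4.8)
The plan is to construct the isomorphism explicitly as a left $\Gr S$-module map and verify bijectivity directly, imitating the proof of \cite[Lem. 2.1]{HL22} but keeping track of the skew structure. First I would define the map $\varphi\colon \Gr S \otimes_\Lambda \mathcal{D} \to \mathcal{D}[\widetilde{s};\widetilde{\sigma},\widetilde{\delta}]$ on simple tensors by $\varphi(P \otimes d) = P\cdot d$, where on the right we use the multiplication in $\mathcal{D}[\widetilde{s};\widetilde{\sigma},\widetilde{\delta}]$ and the inclusion $\Gr S = \Lambda[\widetilde{s};\widetilde{\sigma},\widetilde{\delta}] \hookrightarrow \mathcal{D}[\widetilde{s};\widetilde{\sigma},\widetilde{\delta}]$ discussed right before the lemma. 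One checks this is well-defined over $\Lambda$ (the balancing is over the \emph{subring} $\Lambda$, and $\Lambda$ acts on $\mathcal{D}$ on the left via the inclusion, so $P\lambda \otimes d \mapsto P\lambda d$ matches $P \otimes \lambda d \mapsto P\lambda d$) and that it is a homomorphism of left $\Gr S$-modules, since left multiplication by elements of $\Gr S$ commutes with the construction.

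Next I would establish surjectivity and injectivity. For surjectivity, note that every element of $\mathcal{D}[\widetilde{s};\widetilde{\sigma},\widetilde{\delta}]$ is a finite sum $\sum_i \widetilde{s}^i d_i$ with $d_i \in \mathcal{D}$, and $\sum_i \widetilde{s}^i d_i = \sum_i \widetilde{s}^i \cdot d_i = \varphi\!\left(\sum_i \widetilde{s}^i \otimes d_i\right)$, using that $\widetilde{s}^i \in \Gr S$. For injectivity, the key point is that $\mathcal{D}[\widetilde{s};\widetilde{\sigma},\widetilde{\delta}]$ is a free \emph{left} $\mathcal{D}$-module with basis $\{\widetilde{s}^i\}_{i \geq 0}$, hence also a free left $\mathcal{D}$-module when we instead write monomials in the form $\widetilde{s}^i d$ (the two conventions differ by an invertible change of basis since $\widetilde{\sigma}$ is an automorphism of $\mathcal{D}$). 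On the other side, $\Gr S = \Lambda[\widetilde{s};\widetilde{\sigma},\widetilde{\delta}]$ is a free right $\Lambda$-module with basis $\{\widetilde{s}^i\}_{i\geq 0}$, so $\Gr S \otimes_\Lambda \mathcal{D} = \bigoplus_{i\geq 0} \widetilde{s}^i \otimes \mathcal{D}$ as a left $\mathcal{D}$-module; under $\varphi$ the $i$-th summand $\widetilde{s}^i \otimes \mathcal{D}$ maps isomorphically onto $\widetilde{s}^i \mathcal{D}$, and these are independent over $\mathcal{D}$ inside $\mathcal{D}[\widetilde{s};\widetilde{\sigma},\widetilde{\delta}]$, giving injectivity. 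The right-module statement is proved symmetrically with the map $d \otimes P \mapsto dP$, using that $\Gr S$ is free as a left $\Lambda$-module on $\{\widetilde{s}^i\}$ and $\mathcal{D}[\widetilde{s};\widetilde{\sigma},\widetilde{\delta}]$ is free as a right $\mathcal{D}$-module on the same set.

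The only mildly delicate point—and the one I would state carefully—is the freeness of $\Gr S$ as a \emph{one-sided} $\Lambda$-module and of $\mathcal{D}[\widetilde{s};\widetilde{\sigma},\widetilde{\delta}]$ over $\mathcal{D}$: a skew polynomial ring $A[\widetilde{s};\sigma,\delta]$ with $\sigma$ an automorphism is free as a left $A$-module \emph{and} as a right $A$-module on the powers of $\widetilde{s}$, because the commutation rule $a\widetilde{s} = \widetilde{s}\sigma^{-1}(a) + (\text{lower terms in }\widetilde{s})$... more directly, $a\widetilde{s} = \widetilde{s}\,\widetilde{\sigma}(a) + \widetilde{\delta}(a)$ with $\widetilde{\delta}(a)$ of $\widetilde{s}$-degree $0$ lets one rewrite any left-monomial $a\widetilde{s}^i$ in the form $\sum_j \widetilde{s}^j a_j$, and $\widetilde{\sigma}$ being invertible makes this rewriting reversible, so the two bases $\{a\widetilde{s}^i\}$ and $\{\widetilde{s}^i a\}$ span the same $\mathcal D$-module freely. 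I expect this to be the main (though routine) obstacle, as everything else is a formal manipulation; once the freeness is in place, the isomorphism follows by matching the direct-sum decompositions of both sides as $\mathcal{D}$-modules.
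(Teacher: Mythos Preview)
Your approach is essentially the same as the paper's: define the multiplication map on simple tensors, check $\Lambda$-balancedness and left $\Gr S$-linearity, and then verify bijectivity by matching bases over $\mathcal{D}$. The paper phrases the last step more cleanly by noting that $\mu$ is \emph{right} $\mathcal{D}$-linear and sends the right $\mathcal{D}$-basis $\{\widetilde{s}^n\otimes 1\}$ of $\Gr S\otimes_\Lambda\mathcal{D}$ to the right $\mathcal{D}$-basis $\{\widetilde{s}^n\}$ of $\mathcal{D}[\widetilde{s};\widetilde{\sigma},\widetilde{\delta}]$. Your injectivity paragraph has a sidedness slip: $\Gr S\otimes_\Lambda\mathcal{D}$ carries no natural \emph{left} $\mathcal{D}$-action, only a right one (coming from the right factor), so the decomposition $\bigoplus_i \widetilde{s}^i\otimes\mathcal{D}$ and the matching with $\bigoplus_i \widetilde{s}^i\mathcal{D}$ should both be read as right $\mathcal{D}$-module statements. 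Once you correct ``left'' to ``right'' there, your argument coincides with the paper's.
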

\begin{proof} Consider the additive map \(\mu\colon \Gr S \otimes_\Lambda \mathcal{D} \to \mathcal{D}[\widetilde{s};\widetilde{\sigma}, \widetilde{\delta}]\) given by the unique extension of the \(\Lambda\)-balanced map \(m\colon \Gr S \times \mathcal{D} \to \mathcal{D}[\widetilde{s};\widetilde{\sigma}, \widetilde{\delta}]\) defined by \[m(\widetilde{s}^n, r) = \widetilde{s}^nr\,.\]
Indeed, it is bi--additive, satisfies \(m(\widetilde{s}^n, r'r) = m(\widetilde{s}^nr', r)\) for any \(r' \in \Lambda\) and therefore extends to an abelian group homomorphism \(\mu\). It is a direct verification that \(\mu\) is left \(\Gr S\)-linear, and to see that it is an isomorphism, it suffices to observe that it is also right \(\mathcal{D}\)-linear and maps the basis \(\{\widetilde{s}^n \otimes 1\}\) of \(\Gr S\otimes_\Lambda \mathcal{D}\) to the basis \(\{\widetilde{s}^n\}\) of \(\mathcal{D}[\widetilde{s};\widetilde{\sigma}, \widetilde{\delta}]\). The proof for the right module case is similar.
\end{proof}

From this and the fact that \(\mathcal{D}[\widetilde{s};\widetilde{\sigma}, \widetilde{\delta}]\) is an Ore domain (\cite[Thm. 1.2.9 and 2.1.15]{robson_noncommutative_2001}), the same argument in \cite{HL22} gives us:

\begin{prop}[{cf. \cite[Lem. 2.7 and 2.8]{HL22}}]\label{prop-1st-vanishing} Let \(M\) (resp. \(M'\)) be a left (resp. right) \(\Gr S\)-module and \(N\) (resp. \(N'\)) be a left (resp. right) \(\Gr S\)-submodule of \(\operatorname{Ore}(\mathcal{D}[\widetilde{s};\widetilde{\sigma},\widetilde{\delta}])^k\) for some \(k \geq 1\). Then the following holds:
\begin{enumerate}[label=(\arabic*)]
    \item \(\Ext^3_{\Gr S}(M,M') = 0\);
    \item \(\mathcal{D}[\widetilde{s};\widetilde{\sigma}, \widetilde{\delta}]\) has projective dimension at most \(1\) as a left and right \(\Gr S\)-module;
    \item Every left or right \(\Gr S\)-submodule of \(\mathcal{D}[\widetilde{s};\widetilde{\sigma}, \widetilde{\delta}]^k\) for \(k\geq 1\) has projective dimension at most \(1\);
    \item Every finitely generated left or right \(\Gr S\)-submodule of \(\operatorname{Ore}(\mathcal{D}[\widetilde{s};\widetilde{\sigma}, \widetilde{\delta}])^k\) for \(k\geq 1\) has projective dimension at most \(1\).
    \item \(\Tor_2^{\Gr S}(\mathcal{D}[\widetilde{s};\widetilde{\sigma}, \widetilde{\delta}], M) = 0\);
    \item \(\Tor_1^{\Gr S}(\mathcal{D}[\widetilde{s};\widetilde{\sigma}, \widetilde{\delta}], N'') = 0\) for every left \(\mathcal{D}[\widetilde{s};\widetilde{\sigma}, \widetilde{\delta}]\)-module \(N''\);
    \item \(\Tor_1^{\Gr S}(\mathcal{D}[\widetilde{s};\widetilde{\sigma}, \widetilde{\delta}], N) = 0\);
    \item \(\Tor_1^{\Gr S}(\operatorname{Ore}(\mathcal{D}[\widetilde{s};\widetilde{\sigma}, \widetilde{\delta}]), N) = 0\);
    \item \(\Tor_1^{\Gr S}(N',\operatorname{Ore}(\mathcal{D}[\widetilde{s};\widetilde{\sigma}, \widetilde{\delta}])) = 0\);
    \item \(\Tor_1^{\Gr S}(\operatorname{Ore}(\mathcal{D}[\widetilde{s};\widetilde{\sigma}, \widetilde{\delta}]), \operatorname{Ore}(\mathcal{D}[\widetilde{s};\widetilde{\sigma}, \widetilde{\delta}])) = 0\).
\end{enumerate}
\end{prop}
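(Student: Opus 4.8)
The plan is to follow the argument of \cite[Lem. 2.7 and 2.8]{HL22} along the same lines, replacing their free group algebra and skew Laurent polynomial ring by $\Lambda$ and the skew polynomial ring $A := \mathcal{D}[\widetilde{s};\widetilde{\sigma},\widetilde{\delta}]$; write $\mathcal{E} := \operatorname{Ore}(A)$. First I would record the ring-theoretic input. As $\Lambda$ is a free ideal ring it has left and right global dimension at most $1$ (\cite{cohnFreeIdealRings2006}), so by the standard global-dimension bound for Ore extensions with an automorphism (\cite[Thm. 7.5.3]{robson_noncommutative_2001}) the ring $\Gr S = \Lambda[\widetilde{s};\widetilde{\sigma},\widetilde{\delta}]$ has global dimension at most $2$; this is statement (1). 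Moreover $A$ is a Noetherian domain which is a principal left and right ideal ring, over which every finitely generated torsion-free one-sided module is free, so $\mathcal{E}$ exists and is flat on both sides over $A$ (\cite[Thm. 1.2.9 and 2.1.15]{robson_noncommutative_2001}). The only homological input beyond such generalities is the vanishing $\Tor_1^{\Lambda}(\mathcal{D},\mathcal{D}) = 0$, which is Proposition~\ref{prop:andrei-sylvester}(1) applied to the Sylvester domain $\Lambda$.

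Next I would exploit flat base change along $\Lambda \hookrightarrow \Gr S$. Since $\Gr S$ is free as a left and as a right $\Lambda$-module and, by Lemma~\ref{lem:exteding-scalars}, $A \simeq \Gr S \otimes_{\Lambda}\mathcal{D}$ as left $\Gr S$-modules and $A \simeq \mathcal{D}\otimes_{\Lambda}\Gr S$ as right $\Gr S$-modules, one has $\Tor_n^{\Gr S}(A, L) \simeq \Tor_n^{\Lambda}(\mathcal{D}, L|_{\Lambda})$ for every left $\Gr S$-module $L$, and the symmetric statement for right modules. Applying $\Gr S\otimes_{\Lambda}-$ to a projective resolution of $\mathcal{D}$ over $\Lambda$ of length at most $1$ yields one for $A$ over $\Gr S$, which is (2) (on both sides) and hence also (5), since projective dimension at most $1$ gives flat dimension at most $1$. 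For (6): a left $A$-module $N''$ is free as a left $\mathcal{D}$-module, so $N''|_{\Lambda}$ is a direct sum of copies of $\mathcal{D}$, and the base-change isomorphism together with $\Tor_1^{\Lambda}(\mathcal{D},\mathcal{D})=0$ forces $\Tor_1^{\Gr S}(A, N'')=0$.

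The statements about submodules of $A^k$ and $\mathcal{E}^k$ I would obtain by dimension shifting. For (3): from $0\to N\to A^k\to Q\to 0$, with $\Ext^2_{\Gr S}(A^k,-)=0$ by (2) and $\Ext^3_{\Gr S}(Q,-)=0$ by (1), one gets $\Ext^2_{\Gr S}(N,-)=0$, i.e. $\operatorname{pd}_{\Gr S}N\le 1$. For (4): a finitely generated $\Gr S$-submodule $N$ of $\mathcal{E}^k$ lies inside the $A$-submodule $P$ it generates, which is finitely generated and torsion-free, hence free of some rank $r\le k$; as the isomorphism $P\simeq A^r$ is $A$-linear it is in particular $\Gr S$-linear, so $N$ is isomorphic to a $\Gr S$-submodule of $A^k$ and (3) applies. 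For (7): from $0\to N\to \mathcal{E}^k\to Q\to 0$, the long exact sequence for $\Tor^{\Gr S}_\bullet(A,-)$ squeezes $\Tor_1^{\Gr S}(A,N)$ between $\Tor_2^{\Gr S}(A,Q)=0$ (by (5)) and $\Tor_1^{\Gr S}(A,\mathcal{E}^k)=0$ (by (6)). For (8): applying $-\otimes_{\Gr S}\mathcal{E}=(-\otimes_{\Gr S}A)\otimes_A\mathcal{E}$ to a projective resolution of $N$ over $\Gr S$ and using flatness of $\mathcal{E}$ over $A$ gives $\Tor_n^{\Gr S}(\mathcal{E},N)\simeq\mathcal{E}\otimes_A\Tor_n^{\Gr S}(A,N)$, which vanishes for $n=1$ by (7). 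Finally, (9) is the left--right mirror of (8) and (10) is the case $N=\mathcal{E}$ of (8).

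I expect the difficulty to be bookkeeping rather than any isolated deep step: one must consistently track left versus right module structures---most delicately in the base-change isomorphism of the second paragraph and in (4), where it is the freeness of the $A$-module $P$, and not multiplication by a scalar, that produces a $\Gr S$-module isomorphism---and the one genuinely quantitative ingredient is the bound on the global dimension of $\Gr S$ used to shave a homological degree in (3), which itself rests on $\Lambda$ being a free ideal ring. Everything else is routine homological algebra.
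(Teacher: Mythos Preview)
Your proposal is correct and follows essentially the same route as the paper: global dimension $\le 2$ for $\Gr S$ via the Ore-extension bound, projective dimension $\le 1$ for $A=\mathcal{D}[\widetilde{s};\widetilde{\sigma},\widetilde{\delta}]$ via Lemma~\ref{lem:exteding-scalars} and a length-$1$ resolution of $\mathcal{D}$ over $\Lambda$, then the same dimension-shifting and flatness-of-Ore arguments for the remaining items. The only cosmetic differences are in (4), where the paper clears denominators to land in $A^k$ while you use that finitely generated torsion-free $A$-modules are free, and in (6) and (8), where your formulations (decomposing $N''$ as a $\mathcal{D}$-module, and writing $\Tor_n^{\Gr S}(\mathcal{E},N)\simeq \mathcal{E}\otimes_A\Tor_n^{\Gr S}(A,N)$) are in fact a touch more precise than the paper's.
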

\begin{proof} (1) Since the skew polynomial ring \(\Gr S\) over the free algebra \(\Gr R\) has global dimension at most \(2\) by \cite[Thm. 7.5.3]{robson_noncommutative_2001} and \cite[Cor. 2.5.2]{cohnFreeIdealRings2006}, every left or right module \(M\) has a projective resolution of length at most \(2\). Hence, the claim follows.

(2) Let \(0 \to P_1 \to P_0 \to \mathcal{D} \to 0\) be a projective resolution of the left \(\Lambda\)-module \(\mathcal{D}\) over \(\Lambda\), which exists because \(\Lambda\) is a free algebra and hence has global dimension at most \(1\). Applying the functor \(\Gr S\otimes_\Lambda -\) to this sequence we obtain the sequence \[0 \to \Gr S\otimes_\Lambda P_1 \to \Gr S\otimes_\Lambda P_0 \to \Gr S\otimes_\Lambda \mathcal{D}\to 0\] which is again exact by the freeness of \(\Gr S\) over \(\Lambda\) (by both sides!). Since each \(\Gr S\otimes_\Lambda P_i\) is again projective, this is a projective resolution of \(\Gr S\otimes_\Lambda \mathcal{D}\) over \(\Gr S\). We know that \(\Gr S\otimes_\Lambda \mathcal{D}\) is isomorphic to \(\mathcal{D}[\widetilde{s};\widetilde{\sigma}, \widetilde{\delta}]\) as a left \(\Gr S\)-module by Lemma~\ref{lem:exteding-scalars}, so the claim for the left module \(\mathcal{D}[\widetilde{s};\widetilde{\sigma}, \widetilde{\delta}]\) follows. The proof for \(\mathcal{D}[\widetilde{s};\widetilde{\sigma}, \widetilde{\delta}]\) as a right \(\Gr S\)-module is analogous, exchanging the functor \(\Gr S\otimes_\Lambda -\) for \(-\otimes_\Lambda \Gr S\).

(3) Let \(A\) be an \(\Gr S\)-submodule of \(\mathcal{D}[\widetilde{s};\widetilde{\sigma}, \widetilde{\delta}]\) and \(B\) be another \(\Gr S\)-module. The short exact sequence \[0 \to A \to \mathcal{D}[\widetilde{s};\widetilde{\sigma}, \widetilde{\delta}] \to C \to 0\] induces a long exact sequence \[\cdots \to \Ext^2_{\Gr S}(\mathcal{D}[\widetilde{s};\widetilde{\sigma}, \widetilde{\delta}], B) \to \Ext^2_{\Gr S}(A,B) \to \Ext^3_{\Gr S}(C,B) \to \cdots\,,\] where the first term vanishes by (2) and the third term vanishes by (1). Hence, \(\Ext^2_{\Gr S}(A,B) = 0\) for all \(\Gr S\)-modules \(B\), and the claim follows from \cite[Prop. 8.6]{rotman_introduction_2008}.

(4) Every finitely generated \(\Gr S\)-submodule of \(\operatorname{Ore}(\mathcal{D}[\widetilde{s};\widetilde{\sigma}, \widetilde{\delta}])\) is \(\Gr S\)-isomorphic to an \(\Gr S\)-submodule of \(\mathcal{D}[\widetilde{s};\widetilde{\sigma}, \widetilde{\delta}]\) by taking the smallest common multiple of the denominators of a generating set. Hence, the claim follows from (3).

(5) The right \(\Gr S\)-module \(\mathcal{D}[\widetilde{s};\widetilde{\sigma}, \widetilde{\delta}]\) has projective dimension at most \(1\) by (2), so the claim follows.

(6) By Lemma~\ref{lem:exteding-scalars}, the right \(\Gr S\)-modules \(\mathcal{D}[\widetilde{s};\widetilde{\sigma}, \widetilde{\delta}]\) and \(\mathcal{D} \otimes_\Lambda \Gr S\) are isomorphic. Since \(\Gr S\) is a free left \(\Lambda\)-module, we have isomorphisms:
\begin{align*}
    \Tor_1^{\Gr S}(\mathcal{D}[\widetilde{s};\widetilde{\sigma}, \widetilde{\delta}], N'') &\simeq \Tor_1^{\Gr S}(\mathcal{D} \otimes_\Lambda \Gr S, N'')\\
    &\simeq \Tor_1^\Lambda(\mathcal{D}, N'')\text{, by \cite[Cor. 10.72]{rotman_introduction_2008}}\\
    &\simeq \Tor_1^{\mathcal{D}}(\mathcal{D},N'')\text{, by \cite[Thm. 2.5(2)]{HL22}}\\
    &\simeq 0\text{, because }\mathcal{D}\text{ has dimension }0\,.
\end{align*}

(7) The short exact sequence \(0 \to N \to \operatorname{Ore}(\mathcal{D}[\fraks])^k \to C \to 0\) induces the long exact sequence
\begin{multline*}
\cdots \to \Tor_2^{\Gr S}(\mathcal{D}[\widetilde{s};\widetilde{\sigma}, \widetilde{\delta}], C) \to 
\Tor_1^{\Gr S}(\mathcal{D}[\widetilde{s};\widetilde{\sigma}, \widetilde{\delta}], N) \to \\
\Tor_1^{\Gr S}(\mathcal{D}[\widetilde{s};\widetilde{\sigma}, \widetilde{\delta}], \operatorname{Ore}(\mathcal{D}[\fraks])^k) \to \cdots\,.\end{multline*}
 The first term vanishes by (5) and the third term vanishes by (6), so that \[\Tor_1^{\Gr S}(\mathcal{D}[\widetilde{s};\widetilde{\sigma}, \widetilde{\delta}], N) = 0\] as well.

(8) Let \[\cdots \to P_k \to \cdots \to P_0 \to N \to 0\] be a projective resolution of \(N\) over \(\Gr S\). The natural isomorphism of functors \[\operatorname{Ore}(\mathcal{D}[\widetilde{s};\widetilde{\sigma}, \widetilde{\delta}]) \otimes_{\Gr S} - \simeq \operatorname{Ore}(\mathcal{D}[\widetilde{s};\widetilde{\sigma}, \widetilde{\delta}]) \otimes_{\mathcal{D}[\widetilde{s};\widetilde{\sigma}, \widetilde{\delta}]} (\mathcal{D}[\widetilde{s};\widetilde{\sigma}, \widetilde{\delta}] \otimes_{\Gr S} -)\] gives us an isomorphism of complexes \begin{equation}\label{eq:iso-complex-ore}\operatorname{Ore}(\mathcal{D}[\widetilde{s};\widetilde{\sigma}, \widetilde{\delta}]) \otimes_{\Gr S} P_* \simeq \operatorname{Ore}(\mathcal{D}[\widetilde{s};\widetilde{\sigma}, \widetilde{\delta}]) \otimes_{\mathcal{D}[\widetilde{s};\widetilde{\sigma}, \widetilde{\delta}]} (\mathcal{D}[\widetilde{s};\widetilde{\sigma}, \widetilde{\delta}] \otimes_{\Gr S} P_*)\,.\tag{\(*\)}\end{equation} Since the Ore localization is flat by \cite[Prop. 2.1.16(ii)]{robson_noncommutative_2001}, the homology of the right hand side of~(\ref{eq:iso-complex-ore}) is isomorphic to the homology of \(\mathcal{D}[\widetilde{s};\widetilde{\sigma}, \widetilde{\delta}] \otimes_{\Gr S} P_*\). Hence, we get isomorphisms \(\Tor_i^{\Gr S}(\operatorname{Ore}(\mathcal{D}[\widetilde{s};\widetilde{\sigma}, \widetilde{\delta}]), N) \simeq \Tor_i^{\Gr S}(\mathcal{D}[\widetilde{s};\widetilde{\sigma}, \widetilde{\delta}], N)\) for all \(i\geq 0\). In particular, \(\Tor_1^{Gr S}(\operatorname{Ore}(\mathcal{D}[\widetilde{s};\widetilde{\sigma}, \widetilde{\delta}]), N) = 0\) by (7).

Finally, (9) Is analogous to (8), and for (10) note that the proof of parts (5)-(8) also holds in the right \(\Gr S\)-module setting, and that (10) is a special case of (8).
\end{proof}

Observe that the embedding \(\Gr S \to \Gr \bbd\) (induced by \(\Gr \iota\)) can be extended to an embedding \(\mathcal{D}[\widetilde{s};\widetilde{\sigma}, \widetilde{\delta}] \to \operatorname{Ore}(\Gr \bbd)\) satisfying \(\widetilde{s} \mapsto \overline{s}\) in the following way: the image of \(\Lambda\) in the latter is contained in \(\mathcal{D}[t]\), and the map \(\mathcal{D}[t] \to \mathcal{D}\) defined by \(t \mapsto 1\) splits this homomorphism. Hence, the image of every full matrix is again full, and the universal property of \(\mathcal{D}\) gives us the desired extension. Therefore, every finitely generated \(\Gr S\)-submodule of \(\operatorname{Ore}(\Gr \bbd)\) is isomorphic to a finitely generated \(\Gr S\)-submodule of \(\operatorname{Ore}(\mathcal{D}[\widetilde{s};\widetilde{\sigma}, \widetilde{\delta}])^k\) for some \(k \geq 1\), from which we obtain:

\begin{cor}\label{cor:graded-tor-vanishing} \ 
\begin{enumerate}[label=\upshape (\arabic*)]
    \item \(\Gr \bbd\) has projective dimension at most \(1\) as a left and right \(\Gr S\)-module;
    \item Every left or right \(\Gr S\)-submodule of \(\Gr \bbd\) has projective dimension at most \(1\);
    \item Every finitely generated left or right \(\Gr S\)-submodule of \(\operatorname{Ore}(\Gr \bbd)\) has projective dimension at most \(1\).
    \item \(\Tor_2^{\Gr S}(\Gr \bbd, N) = 0\) for every left \(\Gr S\)-module \(N\);
    \item \(\Tor_1^{\Gr S}(\Gr \bbd, N) = 0\) for every left \(\Gr \bbd\)-module \(N\);
    \item \(\Tor_1^{\Gr S}(\Gr \bbd, N) = 0\) for every left \(\Gr S\)-submodule \(N \leq \operatorname{Ore}(\Gr \bbd)\);
    \item \(\Tor_1^{\Gr S}(\operatorname{Ore}(\Gr \bbd), N) = 0\) for every left \(\Gr S\)-submodule \(N \leq \operatorname{Ore}(\Gr \bbd)\);
    \item \(\Tor_1^{\Gr S}(M,\operatorname{Ore}(\Gr \bbd)) = 0\) for every right \(\Gr S\)-submodule \(M \leq \operatorname{Ore}(\Gr \bbd)\);
    \item \(\Tor_1^{\Gr S}(\operatorname{Ore}(\Gr \bbd), \operatorname{Ore}(\Gr \bbd)) = 0\).
\end{enumerate}
\end{cor}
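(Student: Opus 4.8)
The plan is to deduce all nine assertions from two seed statements, items (1) and (5), by a short cascade of long exact sequences together with the flatness of the Ore localization $\Gr\bbd\hookrightarrow\operatorname{Ore}(\Gr\bbd)$, mirroring the proof of Proposition~\ref{prop-1st-vanishing}. The structural input I would record first is the graded analogue of Lemma~\ref{lem:exteding-scalars}: the multiplication maps furnish isomorphisms of left, respectively right, $\Gr S$-modules $\Gr S\otimes_\Lambda\mathcal D[\overline t]\xrightarrow{\sim}\Gr\bbd$ and $\mathcal D[\overline t]\otimes_\Lambda\Gr S\xrightarrow{\sim}\Gr\bbd$. This is proved exactly as Lemma~\ref{lem:exteding-scalars}, using that $\Gr S$ is free over $\Lambda$ on the powers of $\widetilde s$, that $\Gr\bbd$ is free over $\mathcal D[\overline t]$ on the powers of $\overline s$, and that the embedding $\Gr S\hookrightarrow\Gr\bbd$ sends $\widetilde s^n$ to $\overline s^n$.

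Item (1) is then immediate: $\Lambda$ is a free ideal ring, hence left and right hereditary, so the $\Lambda$-module $\mathcal D[\overline t]$ has projective dimension at most $1$, and flat base change along $\Lambda\to\Gr S$ converts a length-one projective resolution of $\mathcal D[\overline t]$ into one of $\Gr\bbd$ over $\Gr S$, on either side. Item (4) follows since $\Gr\bbd$, having projective dimension $\le1$ over $\Gr S$, has vanishing higher $\Tor$. Item (2) is the $\Ext$-squeeze of Proposition~\ref{prop-1st-vanishing}(3): a $\Gr S$-submodule $A\le\Gr\bbd$ sits in $0\to A\to\Gr\bbd\to C\to0$, and $\Ext^2_{\Gr S}(A,B)$ is trapped between $\Ext^2_{\Gr S}(\Gr\bbd,B)=0$ (by item (1)) and $\Ext^3_{\Gr S}(C,B)=0$ (by Proposition~\ref{prop-1st-vanishing}(1), as $\Gr S$ has global dimension $\le2$). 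For item (3), a finitely generated $\Gr S$-submodule of $\operatorname{Ore}(\Gr\bbd)$ becomes, after clearing a common denominator on the appropriate side, isomorphic to a $\Gr S$-submodule of $\Gr\bbd$, so item (2) applies; alternatively one uses the reduction recorded just before the statement together with Proposition~\ref{prop-1st-vanishing}(4).

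Item (5) carries the only genuinely new content. Using the right-module isomorphism $\Gr\bbd\cong\mathcal D[\overline t]\otimes_\Lambda\Gr S$ and the flatness of $\Gr S$ over $\Lambda$, the standard base-change identity gives $\Tor_1^{\Gr S}(\Gr\bbd,N)\cong\Tor_1^\Lambda(\mathcal D[\overline t],N)$; since a left $\Gr\bbd$-module is in particular a $\mathcal D[\overline t]$-module via $\mathcal D[\overline t]\subseteq\Gr\bbd$, it remains to show $\Tor_1^\Lambda(\mathcal D[\overline t],N)=0$ for every $\mathcal D[\overline t]$-module $N$. This is the $\mathcal D[\overline t]$-version of \cite[Thm.~2.5]{HL22}: the universal localization $\Lambda\hookrightarrow\mathcal D$ satisfies $\Tor_1^\Lambda(\mathcal D,\mathcal D)=0$, whence $\Tor_i^\Lambda(\mathcal D,-)\cong\Tor_i^{\mathcal D}(\mathcal D,-)$ on $\mathcal D$-modules, and one must see that this persists after adjoining the central variable $\overline t$ with the twisted action $a_{i,j}\mapsto a_{i,j}\overline t^{w_{i,j}}$. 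Verifying this pseudo-flatness of $\Lambda\to\mathcal D[\overline t]$ in the Magnus-rescaled grading is, I expect, the main obstacle; one can either rerun the argument of \cite{HL22} over $\mathcal D[\overline t]$ or pass to $\operatorname{Ore}(\Gr\bbd)$, where $\overline t$ is invertible and the rescaling becomes a conjugation.

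The remaining items are purely formal. From $0\to N\to\operatorname{Ore}(\Gr\bbd)\to C\to0$ and the vanishing of $\Tor_2^{\Gr S}(\Gr\bbd,C)$ (item (4)) and $\Tor_1^{\Gr S}(\Gr\bbd,\operatorname{Ore}(\Gr\bbd))$ (by item (5), $\operatorname{Ore}(\Gr\bbd)$ being a $\Gr\bbd$-module), the long exact $\Tor$-sequence forces $\Tor_1^{\Gr S}(\Gr\bbd,N)=0$, which is item (6); the symmetric argument gives its right-hand analogue and, applied to $\Gr\bbd$ itself, $\Tor_1^{\Gr S}(\Gr\bbd,\Gr\bbd)=0$. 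Since the Ore localization is flat, the functor isomorphisms used in the proof of Proposition~\ref{prop-1st-vanishing}(8) yield $\Tor_i^{\Gr S}(\operatorname{Ore}(\Gr\bbd),-)\cong\operatorname{Ore}(\Gr\bbd)\otimes_{\Gr\bbd}\Tor_i^{\Gr S}(\Gr\bbd,-)$ and $\Tor_i^{\Gr S}(-,\operatorname{Ore}(\Gr\bbd))\cong\Tor_i^{\Gr S}(-,\Gr\bbd)\otimes_{\Gr\bbd}\operatorname{Ore}(\Gr\bbd)$, so items (7) and (8) follow from item (6) and its right analogue, and item (9) is the case $N=\operatorname{Ore}(\Gr\bbd)$ of item (7).
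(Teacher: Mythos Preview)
Your approach—establishing the graded analogue of Lemma~\ref{lem:exteding-scalars} and rerunning the arguments of Proposition~\ref{prop-1st-vanishing} with $\mathcal{D}[\overline t]$ in place of $\mathcal{D}$—matches the alternative proof the paper sketches in the Remark immediately following the corollary, not the paper's main argument. The paper's own route is more economical: rather than redo the work, it extends the embedding $\Gr S\hookrightarrow\Gr\bbd$ to a ring embedding $\mathcal{D}[\widetilde s;\widetilde\sigma,\widetilde\delta]\hookrightarrow\operatorname{Ore}(\Gr\bbd)$, obtained by observing that the Magnus map $\Lambda\to\mathcal{D}[\overline t]$ composed with the specialization $\overline t\mapsto1$ is the canonical inclusion $\Lambda\to\mathcal{D}$, so full matrices over $\Lambda$ stay full and hence become invertible in $\operatorname{Ore}(\Gr\bbd)$. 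This makes $\operatorname{Ore}(\Gr\bbd)$ into a vector space over the division ring $\operatorname{Ore}(\mathcal{D}[\widetilde s])$, so every finitely generated $\Gr S$-submodule of $\operatorname{Ore}(\Gr\bbd)$ sits inside some $\operatorname{Ore}(\mathcal{D}[\widetilde s])^k$ and Proposition~\ref{prop-1st-vanishing} applies verbatim. This reduction sidesteps the pseudo-flatness issue you flag at item~(5) entirely.

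On that flagged obstacle: your second suggested fix (``pass to $\operatorname{Ore}(\Gr\bbd)$, where $\overline t$ is invertible and the rescaling becomes a conjugation'') does not work as stated, since $\overline t$ is central and conjugation by it is the identity. The workable version of your route, as the paper's Remark indicates, is to first check that $\Lambda[\overline t]$ is \emph{free} as a left and right $\Lambda$-module via the Magnus map $a_{i,j}\mapsto a_{i,j}\overline t^{w_{i,j}}$ (the Remark records an explicit basis); with this freeness the base-change chain in Proposition~\ref{prop-1st-vanishing}(6) carries over. So your proposal is essentially correct for items (1)--(4) and (6)--(9), but at item~(5) you leave the key ingredient unverified and your heuristic for why it should hold is off.
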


\begin{rem} The vanishing results of Corollary~\ref{cor:graded-tor-vanishing} can also be proven directly, through an argument similar to that of Proposition~\ref{prop-1st-vanishing}, once one has shown that \(\Gr \iota \) makes \(\Lambda[\overline{t}]\) into a free \(\Lambda\)-module on both sides. Indeed, the set \[\{\overline{t}^k, w\mid k \geq 1\,, w\text{ a monomial in the letters }a_1,a_2,\ldots\}\] is a basis for \(\Lambda[\overline{t}]\) as a right and as a left \(\Lambda\)-module.
\end{rem}

\subsection{\texorpdfstring{$\Falgebra{G}$}{Fp[[G]]} is a Sylvester domain}

Since every free-by-\(\Z_p\) pro-\(p\) group has cohomological dimension at most \(2\), our ring \(S = \F_p[\![G]\!]\) has ``continuous'' global dimension at most \(2\) in the sense of A. Brumer (\cite[Sec. 3]{brumerPseudocompactAlgebrasProfinite1966}). By the characterization of \(P\) being a projective profinite \(S\)-module if and only if \(\CTor_1^S(\F_p,P) = 0\) given in \cite[Prop.~3.1]{brumerPseudocompactAlgebrasProfinite1966}, through dimension shifting we can improve the results of Proposition~\ref{prop-continuous-resolution}:

\begin{cor}\label{cor-dim-2-resolution} Let \(M\) be a complete and bouded filtered \(S\)-module. Then, there are bounded multisets of integers \(V_0\), \(V_1\) and \(V_2\) and a strict exact sequence of filtered \(S\)-modules \[0 \to \widehat{S(V_2)} \to \widehat{S(V_1)} \to \widehat{S(V_0)} \to M \to 0\,.\]
\end{cor}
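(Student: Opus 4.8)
The plan is to start from the strict exact resolution
\(\cdots \to \widehat{S(V_1)} \to \widehat{S(V_0)} \to M \to 0\)
by bounded multisets \(V_i\) furnished by Proposition~\ref{prop-continuous-resolution}, and to prove that its second syzygy can be taken of the form \(\widehat{S(V_2)}\), so that the resolution truncates. Concretely, let \(K\) be the kernel of \(\widehat{S(V_1)} \to \widehat{S(V_0)}\), endowed with the filtration induced from \(\widehat{S(V_1)}\). Then \(K\) is separated, complete and bounded, the inclusion \(K \hookrightarrow \widehat{S(V_1)}\) is strict, and both \(M\) and \(K\) are pseudocompact \(S\)-modules. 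It suffices to show that \(K\) is filtered-isomorphic to \(\widehat{S(V_2)}\) for some bounded multiset \(V_2\); composing such an isomorphism with \(K \hookrightarrow \widehat{S(V_1)}\) then produces \(0 \to \widehat{S(V_2)} \to \widehat{S(V_1)} \to \widehat{S(V_0)} \to M \to 0\).

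I would first check that \(K\) is a \emph{projective} pseudocompact \(S\)-module. Since each \(\widehat{S(V_i)}\) is acyclic for \(\CTor\), dimension shifting along the resolution gives \(\CTor_1^S(\F_p, K) \simeq \CTor_3^S(\F_p, M)\), and the right-hand side vanishes because \(S\) has continuous global dimension at most \(2\). Hence \(K\) is projective pseudocompact by \cite[Prop. 3.1]{brumerPseudocompactAlgebrasProfinite1966}.

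The remaining and crucial step is to upgrade this to \(K\) being \textit{filtered}-free, which is not automatic for an arbitrary compatible filtration but should follow from the fact that the induced filtration on \(K\) is separated, bounded and complete. I would build a minimal filtered presentation: as \(\Gr K\) is a bounded-below graded \(\Gr S\)-module with finite-dimensional homogeneous components, choose homogeneous elements of \(K\) — finitely many in each degree and none below a fixed degree — whose images form a minimal homogeneous generating set of \(\Gr K\) over \(\Gr S\); let \(V_2\) be the (bounded) multiset of their degrees and \(\phi \colon \widehat{S(V_2)} \to K\) the associated strict homomorphism. Graded Nakayama makes \(\Gr \phi\) surjective, hence \(\phi\) surjective by a completeness argument, and minimality of the generating set makes \(\phi \otimes_S \F_p\) an isomorphism. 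Since \(K\) is projective pseudocompact, \(\phi\) splits, so \(\widehat{S(V_2)} \simeq K \oplus \ker \phi\) as pseudocompact \(S\)-modules; then \(\ker \phi \otimes_S \F_p = 0\), whence \(\ker \phi = 0\) by topological Nakayama, and \(\phi\) is a strict isomorphism. The delicate point, and the main obstacle, is precisely that the splitting of \(\phi\) is available only in the category of pseudocompact modules rather than of filtered modules, so the vanishing of \(\ker\phi\) must be extracted from Nakayama's lemma applied to \(\ker\phi\) rather than from a filtered retraction; everything else reduces to the standard interplay between a filtered module, its associated graded, and the minimal generating data.
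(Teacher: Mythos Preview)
Your overall strategy matches the paper's exactly: start from the resolution of Proposition~\ref{prop-continuous-resolution}, use dimension shifting and Brumer's criterion (continuous global dimension $\leq 2$) to see that the second syzygy $K=\ker(\widehat{S(V_1)}\to\widehat{S(V_0)})$ is projective---hence free---pseudocompact, and then argue that $K$ with its induced filtration is filtered-isomorphic to some $\widehat{S(V_2)}$.

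The divergence is in this last step, and your version has a gap. You pick lifts of a minimal homogeneous generating set of $\Gr K$ over $\Gr S$, build $\phi\colon\widehat{S(V_2)}\to K$, and then assert that ``minimality of the generating set makes $\phi\otimes_S\F_p$ an isomorphism''. But the minimality you have is for $\Gr K$ over $\Gr S$: it says $\F_p\otimes_{\Gr S}\Gr\phi$ is an isomorphism. What you need is that $\F_p\cotimes_S\phi$ is an isomorphism, i.e.\ that your chosen elements form a minimal \emph{topological} generating set of $K$ over $S$. In general the cardinality of a minimal homogeneous generating set of $\Gr K$ can exceed $\dim_{\F_p}K/\overline{I_GK}$ (for instance, if the induced filtration on a rank-one free module were such that $\bar t$ acts trivially on $\Gr K$, then $\Gr K$ needs infinitely many generators while $K$ needs one). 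You have not used anything that rules this out, so the step ``$\ker\phi\otimes_S\F_p=0$'' is unjustified, and the splitting argument does not close the gap because the splitting is only pseudocompact, not filtered.

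The paper avoids this by working the other way around: rather than lifting generators of $\Gr K$, it first fixes a free topological basis $X$ of the (already known to be free) pseudocompact module $K_1\simeq S[\![(X,*)]\!]$, and only then reads off the filtration degree of each basis element to build the multiset $V$. Because the $x\in X$ are part of a topological basis, the map $S(V)\to K_1$ is automatically injective, with dense image since $X$ converges to $0$ and topologically spans; passing to completions gives the isomorphism $\widehat{S(V)}\to K_1$. In short, the paper uses freeness to obtain injectivity directly, whereas your route tries to recover injectivity from graded data and a non-filtered splitting, and that inference is where the argument breaks.
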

\begin{proof} Take the exact sequence of Proposition~\ref{prop-continuous-resolution}. If \(K_i\) denotes the kernel of \(\widehat{S(V_i)} \to \widehat{S(V_{i-1})}\), then we know that \[\CTor_{1}^S(\F_p,K_i) \simeq \cdots \simeq \CTor_{i+1}^S(\F_p, K_0) \simeq \CTor_{i+2}^S(\F_p, M)\,.\] Since the continuous global dimension of \(S\) is \(2\), we have that \(\CTor_1^S(\F_p, K_1) = 0\), that is, \(K_1\) is a projective and hence free \(S\)-module.

Inducing a filtration on \(K_1\) from \(\widehat{S(V_1)}\), it becomes again a separated, complete and filtered \(S\)-module with a strict inclusion map \(K_1 \to \widehat{S(V_1)}\). Since \(\widehat{S(V_1)} \simeq S[\![(\bm{V_1}\cup\{\infty\},\infty)]\!]\) is countably based, then so is \(K_1\), which we identify with a free profinite \(S\)-module \(S[\![(X,*)]\!]\) on a countable profinite pointed set \((X,*)\). By counting how many elements of \(X\) have a non-trivial image in each homogeneous component of \(\Gr K_1\), one obtains a multiset of integers \(V\) such that the induced map \(S(V) \to K_1\) is injective and has a dense image. This implies that \(\widehat{S(V)} \to K_1\) is an isomorphism, completing the proof.
\end{proof}

We can now prove the main finiteness result of this section.

\begin{prop}\label{prop:mainbound} Let \(M\) be a finitely generated left (resp. right) \(S\)-submodule of \(\mathcal{Q}\). Then, \(\Tor_1^S(\F_p, M)\) (resp. \(\Tor_1^S(M,\F_p)\)) is finite and \(\Tor_2^S(\F_p, M) = 0\) (resp. \(\Tor_2^S(M,\F_p) = 0\)).
\end{prop}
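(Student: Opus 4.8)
The plan is to transfer the required vanishing and finiteness from the associated graded ring $\Gr S$, where Subsection~\ref{subsec:gradedring} gives us good control, to $S$ itself, via the descent result Theorem~\ref{thm:andrei410}. Since every ring in the construction (the free algebra $\Lambda$, its universal division ring, and the power series and skew power series rings built from them) has a left--right symmetric counterpart, it suffices to treat finitely generated \emph{left} $S$-submodules $M$ of $\mathcal Q$. Because $\mathcal Q = \operatorname{Ore}(\bbd)$ by Corollary~\ref{cor-existence-Ore}, clearing a common denominator replaces $M$ by an isomorphic left $S$-module contained in $\bbd$, so I may assume $M \subseteq \bbd$. I then equip $M$ with the filtration $M_k = M \cap J^k$ induced by the $J$-adic filtration of $\bbd$; it is separated (as that of $\bbd$ is) and complete (as $M$ is a finitely generated separated module over the complete and bounded ring $S$). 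Crucially, $M$ is also \emph{bounded}: for each $k$ the quotient $M/(M\cap J^{k+1})$ embeds into $\bbd/J^{k+1}$, on which $I_G^{k+1} = S\cap J^{k+1}$ acts trivially, so $M/(M\cap J^{k+1})$ is a finitely generated module over the finite-dimensional $\F_p$-algebra $S/I_G^{k+1}$ and hence finite-dimensional over $\F_p$; thus every $M_k/M_{k+1}$ is finite-dimensional. Finally, since $M\hookrightarrow\bbd$ is strict, $\Gr M$ is a graded $\Gr S$-submodule of $\Gr\bbd$.

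For the vanishing of $\Tor_2$: as $G$ is a finitely generated free-by-$\Z_p$ pro-$p$ group it is finitely presented of cohomological dimension at most $2$, so $\F_p$ is of type $FP_\infty$ (in particular $FP_3$) as a right $S$-module. Since $\Gr M$ is a $\Gr S$-submodule of $\Gr\bbd$, Corollary~\ref{cor:graded-tor-vanishing}(2) gives $\operatorname{pd}_{\Gr S}\Gr M \le 1$, whence $\Tor_2^{\Gr S}(\F_p,\Gr M) = 0$. I then apply Theorem~\ref{thm:andrei410} with $\Omega = S$ (complete and bounded as a left and right module over itself, $G$ being finitely generated), $k = 2$, the $FP_{k+1}$-module being the right $S$-module $\F_p$ and the bounded module being the left $S$-module $M$: all three hypotheses hold, giving $\Tor_2^S(\F_p, M) = 0$ (and, by the opposite-ring remark, $\Tor_2^S(M,\F_p)=0$ in the right-module version).

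It remains to prove $\dim_{\F_p}\Tor_1^S(\F_p, M) < \infty$, and here lies the main obstacle: one must show that $\Gr M$ is a \emph{finitely generated} $\Gr S$-module, equivalently that the induced filtration on $M$ is good. This does not follow formally, since $\Gr S$ is not Noetherian; I would instead deduce it from the fact that $\bbd$ \emph{is} Noetherian (Corollary~\ref{cor-existence-Ore}) --- the $\bbd$-submodule $\bbd M$ of $\bbd$ is then finitely generated with good $J$-adic filtration, as $\Gr\bbd$ is Noetherian --- combined with a comparison of the filtration on $M$ with one arising from a finite presentation of $M$ over $S$. Granting that $\Gr M$ is finitely generated, it has projective dimension $\le 1$ over the graded-local ring $\Gr S$, so $\Tor_1^{\Gr S}(\F_p,\Gr M)$ is finite-dimensional over $\F_p$; lifting a finite presentation of $\Gr M$ to a finite presentation $0\to K\to S^r\to M\to 0$ with $K\subseteq I_GS^r$ (a routine argument for complete bounded filtered modules over the local ring $S$), one identifies $\Tor_1^S(\F_p,M)\cong K/I_GK$ and reads off $\dim_{\F_p}\Tor_1^S(\F_p,M)\le\dim_{\F_p}\Tor_1^{\Gr S}(\F_p,\Gr M)<\infty$. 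Everything outside the finite-generation of $\Gr M$ is either the machinery imported from Section~\ref{sec:hom-finite-prop} and Subsection~\ref{subsec:gradedring} or standard bookkeeping with complete filtered modules, so I expect that single step to be the only genuinely delicate part of the argument.
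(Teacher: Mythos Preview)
Your argument for $\Tor_2^S(\F_p, M) = 0$ is correct and in fact more direct than the paper's: you apply Theorem~\ref{thm:andrei410} with $\Omega = S$, using that $\F_p$ is of type $FP_3$ and that $\Gr M \subseteq \Gr\bbd$ has projective dimension at most $1$ by Corollary~\ref{cor:graded-tor-vanishing}(2). The paper instead applies Theorem~\ref{thm:andrei410} with $\Omega = \bbd$ to obtain $\Tor_2^{\bbd}(\mathcal D,\bbd\cotimes_S M)=0$, and then transfers this back to $\F_p$ via a comparison of complexes; your route is shorter.

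The finiteness of $\Tor_1^S(\F_p, M)$ is where your proposal has a genuine gap. You correctly isolate the crux as the finite generation of $\Gr M$ over $\Gr S$, but the sketch you give does not prove it. From Noetherianity of $\bbd$ and $\Gr\bbd$ you only get that $\Gr(\bbd M)$ is finitely generated over $\Gr\bbd$; since $\Gr\bbd$ contains $\mathcal D$ and is nowhere near finite over $\Gr S$, this says nothing about $\Gr M$ as a $\Gr S$-module. Comparing the induced filtration on $M$ with a good $S$-filtration coming from a free presentation does not help either: that is precisely an Artin--Rees statement, and since $\Gr S$ is not Noetherian (when $d(N)\ge 2$) there is no such lemma available. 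Even granting finite generation of $\Gr M$, the ``lifting a finite presentation'' step you describe is not routine, because one must show that a strict surjection $S^r\to M$ chosen to induce a surjection $\Gr S^r\to\Gr M$ has finitely generated kernel.

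The paper sidesteps both issues by never asserting that $\Gr M$ is finitely generated. It takes the (possibly infinite-rank) continuous resolution $\dagger$ of Corollary~\ref{cor-dim-2-resolution}, proves that $\bbd\cotimes_S\dagger$ stays exact (via Corollary~\ref{cor:graded-tor-vanishing} at the graded level), and identifies the homology of $\mathcal D\cotimes_S\dagger$ with $\Tor_*^{\bbd}(\mathcal D,\bbd\cotimes_S M)$. Noetherianity of $\bbd$ enters here, giving $\dim_{\mathcal D}\Tor_1^{\bbd}(\mathcal D,\bbd\cotimes_S M)<\infty$. Since both $\F_p\cotimes_S\dagger$ and $\mathcal D\cotimes_S\dagger$ are complexes of products with differentials given by the \emph{same} column-finite matrices over $\F_p$ (because $S\to\mathcal D$ factors through $S/I_G=\F_p$), $\F_p$-independent homology classes remain $\mathcal D$-independent, yielding $\dim_{\F_p}\Tor_1^S(\F_p,M)\le\dim_{\mathcal D}\Tor_1^{\bbd}(\mathcal D,\bbd\cotimes_S M)<\infty$. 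This is the step your approach lacks a substitute for.
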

\begin{proof} We follow the ideas of \cite[Prop. 4.13]{jaikin-zapirainExplicitConstructionUniversal2020}. We will prove the proposition for left modules only, since the proof for right modules is analogous. We can assume that \(M \subseteq \bbd\), so that \(M\) is a filtered, separated, bounded and complete \(S\)-module. Let us take \begin{equation}0 \to \widehat{S(V_2)} \overset{d_2}{\longrightarrow} \widehat{S(V_1)} \overset{d_1}{\longrightarrow} \widehat{S(V_0)} \overset{d_0}{\longrightarrow} M \to 0\tag{\(\dagger\)}\end{equation} the strict exact sequence of Corollary~\ref{cor-dim-2-resolution}. Since \(\Gr \widehat{S(V_i)} \simeq (\Gr S)(V_i)\), we have another exact sequence \begin{equation}
    0 \to (\Gr S)(V_2) \overset{\overline{d_2}}{\longrightarrow} (\Gr S)(V_1) \overset{\overline{d_1}}{\longrightarrow} (\Gr S)(V_0) \overset{\overline{d_0}}{\longrightarrow} \Gr M \to 0\,.\tag{\(\Gr \dagger\)}
\end{equation}

We know, through Proposition~\ref{prop-graded-polynomial}, that \(\Gr M\) is a \(\Gr S\)-submodule of the skew polynomial ring \[\mathcal{D}[\overline{t}][ \overline{s};\overline{\sigma},\overline{\delta}] \simeq \Gr \bbd\quad\text{ where }\overline{t} = t + J\text{ and }\overline{s} = s + J\,.\] By Corollary~\ref{cor:graded-tor-vanishing}, we know that \[\Tor_2^{\Gr S}(\Gr \bbd, \Gr M) = \Tor_1^{\Gr S}(\Gr \bbd, \Gr M) = 0\,.\] Since \(\Tor_i^{\Gr S}(\Gr \bbd, (\Gr S)(V_j)) = 0\) for every \(i > 0\) by the freeness of each \((\Gr S)(V_j)\), we can apply Lemma~\ref{lem-partial-exactness} with \(k = 4\) to deduce that \(\Gr \bbd \otimes_{\Gr S} \Gr \dagger\) is exact. Observe that we have \[\bbd \cotimes_S \widehat{S(V_i)} \simeq \widehat{\bbd(V_i)}\] by Proposition~\ref{prop-fp-acyclic}. 

We now claim that the sequence 
\begin{equation}\label{eq:tensor-d}
    0 \to \widehat{\bbd(V_2)} \overset{\operatorname{id}\cotimes d_2}{\longrightarrow} \widehat{\bbd(V_1)} \overset{\operatorname{id}\cotimes d_1}{\longrightarrow} \widehat{\bbd(V_0)} \overset{\operatorname{id}\cotimes d_0}{\longrightarrow} \bbd \cotimes_S M \to 0\tag{\(\bbd \cotimes_S \dagger\)}
\end{equation}
is again exact. There is a commutative diagram
\begin{center}
\begin{tikzcd}
0 \arrow[r] & \Gr\widehat{\bbd(V_2)} \arrow[r, "\overline{\operatorname{id} \cotimes d_2}"]                            & \Gr\widehat{\bbd(V_1)} \arrow[r, "\overline{\operatorname{id}\cotimes d_1}"]                             & \Gr\widehat{\bbd(V_0)}                            \\
0 \arrow[r] & \Gr\bbd \otimes_{\Gr S} (\Gr S)(V_2) \arrow[r, "\operatorname{id} \otimes \overline{d_2}"'] \arrow[u] & \Gr\bbd \otimes_{\Gr S} (\Gr S)(V_1) \arrow[r, "\operatorname{id} \otimes \overline{d_1}"'] \arrow[u] & \Gr\bbd \otimes_{\Gr S} (\Gr S)(V_0) \arrow[u]
\end{tikzcd}
\end{center}
where the vertical arrows are isomorphisms by \cite[Lem. 4.8]{jaikin-zapirainExplicitConstructionUniversal2020}. Since we've shown the bottom row is exact, the same holds for the top row, showing by \cite{jaikin-zapirainExplicitConstructionUniversal2020}[Prop. 4.4] that the sequence~\ref{eq:tensor-d} is exact at \(\widehat{\bbd(V_i)}\) for \(i \in \{1,2\}\). That the map \(\operatorname{id} \cotimes d_0\) is surjective is also clear, so it remains to show exactness at \(\widehat{\bbd(V_0)}\).

Let \(x \in \ker \operatorname{id} \cotimes_S d_0\). For each \(k \geq 0\), we have the following short exact sequence: \[0 \to \widehat{S(V_{1})}/d_1^{-1}(\widehat{S(V_{0})}_k) \overset{d_{1}}{\to} \widehat{S(V_0)}/\widehat{S(V_0)}_k \overset{d_0}{\to} M/d_0(\widehat{S(V_0)}_k) \to 0\,.\] Tensoring with \(\bbd/J^k\) over \(S\), we obtain the exactness of
\begin{align*}\bbd/J^k \otimes_S \widehat{S(V_{1})}/d_1^{-1}(\widehat{S(V_{0})}_k) \overset{\operatorname{id} \otimes_S d_{1}}{\to}&  \bbd/J^k \otimes_S \widehat{S(V_0)}/\widehat{S(V_0)}_k\\ &\overset{\operatorname{id}\otimes_S d_0}{\to} \bbd/J^k \otimes_S M/d_0(\widehat{S(V_0)}_k)\,.\end{align*}

Let \(x_k\) be the image of \(x\) in the middle term. Since \(x_k\) belongs to the kernel of \(\operatorname{id} \otimes_S d_0\), there exists \(y_k \in \bbd/J^k \otimes_S \widehat{S(V_{1})}/d_1^{-1}(\widehat{S(V_{0})}_k)\) such that \((\operatorname{id} \otimes_S d_{1})(y_k) = x_k\). We then choose a sequence \(z_k \in \bbd\cotimes_S \widehat{S(V_{1})}\) such that each \(z_k\) maps to \(y_k\) in the quotient. By construction, we have \(\lim_{k \to \infty} d_{1}(z_k) = x\). However, since \(\operatorname{id} \cotimes_S d_{1}\) is a filtered homomorphism, the topology on \(\bbd \cotimes_S \widehat{S(V_{1})}\) coincides with the topology induced by \(\operatorname{id} \cotimes_S d_{1}\). In particular, the sequence \(z_k\) converges to some \(z\) such that \(d_{1}(z) = x\), as desired. The exactness at \(\widehat{\bbd(V_0)}\) follows, and thus~(\ref{eq:tensor-d}) is exact as we claimed.

By Proposition~\ref{pro-acyclic-tor-resolution}, the complex~(\ref{eq:tensor-d}) is a resolution of \(M\)  over \(\bbd\) that can be used to compute \[\Tor_i^{\bbd}(\mathcal{D}, \bbd \cotimes_S M) \simeq \CTor_i^{\bbd}(\mathcal{D},\bbd \cotimes_S M)\,.\] The isomorphisms of complexes
\begin{align*}
    \mathcal{D} \otimes_{\bbd} \left(\bbd \cotimes_ S \dagger\right) & \simeq \mathcal{D} \cotimes_{\bbd} \left(\bbd \cotimes_ S \dagger\right)\,,\\
    &\simeq \left(\mathcal{D} \cotimes_{\bbd} \bbd\right) \cotimes_S \dagger\,,\\
    &\simeq \mathcal{D} \cotimes_S \dagger
\end{align*}
also gives us the isomorphisms \[\CTor_i^{\bbd}(\mathcal{D},\bbd \cotimes_S M) \simeq \CTor_i^S(\mathcal{D}, M)\simeq H_i(\mathcal{D} \cotimes_S \dagger)\,.\]

Since \(M\) is a finitely generated \(S\)-module, then \(\bbd\cotimes_S M\) is a finitely generated \(\bbd\)-module (\cite[Cor. 4.3]{jaikin-zapirainExplicitConstructionUniversal2020}), so that \begin{equation}\label{eq:fdimf1}\dim_{\mathcal{D}} \Tor_1^{\bbd}(\mathcal{D}, \bbd\cotimes_S M) < \infty\,.\end{equation} On one hand, Shapiro's isomorphism (\cite[Cor. 10.72]{rotman_introduction_2008}) gives us:
\[ \Tor_2^{\mathcal{D}[\overline{t}][\overline{s};\overline{\sigma},\overline{\delta}]}(\mathcal{D}, \mathcal{D}[\overline{t}][\overline{s};\overline{\sigma},\overline{\delta}] \otimes_{\Gr S} \Gr M) \simeq \Tor_2^{\Gr S}(\mathcal{D}, \Gr M) \simeq 0\text{, by Corollary~\ref{cor:graded-tor-vanishing}}\,.\]
On the other hand, since \[\Gr(\bbd \cotimes_S M) \simeq \Gr(\bbd \otimes_S M) \simeq \mathcal{D}[\overline{t}][\overline{s};\overline{\sigma},\overline{\delta}] \otimes_{\Gr S} \Gr M\text{ by \cite[Lem. 4.8]{jaikin-zapirainExplicitConstructionUniversal2020}}\,,\] and as a finitely generated module over the Noetherian ring \(\bbd\) the module \(\mathcal{D}\) is of type \(FP_\infty\), we get by Theorem~\ref{thm:andrei410} that \begin{equation}\label{eq:fdimf2}\Tor_2^S(\mathcal{D}, \bbd \cotimes_S M) = 0\,.\end{equation}
Since \(\F_p\)-linear independent elements in \(\F_p \cotimes_S \widehat{S(-)}\) remains \(\mathcal{D}\)-linear independent in \(\mathcal{D} \cotimes_S \widehat{S(-)}\) through tensoring with \(\mathcal{D} \otimes_{\F_p}\), we have that \[\dim_{\F_p} \ker(\widehat{\F_p(V_n)} \to \widehat{\F_p(V_{n-1})}) \leq \dim_{\mathcal{D}} \ker(\widehat{\mathcal{D}(V_n)} \to \widehat{\mathcal{D}(V_{n-1})})\] for \(n \in \{1,2\}\). Then, the claims follow from equations~(\ref{eq:fdimf1}) and~(\ref{eq:fdimf2}).
\end{proof}

\begin{cor}\label{cor:condition2} Let \(M\) be a finitely generated left or right \(S\)-submodule of \(\mathcal{Q}\). Then for any exact sequence \(0 \to I \to S^d \to M \to 0\) of left \(S\)-modules, \(I\) is free of finite rank.
\end{cor}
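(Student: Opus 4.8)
The plan is to deduce this from Proposition~\ref{prop:mainbound} by a bootstrapping argument analogous to the ones already used in the proofs of Theorem~\ref{thm:sylvester-virtual}. First I would treat the left-module case (the right-module case being symmetric). Given the short exact sequence $0 \to I \to S^d \to M \to 0$ with $M$ a finitely generated $S$-submodule of $\mathcal{Q}$, the long exact sequence for $\Tor^S_\bullet(\F_p,-)$ gives $\Tor_1^S(\F_p,I) \simeq \Tor_2^S(\F_p,M) = 0$ by Proposition~\ref{prop:mainbound}; but this alone does not give freeness of $I$ until we know $I$ is finitely generated.

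To establish finite generation of $I$, I would argue as in the $(C)\Rightarrow(A)$ step of Theorem~\ref{thm:sylvester-virtual}. Note that $I$ is itself an $S$-submodule of $S^d$, hence of $\mathcal{Q}^d$; however, Proposition~\ref{prop:mainbound} as stated concerns submodules of $\mathcal{Q}$ (i.e. $\mathcal{Q}^1$), so I would first remark that the proof of Proposition~\ref{prop:mainbound} goes through verbatim for finitely generated $S$-submodules of $\mathcal{Q}^r$ for any $r$, since $\bbd^r$ is still a filtered, separated, bounded, complete $S$-module, $\Gr(\bbd^r) = (\Gr\bbd)^r$, and the vanishing results of Corollary~\ref{cor:graded-tor-vanishing} apply to submodules of $\operatorname{Ore}(\Gr\bbd)^r$. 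Alternatively — and this is cleaner — I would split the sequence into $d$ pieces or simply observe that $I \subseteq S^d$ and one can realize $S^d \subseteq \bbd^d \subseteq \mathcal{Q}^d$ directly. With the extended version of Proposition~\ref{prop:mainbound} in hand, applied to the finitely generated submodule $M \subseteq \mathcal{Q}$: we get that $\Tor_1^S(\F_p, I)$ is finite. Then by Nakayama-type reasoning — $\dim_{\F_p} I/I_G I = \dim_{\F_p}(\F_p \otimes_S I)$ is finite because $\F_p\otimes_S I$ sits in the exact sequence $\Tor_1^S(\F_p,M) \to \F_p\otimes_S I \to \F_p^d \to \F_p\otimes_S M \to 0$ and $\Tor_1^S(\F_p,M)$ is finite by Proposition~\ref{prop:mainbound} — one concludes that $I$ is finitely generated over the local ring $S = \Falgebra{G}$ (using that $I$ is closed, which follows since $S^d$ is profinite and $I = \ker(S^d \to \mathcal{Q}^d/(\text{closure}))$ — or more simply because $I$ is the kernel of a continuous map $S^d \to \mathcal{Q}^d$ once we equip $\mathcal Q$ with the topology from the appendix, or because submodules of f.g. modules over Noetherian... — actually since $S$ need not be Noetherian, I would invoke closedness: $I$ is the preimage of $0$ under $S^d\to M$ but $M\hookrightarrow \mathcal Q$ need not be continuous; the safest route is that $I$ is finitely generated because $\F_p\otimes_S I$ is finite-dimensional and $I$ is complete, hence topologically finitely generated, hence finitely generated).

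Once $I$ is known to be finitely generated, I would re-apply the extended Proposition~\ref{prop:mainbound} to $I$ itself (it is now a finitely generated $S$-submodule of $S^d \subseteq \mathcal{Q}^d$): from an exact sequence $0 \to J \to S^k \to I \to 0$ we get $\Tor_1^S(\F_p,J) \simeq \Tor_2^S(\F_p,I)$, and I would need $\Tor_2^S(\F_p,I) = 0$ to run the bootstrap one more step, concluding $J$ is finitely generated and hence $I$ is finitely presented. The vanishing $\Tor_2^S(\F_p,I)=0$ is again exactly what Proposition~\ref{prop:mainbound} provides for finitely generated submodules of $\mathcal{Q}^d$. With $I$ finitely presented and $\Tor_1^S(\F_p,I)$ finite (in fact we want it to vanish): here I use that $S$ has continuous global dimension $2$, so from $0\to J\to S^k\to I\to 0$ with $I$ of type $FP_\infty$ we get $\Tor_2^S(\F_p,I)=0$ forces — wait, I need $\Tor_1^S(\F_p,I)=0$. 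Since $\Tor_2^S(\F_p,M)=0$ gave $\Tor_1^S(\F_p,I)\simeq\Tor_2^S(\F_p,M)=0$ directly from the \emph{first} exact sequence, that is already done. So $\Tor_1^S(\F_p,I)=0$ and $I$ is finitely generated; since $S$ is local and $I$ is finitely presented with $\Tor_1^S(\F_p,I)=0$, $I$ is projective, hence free of finite rank (projective modules over $\Falgebra{G}$ are free).

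The main obstacle I anticipate is the bookkeeping around \emph{which} submodules Proposition~\ref{prop:mainbound} applies to: as stated it is about submodules of $\mathcal{Q}$, but in the bootstrap one immediately needs submodules of $\mathcal{Q}^d$ (namely $I$ and then $J$). I expect the resolution is simply to note this generalization is immediate from the same proof — the filtered-module machinery over $\bbd^d$, the graded vanishing over $\operatorname{Ore}(\Gr\bbd)^d$, and Theorem~\ref{thm:andrei410} all extend with $r$ in place of $1$ with no essential change. The secondary subtlety is ensuring $I$ is finitely generated before quoting projectivity; here the key point is that $I$, carrying the induced filtration from $S^d$, is separated and complete, $\dim_{\F_p}(\F_p\otimes_S I)<\infty$ by Proposition~\ref{prop:mainbound}, and hence $I$ is topologically — therefore algebraically — finitely generated over the pseudocompact ring $S$ by a standard Nakayama/completeness argument (as in \cite[Prop. 4.2]{jaikin-zapirainExplicitConstructionUniversal2020}).
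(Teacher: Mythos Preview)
Your approach is correct, but it takes a longer route than the paper's. The paper does not bootstrap through finite generation and finite presentation of \(I\), nor does it need to extend Proposition~\ref{prop:mainbound} to submodules of \(\mathcal{Q}^d\). Instead, it argues directly at the level of pseudocompact modules: since \(\F_p\) is the unique simple pseudocompact \(S\)-module, the vanishing \(\Tor_2^S(\F_p,M)=0\) from Proposition~\ref{prop:mainbound} combined with Brumer's criterion \cite[Cor.~3.2]{brumerPseudocompactAlgebrasProfinite1966} gives immediately that \(M\) has projective dimension at most \(1\). Then \(I\) is projective, hence free by Kaplansky's theorem on local rings, and the rank formula
\[
\dim_{\F_p}\,\F_p\cotimes_S I \;=\; d - \dim_{\F_p}\,\F_p\cotimes_S M + \dim_{\F_p}\Tor_1^S(\F_p,M)
\]
shows it is finite, again by Proposition~\ref{prop:mainbound}.

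What your approach buys is that it stays entirely within abstract module theory: you never invoke Brumer's pseudocompact projectivity criterion, instead reaching projectivity via the standard ``finitely presented \(+\) \(\Tor_1(k,-)=0\) over local \(\Rightarrow\) free'' argument. The cost is the extra bookkeeping you identify: the extension of Proposition~\ref{prop:mainbound} to \(\mathcal{Q}^d\) (harmless, as you say), and the slightly delicate justification that \(I\) is closed in \(S^d\). For the latter, the clean statement is that after moving \(M\) into \(\bbd\) by clearing denominators, the map \(S^d\to\bbd\) is continuous (the embedding \(S\hookrightarrow\bbd\) is filtered and strict), so \(I\) is closed and hence complete; your Nakayama argument then goes through. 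Note also that once you know \(I\) is closed and \(\F_p\otimes_S I\) is finite-dimensional, the profinite Nakayama lemma already gives you freeness directly from \(\Tor_1^S(\F_p,I)=0\) without the second bootstrap to finite presentation --- this is essentially what the paper's invocation of Brumer accomplishes in one stroke.
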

\begin{proof} Again, we will only prove the corollary for left \(S\)-submodules. Since \(\F_p\) with trivial \(G\)-action is the only simple and pseudocompact \(S\)-module, by Proposition~\ref{prop:mainbound} and \cite[Cor. 3.2]{brumerPseudocompactAlgebrasProfinite1966}, we know that \(M\) has projective dimension at most \(1\) as an \(S\)-module. Hence, \(I\) is projective by \cite[Prop. 8.6]{rotman_introduction_2008} and thus free by Kaplansky's theorem on local rings. It's rank as a free \(S\)-module is then given by \[\dim_{\F_p} \F_p \cotimes_S I = d - \dim_{\F_p} \F_p \cotimes_S M + \dim_{\F_p} \Tor_1^S(\F_p,M)\,,\] which is once again seen to be finite by Proposition~\ref{prop:mainbound}.
\end{proof}

We can now prove the main theorem of this section.
\begin{thm}\label{sylvester}
Let $G$ be a torsion-free finitely generated virtually free-by-$\Z_p$ pro-$p$ group. Then $\Falgebra{G}$ is a Sylvester domain.
\end{thm}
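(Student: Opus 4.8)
The strategy is to reduce the statement to a single, well‑chosen open subgroup and then to apply the homological criterion of Theorem~\ref{thm-jaikin-criterion-universality} to the embedding built in Sections~\ref{sec:group-embeddings} and~\ref{sec:hom-finite-prop}. Since $G$ is finitely generated and virtually free-by-$\Z_p$, it contains an open subgroup $H$ that is free-by-$\Z_p$; being closed in $G$, $H$ is again finitely generated and torsion-free. By Lemma~\ref{lem:virtual-flag}, $H$ contains a normal open subgroup $U$ with a mild flag presentation, so $U$ is an open subgroup of $G$ of exactly the type studied in Section~\ref{sec:hom-finite-prop}. By the equivalence (A)$\Leftrightarrow$(C) of Theorem~\ref{thm:sylvester-virtual} it therefore suffices to prove that $S = \Falgebra{U}$ is a Sylvester domain.

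For this I would invoke Theorem~\ref{thm-jaikin-criterion-universality} for the embedding $\phi\colon S \hookrightarrow \mathcal Q$, where $\mathcal Q$ is the classical ring of fractions of $\bbd = \mathcal{D}[\![t]\!][\![s;\sigma,\delta]\!]$ produced in Corollary~\ref{cor-existence-Ore}, with the Sylvester matrix rank function $\rk = \phi^{\#}\rk_{\mathcal Q}$. Being a division ring, $\mathcal Q$ is von Neumann regular and is tautologically an envelope for $\rk$. Hypothesis~(2) of the theorem --- that for every finitely generated left or right $S$-submodule $M$ of $\mathcal Q$ and every exact sequence $0 \to I \to S^n \to M \to 0$ the module $I$ is free of finite rank --- is precisely Corollary~\ref{cor:condition2}. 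So the whole proof reduces to verifying hypothesis~(1), namely that $\Tor_1^S(\mathcal Q, \mathcal Q) = 0$.

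I would establish this in three steps. First, since $\Tor$ commutes with direct limits and $\mathcal Q$ is the directed union of its finitely generated $S$-submodules, it is enough to show $\Tor_i^S(\mathcal Q, M) = 0$ for $i \geq 1$ and $M$ a finitely generated $S$-submodule of $\mathcal Q$; up to $S$-module isomorphism one may assume $M \subseteq \bbd$, so that $M$ is a separated, bounded and complete filtered $S$-module. Second, because $\mathcal Q$ is an Ore localization of $\bbd$ it is flat over $\bbd$, so flat base change gives $\Tor_i^S(\mathcal Q, M) \cong \mathcal Q \otimes_{\bbd} \Tor_i^S(\bbd, M)$. Third, I claim $\Tor_i^S(\bbd, M) = 0$ for $i \geq 1$: by Corollary~\ref{cor:condition2} the module $M$ admits a length-one resolution by finitely generated free $S$-modules, on which the ordinary and completed tensor products with $\bbd$ coincide, so $\Tor_i^S(\bbd,M) \cong \CTor_i^S(\bbd, M)$; and computing $\CTor_i^S(\bbd,M)$ instead with the strict resolution $(\dagger)\colon 0 \to \widehat{S(V_2)} \to \widehat{S(V_1)} \to \widehat{S(V_0)} \to M \to 0$ of Corollary~\ref{cor-dim-2-resolution}, whose terms are $\cotimes_S$-flat, identifies it with the $i$-th homology of $\bbd \cotimes_S (\dagger)$, which was shown to be exact in the proof of Proposition~\ref{prop:mainbound}. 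Hence $\Tor_1^S(\mathcal Q, M) = 0$, and passing to the direct limit over $M$ yields $\Tor_1^S(\mathcal Q, \mathcal Q) = 0$.

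With both hypotheses of Theorem~\ref{thm-jaikin-criterion-universality} verified, $S = \Falgebra{U}$ is a Sylvester domain (with $\mathcal Q$ realising its universal division ring of fractions), and Theorem~\ref{thm:sylvester-virtual} then upgrades this to $\Falgebra{G}$. The real work lies entirely in hypothesis~(1): the decisive inputs are the exactness of $\bbd \cotimes_S (\dagger)$ obtained in Proposition~\ref{prop:mainbound} and the finiteness statements behind Corollary~\ref{cor:condition2} (themselves resting on Proposition~\ref{prop:mainbound}), which guarantee that finitely generated $S$-submodules of $\mathcal Q$ have finite free resolutions --- without this one could neither compute $\Tor^S(\bbd,-)$ via the completed resolution nor make the comparison between $\otimes$ and $\cotimes$ that drives the argument.
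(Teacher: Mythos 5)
Your proof is correct and follows the paper exactly through the reduction to the mild flag case (Lemma~\ref{lem:virtual-flag}, Theorem~\ref{thm:sylvester-virtual}), the choice of embedding $S\hookrightarrow\mathcal{Q}$, and the verification of hypothesis~(2) of Theorem~\ref{thm-jaikin-criterion-universality} via Corollary~\ref{cor:condition2}; but the route you take to hypothesis~(1), i.e.\ $\Tor_1^S(\mathcal{Q},\mathcal{Q})=0$, is genuinely different from the paper's. The paper does flat base change over $\bbd$ in \emph{both} variables to reduce to $\Tor_1^S(M,N)=0$ for finitely generated $S$-submodules $M,N\subseteq\bbd$, then establishes $\Tor_1^{\Gr S}(\Gr M,\Gr N)=0$ via Corollary~\ref{cor:graded-tor-vanishing} and lifts this to $\Tor_1^S(M,N)=0$ with the graded-to-filtered comparison Theorem~\ref{thm:andrei410}. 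You instead fix $\bbd$ in the first slot, reduce only the second to a finitely generated $M\subseteq\bbd$, and observe that the finite free resolution $0\to S^a\to S^b\to M\to 0$ guaranteed by Corollary~\ref{cor:condition2} (on which $\otimes_S$ and $\cotimes_S$ with $\bbd$ agree) identifies $\Tor_i^S(\bbd,M)$ with $\CTor_i^S(\bbd,M)$; the latter vanishes for $i\geq 1$ because it can be computed from the $\cotimes_S$-flat resolution $(\dagger)$ of Corollary~\ref{cor-dim-2-resolution}, and the exactness of $\bbd\cotimes_S\dagger$ was already established inside the proof of Proposition~\ref{prop:mainbound}. Your version economises on one flat base change and one invocation of Theorem~\ref{thm:andrei410}, at the price of reaching into an intermediate step of Proposition~\ref{prop:mainbound} rather than citing a stated result, and of leaning on the general fact that a $\cotimes$-flat resolution computes $\CTor$; both arguments are ultimately driven by the same graded exactness of $\Gr\bbd\otimes_{\Gr S}\Gr\dagger$ coming from Corollary~\ref{cor:graded-tor-vanishing}.
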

\begin{proof}
By Lemma \ref{lem:virtual-flag} and Theorem \ref{thm:sylvester-virtual}, we can assume that $G$ is  a mild flag free-by-$\Z_p$ pro-\(p\) group. We use the notation of this section.

 We want to apply Theorem~\ref{thm-jaikin-criterion-universality} to the embedding \(S \to\mathcal Q \) constructed throughout Section~\ref{sec:group-embeddings}. The condition (2) was shown in Corollary~\ref{cor:condition2}, so it only remains to show that \[\Tor_1^S(\mathcal{Q},\mathcal{Q}) = 0\,.\] Since \(\mathcal{Q}\) is flat as a \(\bbd\)-module (\cite[Prop. 2.1.16]{robson_noncommutative_2001}), if \(P_* \to\mathcal \mathcal{Q} \) is a projective resolution of \(\mathcal{Q}\) over \(S\) we have the isomorphisms: 
 \begin{align*}
   \mathcal{Q}  \otimes_\bbd \Tor_i^S(\bbd,\mathcal D_{\Falgebra{G}} ) &=\mathcal{Q}  \otimes_\bbd H_i(\bbd \otimes_S P_*) \\
    &\simeq H_i (\mathcal{Q} \otimes_{\bbd} \bbd \otimes_S P_*) \\
    &\simeq H_i(\mathcal{Q} \otimes_S P_*) = \Tor_i^S(\mathcal{Q},\mathcal{Q})\,.
\end{align*}
Hence, if \(\Tor_i^S(\bbd,\mathcal{Q})\) vanishes, then so does \(\Tor_i^S(\mathcal{Q},\mathcal{Q})\). The same reasoning, now applied to the right factor, gives us  \[\Tor_i^S(\bbd,\bbd) = 0 \implies \Tor_i^S(\mathcal{Q},\mathcal{Q}) = 0\,.\]

Hence, because the \(\Tor\) functor commutes with direct limits, it suffices to show that for every finitely generated right and left \(S\)-submodules \(M\) and \(N\) of \(\bbd\), one has \(\Tor_1^S(M,N) = 0\). Note that \(\Gr M\) and \(\Gr N\) are \(\Gr S\)-submodules of \(\Gr \bbd\) in the filtration induced by the powers of the ideal \(J\) generated by \(t\) and \(s\). The short exact sequence \[0 \to \Gr M \to \Gr \bbd \to Q \to 0\] induces the long exact sequence \[\cdots \to \Tor_2^{\Gr S}(Q, \Gr N) \to \Tor_1^{\Gr S}(\Gr M, \Gr N) \to \Tor_1^{\Gr S}(\Gr \bbd, \Gr N) \to \cdots\] on the \(\Tor\) groups. Since by Corollary~\ref{cor:graded-tor-vanishing} the module \(\Gr N\) has projective dimension at most \(1\) and \(\Tor_1^{\Gr S}(\Gr \bbd, \Gr N) = 0\), we get \[\Tor_1^{\Gr S}(\Gr M, \Gr N) = 0\,.\] We have shown in Proposition~\ref{prop:mainbound} that \(M\) is of type \(FP_\infty\): given a presentation \[0 \to K \to S^n \to M \to 0\,,\] with \(n\) minimal we have that \(K\) is finitely generated and thus profinite as \(\Tor_1^S(M,\F_p) = \Tor_0^S(K,\F_p)\) is finite, an argument we can now repeat with \(M\) replaced by \(K\). Moreover, the vanishing of \[\CTor_1^S(K,\F_p) \simeq \Tor_1^S(K,\F_p) \simeq  \Tor_2^S(M,\F_p)\] shows that it is projective by \cite[Lem. 2.1(ii) and Prop. 3.1]{brumerPseudocompactAlgebrasProfinite1966} and hence free. Since \(S\) is bounded and \(N\) is finitely generated, it is also bounded, and thus the claim now follows from Theorem~\ref{thm:andrei410}.
\end{proof}

\section{The equality \texorpdfstring{$\rk_G=\irk_{\Falgebra{G}}$}{rk\_G = irk\_Fp[[G]]}}\label{sec:atiyah}
The main result of this section is the following theorem.
\begin{thm}\label{thm:approximation} 
Let \(G\) be a pro-\(p\) group, \((\mathcal{Q},w)\) a division ring with a non-archimedean discrete valuation \(w\colon\mathcal Q  \to \{p^k \mid k \in \Z\}\cup \{0\}\) and \(\varphi\colon \Falgebra{G} \to\mathcal Q \) a continuous embedding with respect to the topology on \(\mathcal{Q}\) induced by \(w\). 
Then \[\varphi^\# \rk_{\mathcal{Q}}\leq \rk_G\,.\]
\end{thm}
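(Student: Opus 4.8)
The plan is to establish the equivalent inequality $\rk_{\mathcal Q}(\varphi(A))\le\rk_G(A)$ for every matrix $A$ over $\Falgebra{G}$, and first to reduce it to the case of an invertible matrix. Over the division ring $\mathcal Q$ a matrix of rank $r$ contains an invertible $r\times r$ submatrix; if that submatrix is $\varphi(B)$ for $B$ the corresponding $r\times r$ submatrix of $A$, then Lemma~\ref{lem:lem-monotonic} gives $\rk_G(A)\ge\rk_G(B)$, so it suffices to prove: whenever $B\in\Mat_r(\Falgebra{G})$ and $\varphi(B)$ is invertible over $\mathcal Q$, then $\rk_G(B)=r$. Equivalently, with $M=\Falgebra{G}^r/\Falgebra{G}^rB$, one must show $\dim_G M=0$; by the approximation formula $\dim_G M=\lim_i\dim_{\F_p}(\F_p\otimes_{\Falgebra{U_i}}M)/|G:U_i|$ of \cite[Prop. 2.15]{Ja24} (used already in the first proof of Claim~\ref{Iclosed}), this amounts to an asymptotic bound on $\dim_{\F_p}\ker(B_{U_i})/|G:U_i|$.

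Next I would record the consequences of continuity and of $w$ being a valuation. Write $\mathcal O=\{x\in\mathcal Q:w(x)\le 1\}$ for the valuation ring, $\mathfrak m$ for its maximal ideal and $\pi$ for a uniformiser. Since every $a\in I_G$ satisfies $a^n\to 0$, continuity forces $\varphi(a)^n=\varphi(a^n)\to 0$ and hence $w(\varphi(a))<1$; thus $\varphi(I_G)\subseteq\mathfrak m$, so $\varphi(I_G^s)\subseteq\mathfrak m^s$ for all $s$, $\varphi^{-1}(\mathfrak m)=I_G$ (by the strong ultrametric inequality applied to $a=\lambda+x$ with $\lambda\in\F_p^\times$, $x\in I_G$), and each $\varphi^{-1}(\mathfrak m^s)$ is an open, hence finite-index, two-sided ideal of $\Falgebra{G}$ containing $I_G^s$. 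Moreover $\mathcal O$ is a complete noncommutative discrete valuation ring, over which every finitely generated module is a direct sum of copies of $\mathcal O$ and of $\mathcal O/\mathfrak m^k$; since $\varphi(B)$ is invertible over $\mathcal Q$, the module $\mathcal O^r/\mathcal O^r\varphi(B)$ is torsion and is annihilated by $\mathfrak m^L$ for some $L$, which yields a matrix $C\in\Mat_r(\mathcal O)$ with $C\varphi(B)=\pi^LI_r$ (and, symmetrically, a right approximate inverse).

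The heart of the proof is to convert this $\mathcal O$-level approximate inverse into the required asymptotic bound over the genuine finite quotients $\F_p[G/U_i]$. A convenient tool is the monotonicity of $\rk_G$ along the chain: for open normal $V\le U$ the Sylvester rank function $\widetilde\rk_U$ obtained by pulling back the normalised rank of $\F_p[G/U]$ satisfies $\widetilde\rk_U\le\widetilde\rk_V$, because a finitely generated $\F_p[G/V]$-module $N$ — being finitely generated over the local ring $\F_p[U/V]$, over which $\F_p[G/V]$ is finite and free — obeys $\dim_{\F_p}N\le|U:V|\cdot\dim_{\F_p}(N/I_{U/V}N)$ by Nakayama, and one applies this to cokernels. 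Hence $\rk_G=\sup_U\widetilde\rk_U$, and it is enough to produce, for each $\varepsilon>0$, an open normal $U$ with $\dim_{\F_p}\ker(B_U)/|G:U|<\varepsilon$. Using $C$, the continuity of $\varphi$, and the finiteness of $\Falgebra{G}/\varphi^{-1}(\mathfrak m^{L+1})\hookrightarrow\mathcal O/\mathfrak m^{L+1}$, one shows that $B$ becomes $\pi^L$ times an invertible matrix after reduction to $\mathcal O/\mathfrak m^{L+1}$, bounding the cokernel of $B$ over $\Falgebra{G}/\varphi^{-1}(\mathfrak m^{L+1})$ by a fixed finite module; transporting this down to $\F_p[G/U]$ and absorbing the bounded defect against $|G:U|$ gives the estimate. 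The main obstacle is exactly this descent: the approximate inverse $C$ need not lie in $\varphi(\Mat_r(\Falgebra{G}))$, and the finite quotient $\Falgebra{G}/\varphi^{-1}(\mathfrak m^{L+1})$ is not itself a group algebra $\F_p[G/U]$, so one must argue through the filtered and graded structure of the embedding $\Falgebra{G}\hookrightarrow\mathcal Q$ — comparing $\Gr(\Falgebra{G})$ with $\Gr(\mathcal O)$ for the induced filtrations — in order to see that no rank is lost in the limit.
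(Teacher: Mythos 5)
Your setup is correct and the reduction steps (via Lemma~\ref{lem:lem-monotonic}, passing to a square $B$ with $\varphi(B)$ invertible, and the monotonicity of $\rk_{G/U}$ in $U$ via Nakayama over $\F_p[U/V]$) are all sound and would fit alongside the paper's argument. You also correctly identify the valuation ring $\mathcal{O}=\mathcal{Q}_0$ as the place where the approximation must happen, and the observation that $\varphi(I_G)\subseteq\mathfrak{m}$, $\varphi^{-1}(\mathfrak{m})=I_G$, and each $\varphi^{-1}(\mathfrak{m}^s)$ is open of finite index is correct.

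The genuine gap is in the descent step, and it is exactly the one you flag at the end. Bounding $\mathrm{coker}(B)$ over a single finite ring $\Falgebra{G}/\varphi^{-1}(\mathfrak{m}^{L+1})$, with a fixed $L$ coming from an $\mathcal{O}$-level approximate inverse, cannot by itself give the desired estimate: that ring is not a group algebra $\F_p[G/U]$, there is no natural ring map in either direction between them, and — more fundamentally — the quantity $\dim_{\F_p}(\F_p\otimes_{\Falgebra{U}}M)$ need not be bounded uniformly in $U$; it can grow with $|G\colon U|$, just more slowly than linearly. A single constant $L$ does not see this. What the paper actually does is build the chain $G_i=\{g\in G\mid w(\varphi(g-1))\le p^{-i}\}$ directly from the valuation, so that the subgroup filtration $I_{G_i}$ is visibly compatible with $\mathfrak{m}^i$, and then introduces the interpolating quantity $l_U(M)=\len_{\mathcal{Q}_0}(\mathcal{Q}_0\otimes_{\Falgebra{U}}M)$. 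Three estimates are then proved (Claims~\ref{lem:lem-finite}, \ref{lem:len-scaling} and \ref{lem:len-lemma}): $l_G(M)<\infty$ because $\mathcal{Q}_0\otimes_{\Falgebra{G}}M$ is a finitely generated torsion $\mathcal{Q}_0$-module; $l_U(M)\le|G\colon U|\,l_G(M)$ by a filtration argument with the nilpotent operator $\psi$; and, crucially, $\dim_{\F_p}(\F_p\otimes_{\Falgebra{G_i}}M)\le l_{G_i}(M)/i$, obtained by factoring the presentation matrix of $M$ over $\Falgebra{G_i}$ as a unit times a diagonal matrix with entries $1$ or $z^i$ (the entries of the off-diagonal blocks land in $\varphi(I_{G_i})\subseteq z^i\mathcal{Q}_0$). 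The factor $1/i$, not a fixed $L$, is what makes the limit go to zero after dividing by $|G\colon G_i|$. Your approach misses both this choice of chain and this length function, which are the essential ideas. A further (smaller) issue: you use a uniformiser $\pi$ and the structure theorem for finitely generated $\mathcal{O}$-modules without discussing whether $\pi$ is central; the paper handles this by first adjoining a central element $z$ of valuation $p^{-1}$ (Claim~\ref{lem:lemuniform}), which you should do as well before quoting any structure theorem.
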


We split again the proof into several lemmas. For each \(i \in \Z\) define following subsets of \(G\):
\[G_i = \left\{g \in G\mid w(\varphi(g-1)) \leq p^{-i}\right\}\,.\]

Each \(G_i\) is a clopen and non-empty subset of \(G\), as \(1 \in G_i\) for every \(i\). Moreover, since \(G\) is compact, there exists a minimum \[k_0 = \min\{k \in \Z \mid w(\varphi(g - 1)) \leq p^{-k}\text{ for all } g \in G\}\,.\] This implies that \(G_i = G\) for every \(i \leq k_0\). We note that:

\begin{lem}\label{lem:positive-fil} The integer \(k_0\) is non-negative. In particular, \(G_0 = G\). Moreover, \(w(\varphi(g)) = 1\) for every \(g \in G\).
\end{lem}
\begin{proof} Suppose by contradiction that \(k_0 < 0\) and let \(g \in G\) be such that \(w(\varphi(g-1)) = p^{-k_0}\). Observe that since \(\Falgebra{G}\) embeds into a division ring, \(G\) is torsion free, and in particular \(g^p \neq 1\). Since \(\mathcal{Q}\) has characteristic \(p\), we get \[w(\varphi(g^p - 1)) = w(\varphi(g-1)^p) = p^{-pk_0} > p^{-k_0}\,,\] contradicting the definition of \(k_0\).

Suppose now that \(w(\varphi(g)) > 1\) for some \(g \in G\). Then, since \(w(\varphi(g)) \neq w(-1) = 1\), we'd have \[w(\varphi(g-1)) = \max\{w(\varphi(g)), w(-1))\} = w(\varphi(g)) > 1\,,\] a contradiction. On the other hand, if \(w(\varphi(g)) < 1\), then the same argument shows that \(w(\varphi(g-1)) = 1\). We get a contradiction by considering: \[w(\varphi(g^{-1} - 1)) = w(\varphi(g^{-1}(1-g))) = w(\varphi(g^{-1}))w(\varphi(g-1)) = w(\varphi(g^{-1})) > 1\,.\qedhere\]
\end{proof}

By the continuity of \(w\) it follows that the \(G_i\) form a chain of normal open subgroups of \(G\), for it is easily verified that each \(G_i\) contains \(1\) and is closed under products, inverses and conjugation. Since \(w(q) = 0\) if and only if \(q = 0\), we can also conclude that \(\bigcap_{i \geq 0} G_i = \{1\}\), that is, the chain \(G_i\) is residual in \(G\). Hence, this chain can be used to compute the rank function \(\rk_G\) of Example~\ref{exmp-rk-G}.  

Let \(\mathcal{Q}_i\) be the set of elements \(q \in\mathcal Q \) with \(v(q) \leq p^{-i}\). In particular, \(\mathcal{Q}_0\) is an \(\F_p\)-algebra containing \(\varphi(\Falgebra{G})\) with ideals \(\mathcal{Q}_i\) for each \(i \geq 0\).

\begin{lem}\label{lem:lemuniform} There exists a division ring extension \(\mathcal{Q} \leq\mathcal Q '\) and an extension \(w'\) of the valuation \(w\) to \(\mathcal{Q}'\) such that \(\mathcal{Q}'\) contains a central element \(z\) with \(w'(z) = p^{-1}\). In particular, \(\mathcal{Q}'_i = z^i\mathcal{Q}'_0\) for every \(i \in \Z\).
\end{lem}
\begin{proof} Consider the polynomial ring \(\mathcal{Q}[z]\) and define \(w'\colon\mathcal Q [z] \to \{p^k\mid k \in \Z\} \cup \{0\}\) to be \[w'\left(\sum_{i=0}^n q_iz^i\right) = \max\{w(q_i)p^{-i}\}\,.\] We check that \(w'\) is an additive non-archimedian valuation on \(\mathcal Q[z]\), for it is immediate that it is discrete.

Let \(r = \sum_{i=1}^n q_iz^i\) and \(r' = \sum_{i=1}^n q_i' z^i\) be two arbitrary elements in \(\mathcal{Q}[z]\). We have \(w'(r) = 0\) if and only if all of the \(q_i\) have valuation \(0\), that is, if and only if \(q_i = 0\) for all \(i\). Hence, \(w'(r) = 0\) implies \(r = 0\). Suppose the minimal \(w'\) valuation on \(r\) and \(r'\) is attained at the terms \(q_mz^m\) and \(q_n'z^n\) with \(m\) and \(n\) minimal for this properties. Then, for all \(0 \leq i\,,j \leq n\), we have:
\[w(q_m)p^{-m} \geq w(q_i)p^{-i}\,,\]
\[w(q'_n)p^{-n} \geq w(q'_j)p^{-j}\,.\]
Multiplying both inequalities implies:
\[w(q_mq'_n)p^{-m-n} \geq w(q_iq'_j)p^{-i-j}\quad \forall i\,,j\,.\]
In particular, for every \(1 \leq k \leq 2n\), we have:
\begin{equation}\label{note7-eq1}w(q_mq'_n)p^{-n-m} \geq \max\{w(q_iq'_j)\mid i + j = k\}p^{-k} \geq w\left(\sum_{i+j=k} q_iq'_j\right)p^{-k}\,.\end{equation}

Suppose now, by contradiction, that there exists \(i\neq m\) and \(j \neq n\) with \(i+j = m+n\) such that \(w(q_iq'_j) \geq w(q_mq'_n)\). Without loss of generality, we may suppose that \(i < m\), so that \(n < j\). By the minimality of \(m\) and \(n\), we must have \(w(q_i)p^{-i} < w(q_m)p^{-m}\) and \(w(q'_j)p^{-j} \leq w(q'_n)p^{-n}\). This implies:
\[w(q_i)/w(q_m) < p^{i-m} = p^{n-j} \leq w(q'_n)/w(q'_j)\,,\] and thus \(w(q_iq'_j) < w(q_mq'_n)\), a contradiction. Therefore, if \(i + j = m+n\), we have \(w(q_iq'_j) < w(q_mq'_n)\). Hence 
\begin{equation}\label{note7-eq2}w(q_mq'_n)p^{-n-m} = w\left(\sum_{i+j = m+n} q_iq_j\right)p^{-m-n}\,.\end{equation}

We have \[rr' = \sum_{k=0}^{2n} \left(\sum_{i+j = k} q_iq'_j\right) z^k\,.\] Since \[w\left(\sum_{i+j=k} q_iq'_j\right)p^{-k}\leq w(q_mq'_n)p^{-n-m} = w\left(\sum_{i+j=m+n} q_iq'_j\right)p^{-n-m}\text{ by~(\ref{note7-eq1}) and~(\ref{note7-eq2})}\,,\] the minimal \(w'\) valuation on \(rr'\) must be attained in the \(z\)-degree \(m+n\), with value exactly \(w'(r)w'(r')\).

At last, we must show that \(w'(r + r') \leq \max\{w'(r)\,, w'(r')\}\). We may suppose, without loss of generality, that \(w(q_m)p^{-m} = w'(r) \leq w'(r') = w(q'_n)p^{-n}\). Observe that in this case \[w(q_i + q'_i)p^{-i} \leq \max\{w(q_i)\,,w(q'_i)\}p^{-i} \leq w(q_m)p^{-m} = w'(r)\,.\] Since \[r+r' = \sum_{i=1}^n (q_i + q'_i)z^i\,,\] we are done.

Now \(\mathcal{Q}[z]\) is a Noetherian domain and thus we can construct the division ring of rational functions \(\mathcal{Q}' =\mathcal Q (z)\) by localizing at the non-zero elements. Hence, the valuation \(w'\) also extends to \(\mathcal{Q}'\), which concludes the proof.
\end{proof}

By virtue of Lemma~\ref{lem:lemuniform}, we can suppose that \(\mathcal{Q}\) contains a central element \(z\) of valuation \(p^{-1}\), for a division ring extension does not change the induced rank function on \(\Falgebra{G}\). Consider now \(\mathcal{Q}_0\), the ring of elements with non-negative valuation. We note that \(\varphi(\Falgebra{G}) \leq\mathcal \mathcal{Q} _0\) and the non-zero left or right ideals of \(\mathcal{Q}_0\) are the two-sided ideals \(z^i\mathcal{Q}_0\) for \(i \geq 0\): if \(z^iu = uz^i \in \mathcal{Q}_0\) with \(w(u) = 1\), then \(u^{-1} \in \mathcal{Q}_0\) and hence the ideals generated by \(z^iu\) and \(z^i\) coincide. This implies that \(\mathcal{Q}_0/z\mathcal{Q}_0\) is the only simple \(\mathcal{Q}_0\)-module, and the length of the \(\mathcal{Q}_0\)-modules \(\mathcal{Q}_0/z^i\mathcal{Q}_0\) are precisely \(i\).

Let \(A \in \Mat_{n\times n}(\Falgebra{G})\) be a matrix such that \(\varphi(A)\) is invertible in \(\mathcal{Q}\), that is, \(\varphi^\# \rk_{\mathcal{Q}}(A) = n\). We define the left \(\Falgebra{G}\)-module \(M = \Falgebra{G}^n/\Falgebra{G}^nA\) and the \(\mathcal{Q}_0\)-length of \(M\) over an open subgroup \(U\) to be \[l_U(M) = \len_{\mathcal{Q}_0}(\mathcal{Q}_0 \otimes_{\Falgebra{U}} M)\,,\] the length of the induced \(\mathcal{Q}_0\)-module. The proof henceforth will follow similar arguments to the ones in \cite[Sec. 5]{jaikin-zapirainExplicitConstructionUniversal2020}.

Let \(\dim_{G/G_i}\) and \(\varphi^\#\dim_{\mathcal{Q}}\) be the Sylvester module rank functions associated to \(\rk_{G/G_i}\) and \(\varphi^\#\rk_{\mathcal{Q}}\). Then, since it suffices to show that \(\rk_G(A) = \lim_{i \to \infty} \rk_{G/G_i} (A)= n\), by the relation \(\rk_G(A) = n - \dim_G(M)\) it only remains to show that \[\lim_{i \to \infty} \dim_{G/G_i}(M) = 0\,.\]

\begin{lem}\label{lem:lem-finite} We have that \(l_G(M)\) is finite.
\end{lem}
\begin{proof} Since \(A\) is invertible over \(\varphi\), we have \[\varphi^\#\dim_{\mathcal{Q}} M = \dim_{\mathcal{Q}}\mathcal Q  \otimes_{\Falgebra{G}} M = 0\,.\] 
Observing that \[\mathcal{Q} \otimes_{\Falgebra{G}} M  \simeq\mathcal Q \otimes_{\mathcal{Q}_0} \left(\mathcal{Q}_0 \otimes_{\Falgebra{G}} M\right)\,,\] 
this implies that the \(\mathcal{Q}_0\)-module \(\mathcal{Q}_0 \otimes_{\Falgebra{G}} M\) is torsion. Given that \(M\) is finitely generated and every proper quotient of \(\mathcal{Q}_0\) is of the form \(\mathcal{Q}_0/z^i\mathcal{Q}_0\) and thus has finite length, that module must have finite length over \(\mathcal{Q}_0\).
\end{proof}

\begin{lem}\label{lem:len-scaling} Let \(U \leq V\) be open subgroups of \(G\) with \(|V\colon U| = p\). Then, \(l_U(M) \leq pl_V(M)\). In particular, \(l_U(M)\leq  |G\colon U|l_G(M)\).
\end{lem}
\begin{proof} Let \(L =\mathcal{Q} _0 \otimes_{\Falgebra{U}} M\) and take any element \(g \in V \smallsetminus U\). Define the function \(\tau\colon \mathcal{Q} _0  \times M \to L\) by \[\tau(q,m) = qg \otimes g^{-1}m - q\otimes m\,.\] Since \(U\) must be normal in \(V\), we have that \(\tau(qu,m) = \tau(q,um)\) for every \(u \in U\), that is, \(\tau\) induces a homomorphism of left \(\mathcal{Q}_0\)-modules \(\psi\colon L \to L\) such that \[\psi(q\otimes m) = qg\otimes g^{-1}m - q\otimes m\,.\]

It is immediate to verify that \(\psi^i(q\otimes m) = \sum_{j=0}^i (-1)^j\binom{i}{j}qg^{i-j}\otimes g^{j-i}m\) and therefore \(\psi^p = 0\). Moreover, \(L/\psi(L) \simeq \mathcal{Q} _0  \otimes_{\Falgebra{V}} M\), and by applying \(\psi\) we find that this surjects onto \(\psi^i(L)/\psi^{i+1}(L)\) for every \(i\). Hence:
\[l_U(M) = \len_{\mathcal{Q}_0 } L \leq p\cdot \len_{\mathcal{Q}_0 }\mathcal{Q} _0 \otimes_{\Falgebra{V}} M = pl_V(M)\,.\qedhere\]
\end{proof}
Recall that $\overline{\mathcal Q}=\mathcal Q_0/\mathcal Q_1$ is a division ring.
\begin{lem}\label{lem:len-lemma} We have that  \[\dim_{\overline{\mathcal Q}} \overline{\mathcal Q} \otimes_{\Falgebra{G_i}} M \leq \frac{l_{G_i}(M)}{i}\,.\]
\end{lem}
\begin{proof} When we view \(M\) as a left \(\Falgebra{G_i}\)-module, it has a presentation of the form \[\Falgebra{G_i}^{n|G\colon G_i|}/\Falgebra{G_i}^{n|G\colon G_i|}B\] for some matrix \(B \in \Mat_{n|G\colon G_i| \times n|G\colon G_i|}(\Falgebra{G_i})\). By projecting the matrix onto \(\F_p\) and applying elementary matrix operations, we can find one such matrix \(B\) satisfying \[B = \begin{pmatrix}\operatorname{Id}_a + B_1 & B_2\\ B_3 & B_4\end{pmatrix}\] where the \(B_i\) are matrices with entries in the augmentation ideal \(I_{G_i}\) of \(\Falgebra{G_i}\) and \(n|G\colon G_i| - a = \dim_{\F_p} M/I_{G_i}\). Since the \(B_i\) vanish when acting on \(\overline{\mathcal Q}\), one gets that 
\[\dim_{\overline{\mathcal Q}} \overline{\mathcal Q} \otimes_{\Falgebra{G_i}} M = \dim_{\F_p} \F_p \otimes_{\Falgebra{G_i}} M = \dim_{\F_p} M/I_{G_i}M = n|G\colon G_i| - a\,.\]

By definition, we have that for any $x\in I_{G_i}$, \(v(\varphi(x)) \geq i\), so that \(\varphi(I_{G_i}) \leq z^i\mathcal{Q}_0\). Hence, there are matrices \(B_2'\) and \(B_4'\) over \(\mathcal{Q}_0\) such that we can express \(\varphi(B)\) as \[\varphi(B) = \underbrace{\begin{pmatrix} \operatorname{Id}_a + \varphi(B_1) & B_2'\\ \varphi(B_3) & B_4'\end{pmatrix}}_{C_1}\underbrace{\begin{pmatrix}\operatorname{Id}_a & 0\\ 0 & z^i\operatorname{Id}_{n|G\colon G_i| - a}\end{pmatrix}}_{C_2}\,.\] This implies that \[(\mathcal{Q}_0/z^i\mathcal{Q}_0)^{n|G\colon G_i| - a} =\mathcal{Q} _0^{n|G\colon G_i|}/\mathcal{Q}_0^{n|G\colon G_i|}C_2\] is a quotient of \(\mathcal{Q}_0 \otimes_{\Falgebra{G_i}} M \simeq\mathcal{Q}_0^{n|G\colon G_i|}/\mathcal{Q}_0^{n|G\colon G_i|}\varphi(B)\). Therefore, we obtain that
\begin{align*}
    l_{G_i}(M) &= \len_{\mathcal{Q}_0}\mathcal{Q} _0\otimes_{\Falgebra{G_i}} M\\
    &\geq \len_{\mathcal{Q}_0} (\mathcal{Q}_0/z^i\mathcal{Q}_0)^{n|G\colon G_i|-a}\\
    &= i(n|G\colon G_i|-a) = i \dim_{\overline{\mathcal Q}} \overline{\mathcal Q} \otimes_{\Falgebra{G_i}} M\,.\qedhere
\end{align*}
\end{proof}

\begin{proof}[Proof of Theorem \ref{main}] Note that, by Lemma~\ref{lem:lem-monotonic}, to show that \(\varphi^\#\rk_{\mathcal{Q}} \leq \rk_G\) it suffices to show that they coincide over square matrices invertible under \(\varphi\). Indeed, any matrix \(A \in \Mat_{n\times m}(\Falgebra{G})\) contains a maximal submatrix \(B \in \Mat_{k \times k}(\Falgebra{G})\) such that \[\varphi^\#\rk_{\mathcal{Q}}(B) = \varphi^\#\rk_{\mathcal{Q}} A = k\,,\] and if \(\rk_G(B) = k\) the lemma shows us that \(k \leq \rk_G(A)\).

Thus, let \(M = \Falgebra{G}^n/\Falgebra{G}^nA\) where \(A \in \Mat_{n\times n}(\Falgebra{G})\) is invertible under \(\varphi\). We want to show that $\dim_G M=0$.
We have that
\begin{align*}
   \frac{\dim_{\F_p} \F_p \otimes_{\Falgebra{G_i}} M}{|G\colon G_i|}  &= \frac{\dim_{\overline{\mathcal{Q}}} \overline{\mathcal{Q}} \otimes_{\Falgebra{G_i}} M}{|G\colon G_i|}\\
    &\leq \frac{l_{G_i}(M)}{i|G\colon G_i|}\text{, by Lemma~\ref{lem:len-lemma},}\\
    &\leq \frac{l_G(M)}{i}\text{, by Lemma~\ref{lem:len-scaling}}
\end{align*}
Since \(l_G(M)\) is finite by Lemma~\ref{lem:lem-finite}, we have \(\lim_{i \to \infty} \frac{l_G(M)}{i} = 0\), which concludes the proof.

Now we are ready to finish the proof of Theorem \ref{main}. By Lemma \ref{lem:virtual-flag} and Corollary  \ref{cor-atiyah-invariant}, we can assume that $G$ is  a mild flag free-by-\(\Z_p\) pro-\(p\) group.  Let \(\mathcal{Q}\) and \(\bbd = \mathcal{D}[\![t]\!][\![s;\sigma,\delta]\!]\) be as in Section~\ref{sec:group-embeddings}. Observe that the valuation \(w\) on \(\bbd\) naturally extends to \(\mathcal{Q}\) and the embedding \(\Falgebra{G} \to\mathcal{Q}\) is continuous by Lemma~\ref{lem-magnus-valuation-embedding}. By Theorem \ref{thm:approximation},  \(\varphi^\# \rk_{\mathcal{Q}}\leq \rk_G\,\).
In the proof of Theorem \ref{sylvester} we showed that $\varphi^\# \rk_{\mathcal{Q}}=\irk_{\Falgebra{G}}$. Since $\irk_{\Falgebra{G}}\ge \rk_G$, we have that $\irk_{\Falgebra{G}}= \rk_G$.
\end{proof}

\section{Abstract subgroups of  free-by-\texorpdfstring{$\Z_p$}{Zp} pro-\texorpdfstring{$p$}{p} groups} \label{abstract}
In this section we discuss some  consequences  that Theorem \ref{main} implies in the study of the L\"uck approximation for  finitely generated (abstract) subgroups of a free-by-$\Z_p$ pro-$p$ groups. 

Let us first recall the statement of the L\"uck approximation.
Let $\Gamma$ be a group and let $K$ be a field. For every matrix $A\in \Mat_{n\times m}(K[\Gamma])$ and every normal  subgroup $N$ of $\Gamma$ of finite index let us define  
$$\begin{array}{cccc}
\phi_{\Gamma/N}^A: & K[\Gamma/N]^n & \to & K[\Gamma/N]^m \\
 &(x_1,\ldots, x_n)&\mapsto & (x_1,\ldots, x_n)A\end{array}.$$
 This is a $K$-linear map between two finite-dimensional $K$-vector spaces. Thus, we  can define a Sylvester rank function of $K[\Gamma]$ by means of 
\begin{equation}\label{rkfinite} \rk_{\Gamma/N}(A)=\frac{\dim_K \im \phi_{\Gamma/N}^A}{|\Gamma:N|}=n-\frac{\dim_K \ker \phi_{\Gamma/N}^A}{|\Gamma:N|}.\end{equation}

\begin{Conjecture}[The L\"uck approximation conjecture] \label{conresfinite} Let $\Gamma$ be a group, $K$ be a field and $\Gamma>N_1>N_2>\ldots $   a  descending chain of normal subgroups of $\Gamma$ of finite index with trivial intersection.  Let $A$ be a matrix over $K[\Gamma]$. Then the following holds.
 \begin{enumerate} 
\item[(1)]  The sequence $\{\rk_{\Gamma/N_i}(A)\}_{i\ge 1}$ converges.
\item[(2)]  The limit $\displaystyle  \lim_{i\to \infty} \rk_{\Gamma/N_i}(A)$ does not depend on the chain  $\Gamma>N_1>N_2>\ldots $.
\item[(3)]  If moreover $\Gamma$ is locally indicable, then there exists a universal embedding $K[\Gamma]\to \mathcal Q$ and   $\displaystyle  \lim_{i\to \infty} \rk_{\Gamma/N_i}(A)=\rk_{K[\Gamma]}(A).$
 \end{enumerate}

\end{Conjecture}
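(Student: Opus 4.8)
Since Conjecture~\ref{conresfinite} is open in general, the plan is to establish the particular case made accessible by Theorem~\ref{main}. The setting is: \emph{$\Gamma$ is a finitely generated group whose pro-$p$ completion $G=\widehat{\Gamma}_p$ is torsion-free and contains an open free-by-$\Z_p$ pro-$p$ subgroup} --- this holds, for instance, whenever $\Gamma$ is residually-$p$ and dense in such a pro-$p$ group with $\widehat{\Gamma}_p=G$, in particular for surface groups and for suitable free-by-cyclic groups. Let $\mathcal Q$ be the universal division ring of fractions of $\Falgebra{G}$, which exists by Theorem~\ref{main}, and view $\F_p[\Gamma]$ as a subring of $\mathcal Q$ via $\F_p[\Gamma]\hookrightarrow\Falgebra{G}\hookrightarrow\mathcal Q$. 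I will show that for every matrix $A$ over $\F_p[\Gamma]$ and every descending chain $\Gamma=N_0\geq N_1\geq\cdots$ of normal subgroups of $p$-power index with $\bigcap_i N_i=\{1\}$ and with the closures $\overline{N_i}$ in $G$ also intersecting trivially (automatic, e.g., when the $N_i$ are induced from open subgroups of $G$), the sequence $\rk_{\Gamma/N_i}(A)$ converges to $\rk_{\mathcal Q}(A)$ regardless of the chain. This gives parts $(1)$ and $(2)$ of the conjecture, and part $(3)$ once $\F_p[\Gamma]\hookrightarrow\mathcal Q$ is shown to be a universal embedding.

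First I would rewrite the finite ranks inside $\Falgebra{G}$. Because $\Gamma/N_i$ is a finite $p$-group, the quotient map $\Gamma\to\Gamma/N_i$ extends to a continuous epimorphism $G\to\Gamma/N_i$; its kernel $U_i$ is open and normal in $G$, satisfies $U_i\cap\Gamma=N_i$, and the resulting isomorphism $\Gamma/N_i\simeq G/U_i$ is compatible with $\F_p[\Gamma]\hookrightarrow\Falgebra{G}$. Hence, if $A_i$ denotes the image of $A$ in $\F_p[G/U_i]$, then $\rk_{\Gamma/N_i}(A)=\rk_{\F_p}(A_i)/|G:U_i|$. By hypothesis $\bigcap_i U_i=\bigcap_i\overline{N_i}=\{1\}$, so $\{U_i\}$ is a chain of open normal subgroups of $G$ with trivial intersection, and Example~\ref{exmp-rk-G} yields
\[
\lim_{i\to\infty}\rk_{\Gamma/N_i}(A)=\lim_{i\to\infty}\frac{\rk_{\F_p}(A_i)}{|G:U_i|}=\rk_G(A)\,,
\]
with the limit existing and being independent of the chain $\{N_i\}$. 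Then Theorem~\ref{main} gives $\rk_G=\irk_{\Falgebra{G}}$, and $\irk_{\Falgebra{G}}(A)=\rk_{\mathcal Q}(A)$ by the definition of the universal division ring of fractions; this proves parts $(1)$ and $(2)$. For part $(3)$ I would check that every full matrix over $\F_p[\Gamma]$ becomes invertible over $\mathcal Q$ by applying Theorem~\ref{thm-jaikin-criterion-universality} to $\F_p[\Gamma]\to\mathcal Q$: here $\mathcal Q$, being a division ring, is von Neumann regular, and one needs $\Tor_1^{\F_p[\Gamma]}(\mathcal Q,\mathcal Q)=0$ together with the freeness of the kernel of any $\F_p[\Gamma]^n\to M$ for finitely generated $\F_p[\Gamma]$-submodules $M$ of $\mathcal Q$ --- which is where the local indicability of $\Gamma$, valid in the relevant examples, should be used to identify $\rk_{\mathcal Q}$ with $\rk_{\F_p[\Gamma]}$.

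The hard part --- and the reason only this particular case is within reach --- is twofold. On the one hand, the argument is intrinsically about $p$-power-index chains that are residual in the pro-$p$ completion: the identification $\Gamma/N_i\simeq G/U_i$ needs $\Gamma/N_i$ to be a $p$-group, and already for $\Gamma=\Z^2$ one can exhibit a $p$-power chain with $\bigcap_i N_i=\{1\}$ whose closures $\overline{N_i}$ in $\widehat{\Gamma}_p$ cut out a nontrivial line, so the hypothesis on the closures cannot be dropped for free. On the other hand, and more seriously, the reduction collapses for a finitely generated abstract subgroup $\Gamma$ of a free-by-$\Z_p$ pro-$p$ group which is not dense in it (or more generally whenever $\widehat{\Gamma}_p$ fails to be virtually free-by-$\Z_p$ or torsion-free): then Theorem~\ref{main} no longer applies to $\widehat{\Gamma}_p$, and there is at present no substitute for the identity $\rk_{\widehat{\Gamma}_p}=\irk$ that powers the computation. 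Bridging either gap --- in particular, obtaining the full mod-$p$ Lück approximation for arbitrary finite-index chains, or for arbitrary abstract subgroups of free-by-$\Z_p$ pro-$p$ groups --- appears to require genuinely new input.
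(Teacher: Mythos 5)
The statement you are asked to prove is a \emph{conjecture}, open in general, and both you and the paper rightly settle for a particular case. Your reduction of parts~(1) and~(2) to Theorem~\ref{main} matches the paper's Theorem in Section~\ref{abstract}: you identify $\Gamma/N_i$ with $G/U_i$ for suitable open normal subgroups of the ambient pro-$p$ group, invoke the independence-of-chain property of $\rk_G$ from Example~\ref{exmp-rk-G}, and then apply $\rk_G=\irk_{\Falgebra{G}}$. (A minor point: the paper works with $\Gamma$ embedded in a given free-by-$\Z_p$ pro-$p$ group and replaces $G$ by the closure $\overline{\Gamma}$, which need not coincide with the pro-$p$ completion $\widehat{\Gamma}_p$; your reformulation in terms of $\widehat{\Gamma}_p$ is a somewhat different and more restrictive hypothesis, though one can still reduce to the dense case.)

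The genuine gap is in part~(3), which you do not actually prove. You write that you ``would check that every full matrix over $\F_p[\Gamma]$ becomes invertible over $\mathcal Q$ by applying Theorem~\ref{thm-jaikin-criterion-universality} to $\F_p[\Gamma]\to\mathcal Q$'' and that local indicability ``should be used'' to verify the Tor and freeness conditions, but this is a sketch of a plan, not an argument. In fact, the Tor-vanishing and freeness verifications the paper carries out in Section~\ref{sec:hom-finite-prop} depend crucially on the pseudocompact, filtered-module machinery for $\Falgebra{G}$ (bounded complete filtrations, the graded ring $\Gr\Falgebra{G}$, etc.); there is no analogous toolbox ready at hand for the abstract group ring $\F_p[\Gamma]$, and it is far from clear the same route goes through. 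The paper avoids this difficulty entirely: letting $N$ be the free pro-$p$ kernel of $G\to\Z_p$ and $H=N\cap\Gamma$, it (i) cites \cite[Prop.\ 3.5]{Ja24} to show the division closure $\D_H$ of $\F_p[H]$ in $\D_{\Falgebra{G}}$ is the universal division $\F_p[H]$-ring of fractions, (ii) uses Proposition~\ref{prop:crossed-division-ring}(d) to identify the subring generated by $\D_H$ and $\F_p[\Gamma]$ with the crossed product $\D_H*(\Gamma/H)$, and (iii) observes that $\Gamma/H$ is a finitely generated subgroup of $\Z_p$, so this crossed product is Noetherian, hence Ore, whence its division ring of fractions is universal over $\F_p[\Gamma]$. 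This crossed-product/Noetherian/Ore argument is the substantive new step and is absent from your proposal; without it, part~(3) of the special case remains unproven.
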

If $K$ is of characteristic 0,  the parts (1) and (2) of Conjecture \ref{conresfinite} are known to be true \cite{Ja19}, and for part (3) we know that there exists an embedding  $\varphi: K[\Gamma]\to \mathcal Q$ into a division ring $\mathcal Q$ such that $\displaystyle  \lim_{i\to \infty} \rk_{\Gamma/N_i}(A)=\rk_{\mathcal Q}(\varphi(A))$ \cite{JL20}, but we still do not know whether $\varphi$ is universal in general \cite{Ja21}.  If $K$ is of positive characteristic, the parts (1), (2)  and (3) are only known when $\Gamma$ is amenable (\cite{campbell_l2-betti_2019, Ja21}).
 
Let now $G$ be a free-by-$\Z_p$ pro-$p$ group and $\Gamma$ a finitely generated (abstract) subgroup of $G$.  It is clear that $\Gamma$ is locally indicable and from \cite[Theorem 3.7]{Ja21} we know that there exist a universal division $\F_p[\Gamma]$-ring of fractions $\D_{\F_p[\Gamma]}$. The following theorem provides a particular case of Conjecture \ref{conresfinite} for $\Gamma$.
\begin{thm}
Let $G$ be a free-by-$\Z_p$ pro-$p$ group and $\Gamma$ a finitely generated (abstract) subgroup of $G$. Let $G>U_1>U_2>\ldots $ be a chain of normal open subgroups of $G$ with trivial intersections. Let $H_i=\Gamma\cap U_i$. Then for every matrix $A$ over $\F_p[\Gamma]$, $\displaystyle  \lim_{i\to \infty} \rk_{\Gamma/H_i}(A) =\rk_{\F_p[\Gamma]}(A).$
\end{thm}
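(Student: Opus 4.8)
The plan is to relate the abstract rank function $\rk_{\Gamma/H_i}$ to the pro-$p$ rank function on $\Falgebra{G}$ via the embedding $\Falgebra{G} \hookrightarrow \mathcal{Q}$ constructed in Section~\ref{sec:group-embeddings}, and then invoke the characterization of $\D_{\F_p[\Gamma]}$ together with Theorem~\ref{main}. First I would fix a matrix $A$ over $\F_p[\Gamma]$ and let $\widetilde{A}$ be its image over $\Falgebra{G}$ under the composite $\F_p[\Gamma] \to \F_p[\![\overline{\Gamma}]\!] \to \Falgebra{G}$, where $\overline{\Gamma}$ is the closure of $\Gamma$ in $G$ (an open subgroup of $G$, hence itself finitely generated virtually free-by-$\Z_p$). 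By Theorem~\ref{main} applied to $\overline{\Gamma}$, we have $\irk_{\F_p[\![\overline{\Gamma}]\!]}(\widetilde A) = \rk_{\overline{\Gamma}}(\widetilde A)$, and the latter is computed as the limit $\lim_i \rk_{\F_p}(\widetilde A_i)/|\overline{\Gamma}:\overline{\Gamma}\cap U_i|$ over the chain $U_i$, by Example~\ref{exmp-rk-G} and Corollary~\ref{cor-atiyah-invariant} (the chain $\overline{\Gamma}\cap U_i$ is residual in $\overline{\Gamma}$).

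The key comparison is that, for each fixed $i$, the finite quotient ranks agree: $\rk_{\Gamma/H_i}(A) = \rk_{\F_p}(\widetilde A_i)$ after normalization, because $\Gamma$ surjects densely onto $\overline{\Gamma}$ and $\Gamma/H_i \cong \overline{\Gamma}/(\overline{\Gamma}\cap U_i)$ (density of $\Gamma$ gives $\Gamma U_i = \overline\Gamma U_i$, so $\Gamma/(\Gamma\cap U_i) \cong \overline\Gamma U_i/U_i = \overline\Gamma/(\overline\Gamma\cap U_i)$), and the map $\F_p[\Gamma] \to \F_p[\Gamma/H_i]$ factors through $\F_p[\![\overline{\Gamma}]\!] \to \F_p[\overline\Gamma/(\overline\Gamma\cap U_i)]$. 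Hence
\[
\lim_{i\to\infty} \rk_{\Gamma/H_i}(A) = \rk_{\overline{\Gamma}}(\widetilde A) = \irk_{\Falgebra{\overline{\Gamma}}}(\widetilde A) = \rk_{\mathcal{Q}}(\varphi(\widetilde A))\,,
\]
where $\varphi\colon \F_p[\![\overline{\Gamma}]\!] \to \mathcal{Q}$ is the universal embedding produced by Theorem~\ref{thm:sylvester-virtual}. In particular the limit exists and is independent of the chain.

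It remains to identify $\rk_{\mathcal{Q}}(\varphi(\widetilde A))$ with $\rk_{\F_p[\Gamma]}(A) = \rk_{\D_{\F_p[\Gamma]}}(A)$. For this I would use the universal property of $\D_{\F_p[\Gamma]}$: the composite ring homomorphism $\F_p[\Gamma] \to \F_p[\![\overline{\Gamma}]\!] \to \mathcal{Q}$ is injective into a division ring, and by \cite[Theorem 3.7]{Ja21} the abstract group algebra $\F_p[\Gamma]$ (with $\Gamma$ locally indicable, being a subgroup of a torsion-free pro-$p$ group) has a universal division ring of fractions; one checks that the division closure of $\F_p[\Gamma]$ inside $\mathcal{Q}$ is a specialization-minimal epic division $\F_p[\Gamma]$-ring, so it is $\F_p[\Gamma]$-isomorphic to $\D_{\F_p[\Gamma]}$, whence $\rk_{\mathcal{Q}}(\varphi(\widetilde A)) = \rk_{\D_{\F_p[\Gamma]}}(A) = \rk_{\F_p[\Gamma]}(A)$. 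The main obstacle I anticipate is precisely this last identification: establishing that the embedding $\F_p[\Gamma] \to \mathcal{Q}$ is universal rather than merely injective. One route is to show the induced Sylvester rank function $\varphi^\#\rk_{\mathcal{Q}}$ on $\F_p[\Gamma]$ is the von Neumann rank function $\rk_{\Gamma}$ (using that $\varphi^\#\rk_{\mathcal{Q}}$ is the Lück-type limit just computed and that for locally indicable $\Gamma$ this limit equals $\rk_{\Gamma}$ by the Hughes-freeness/uniqueness results underlying \cite{Ja21}), and then appeal to the uniqueness of $\D_{\F_p[\Gamma]}$ as the division $\F_p[\Gamma]$-ring realizing $\rk_{\Gamma}$; alternatively, the approximation argument in Section~\ref{sec:atiyah} applied directly to $\F_p[\Gamma] \to \mathcal{Q}$ with its discrete valuation already gives $\varphi^\#\rk_{\mathcal Q} \le \rk_{\Gamma/H_i}$-limit, and combined with the reverse inequality $\rk_{\F_p[\Gamma]} \ge$ any envelope rank, forces equality.
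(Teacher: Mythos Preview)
Your overall strategy matches the paper's: pass to the closure $\overline{\Gamma}$, apply Theorem~\ref{main} there to identify $\lim_i \rk_{\Gamma/H_i}(A)$ with the rank over the universal division ring $\D_{\Falgebra{\overline{\Gamma}}}$, and then argue that the embedding $\F_p[\Gamma]\hookrightarrow \D_{\Falgebra{\overline{\Gamma}}}$ is universal. Two comments.

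First, a minor slip: $\overline{\Gamma}$ is not in general an \emph{open} subgroup of $G$. What is true (and what the paper uses) is that any closed subgroup of a free-by-$\Z_p$ pro-$p$ group is again free-by-$\Z_p$ (or free pro-$p$): intersect with the free kernel and look at the image in $\Z_p$. Since $\overline{\Gamma}$ is topologically generated by any abstract generating set of $\Gamma$, it is a finitely generated free-by-$\Z_p$ pro-$p$ group, and Theorem~\ref{main} applies to it directly. After this correction your first two displayed equalities go through.

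Second, and this is the substantive gap: you correctly flag that the whole proof hinges on showing $\F_p[\Gamma]\to \D_{\Falgebra{\overline{\Gamma}}}$ is universal, but neither of the routes you sketch actually closes it. Your Route~2 gives only $\rk_{\F_p[\Gamma]}\geq \varphi^\#\rk_{\mathcal Q}$ from the universal property, and there is no evident reverse inequality without already knowing universality; Route~1 invokes a ``von Neumann rank $\rk_\Gamma$'' in positive characteristic which is not defined for abstract $\Gamma$. The paper's argument is concrete and different from both: set $H=\Gamma\cap N$ where $N$ is the free pro-$p$ kernel. By \cite[Prop.~3.5]{Ja24} the division closure $\D_H$ of $\F_p[H]$ inside $\D_{\Falgebra{\overline{\Gamma}}}$ is already the universal division $\F_p[H]$-ring of fractions (this is the key external input, specific to subgroups of free pro-$p$ groups). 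One then checks, using the crossed-product description of $\D_{\Falgebra{\overline{\Gamma}}}$ over $\D_{\Falgebra{N}}$ from Proposition~\ref{prop:crossed-division-ring}, that elements of $\Gamma$ in distinct $H$-cosets remain $\D_H$-linearly independent, so the subring generated by $\D_H$ and $\F_p[\Gamma]$ is exactly the crossed product $\D_{\F_p[H]}*(\Gamma/H)$. Since $\Gamma/H$ is a finitely generated subgroup of $\Z_p$ (hence $\cong\Z$ or trivial), this crossed product is Noetherian, so its Ore localization is the division closure of $\F_p[\Gamma]$ and is therefore the universal division $\F_p[\Gamma]$-ring of fractions.
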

\begin{proof}
Since the closure of $\Gamma$ in $G$ is a finitely generated free-by-$\Z_p$ pro-$p$ group, we can assume that  \(\Gamma\) is dense in $G$, so that $G$ is topologically finitely generated. Hence, this implies that the inclusion map \(\Gamma \to G\) induces isomorphisms \(\Gamma/H_i \simeq G/U_i\) for every \(i\geq 1\).

By Theorem \ref{main}, there exists $\D_{\Falgebra{G}}$. Denote by $\varphi:\F_p[\Gamma]\to \D_{\Falgebra{G}}$ the corresponding embedding. Theorem \ref{main} implies also that  for every matrix $A$ over $\F_p[\Gamma]$, 
$$\displaystyle  \lim_{i\to \infty} \rk_{\Gamma/H_i}(A) = \lim_{i \to \infty} \rk_{G/U_i}(A) =\rk_{\D_{\Falgebra{G}}}(\varphi(A)).$$
Thus, we have to show that $\varphi$ is universal. 
Let $N$ be a normal free pro-$p$ subgroup of $G$ such that $G/N\cong \Z_p$. Put  $H=N\cap \Gamma$. Denote by $\D_\Gamma$ (resp. \(\D_H\)) the division closure of $\F_p[\Gamma]$ (resp. \(\F_p[H]\)) in $\D_{\Falgebra{G}}$ and by $R$ the subring generated by $\D_H$ and $\F_p[\Gamma]$.

In \cite[Prop. 3.5]{Ja24}, it was shown that \(\D_H\) is the universal division ring of fractions of \(\F_p[H]\). We also claim that the induced surjective map \(\D_H * (\Gamma/H) \to R\) is an isomorphism. 
Let $\gamma_1,\ldots, \gamma_n$ lie in different classes modulo $H$.  
There exists an open normal subgroup $U$ of $G$ containing $N$ such that $\gamma_1,\ldots, \gamma_n$ lie in different clases modulo $U$. By Proposition \ref{prop:crossed-division-ring}(d), $\displaystyle\sum_{i=1}^n \D_H\gamma_i$ is direct. Hence, it sends linearly independent elements to linearly independent elements, establishing its injectivity. 

These two facts together imply that the ring $R$ is isomorphic to a crossed product $\D_{\F_p[H]}*(\Gamma/H)$.
Since \(\Gamma/H\) is   finitely generated,  \(R\) a Noetherian ring.   Thus, $\D_\Gamma$ is isomorphic to the Ore division ring of fractions of $R$. Hence, we obtain that $\D_\Gamma$ will be the universal division ring of fraction of $\F_p[\Gamma]$.
\end{proof}

\appendix
\section{Localization of profinite rings}
 
Let \(R\) be an associative ring and \(\Sigma\) a collection   of square matrices over \(R\).  The universal localization of \(R\) with respect to \(\Sigma\) is a ring \(R_\Sigma\) that comes with a ring homomorphism \(\lambda\colon R  \to R_\Sigma\) such that \(\lambda(A)\) is invertible for every \(A \in \Sigma\), and \(R_\Sigma\) is universal for that property.

 We denote by  $\widetilde \Sigma$ the set of all square matrices over \(R\) that become invertible under \(\lambda\). In general, $\widetilde \Sigma$ is greater than $\Sigma$. However $R_{\widetilde \Sigma}$ is $R$-isomorphic to $R_{\Sigma}$. We say that $\Sigma$ is \emph{complete} if $\Sigma=\widetilde \Sigma$. 

It is clear that a complete collection  of square matrices is \emph{multiplicative}, that is it contains 
 \(\GL_n(R)\) for every \(n\), is closed under products when defined and if \(A\,, B \in \Sigma\) then \(\begin{pmatrix} A & C\\ 0 & B\end{pmatrix} \in \Sigma\) for any matrix \(C\) of the appropriate size. It also has  the following property regarding diagonal summands on \(\Sigma\):
\begin{equation}\label{property-DS}
    \text{If }\begin{pmatrix}A & 0\\0 & B\end{pmatrix} \in \Sigma\text{, then }A\,,B \in \Sigma\,.\tag{DS}
\end{equation}

J. Beachy has shown in \cite{MR1344222} that \(R_\Sigma\) and \(\lambda\) can be constructed as follows. From now on we assume that $\Sigma$ is complete.
 Let \(R^n\) denote the rows of \(n\) elements in \(R\), \(^nR\) the columns of \(n\) elements in \(R\) and \(\Sigma_n\) denote the subset of \(n\)-by-\(n\) matrices in \(\Sigma\). Consider the set \(T_\Sigma\) given by the disjoint union of all the products \(R^n \times \Sigma_n \times {^nR}\), that is, the triples \((a,C,x)\) where \(a\) is a row matrix over \(R\) of length \(n\), \(C \in \Sigma_n\) and \(x\) is a column matrix over \(R\) of height \(n\).
We define two binary operations of sum and product in \(T_\Sigma\) as follows:
\[(a,C,x) + (b,D,y) = \left(\begin{pmatrix}
    a & b
\end{pmatrix}\,, \begin{pmatrix} C & 0 \\ 0 & D\end{pmatrix}\,, \begin{pmatrix} x \\ y\end{pmatrix}\right)\,,\]
\[(a,C,x)\cdot (b,D,y) = \left(\begin{pmatrix} a & 0\end{pmatrix}\,, \begin{pmatrix}C & -xb\\ 0 & D\end{pmatrix}\,, \begin{pmatrix} 0 \\ y \end{pmatrix}\right)\,,\]
where the matrices make sense because \(\Sigma\) is multiplicative, \(x\) is an \(n\times 1\) matrix and \(b\) is an \(1\times m\) matrix.

We define an equivalence relation \(\mathcal{R}_1 \subset T_\Sigma \times T_\Sigma\) on \(T_\Sigma\) by means of \((a,C,x) \sim_{\mathcal{R}_1} (b,D,y)\) if there exists invertible matrices \(U, V\) over \(R\) such that \(b = aU\), \(y = Vx\) and \(D = VCU\). Observe that in particular \(C\) and \(D\) must have the same size if \((a,C,x) \sim_{\mathcal{R}_1}(b,D,y)\). We denote the quotient \(T_\Sigma/\sim_{\mathcal{R}_1}\) by \(\Sigma^{-1}R\) and the equivalence class of a triple \((a,C,x)\) in \(\Sigma^{-1}R\) by \((a\colon C \colon x)\).

The operations of sum and product are well defined on the equivalence classes and descend to a sum and a product on the quotient \(T_\Sigma\). With the sum, it becomes a commutative semigroup. To obtain an abelian additive group, we further introduce a new equivalence relation.

Let \(\Sigma^{-1}_0 R\) be the subsemigroup of \(\Sigma^{-1}R\) generated by all the elements of the form \((a\colon C\colon 0)\) and \((0\colon C\colon y)\). It is shown in \cite{MR1344222} that all the elements of \(\Sigma^{-1}_0 R\) are either of that form or of the form \(\left(\begin{pmatrix}a & 0\end{pmatrix}\colon \begin{pmatrix} A & 0\\0 & D\end{pmatrix}\colon \begin{pmatrix} 0 \\ x\end{pmatrix}\right)\) for blocks of the appropriate sizes. This subsemigroup allows us to define the congruence relation \(\mathcal{R}_2 \subseteq \Sigma^{-1}R \times \Sigma^{-1}R\) on \(\Sigma^{-1} R\), where two elements \(\overline{p}\,, \overline{q} \in \Sigma^{-1} R\) are equivalent if there exists \(\overline{z}_1\,, \overline{z}_2 \in \Sigma_0^{-1} R\) such that \(\overline{p} + \overline{z}_1 = \overline{q} + \overline{z}_2\).

The quotient space \(\Sigma^{-1}R / \sim_{\mathcal{R}_2}\) inherits the sum and product, under which it becomes an associative ring with trivial element being the class \([(1:1:0)]\) and the identity element being the class \([(1:1:1)]\). The additive inverse of \([(a\colon C\colon x)]\) is \([(a\colon C \colon -x)]\). One of the main results of \cite{MR1344222} is that this quotient space is precisely the universal localization \(R_\Sigma\), where the map \(\lambda\colon R \to R_\Sigma\) is given by \(x \mapsto [(1\colon 1\colon x)]\).

Suppose now that \(R\) is a topological ring. In that case, all the spaces \(^n R\) and \(R^n\) have a natural product topology, and \(\Sigma_n\) also becomes a topological space as it is contained in \(\Mat_n(R) \simeq R^{n^2}\). Hence, \(T_\Sigma\) becomes a topological space endowed with the direct union topology on which the operations of sum and product defined are continuous. Under the quotient topology, those operations are still continuous on \(\Sigma^{-1} R\) and \(R_\Sigma\), so that \(R_\Sigma\) becomes a topological ring. We call this topology structure on $R_\Sigma$ the \emph{induced} topological structure from $R$. It is clear that \(\lambda\colon R \to R_\Sigma\) is a continuous ring homomorphism. 
The main result of the appendix is the following theorem.

\begin{thm} \label{hausd}
Let $R$ be a profinite ring and $\Sigma$ a collection of square matrices over $R$. Then \(R_{\Sigma}\) admits a Hausdorff ring topology such that \(\lambda\colon R \to R_{\Sigma}\) is continuous. In particular, every finitely generated \(R\)-submodule of \(R_{\Sigma}\) is profinite.
\end{thm}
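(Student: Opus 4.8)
The plan is to show that the quotient topology on $R_\Sigma$ coming from Beachy's construction recalled above is already Hausdorff; after replacing $\Sigma$ by the complete collection $\widetilde\Sigma$ (harmless, since $R_{\widetilde\Sigma}\cong R_\Sigma$) I may assume $\Sigma$ is complete, which is what Beachy's construction needs. Since $R$ is profinite, every parameter space $R^n\times\Sigma_n\times{}^nR$ is compact, so $T_\Sigma$ is a disjoint union of compact Hausdorff spaces. The relation $\sim_{\mathcal R_1}$ preserves the size $n$ of the middle matrix, so it can be analysed level by level, and on the level-$n$ piece it is exactly the orbit equivalence relation of a continuous action of the compact group $\GL_n(R)\times\GL_n(R)$ on $R^n\times\Sigma_n\times{}^nR$ (explicitly, $(a,C,x)\cdot(U,V)=(aU,VCU,Vx)$); here one uses that the unit group of a profinite ring is closed, so $\GL_n(R)$ is compact, and that $\Sigma$ is complete, so $\Sigma_n$ is invariant under the action. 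As the quotient of a Hausdorff space by a continuous compact group action is Hausdorff, $\Sigma^{-1}R$ is a disjoint union of compact Hausdorff spaces, and its level-$n$ pieces are clopen in it.

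Next I would pass to $R_\Sigma$. Recall from the discussion above that $R_\Sigma$, with the quotient topology induced by the surjection $\pi\colon\Sigma^{-1}R\to R_\Sigma$, is a topological ring and that $\lambda$ is continuous; in particular it is a topological abelian group, hence Hausdorff as soon as it is $T_1$, i.e. as soon as $\{0\}$ is closed. Since $\pi$ is a quotient map, this is equivalent to the fibre $\pi^{-1}(0)$ being closed in $\Sigma^{-1}R$. Using Beachy's description of the congruence $\sim_{\mathcal R_2}$ — in particular that the class of $0$ is exactly the subsemigroup $\Sigma^{-1}_0R$ and that every element of $\Sigma^{-1}_0R$ is represented by a triple of one of the three forms $(a\colon C\colon 0)$, $(0\colon C\colon y)$ or $\left(\begin{pmatrix}a & 0\end{pmatrix}\colon\begin{pmatrix}A&0\\0&D\end{pmatrix}\colon\begin{pmatrix}0\\x\end{pmatrix}\right)$ — one sees that the part of $\Sigma^{-1}_0R$ lying in level $n$ is the image, under the continuous quotient map, of a finite union of compact parameter spaces (one for each way of splitting $n$ into the relevant block sizes). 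Hence $\Sigma^{-1}_0R$ meets every clopen level in a compact, hence closed, set, so $\Sigma^{-1}_0R$ is closed in $\Sigma^{-1}R$, and $R_\Sigma$ is Hausdorff.

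For the final assertion, let $N=\sum_{i=1}^k\lambda(R)x_i$ be a finitely generated $R$-submodule of $R_\Sigma$. The map $R^k\to R_\Sigma$ sending $(r_1,\dots,r_k)$ to $\sum_i\lambda(r_i)x_i$ is continuous, being built from $\lambda$, multiplication and addition, and its image is $N$. Since $R^k$ is profinite, hence compact, and $R_\Sigma$ is now Hausdorff, this map is closed; so its kernel is a closed subgroup of $R^k$, the subspace and quotient topologies on $N$ coincide, and $N\cong R^k/\ker$ is profinite.

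The step I expect to be the main obstacle is the verification that $\pi^{-1}(0)$ is closed. The topological ingredients (compactness of $\GL_n(R)$, Hausdorffness of quotients by compact group actions, closedness of continuous images of compact sets) are routine once set up, but they must be dovetailed with the purely algebraic structure of Beachy's model — the normal forms for the elements of $\Sigma^{-1}_0R$ and the fact that the $\sim_{\mathcal R_2}$-class of $0$ does not exceed $\Sigma^{-1}_0R$ — and one should also take care that it is the local compactness of $\Sigma^{-1}R$, not merely its Hausdorffness, that underlies the claim that the quotient topology makes $R_\Sigma$ a topological ring.
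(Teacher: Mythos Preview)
Your approach is very close to the paper's, and the topological ingredients (compact group action, closedness of preimages, passage to the quotient) are essentially the same. However, there is one genuine gap.

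You assert that $\pi^{-1}(0)$ equals $\Sigma_0^{-1}R$. But unwinding the definition of the congruence $\sim_{\mathcal R_2}$, the class of $0=[(1\!:\!1\!:\!0)]$ consists of all $\overline p\in\Sigma^{-1}R$ such that $\overline p+\overline z_1=\overline z_2$ for some $\overline z_1,\overline z_2\in\Sigma_0^{-1}R$; since $(1\!:\!1\!:\!0)\in\Sigma_0^{-1}R$, this set is
\[
\Sigma_0^{-1}R - \Sigma_0^{-1}R \;=\; \{\overline p : \exists\,\overline z\in\Sigma_0^{-1}R,\ \overline p+\overline z\in\Sigma_0^{-1}R\},
\]
which contains $\Sigma_0^{-1}R$ but, as the paper explicitly remarks, may be strictly larger. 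Beachy's normal forms describe the elements of $\Sigma_0^{-1}R$, not of this saturation, so your appeal to them does not close the gap. The paper handles this with one extra step: once $\Sigma_0^{-1}R$ is known to be closed, one considers the continuous addition map $\varphi\colon \Sigma^{-1}R\times\Sigma_0^{-1}R\to\Sigma^{-1}R$, notes that $\varphi^{-1}(\Sigma_0^{-1}R)$ is closed, and reads off that $\Sigma_0^{-1}R-\Sigma_0^{-1}R$ is closed from this.

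A smaller point: you claim the parameter spaces $R^n\times\Sigma_n\times{}^nR$ and their block-diagonal variants are compact, but $\Sigma_n$ need not be closed in $\Mat_n(R)$ (completeness of $\Sigma$ gives invariance under $\GL_n(R)^2$, not closedness). This is easy to repair: the preimages you describe are closed in $T_n$, and the quotient map $T_n\to\Sigma^{-1}R_n$ is closed because it is the orbit map of a compact group action (projection along a compact factor is a closed map). That is exactly how the paper argues, without invoking compactness of $\Sigma_n$.
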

\begin{proof}
Without loss of generality we assume that $\Sigma$ is complete.
Since \(R\) is profinite, then \(T_\Sigma\) is a Hausdorff space under the induced topology.

\begin{claim} We have that  \(\Sigma^{-1}R\) is Hausdorff.
\end{claim}
\begin{proof} We must show that the equivalence relation \(\mathcal{R}_1\) is a closed subspace of \(T_\Sigma\times T_\Sigma\). Since \(T_\Sigma\) is given the union topology and each equivalence class under \(\mathcal{R}_1\) is entirely contained in one of the subsets \(T_n = R^n \times \Sigma_n \times {^nR}\), it suffices to prove that \(X_n = (\mathcal{R}_1 \cap (T_n \times T_n))\)  is closed in \(T_n \times T_n\).

Consider the continuous maps \(\psi\colon T_n \times T_n \times \GL_n(R)^2 \to R^n \times \Mat_n(R) \times {^nR}\) and \(\pi\colon T_n \times T_n \times \GL_n(R)^2 \to T_n \times T_n\) given by \[\psi(a,C,x,b,D,y,U,V) = (aU - b\,, VCU - D\,, Vx - y)\,,\] \[\pi(a,C,x,b,D,y,U,V) = (a,C,x,b,D,y)\,.\]
It is clear that \(X_n = \pi(\psi^{-1}(0,0,0))\). Since \(\GL_n(R)^2\) is compact, \(\pi\) is a closed map, so we are done.
\end{proof}

The kernel of \(\Sigma^{-1}R \to R_\Sigma\) is the subsemigroup \[\Sigma^{-1}_0 R - \Sigma^{-1}_0 R = \{\overline{p} \in \Sigma^{-1}R\,\mid\, \exists \overline{z} \in \Sigma^{-1}_0R\text{ such that }\overline{p} + \overline{z} \in \Sigma^{-1}_0 R\}\,,\] which contains \(\Sigma^{-1}_0 R\) but it might be strictly larger.

\begin{claim}\label{lem-kernelclosed} We have that \(\Sigma^{-1}_0 R\) is closed in \(\Sigma^{-1} R\).
\end{claim}
\begin{proof} It again suffices to consider \(\Sigma^{-1}R_n = (R^n \times \Sigma_n \times {^n R})/\sim_{\mathcal{R}_1}\) and show that \(Y_n = \Sigma^{-1}_0 R \cap \Sigma^{-1}R_n\) is closed in \(\Sigma^{-1}R_n\), as we have a homeomorphism between \(\Sigma^{-1}R\) and the disjoint union \(\bigcup_{n \geq 1} \Sigma^{-1}R_n\).

Decompose \(Y_n = Z_I \cup Z_{II} \cup \bigcup_{i=1}^{n-1} Z_{III,i}\) where \[Z_I = \{(a\colon C \colon 0) \in Y_n\}\,,\]
\[Z_{II} = \{(0\colon C\colon x) \in Y_n\}\,,\]
\[Z_{III,i} = \left\{\left(\begin{pmatrix} a & 0\end{pmatrix}\colon \begin{pmatrix} C & 0 \\ 0 & D\end{pmatrix}\colon \begin{pmatrix}0 \\ x\end{pmatrix}\right) \in Y_n\,\mid\, 
\begin{array}{l} a \in R^i\,, x \in {^{n-i}R},\\ C \in \Sigma_i\,, D \in \Sigma_{n-i} \end{array}\right \}\, .\]
We only need to show that each one of the \(Z_I\), \(Z_{II}\) and \(Z_{III,i}\) is closed in \(\Sigma^{-1}R_n\).

The inverse image of \(Z_I\) in \(R^n \times \Sigma_n \times {^n R}\) is the set of triples of the form \((a,C,0)\), which immediately shows that it is closed. An analogous description holds for the inverse image of \(Z_{II}\), hence it is also closed. The only case requiring a finer description is that of \(Z_{III,i}\).

The inverse image of \(Z_{III,i}\) in \(R^n \times \Sigma_n \times {^n R}\) is \[\left\{\left(\begin{pmatrix} a & 0\end{pmatrix}
U\,, V\begin{pmatrix}C & 0\\ 0 & D\end{pmatrix}U\,, V\begin{pmatrix} 0 \\ x\end{pmatrix}\right)\,\mid\, U\,,V \in 
\GL_n(R)\right\}\,.\] Define \[Z_i = \left\{\left(\begin{pmatrix} a &0\end{pmatrix}\,, 
\begin{pmatrix}  C & 0\\ 0 & D\end{pmatrix}\,, \begin{pmatrix} 0 \\x\end{pmatrix}\right)\right\} \subseteq R^n \times \Sigma_n \times {^n R}\,.\] 
Then, the inverse image in question is the image of \(Z_i \times (\GL_n(R))^2\) under the continuous right group action map \(\mu\colon (R^n \times \Sigma_n \times {^nR})\times (\GL_n(R))^2 \to R^n \times \Sigma_n \times {^n R}\) given by \[\mu(b,A,y,U,V) = (bU,V^{-1}AU,V^{-1}y)\,.\] Since \(\Sigma\) satisfies property~(\ref{property-DS}), the set \(Z_i\) is closed, and because \((\GL_n(R))^2\) is compact the map \(\mu\) is closed. Hence, \(\mu(Z_i\times(\GL_n(R))^2)\) and \(Z_{III,i}\) are both closed.
\end{proof}

\begin{claim}\label{prop-topology-0}We have that  \(\{[(1\colon 1\colon 0)]\}\) is closed in \(R_{\Sigma}\).
\end{claim}
\begin{proof} We must show that \(\Sigma_0^{-1} R - \Sigma_0^{-1} R\) is a closed subset of \(\Sigma^{-1}R\). Consider the map \(\varphi\colon \Sigma^{-1} R \times \Sigma^{-1}_0 R \to \Sigma^{-1}R\) given by \(\varphi(\overline{p},\overline{z}) = \overline{p} + \overline{z}\). The inverse image \(\varphi^{-1}(\Sigma^{-1}_0 R)\) is closed in the product by Claim~\ref{lem-kernelclosed}. Since this inverse image is precisely \((\Sigma^{-1}_0 R - \Sigma^{-1}_0 R) \times \Sigma^{-1}_0 R\), it proves that \(\Sigma^{-1}_0 R - \Sigma^{-1}_0 R\) is closed in \(\Sigma^{-1} R\).
\end{proof}
   By Claim~\ref{prop-topology-0}, the ring topology constructed on \(R_{ {\Sigma}}\) is such that \(\{0\}\) is closed. Since \((R_{ {\Sigma}}, +)\) is a topological abelian group, it must also be Hausdorff.   
   
   For the last statement of the theorem, take one such \(M \leq R_{\Sigma}\) and observe that the kernel \(I\) of any surjection \(R^n \to M\) must be closed in \(R^n\). Therefore,  \(M \simeq R^n/I \)  is profinite.
     \end{proof}
\bibliographystyle{amsalpha}
\bibliography{freebycyclic}
\end{document}